\theoremstyle{plain}
\newtheorem{theorem}{Theorem}[section]
\newtheorem{lemma}[theorem]{Lemma}
\newtheorem{corollary}[theorem]{Corollary}
\newtheorem{remark}[theorem]{Remark}
\newtheoremstyle{namedthm}{}{}{\itshape}{}{\bfseries}{\!\!.}{0.5em}{\thmnote{#3 }}
\theoremstyle{namedthm}
\theoremstyle{definition}
\newtheorem{definition}[theorem]{Definition}
\newtheorem{example}[theorem]{Example}
\def\Ddots{\mathinner{\mkern1mu\raise\p@
\vbox{\kern7\p@\hbox{.}}\mkern2mu
\raise4\p@\hbox{.}\mkern2mu\raise7\p@\hbox{.}\mkern1mu}}
\newcommand{\Ato}{\Rightarrow}
\renewcommand{\gg}{\mathfrak{g}}
\newcommand{\hh}{\mathfrak{h}}
\newcommand{\diffto}{\xrightarrow{\raisebox{-0.2 em}[0pt][0pt]{\smash{\ensuremath{\sim}}}}}
\date{} \usepackage{color} \definecolor{tocolor}{rgb}{.1,.1,.5}
\definecolor{urlcolor}{rgb}{.2,.2,.6}
\definecolor{linkcolor}{rgb}{.1,.1,.6}
\definecolor{citecolor}{rgb}{.6,.2,.1}
\definecolor{darkgreen}{rgb}{0.0, 0.5, 0.0}
\renewcommand{\rm}{\normalshape}
\numberwithin{equation}{section}
\begin{document}

\title{The Serre spectral sequence of a Lie subalgebroid}

\author{Ioan M\u{a}rcu\cb{t}
  \and
 Andreas Sch\"{u}\ss ler
}

\newcommand{\Addresses}{{
  \bigskip
  \footnotesize

\noindent Ioan M\u{a}rcu\cb{t},\par\nopagebreak
\noindent \textsc{Mathematisches Institut, Universit\"{a}t zu K\"{o}ln,\\
Weyertal 86-90,
D-50931 K\"{o}ln, Germany}\par\nopagebreak
\noindent \textit{E-mail address}: \texttt{imarcut@uni-koel.de}

\medskip

 \noindent Andreas~Sch\"{u}\ss ler,\par\nopagebreak
\noindent  \textsc{KU Leuven, Department of Mathematics,\\
Celestijnenlaan 200B box 2400,  BE-3001 Leuven, Belgium}\\
 \textit{E-mail address:} \texttt{a.schuessler@math.ru.nl}
}}

\maketitle

\abstract{We study a spectral sequence approximating Lie algebroid cohomology associated to a Lie subalgebroid. 
This is a simultaneous generalisation of several classical constructions in differential geometry, including the Leray–Serre spectral sequence for de Rham cohomology associated to a fibration \cite{Ser51}, the Hochschild-Serre spectral sequence for Lie algebras \cite{HoSe53}, and the Mackenzie spectral sequence for Lie algebroid extensions \cite{Mack05}. We show that, for wide Lie subalgebroids, the spectral sequence converges to the Lie algebroid cohomology, and that, for Lie subalgebroids over proper submanifolds, the spectral sequence converges to the formal Lie algebroid cohomology. We discuss applications and recover several constructions in Poisson geometry in which this spectral sequence has appeared naturally in the literature.
}
\vskip12pt

\setcounter{tocdepth}{2}
\tableofcontents


\section{Introduction}

Lie algebroids generalise simultaneously various geometric structures, including Lie algebras, manifolds (via their tangent bundles), foliations, Lie algebra actions, Poisson structures, (generalised) complex structures (in the setting of complex Lie algebroids) etc. 

To a Lie algebroid $A\Ato M$ (with anchor denoted by $\sharp: A\to TM$) one associates the cochain complex of differential forms on $A$,
\[(\Omega^{\bullet}(A),\wedge, d_A).\]
This assignment is a fully faithful contravariant functor from the category of Lie algebroids to that of differential graded commutative algebras. The resulting cohomology groups, $H^{\bullet}(A)$, form the Lie algebroid cohomology of $A$. Given a representation $V\to M$ of $A$, forms on $A$ with values in $V$, form a differential graded $\Omega^{\bullet}(A)$-module, denoted by $(\Omega^{\bullet}(A,V),d_A)$, which yields the Lie algebroid cohomology of $A$ with values in $V$, denoted by $H^{\bullet}(A,V)$. 

Lie algebroid cohomology encodes important geometric information (e.g.\ invariant functions, infinitesimal automorphisms of Poisson and other geometric structures, deformations, etc.). 
However, calculating it can be quite difficult in general. 
One of the issues is that the defining complex is typically non-elliptic and so, even over a compact manifold, the cohomology groups might be infinite dimensional. Another difficulty is to understand and control the behaviour around points where the anchor drops rank. Moreover, there are few computational tools available in general. In this paper, we build such a tool, namely a spectral sequence approximating Lie algebroid cohomology associated to a Lie subalgebroid, which simultaneously generalises several classical constructions. 

The spectral sequence is constructed as follows. Given a Lie subalgebroid $L\Ato N$ of $A\Ato M$, the pullback along the inclusion $i\colon L\hookrightarrow A$ yields a differential graded ideal in $\Omega^\bullet(A)$,
\[ \mathcal{I}:=\ker \big( i^\ast:\Omega^\bullet(A)\to \Omega^\bullet(L)\big).  \]
This ideal gives rise to a descending, differential graded filtration on $\Omega^\bullet(A,V)$ by setting
\[ \mathcal{F}^p\Omega^\bullet(A,V):=\big(\wedge^p\mathcal{I}\wedge\Omega(A,V)\big)^{\bullet}\subset \Omega^{\bullet}(A,V). \]
This filtration induces a spectral sequence $\{ E_r^{p,q}\}_{r\geq 0} $ 
which we call the \textbf{Serre spectral sequence} of the Lie subalgebroid $i:L\hookrightarrow A$. This construction reproduces important spectral sequences known in the literature via suitable choices of the Lie subalgebroid. 

\begin{enumerate}
    \item For a Lie subalgebra $\hh$ of a Lie algebra $\gg$, we recover the classical Hochschild-Serre spectral sequence \cite{HoSe53}.
    \item The spectral sequence of a Lie algebroid extension (\cite{Mack05}, see also \cite{Brah10})
    \[ 0\to L\to A\to B\to 0\]
    is the Serre spectral sequence of $L\hookrightarrow A$ (Section \ref{sec:liealgebroid_extensions}). 
    \item The Leray-Serre spectral sequence \cite{Ser51} for de Rham cohomology of a locally trivial fibre bundle $\pi:M\to Q$ (developed in \cite{Hatt60}) is the Serre spectral sequence of the vertical distribution, i.e.\ of the subbundle $\ker d\pi\subset TM$ (Section \ref{subsection:Leray-Serre}). The construction works similarly for the pullback of a Lie algebroid along a fibration (Section \ref{sec:pullback_liealgebroids}).
    \item The spectral sequence of a regular Poisson manifold \cite{Vai90} is the Serre spectral sequence of the kernel of the anchor map (Section \ref{sec:vaisman}).
\end{enumerate}

Regarding convergence, we obtain the following result in Section \ref{sec:ss_convergence}. 

\begin{theorem}\label{introduction:theorem:convergence}
    Let $L\Ato N$ be a Lie subalgebroid and $V\to M$ a representation of $A\Ato M$.
    \begin{enumerate}
        \item If $N=M$, i.e.\ $L$ is a wide subalgebroid, then the Serre spectral sequence associated to $L$ converges to the cohomology of $A$ with values in $V$.
        \item If $N\subset M$ is a closed and embedded submanifold of positive codimension, then the Serre spectral sequence converges to the formal cohomology of $A$ around $N$ with values in $V$. 
    \end{enumerate}
\end{theorem}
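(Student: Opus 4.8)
The plan is to treat both statements as instances of the standard convergence machinery for a descending filtered differential graded module, the dichotomy between the two cases reflecting whether the filtration $\mathcal F^\bullet$ is bounded or merely complete. In both cases I would first record the local model of the ideal $\mathcal I$: choosing a local frame of $A^*$ adapted to the splitting $A^*|_N = L^* \oplus C^*$ (with $C^* = L^\circ$ the transverse fibre directions) and transverse coordinates $y$ to $N$ (so that the vanishing ideal is $I_N = (y)$), one checks that a form lies in $\mathcal I$ precisely when its restriction to $N$ has a leg in $L^\circ$; hence $\mathcal I$ is the ideal generated by the functions $y^i$ together with the transverse $1$-forms $\eta^i$ spanning $C^*$. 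Consequently $\mathcal F^p$ is spanned by monomials $y^\alpha\,\eta^I\,\theta^J$ with $|\alpha| + |I| \ge p$, so the filtration degree is the total transverse weight (normal base-jet order plus number of transverse legs). This description controls the associated graded and is the geometric input for everything that follows.

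For (1), when $N = M$ there are no transverse coordinates $y$, so $\mathcal F^p$ counts only transverse legs and the filtration is bounded in each total degree: $\mathcal F^{n+1}\Omega^n(A,V) = 0$ and $\mathcal F^0\Omega^n(A,V) = \Omega^n(A,V)$. The spectral sequence is therefore first-quadrant (both $p,q \ge 0$), and the classical convergence theorem for a bounded filtration gives strong convergence to $H^\bullet(A,V)$ with $E_\infty^{p,q} = \mathrm{gr}^p H^{p+q}(A,V)$. This case requires essentially no work beyond the identification of the $E_0$ page.

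For (2) the transverse coordinates $y$ are present, so $\mathcal F^p$ contains the full power $I_N^p\cdot\Omega^\bullet(A,V)$ and the filtration is no longer bounded above: along a fixed total degree $n$ the index $p$ ranges over all of $[0,\infty)$, with $q = n - p$ unbounded below. The filtration remains exhaustive and equals the whole complex for $p \le 0$, but it fails to be Hausdorff: $\bigcap_p\mathcal F^p\Omega^\bullet(A,V)$ consists of the forms that are flat along $N$. The remedy is to pass to the completion $\widehat\Omega^\bullet(A,V) := \varprojlim_p \Omega^\bullet(A,V)/\mathcal F^p$. Its induced filtration is complete, Hausdorff and exhaustive, and since completion does not alter the subquotients $\mathcal F^p/\mathcal F^{p+1}$, the spectral sequence of $\widehat\Omega^\bullet(A,V)$ coincides with the one under study from the $E_0$ page on. I would then invoke the complete convergence theorem for a complete, Hausdorff, exhaustive filtration equal to $\widehat\Omega^\bullet(A,V)$ for $p \le 0$ to obtain strong convergence to $H^\bullet(\widehat\Omega^\bullet(A,V),d_A)$. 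Finally, the local model shows that the completion only introduces formal power series in the finitely many normal directions $y$ (the $\eta$-weight being capped by $\operatorname{rank} C$), so $\widehat\Omega^\bullet(A,V)$ is exactly the complex of $A$-forms on the formal neighbourhood of $N$; its cohomology is the formal Lie algebroid cohomology around $N$, which identifies the abutment.

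The main obstacle is the non-wide case, and within it two points deserve care. First, verifying strong (not merely conditional) convergence: because the $d_r$ always point toward increasing $p$, a fixed bidegree can receive only finitely many incoming differentials but may emit differentials for arbitrarily large $r$, so I must confirm that the completeness and bounded-below hypotheses genuinely kill the relevant $\lim^1_r E_r$ term, equivalently that the tower $\{H^\bullet(\Omega^\bullet(A,V)/\mathcal F^p)\}_p$ is Mittag--Leffler. Second, the clean identification of $\widehat\Omega^\bullet(A,V)$ with the formal $A$-forms along $N$ relies on the cofinality of the powers $(y,\eta)^p$ with $I_N^p$ up to the bounded $\eta$-degree; making this compatible with the Lie algebroid differential $d_A$, so that the completed complex is genuinely the formal cohomology complex defined earlier, is the step where the geometry, rather than pure homological algebra, does the work.
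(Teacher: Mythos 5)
Your proposal is correct in outline, and for part (1) it coincides with the paper's argument: when $N=M$ the degree-zero part of the ideal vanishes, so $\mathcal{F}^p_L\Omega^\bullet(A,V)\subset\bigoplus_{k\geq p}\Omega^k(A,V)$, the filtration is finite in each total degree, and classical convergence applies. For part (2) you take a genuinely different, though closely related, route. The paper does not complete the complex: it establishes the sandwich $\mathcal{I}_N^p\Omega^q(A,V)\subset\mathcal{F}_L^p\Omega^q(A,V)\subset\mathcal{I}_N^{p-q}\Omega^q(A,V)$ for $p>q$ (the same total-transverse-weight bookkeeping as your local model), deduces $\bigcap_p\mathcal{F}^p_L\Omega^\bullet(A,V)=\mathcal{I}_N^\infty\Omega^\bullet(A,V)$, and then passes to the \emph{quotient} complex $\mathscr{J}_N^\infty\Omega^\bullet(A,V)=\Omega^\bullet(A,V)/\mathcal{I}_N^\infty\Omega^\bullet(A,V)$ --- which is, by the paper's definition, literally the complex computing formal cohomology --- transferring the conclusion back via the Mapping Lemma, the quotient map being an isomorphism on $E_0$. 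Your completion $\varprojlim_p\Omega^\bullet(A,V)/\mathcal{F}^p$ buys you a filtration that is complete as well as Hausdorff, so the complete convergence machinery applies at the source and your $\lim^1$ concern is the right one to raise (it is present, though unexamined, in the quotient picture too, where completeness of the induced filtration is not automatic). What it costs you is an extra identification: to land on the stated abutment you must show that $\mathscr{J}_N^\infty\Omega^\bullet(A,V)\to\varprojlim_p\Omega^\bullet(A,V)/\mathcal{F}^p$ is surjective, i.e.\ a Borel--Whitney statement that every formal normal expansion of forms along the closed embedded $N$ is the $\infty$-jet of an actual smooth form, compatibly with $d_A$. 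This is true but is genuine analytic input that the paper's formulation sidesteps by defining formal cohomology as the cohomology of the quotient by flat forms rather than of the completed complex. Both routes ultimately rest on the same two facts: the associated graded is unchanged by the passage to the quotient or completion, and the resulting filtration is Hausdorff (respectively complete) and exhaustive.
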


Using the notion of Lie algebroid representations up to homotopy introduced in \cite{AbCr12}, we can describe the first page of the Serre spectral sequence in the settings of Theorem \ref{introduction:theorem:convergence}.

\begin{theorem}\label{introduction:theorem:submanifold} Let $L\Ato N$ be a Lie subalgebroid and $V\to M$ a representation of $A\Ato M$, where $N\subset M$ is a closed, embedded submanifold. The conormal bundle of $L$ in $A$ is canonically a VB-algebroid $\nu_L(A)^*\Ato L^{\circ}$, which corresponds to a representation up to homotopy of $L$ on $\nu_N(M)^*\oplus L^{\circ}$.    
      The first page of the Serre spectral sequence of $L$ is isomorphic to the cohomology of $L$ with coefficients in representations up to homotopy, as follows
\begin{equation}\label{introduction:eq:E_1}
            E_1^{p,q}\simeq\mathrm{H}(L,\wedge^p(\nu_N(M)^*\oplus L^{\circ})\otimes V|_N)^q.
    \end{equation}
These representations up to homotopy are classical representations in the following cases. 
    \begin{enumerate}    \item\label{introduction:theorem:E_1:item:wide} If $N=M$, i.e.\ $L$ is a wide Lie subalgebroid, then \eqref{introduction:eq:E_1} reduces to
    \[E_1^{p,q}\simeq \mathrm{H}^{q}(L,\wedge^pL^{\circ}\otimes V),\]
    where the representation of $L$ on $L^{\circ}\simeq (A/L)^*$ is the dual Bott connection.
\item\label{introduction:theorem:E_1:item:invariant} If $N$ is an invariant submanifold of $A$ and $L=A|_N$, then \eqref{introduction:eq:E_1} reduces to 
    \[ E_1^{p,q}\simeq\mathrm{H}^{p+q}(A|_N,S^p\nu_N(M)^\ast\otimes V|_N).\]
    \end{enumerate}
\end{theorem}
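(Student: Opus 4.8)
The statement has two parts: first, that the conormal bundle carries a VB-algebroid structure encoding a representation up to homotopy of $L$ on $\nu_N(M)^*\oplus L^\circ$, and second, that the first page is the $L$-cohomology with these coefficients. Since $E_1^{p,q}$ is by construction the cohomology of the complex $(E_0^{p,\bullet},d_0)$ in total degree $q$, the whole theorem reduces to identifying the zeroth page together with its differential. The plan is to produce a natural isomorphism of $\mathbb Z$-graded complexes $(E_0^{p,\bullet},d_0)\cong\big(\Omega^\bullet(L)\otimes\wedge^p(\nu_N(M)^*\oplus L^\circ)\otimes V|_N,\,D\big)$, where $D$ is the differential computing the $L$-cohomology with values in the representation up to homotopy; taking cohomology then yields \eqref{introduction:eq:E_1}. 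Here the filtration degree $p$ records transverse (conormal) order, so $d_0$ — the part of $d_A$ preserving $p$ — is the differential along $L$, which is exactly why the $L$-cohomology appears already on the first page rather than the second.

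To compute $\mathcal F^p/\mathcal F^{p+1}$ I would fix local frames and a splitting adapted to the flags $N\subset M$ and $L\subset A|_N\subset A$, so that $A^*|_N\cong L^*\oplus L^\circ$ and $T^*M|_N\cong T^*N\oplus\nu_N(M)^*$. The ideal $\mathcal I$ is generated by functions in the vanishing ideal $I_N$ of $N$ and by $1$-forms whose restriction to $N$ lands in $L^\circ$, and the associated graded in conormal degree $p$ is spanned by degree-$p$ products of these generators. The decisive point is the d\'ecalage: the generators coming from $I_N$ are functions (form-degree $0$), so they multiply symmetrically and assemble into $S^\bullet\nu_N(M)^*$, whereas the conormal $1$-forms multiply antisymmetrically and assemble into $\wedge^\bullet L^\circ$. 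Reading $\nu_N(M)^*$ in odd and $L^\circ$ in even homological degree, this is precisely $\bigoplus_{a+b=p}S^a\nu_N(M)^*\otimes\wedge^bL^\circ=\wedge^p(\nu_N(M)^*\oplus L^\circ)$, giving the claimed identification of the underlying graded bundle, with the symmetric factor placed in degree $-a$.

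Next I would identify $d_0$ with the representation-up-to-homotopy differential. First I would exhibit the VB-algebroid structure, for instance by realising $\nu_L(A)^*\subset T^*A|_L$ as a sub-VB-algebroid of the cotangent VB-algebroid $T^*A\Ato A^*$ over $L^\circ\subset A^*$, the Lie subalgebroid condition on $L$ being exactly the coisotropy condition that makes $\nu_L(A)^*$ closed under the cotangent bracket. By the correspondence between VB-algebroids and representations up to homotopy \cite{AbCr12} (Gracia-Saz--Mehta), the chosen splitting turns this into a $2$-term representation of $L$ with side bundle $L^\circ$ and core $\nu_N(M)^*$, whose structure differential $D$ decomposes into an $L$-connection (the Bott connection), a core cochain map $\partial\colon\nu_N(M)^*\to L^\circ$, and a curvature $2$-form. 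I would then verify, on frame elements, that the residual action of $d_A$ on the associated graded reproduces these three pieces: the connection comes from Lie-differentiating transverse (co)vectors along $L$ (so that $d_Af\bmod\mathcal F^{p+1}$ is the covariant derivative of the symbol of $f\in I_N$), the cochain map from the $L^\circ$-component of $d_Af$, and the curvature from the second-order defect of the splitting. This is the step I expect to be the main obstacle, as it requires keeping track of signs, the splitting-dependence, and the interaction of the symmetric and exterior factors under $d_A$; the cleanest route is to compare $d_0$ and $D$ on frames and then invoke naturality to remove the auxiliary choices.

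Finally I would read off the two special cases. When $N=M$ the core $\nu_N(M)^*$ vanishes, the curvature and cochain map disappear, and the representation up to homotopy collapses to the honest $L$-representation on $L^\circ\cong(A/L)^*$ given by the dual Bott connection, so that \eqref{introduction:eq:E_1} becomes $E_1^{p,q}\cong H^q(L,\wedge^pL^\circ\otimes V)$. When $L=A|_N$ with $N$ invariant, the side bundle $L^\circ$ vanishes, only the core survives, and the graded exterior power degenerates to the symmetric power $S^p\nu_N(M)^*$ sitting in homological degree $-p$; again the higher structure maps drop out, leaving the honest $A|_N$-representation, and the degree shift by $p$ coming from the d\'ecalage converts $H^q$ into $H^{p+q}$, yielding $E_1^{p,q}\cong H^{p+q}(A|_N,S^p\nu_N(M)^*\otimes V|_N)$.
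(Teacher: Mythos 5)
Your proposal is correct and follows essentially the same route as the paper: identify the associated graded $\mathcal F^p_L\Omega/\mathcal F^{p+1}_L\Omega$ over a tubular neighbourhood via the d\'ecalage $\bigoplus_k S^{k}\nu_N(M)^*\otimes\wedge^{p-k}L^\circ=\wedge^p(\nu_N(M)^*\oplus L^\circ)$ (functions in $\mathcal I_N$ contributing symmetrically, conormal one-forms antisymmetrically), match $d_0$ on generators with the connection, core map and curvature of the representation up to homotopy, and then read off the two degenerate cases by noting that the core (resp.\ the side bundle) vanishes. The only cosmetic difference is that you propose to obtain the VB-algebroid structure on $\nu_L(A)^*$ by embedding it into the cotangent VB-algebroid $T^*A\Ato A^*$ via coisotropy of $L^\circ$, whereas the paper constructs $\nu_L(A)\Ato\nu_N(M)$ directly using the normal functor on sections of $A$ tangent to $L$ and then dualises over $L$; both yield the same representation up to homotopy after a choice of splitting.
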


From Section \ref{sec:liealgebroid_extensions} on, we will consider subalgebroids which fit into a short exact sequence 
\begin{equation}\label{introduction:eq:LA_extensions}
\begin{tikzpicture}[baseline=(current bounding box.center)]
\usetikzlibrary{arrows}
\node (1) at (0,1) {$ 0 $};
\node (2) at (1.5,1) {$ L $};
\node (3) at (3,1) {$ A $};
\node (4) at (4.5,1) {$ B $};
\node (5) at (6,1) {$ 0 $};

\node (w) at (1.5,0) {$ M $};
\node (e) at (3,0) {$ M $};
\node (r) at (4.5,0) {$ Q $};
\draw[-Implies,double equal sign distance]
(2) -- (w);
\draw[-Implies,double equal sign distance]
(3) -- (e);
\draw[-Implies,double equal sign distance]
(4) -- (r);
\path[->]
(1) edge node[]{$  $} (2)
(2) edge node[above]{$ $} (3)
(3) edge node[above]{$ $} (4)
(4) edge node[]{$  $} (5)
(w) edge node[above]{$\mathrm{id}_M $} (e)
(e) edge node[above]{$ \pi $} (r);
\end{tikzpicture}
\end{equation}
where the base map $\pi: M\to Q $ is a surjective submersion. In this case we can describe the differential on $E_1$ more explicitly, and make rigorous the interpretation of the second page given in \cite{Brah10}, where the Serre spectral sequence of general extensions \eqref{introduction:eq:LA_extensions} was first considered.

\begin{theorem}
The $C^\infty(Q)$-module $\mathrm{H}^\bullet(L,V)$
is a generalised $B$-representation (defined in Section \ref{sec:basics}), and the Serre spectral sequence associated to $L$ satisfies
\[E_2^{p,q}\simeq \mathrm{H}^p(B,\mathrm{H}^q(L,V)).\]
\end{theorem}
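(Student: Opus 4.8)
The plan is to construct the generalised $B$-representation on $\mathrm{H}^\bullet(L,V)$ by hand, identify the first page of the spectral sequence with the Chevalley--Eilenberg complex of $B$ with these coefficients, and then show that the first differential $d_1$ is precisely the Lie algebroid differential $d_B$. For the extension \eqref{introduction:eq:LA_extensions} the quotient $A/L$ is, as a vector bundle over $M$, the pullback $\pi^*B$, so the conormal bundle satisfies $L^\circ=(A/L)^*\cong\pi^*B^*$. Hence item \ref{introduction:theorem:E_1:item:wide} of Theorem \ref{introduction:theorem:submanifold} already gives $E_1^{p,q}\simeq \mathrm{H}^q(L,\pi^*\wedge^pB^*\otimes V)$, where $L$ acts on $\pi^*\wedge^pB^*$ through the dual Bott connection. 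I would first record the key flatness statement: a covector pulled back from $\Gamma(B^*)$ is flat for the Bott connection, since sections of $L$ are $\pi$-vertical (their anchor lies in $\ker d\pi$) and map to $0$ under $A\to B$, so the Bott derivative along $L$ of a basic conormal covector vanishes. Thus $\pi^*\wedge^pB^*$ is a flat $L$-representation which admits, locally over $Q$, a frame of flat basic sections.

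To produce the $B$-action on $\mathrm{H}^\bullet(L,V)$, I would lift a section $b\in\Gamma(B)$ to $\tilde b\in\Gamma(A)$ — possible since $A\to B$ is fibrewise surjective over the submersion $\pi$ — and let it act on $\Omega^\bullet(L,V)$ by the Lie derivative $\mathcal L_{\tilde b}$, which is well defined because $L$ is an ideal, so $[\tilde b,-]$ preserves $\Gamma(L)$, and $V$ carries an $A$-module structure. Two lifts differ by a section $l\in\Gamma(L)$, and $\mathcal L_l=[d_L,\iota_l]$ is null-homotopic on $(\Omega^\bullet(L,V),d_L)$; hence $\mathcal L_{\tilde b}$ descends to a well-defined operator on $\mathrm{H}^\bullet(L,V)$ independent of the lift. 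Using $\mathcal L_{(\pi^*f)\tilde b}=\pi^*f\,\mathcal L_{\tilde b}+d_A(\pi^*f)\wedge\iota_{\tilde b}(-)$ together with $d_L(\pi^*f)=0$ for $f\in C^\infty(Q)$ (again by verticality of $\sharp_L$), one checks the assignment is $C^\infty(Q)$-linear in $b$ and satisfies the Leibniz rule along $\sharp_B(b)$, while the Jacobi identity for $A$ and the fact that $[\tilde b_1,\tilde b_2]$ lifts $[b_1,b_2]_B$ up to a section of $L$ give flatness. This equips the $C^\infty(Q)$-module $\mathrm{H}^\bullet(L,V)$ with a flat $B$-connection, proving the first assertion.

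Next I would identify $(E_1^{\bullet,q},d_1)$ with $(\Omega^\bullet(B,\mathrm{H}^q(L,V)),d_B)$. Using the flat basic frame from the first step, and that $\Gamma(B^*)$ is finitely generated projective over $C^\infty(Q)$ (so tensoring is exact and commutes with taking $L$-cohomology), a projection-formula argument yields
\[E_1^{p,q}\simeq\mathrm{H}^q(L,\pi^*\wedge^pB^*\otimes V)\simeq \Gamma(\wedge^pB^*)\otimes_{C^\infty(Q)}\mathrm{H}^q(L,V)=\Omega^p(B,\mathrm{H}^q(L,V)).\]
Concretely, a class in $E_1^{p,q}$ is represented by a form in $\mathcal F^p\Omega^{p+q}(A,V)$ whose $d_A$-image lies in $\mathcal F^{p+1}$, and under this isomorphism it corresponds to the $B$-form sending $(b_0,\dots,b_{p-1})$ to the $L$-cohomology class obtained by contracting with lifts $\tilde b_0,\dots,\tilde b_{p-1}$.

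Finally — and this is the step I expect to be the main obstacle — I would verify that under this identification $d_1$ coincides with $d_B$. By construction $d_1$ is induced by $d_A$ on the associated graded, so I would feed a representative built from the lifts into the Koszul formula for $d_A$ and read off the answer modulo $\mathcal F^{p+2}$ and modulo $d_L$-exact terms. The anchor terms reproduce the $B$-connection constructed above (here the Lie-derivative definition of the action is used essentially), while the bracket terms reproduce the bracket term of $d_B$ because $[\tilde b_i,\tilde b_j]$ projects to $[b_i,b_j]_B$, the $L$-valued discrepancy contributing only a $d_L$-exact term that vanishes in $\mathrm{H}^q(L,V)$. The careful bookkeeping here — controlling the filtration degree, the independence of the choices of lifts and representatives, and the null-homotopy of all correction terms on the $L$-complex — is the technical heart of the argument. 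Once $d_1=d_B$ is established, taking cohomology of $(\Omega^\bullet(B,\mathrm{H}^q(L,V)),d_B)$ gives $E_2^{p,q}\simeq \mathrm{H}^p(B,\mathrm{H}^q(L,V))$, as claimed.
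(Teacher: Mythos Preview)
Your proposal is correct and follows essentially the same strategy as the paper: the generalised $B$-representation is built via Lie derivatives along lifts $\tilde b$, the identification $E_1^{p,q}\simeq\Omega^p(B,\mathrm{H}^q(L,V))$ uses that $\Gamma(\wedge^p B^*)$ is finitely generated projective over $C^\infty(Q)$ so that tensoring commutes with $L$-cohomology, and $d_1=d_B$ is then verified. The one efficiency the paper gains over your plan is that, instead of the full Koszul bookkeeping you anticipate, it observes that both $d_1$ and $d_B$ are derivations for the $\Omega^\bullet(B)$-module structure on $E_1$ induced by $\Pi^*$, so once they agree for $p=0$ --- which is precisely the definition of $\nabla$ --- they agree in all degrees.
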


In some cases, the generalised representation of $B$ is actually a classical one. For example, if $L$ is abelian and $\pi=\mathrm{id}_M$ the result was already obtained in \cite{Mack05}. If $A=\pi^! B$ is the pullback Lie algebroid along a locally trivial fibration $\pi:M\to Q$ and $L=\ker d\pi$, we obtain the following generalisation of the Leray-Serre spectral sequence for de Rham cohomology.
\begin{theorem}\label{introduction:theorem:fibre_bundles}
     Let $B\Ato Q$ be a Lie algebroid over the base of a locally trivial fibre bundle $\pi: M\to Q$ with typical fibre $F$. If $\mathrm{H}^\bullet(F)$ is finite dimensional then the associated spectral sequence computing $\mathrm{H}^\bullet(\pi^! B)$ satisfies
     \[ E_2^{p,q}\simeq\mathrm{H}^p(B,\mathcal{H}^q(\ker d\pi)), \]
     where the representation of $B$ on $\mathcal{H}^q(\ker d\pi)$ is the pullback via the anchor $\sharp:B\to TN$ of the Gauss-Manin connection, i.e.\ $\nabla_b=\nabla_{\sharp b}^{GM}$. Here, $\mathcal{H}^q(\ker d\pi)\to Q$ is the finite dimensional vector bundle with fibres $\mathcal{H}^q(\ker d\pi)_x=\mathrm{H}^q(\pi^{-1}(x))$.
\end{theorem}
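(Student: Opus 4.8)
The plan is to combine the preceding theorem on $E_2$ for extensions with the classical analysis of the de Rham cohomology of fibre bundles. First I would recall from Section \ref{sec:pullback_liealgebroids} that the pullback algebroid $\pi^!B$ fits into a short exact sequence of the form \eqref{introduction:eq:LA_extensions}, with kernel $L=\ker d\pi$ (the vertical foliation, tangent to the fibres of $\pi$), quotient $B$, and base map $\pi$. Taking $V=\mathbb{R}$ to be the trivial representation, the preceding theorem yields directly
\[ E_2^{p,q}\simeq \mathrm{H}^p\big(B,\mathrm{H}^q(\ker d\pi)\big), \]
where $\mathrm{H}^q(\ker d\pi)$ a priori carries only the structure of a generalised $B$-representation. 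The proof therefore reduces to two independent claims: (i) that this generalised representation is in fact classical, carried by the module of sections of a finite-rank vector bundle $\mathcal{H}^q(\ker d\pi)\to Q$ with fibres $\mathrm{H}^q(\pi^{-1}(x))$; and (ii) that the resulting flat $B$-connection is the pullback of the Gauss--Manin connection along the anchor.

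For (i), the starting observation is that the leafwise differential is $C^\infty(Q)$-linear: since $\pi^*C^\infty(Q)$ consists of functions constant along the fibres and $\ker d\pi$ is tangent to them, $d_{\ker d\pi}(\pi^*f)=0$, so $\mathrm{H}^\bullet(\ker d\pi)$ is a genuine $C^\infty(Q)$-module. To see that it is locally free of finite rank I would work over a trivialising open $U\subset Q$, where an identification $\pi^{-1}(U)\cong U\times F$ presents the leafwise complex as the complex of smooth $U$-families of de Rham forms on the fibre, namely $(C^\infty(U,\Omega^\bullet(F)),\,\mathrm{id}\otimes d_F)$. The main obstacle of the whole argument lives here: one must show that taking cohomology commutes with $C^\infty(U,-)$, so that $\mathrm{H}^q(\ker d\pi)|_U\cong C^\infty(U)\otimes \mathrm{H}^q(F)$. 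This is precisely where the hypothesis that $\mathrm{H}^\bullet(F)$ is finite dimensional is used: it allows one to replace $(\Omega^\bullet(F),d_F)$, up to continuous chain homotopy, by its cohomology equipped with the zero differential, after which the $C^\infty(U,-)$-computation is immediate. Gluing these local trivialisations produces the vector bundle $\mathcal{H}^q(\ker d\pi)$ and identifies the $C^\infty(Q)$-module globally with its sections.

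For (ii), I would first recall how the preceding theorem builds the $B$-action: for $b\in\Gamma(B)$ one chooses a lift $\tilde b\in\Gamma(\pi^!B)$ and sets $\nabla_b[\omega]=[\mathcal{L}_{\tilde b}\,\omega]$ on leafwise-closed forms, which is well defined modulo $\Gamma(\ker d\pi)$ by Cartan's formula (as defined in Section \ref{sec:basics}). This construction is natural with respect to morphisms of extensions restricting to the identity on the kernel. I would then reduce to the universal case $B=TQ$, where $\pi^!TQ=TM$ and the sequence becomes $0\to\ker d\pi\to TM\to \pi^*TQ\to 0$; for this extension the action above is exactly the classical Gauss--Manin connection $\nabla^{GM}$ on $\mathcal{H}^q(\ker d\pi)$, flat with monodromy the $\pi_1(Q)$-action on fibre cohomology. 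For general $B$ the anchor $\sharp\colon B\to TQ$ is a Lie algebroid morphism, and functoriality of the pullback gives a morphism of extensions $\pi^!B\to \pi^!TQ=TM$ covering $\sharp$ and equal to the identity on $\ker d\pi$. Naturality of the connection then forces $\nabla_b=\nabla^{GM}_{\sharp b}$, which together with (i) completes the proof.
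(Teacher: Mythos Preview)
Your proposal is correct. For part~(i) your argument is essentially the paper's (Lemmas~\ref{lemma:H(kerdpi)_locally_trivial} and~\ref{lemma:iso_of_sections}), though the paper realises the local computation concretely via integration over cycles, citing~\cite{CrMa15}, rather than invoking an abstract continuous chain homotopy equivalence between $(\Omega^\bullet(F),d_F)$ and its cohomology; both routes need the finite-dimensionality hypothesis in the same place. For part~(ii) your approach is more conceptual than the paper's: you use that the anchor $\sharp_{\pi^!B}\colon\pi^!B\to TM$ is a morphism of extensions over $\sharp\colon B\to TQ$ restricting to the identity on $\ker d\pi$, and deduce $\nabla_b=\nabla^{GM}_{\sharp b}$ by naturality of the generalised connection under such morphisms. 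The paper instead argues locally over a trivialising chart $\pi^{-1}(U)\simeq U\times F$: it observes that the classes $[\mathrm{pr}^*\theta]$ for $\theta\in\Omega^q(F)$ closed are flat both for the Gauss--Manin connection (by definition) and for the generalised $B$-connection (because the extension $(\sharp_{\pi^!B})^*\mathrm{pr}^*\theta\in\Omega^q(\pi^!B)$ is $d_{\pi^!B}$-closed), and since such classes span $\Gamma(\mathcal{H}^q(\ker d\pi)|_U)$ the Leibniz rule forces the two connections to agree. In effect the paper applies your naturality only to Gauss--Manin flat sections and finishes by a spanning argument, whereas you package it as a single functoriality statement; your version is cleaner, the paper's makes the explicit closed extension visible.
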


However, interpreting the generalised representation as a classical one is not always possible. A class of such examples are given by submersions by Lie algebroids, discussed in Section \ref{sec:submersions_by_LAs}. Submersions by Lie algebroids, introduced and studied in \cite{Frej19}, consist of a Lie algebroid $A\Ato M$ and a surjective submersion $\pi:M\to Q$, such that $d\pi\circ \sharp:A\to TQ$ is pointwise surjective. A spectral sequence for the cohomology of a submersion by Lie algebroids was developed in \cite{Frej19}.
We compare this to the Serre spectral sequence of the Lie subalgebroid $L:=\ker (d\pi\circ \sharp)$. Moreover, we prove the following result.

\begin{theorem}\label{introduction:theorem:submersion_by_LA}
    Let $(A\Ato M,\pi:M\to Q)$ be a locally trivial submersion by Lie algebroids and $V\to M$ a representation of $A$. The spaces on the second page of the Serre spectral sequence of the Lie subalgebroid $L=\ker d\pi\circ \sharp$ are isomorphic to the sheaf cohomology of $Q$,
    \[ E_2^{p,q}\simeq\mathrm{H}^p(Q,\mathcal{S}^q_{L,V}), \]
    where $\mathcal{S}^q_{L,V}$ is a locally constant sheaf, and it sends an open subset $U\subset Q$ to 
    \[ \mathcal{S}^q_{L,V}(U)=\big\{ c\in \mathrm{H}^q(L|_{\pi^{-1}(U)},V|_{\pi^{-1}(U)})\, |\,  \nabla c=0  \big\}, \]
    where $\nabla$ is a generalised representation of $TQ$ on the $C^{\infty}(Q)$-module $\mathrm{H}^{q}(L,V)$.
\end{theorem}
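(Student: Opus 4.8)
The plan is to reduce the statement to the second-page description already available for extensions, and then to identify the resulting Lie algebroid cohomology of $TQ$ with sheaf cohomology via an abstract de Rham argument. First I would observe that a submersion by Lie algebroids is precisely an instance of the extension setting \eqref{introduction:eq:LA_extensions}: the composite $d\pi\circ\sharp\colon A\to TQ$ is a fibrewise surjective Lie algebroid morphism covering the surjective submersion $\pi$, with kernel $L$, so that
\[ 0\to L\to A\to TQ\to 0 \]
is a short exact sequence with quotient $B=TQ\Ato Q$. Applying the theorem on the second page of the Serre spectral sequence of an extension then gives, with no further work,
\[ E_2^{p,q}\simeq \mathrm{H}^p\big(TQ,\mathrm{H}^q(L,V)\big), \]
where $\mathrm{H}^q(L,V)$ carries the generalised $TQ$-representation $\nabla$. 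It therefore remains to compute the Lie algebroid cohomology of the tangent algebroid $TQ$ with coefficients in this generalised representation.

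Since the cohomology of $TQ$ with coefficients in a flat module is computed by the twisted de Rham complex $\big(\Omega^\bullet(Q)\otimes_{C^\infty(Q)}\mathrm{H}^q(L,V),\, d_\nabla\big)$, the natural route is the abstract de Rham theorem, carried out sheaf-theoretically. For $U\subseteq Q$ open I set $\mathcal{M}(U)=\mathrm{H}^q(L|_{\pi^{-1}(U)},V|_{\pi^{-1}(U)})$ and consider the complex of sheaves $U\mapsto \big(\Omega^\bullet(U)\otimes \mathcal{M}(U),\, d_\nabla\big)$ (sheafifying where needed). Each term is a module over the sheaf of smooth functions, hence fine and therefore acyclic for sheaf cohomology, while the global sections of the complex recover $\mathrm{H}^\bullet(TQ,\mathrm{H}^q(L,V))$. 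By definition $\mathcal{S}^q_{L,V}$ is the degree-zero flat-section sheaf $\ker\big(d_\nabla\colon \mathcal{M}\to \Omega^1\otimes\mathcal{M}\big)$, so to conclude it suffices to prove a Poincaré lemma: that the augmented complex $0\to \mathcal{S}^q_{L,V}\to \mathcal{M}\xrightarrow{d_\nabla}\Omega^1\otimes\mathcal{M}\to\cdots$ is exact. This exhibits the twisted de Rham complex as a fine resolution of $\mathcal{S}^q_{L,V}$ and yields $\mathrm{H}^p(TQ,\mathrm{H}^q(L,V))\simeq \mathrm{H}^p(Q,\mathcal{S}^q_{L,V})$.

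The local Poincaré lemma is where I expect the real work, and it is exactly where the local triviality hypothesis enters. Over a contractible chart $U$ on which the submersion by Lie algebroids trivialises, I would use the product structure to show that $\mathcal{M}(U)$ is, as a flat $TU$-module, isomorphic to $C^\infty(U)\otimes_{\mathbb{R}} \mathrm{H}^q(L_{x_0},V_{x_0})$ with the trivial connection, where $L_{x_0}$ denotes the restriction over a single fibre; this simultaneously shows that $\mathcal{S}^q_{L,V}$ is locally constant with stalk $\mathrm{H}^q(L_{x_0},V_{x_0})$. Granting such a trivialisation, the $d_\nabla$-Poincaré lemma reduces to the ordinary Poincaré lemma with coefficients in the vector space $\mathrm{H}^q(L_{x_0},V_{x_0})$, which remains valid even when this space is infinite dimensional.

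The main obstacle is precisely this local trivialisation of the generalised representation. Because no finite dimensionality of the fibre cohomology is assumed (in contrast to Theorem \ref{introduction:theorem:fibre_bundles}), one cannot invoke a vector-bundle Gauss–Manin connection and must instead produce the flat trivialisation directly from the local model of the submersion by Lie algebroids. I would obtain it by constructing, over $U$, a cochain homotopy equivalence between the complex computing $\mathrm{H}^q(L|_{\pi^{-1}(U)},V)$ and the complex over a single fibre, depending smoothly on $U$ and intertwining the $TU$-action $\nabla$ with the trivial one. Controlling the smooth dependence on the base parameter and the exact compatibility of these homotopies with $\nabla$, without any finiteness assumption, is the technical heart of the argument.
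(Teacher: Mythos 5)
Your overall architecture coincides with the paper's: reduce to the extension result with $B=TQ$ to get $E_2^{p,q}\simeq\mathrm{H}^p(TQ,\mathrm{H}^q(L,V))$, then exhibit the twisted de Rham complex $\Omega^\bullet_Q(\cdot,\mathrm{H}^q(L,V))$ as a fine resolution of the flat-section sheaf $\mathcal{S}^q_{L,V}$ and invoke the abstract de Rham theorem. This is exactly what the paper does (Theorem \ref{theorem:first_page_with_differential}, Lemma \ref{lemma:subm_by_LA:sheaf_resolution}, and \cite[Theorem 5.25]{War83}), and like you it isolates the local exactness of the resolution as the technical heart, starting from the simultaneous local trivialisation $(A|_{\pi^{-1}(U)},V|_{\pi^{-1}(U)})\simeq(TU\times L_x,\mathrm{pr}_2^!V_x)$ provided by local triviality.

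However, your proposed execution of the local Poincar\'e lemma has a genuine gap. You want to show that over a trivialising chart the generalised representation $\mathcal{M}(U)=\mathrm{H}^q(L|_{\pi^{-1}(U)},V|_{\pi^{-1}(U)})$ is flatly isomorphic to $C^\infty(U)\otimes_{\mathbb{R}}\mathrm{H}^q(L_{x_0},V_{x_0})$, obtained from a cochain homotopy equivalence between the complex over $\pi^{-1}(U)$ and the complex over a single fibre. No such homotopy equivalence exists in general: already for $L=0$ (e.g.\ the horizontal foliation on $M=Q\times F$) and $q=0$ one has $\mathrm{H}^0(L|_{\pi^{-1}(U)},V)=\Gamma(V|_{\pi^{-1}(U)})$, which is a completed rather than algebraic tensor product of $C^\infty(U)$ with $\Gamma(V_{x_0})$, and the restriction to a single fibre is far from a quasi-isomorphism. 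In positive degrees there is the further obstruction that a smooth family of exact cocycles need not admit a smooth family of primitives when $\mathrm{H}^q(L_{x_0},V_{x_0})$ is infinite dimensional or non-Hausdorff, so even identifying $\mathcal{M}(U)$ with ``smooth maps into $\mathrm{H}^q(L_{x_0},V_{x_0})$'' is unjustified. The paper avoids this entirely by never trivialising the cohomology module: it works in the bicomplex $\Omega^p(U,\Omega^q(\mathrm{pr}_2^!L_x,\mathrm{pr}_2^!V_x))$, where $d_A=d_{L_x}+(-1)^q d$, and applies the standard Poincar\'e homotopy operator $h$ in the base directions to an actual representative $\eta$; the only inputs are that $h$ commutes with $d_{L_x}$ and that $d_\nabla[\eta]=0$ yields a smooth family $\theta$ with $d\eta=d_{L_x}\theta$, which holds by the very definition of $\mathcal{M}(U)$ as the cohomology of the smooth-family complex. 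You would need to replace your trivialisation step by such a representative-level argument. A smaller omission: the theorem asserts that the presheaf $U\mapsto\mathcal{S}^q_{L,V}(U)$ as written is already a sheaf (proved in the paper using partitions of unity on $Q$ and the $C^\infty(Q)$-linearity of $d_L$); ``sheafifying where needed'' does not deliver that part of the statement.
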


In Section \ref{sec:coupling_poisson} and \ref{sec:normal_forms_presympl_leaves} we apply Theorem \ref{introduction:theorem:submersion_by_LA} to horizontally nondegenerate Dirac structures (see \cite{Wade08}, and \cite{Vor01,Vor05} in the Poisson case). 
We obtain descriptions of the cohomology of such Dirac structures in low degrees; in particular, we reproduce and generalise results for horizontally nondegenerate Poisson structures obtained in \cite{VBVo18}.

Finally, in Section \ref{sec:abelian_extensions} we consider extensions \eqref{introduction:eq:LA_extensions} for which the Lie subalgebroid $L$ is abelian. Then we can describe the differential on $E_2$ by means of the extension class \[[\gamma]\in \mathrm{H}^2(B,L).\] Note that, for $\pi\neq \mathrm{id}_M$, we again make use of the notion of a generalised representation. Namely, we obtain the following generalisation of \cite[Theorem 7.4.11]{Mack05} (for $B=TQ$), \cite[Theorem 8]{HoSe53} (for Lie algebras), and 
\cite[Corollary 4.3]{MaZe21} (for the anchor of a regular Lie algebroid).

 \begin{theorem}
 	Let $L\Ato M$ in \eqref{introduction:eq:LA_extensions} be abelian, and let $V$ be a representation of $A$ on which $L$ acts trivially. The second page of the Serre spectral sequence can be identified as
 	\[ 
 	\big(d_2: E_2^{p,q}\to E_2^{p+2,q-1}\big) \simeq \big( (-1)^p\mathrm{i}_{[\gamma]}: \mathrm{H}^{p}(B,\Omega^q(L,V))\to \mathrm{H}^{p+2}(B,\Omega^{q-1}(L,V))\big). \]
 \end{theorem}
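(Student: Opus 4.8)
The plan is to run the computation directly on the filtered complex $(\Omega^\bullet(A,V),d_A)$, using a splitting of the extension to make the bigrading explicit, and then to identify the second differential with the $(2,-1)$-component of $d_A$. Choose a vector bundle splitting $\sigma\colon B\to A$ of \eqref{introduction:eq:LA_extensions}; since $L=\ker(A\to B)$ is an ideal, this gives an isomorphism $A\cong L\oplus B$ and hence a bigrading
\[ \Omega^n(A,V)\cong\bigoplus_{p+q=n}\Omega^p(B,\wedge^qL^*\otimes V), \]
under which $\mathcal{F}^p\Omega^\bullet(A,V)$ is the sum of the pieces of horizontal degree $\geq p$, so that $E_0^{p,q}\cong\Omega^p(B,\Omega^q(L,V))$. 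Decomposing $d_A$ by this bigrading gives $d_A=d_0+d_1+d_2$, where $d_r$ raises the horizontal degree by $r$ and lowers the vertical degree by $r-1$. The only brackets occurring are $[\ell_1,\ell_2]=0$, $[\sigma b,\ell]=\nabla_b\ell\in\Gamma(L)$ (well defined because $L$ is an abelian ideal), and $[\sigma b_1,\sigma b_2]=\sigma[b_1,b_2]+\gamma(b_1,b_2)$ with $\gamma(b_1,b_2)\in\Gamma(L)$ the extension cocycle; tracking the horizontal/vertical degrees of each term shows that exactly these three pieces arise and that the decomposition terminates at $d_2$.

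First I would record that the abelian hypothesis forces $d_0=d_L=0$: with trivial bracket and trivial $L$-action on $V$, the Lie algebroid differential of $L$ vanishes, so $\mathrm{H}^q(L,V)=\Omega^q(L,V)$ and $E_1=E_0$. Consequently $d_1$ is the Lie algebroid differential of $B$ with coefficients in the $B$-representation $\Omega^\bullet(L,V)$ (the dual Bott connection induced by $\nabla$), and taking $d_1$-cohomology recovers $E_2^{p,q}\cong\mathrm{H}^p(B,\Omega^q(L,V))$, in agreement with the earlier description of the second page. The crucial simplification is that, because $d_0=0$, the identity $d_A^2=0$ splits by horizontal degree into $d_1^2=0$, $\;d_1d_2+d_2d_1=0$ and $d_2^2=0$; hence $d_2$ descends to $d_1$-cohomology and the induced map on $E_2$ is literally the component $d_2$ of $d_A$, with no zig-zag lift required.

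It then remains to identify this component with contraction by $\gamma$. Expanding $d_A\omega$ for $\omega$ of bidegree $(p,q)$ and collecting the terms of horizontal degree $p+2$, the only contributions come from the vertical parts $\gamma(b_i,b_j)$ of the brackets $[\sigma b_i,\sigma b_j]$, which feed $\gamma(b_i,b_j)\in\Gamma(L)$ into one of the $L$-slots of $\omega$. This is precisely the operator that wedges $\gamma$ into the $B$-direction and contracts its $L$-value against $\wedge^qL^*$, i.e. $\mathrm{i}_\gamma\colon\Omega^p(B,\wedge^qL^*\otimes V)\to\Omega^{p+2}(B,\wedge^{q-1}L^*\otimes V)$, up to a sign $(-1)^p$ coming from commuting the degree-two factor past the degree-$p$ horizontal part. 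Since the coefficient contraction $L\otimes\wedge^qL^*\to\wedge^{q-1}L^*$ is $B$-equivariant and $\gamma$ is a $d_1$-cocycle representing $[\gamma]\in\mathrm{H}^2(B,L)$, passing to cohomology turns $\mathrm{i}_\gamma$ into cup product with $[\gamma]$ followed by this contraction, namely $\mathrm{i}_{[\gamma]}$; this also shows the result depends only on $[\gamma]$ and not on $\sigma$, consistently with the intrinsic, splitting-independent definition of $d_2$.

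The step I expect to be the main obstacle is the sign-accurate bookkeeping in the last paragraph: pinning down the factor $(-1)^p$ requires careful tracking of Koszul signs when reordering horizontal and vertical arguments and when commuting the degree-two cocycle past a horizontal $p$-form, and one must check that the cochain-level formula for $d_2$ matches the chosen conventions for the cup product and for $\mathrm{i}_{[\gamma]}$. This computation specialises to, and is guided by, the classical Hochschild–Serre calculation \cite[Theorem 8]{HoSe53} and its algebroid analogue \cite[Theorem 7.4.11]{Mack05}, which I would use as consistency checks in the cases $B=TQ$ and $M=Q$.
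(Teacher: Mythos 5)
Your proposal is correct and is precisely the ``standard'' argument that the paper explicitly omits (it only points to the proof of \cite[Proposition 4.2]{MaZe21}): split the extension, observe that $d^{(0,1)}=0$ because $L$ is abelian and acts trivially on $V$, so that $d_A=d^{(1,0)}+d^{(2,-1)}$ and the page-two differential is literally the $(2,-1)$-component, which the Koszul formula identifies with $(-1)^p\mathrm{i}_\gamma$ via the terms $\omega(\gamma(b_i,b_j),\ldots)$ coming from $[\sigma b_i,\sigma b_j]$. The one point to phrase more carefully is that, for $\pi\neq\mathrm{id}_M$, the splitting is a map $\sigma:M\times_Q B\to A$ and the isomorphism is $A\simeq L\oplus\pi^*B$, so the bigraded pieces must be read as $\Omega^p(B,\Omega^q(L,V))$ with $\Omega^q(L,V)$ a generalised $B$-representation ($C^\infty(Q)$-module) as in Theorem \ref{theorem:first_page_with_differential}, rather than as forms valued in a vector bundle $\wedge^qL^*\otimes V$ over $Q$; with that adjustment your computation goes through unchanged.
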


\section{Basics on Lie algebroids}\label{sec:basics}
To fix notations and conventions, we recall in this section the basic notions from the theory of Lie algebroids. For more on the general theory, see e.g.\ \cite{Mack05,CrFe11,Mein17,CFM21}.

A \textbf{Lie algebroid}, denoted schematically by $A\Ato M$, consists of
\begin{itemize}
\item a smooth vector bundle over a smooth manifold, $A\to M$,
\item a vector bundle map covering the identity map
$\sharp: A\to TM$, called the anchor map,
\item a Lie algebra structure on $\Gamma(A)$ (:= the space of smooth sections of $A$),
\[[\cdot,\cdot]:\Gamma(A)\times\Gamma(A)\to \Gamma(A),\]
\end{itemize}
which satisfies the following compatibility condition, called the Leibniz identity,
\[[\alpha, f\beta]=f[\alpha,\beta]+(\mathscr{L}_{\sharp \alpha}f) \beta,\]
for all $\alpha,\beta\in \Gamma(A)$ and $f\in C^{\infty}(M)$. Here $\mathscr{L}_Xf$ denotes the Lie derivative of a function $f$ along the vector field $X$.

A \textbf{representation} of a Lie algebroid $A\Ato M$ on a vector bundle $V\to M$ is defined as a flat $A$-connection on $V$, i.e.\ a bilinear operator, 
\[\nabla:\Gamma(A)\times \Gamma(V)\to \Gamma(V),\quad (\alpha,\eta)\mapsto \nabla_\alpha\eta,\]
which is $C^{\infty}(M)$-linear in the first entry and satisfies the Leibniz identity in the second entry
\begin{equation}\label{eq:rep:1}
 \nabla_{f\alpha} \eta=f\, \nabla_\alpha \eta,\qquad \nabla_\alpha (f\eta)=f\, \nabla_\alpha \eta+(\mathscr{L}_{\sharp \alpha}f)\eta,
 \end{equation}
 and has trivial curvature
\begin{equation}\label{eq:rep:2}
 \nabla_\alpha \nabla_\beta \eta-
 \nabla_\beta \nabla_\alpha\eta=\nabla_{[\alpha,\beta]}\eta,
 \end{equation}
where $f\in C^{\infty}(M)$, $\alpha,\beta\in \Gamma(A)$ and $\eta\in \Gamma(V)$.

The notion of a Lie algebroid unifies several concepts in differential geometry. A Lie algebra $\gg$ is the same as a Lie algebroid over a point, $A\Ato \{*\}$, and the corresponding notions of representations coincide; the tangent bundle $TM$ of a manifold $M$ is a Lie algebroid for the usual bracket of vector fields, and a representation of $TM\Ato M$ is the same as a flat vector bundle $V\to M$; by the Frobenius Theorem, a foliation on a manifold $M$ is the same as a Lie subalgebroid $D\Ato M$ (in the sense below) of $TM\Ato M$; any Poisson bracket $\{\cdot,\cdot\}$ on $C^{\infty}(M)$ yields a Lie algebroid structure on $T^*M\Ato M$; an infinitesimal action of a Lie algebra $\gg$ on a manifold $M$ yields a so-called action Lie algebroid, $\gg\ltimes M\Ato M$.

\vspace{0.1cm}

We recall the construction of \textbf{Lie algebroid cohomology}. To any Lie algebroid $A\Ato M$, one associates the differential graded commutative algebra of de Rham forms on $A$,
\begin{equation}\label{eq:dgca}
\Big(\Omega^{\bullet}(A):=\bigoplus_{ k=0}^{r}\Omega^{k}(A),\wedge, d_A\Big), \quad \Omega^{k}(A):=\Gamma(\wedge^kA^*),
\end{equation}
where $r=\mathrm{rank}(A)$ and $d_A$ is defined similar to the usual de Rham exterior derivative 
\begin{align}\label{eq:Koszul}
(d_A\omega)(\alpha_0,\ldots,\alpha_k)=&\sum_{i}(-1)^i\mathscr{L}_{\sharp\alpha_i} \omega(\alpha_0,\ldots,\hat{\alpha}_i,\ldots, \alpha_k)\\
+&\sum_{i<j}(-1)^{i+j}\omega([\alpha_i,\alpha_j],\alpha_0,\ldots,\hat{\alpha}_i,\ldots,\hat{\alpha}_j,\ldots,\alpha_k),\nonumber
\end{align}
where we regard elements in $\Omega^{\bullet}(A)$ as $C^{\infty}(M)$-multilinear forms on sections $\alpha_i\in \Gamma(A)$ with values in $C^{\infty}(M)$. By passing to cohomology one obtains the Lie algebroid cohomology of $A$, denoted by $\mathrm{H}^{\bullet}(A)$. More generally, given a representation $V\to M$ of $A$ we have an induced differential on $V$-valued forms on $A$, which we continue denoting by $d_A$
\begin{equation}\label{eq:V-val-forms}
\big(\Omega^{\bullet}(A,V):=\Gamma(\wedge^{\bullet}A^*\otimes V),d_A\big),
\end{equation}
defined by formula \eqref{eq:Koszul} with the operator $\nabla_{\alpha_i}$ instead of $\mathscr{L}_{\sharp\alpha_i}$ (the latter corresponding to the canonical representation on $\mathbb{R}\times M$). Passing to cohomology, one obtains the Lie algebroid cohomology of $A\Ato M$ with coefficients in the representation $V\to M$, denoted by $\mathrm{H}^{\bullet}(A,V)$.

Lie algebroid cohomology recovers Chevalley-Eilenberg cohomology of Lie algebras, de Rham cohomology, foliated cohomology, Poisson cohomology, etc.

It is convenient to introduce the following notion. A \textbf{generalised representation} of a Lie algebroid $A\Ato M$ is a $C^{\infty}(M)$-module $\mathfrak{M}$, endowed with a bilinear map 
\[\nabla:\Gamma(A)\times \mathfrak{M}\to \mathfrak{M}\]
satisfying the axioms \eqref{eq:rep:1} and \eqref{eq:rep:2}. Usual representations correspond to the case when $\mathfrak{M}$ is a finitely generated projective $C^{\infty}(M)$-module.

To introduce $\mathfrak{M}$-valued cohomology, define the set of $p$-forms on $A$ with values in $\mathfrak{M}$ as
\begin{equation}
\label{eq:forms:in:module}
\Omega^p(A,\mathfrak{M}):=\big\{
\omega:
\Gamma\underbrace{(A)\times \ldots \times \Gamma(A)}_{p-\textrm{times}}\to \mathfrak{M}\, |\, \textrm{alternating and $C^{\infty}(M)$-multilinear}\big\}.
\end{equation}
If follows from Lemma \ref{lemma:tensor_vb} \eqref{App_2} that we have a canonical isomorphism of $C^{\infty}(M)$-modules
\begin{equation}
\label{eq:iso:forms:tensor}
\Omega^p(A,\mathfrak{M})\simeq \Omega^p(A)\otimes_{C^{\infty}(M)} \mathfrak{M}.
\end{equation}
The formula \eqref{eq:Koszul} generalises to this setting, and yields a differential 
\[d_{A}:\Omega^{\bullet}(A,\mathfrak{M})\to \Omega^{\bullet+1}(A,\mathfrak{M}).\]
The associated cohomology groups will be denoted by 
\[\mathrm{H}^{\bullet}(A,\mathfrak{M}).\]
For a classical representation $V$, the two equivalent notations $\mathrm{H}^{\bullet}(A,\Gamma(V))=\mathrm{H}^{\bullet}(A,V)$ will, hopefully, not lead to confusions.

General \textbf{Lie algebroid morphisms} are introduced efficiently using the Lie algebroid complex. Let $A\Ato M$ and $L\Ato N$ be two Lie algebroids. A vector bundle map $\Phi:L\to A$ covering $\phi:N\to M$ induces a pullback homomorphism between the graded commutative algebras of de Rham forms
\[\Phi^*:(\Omega^{\bullet}(A),\wedge) \to (\Omega^{\bullet}(L),\wedge),\quad 
(\Phi^*\omega)_{p}(\alpha_1,\ldots,\alpha_k):=\omega_{\phi(p)}(\Phi(\alpha_1),\ldots, \Phi(\alpha_k)),
\]
for all $p\in N$ and $\alpha_i\in L_{p}$. By definition, $\Phi$ is a Lie algebroid morphism if and only if $\Phi^*$ is a cochain map, i.e.\ for all $\omega\in \Omega^{\bullet}(A)$, we have 
\[\Phi^*(d_A\omega)=d_L(\Phi^*\omega).\]

For example, for Lie algebras $\gg$ and $\hh$, this notion recovers Lie algebra maps $\Phi:\gg\to \hh$; Lie algebroid morphisms $\Phi:TM\to TN$ are determined by their base map, via $\Phi=d\phi$; Lie algebroid morphisms $\Phi:TM\to \gg$ are the same as flat, principal connections on the trivial principal $G$-bundle $G\times M$, where $G$ is a connected Lie group integrating $\gg$.

Representations can be pulled back along a Lie algebroid morphism $\Phi:L\to A$. If $V\to M$ is a representation of $A\Ato M$ then 
 the pullback representation of $L\Ato N$ is on the pullback vector bundle $\phi^*V\to N$ and is uniquely determined by the condition that the pullback map is a cochain map
 \begin{equation*}
     \Phi^*:\big(\Omega^{\bullet}(A,V),d_{A}\big)\to  
 \big(\Omega^{\bullet}(L,\phi^*V),d_{L}\big).
 \end{equation*}

A \textbf{Lie subalgebroid} of $A\Ato M$ is a vector subbundle $L\hookrightarrow A$ over an injective immersion $N\hookrightarrow M$ endowed with a Lie algebroid structure $L\Ato N$ (necessarily unique) for which the inclusion $i: L\hookrightarrow A$ is a Lie algebroid morphism. 

The \textbf{pullback} of a Lie algebroid $A\Ato M$ along a smooth map $f:N\to M$ is given by
\begin{equation}\label{eq:pullback_def}
    f^!A:=\{(v,a)\in T_xN\oplus A_{f(x)}\, |\, df_x(v)=\sharp a\in T_{f(x)}M, \ x\in N\},
\end{equation}
which carries a canonical Lie algebroid structure over $N$, provided it has constant rank. If this is the case, its anchor is the first projection $\sharp=\mathrm{pr}_1: f^!A\to TN$ and the second projection is a Lie algebroid map $f^{!}:=\mathrm{pr}_2: f^!A\to A$ covering $f$.

\section{The Serre spectral sequence of a Lie subalgebroid}

In this section we introduce the Serre spectral sequence of a Lie subalgebroid. After discussing the general construction and convergence properties of the spectral sequence, we describe the zeroth page and the spaces on the first page first for Lie subalgebroids over the whole base manifold and then for Lie subalgebroids over closed and embedded submanifolds.

\subsection{The spectral sequence}\label{sec:ss_convergence}

We will recall the basic construction of a spectral sequence associated to a filtered complex (see for example \cite[Chapter 5.4]{Weib94}, \cite[Chapter 1.1]{McC00} or \cite[Chapter 7.4]{Mack05}). 

Consider a differential graded \textbf{filtration} on the cochain complex $(\Omega^\bullet(A,V), d_A)$, 
\[\ldots \subset \mathcal{F}^{p}\Omega^{\bullet}(A,V)\subset \mathcal{F}^{p-1}\Omega^{\bullet}(A,V)\subset \ldots \subset \mathcal{F}^{0}\Omega^{\bullet}(A,V)= \Omega^{\bullet}(A,V).\]
The \textbf{spectral sequence} associated to this filtration is defined as follows. For $r\geq 0$, let 
\[Z_{r}^{p,q}:=\{\omega\in\mathcal{F}^{p}\Omega^{p+q}(A,V)\, |\, d_A\omega\in \mathcal{F}^{p+r}\Omega^{p+q+1}(A,V)\},\]
where, for $p\leq  
 0$, $\mathcal{F}^p\Omega^{\bullet}(A,V):=\Omega^{\bullet}(A,V)$. Since $Z_{r-1}^{p+1,q-1}\subset Z_{r}^{p,q}$, we can define the quotients
\[E^{p,q}_r:=\frac{Z_{r}^{p,q}}{Z_{r-1}^{p+1,q-1}+d_A\, Z_{r-1}^{p-r+1,q+r-2}}
\]
 where $Z_{-1}^{p,q}:=\mathcal{F}^{p}\Omega^{p+q}(A,V)$. Since $d_A\, Z_{r}^{p,q}\subset Z_{r}^{p+r,q-r+1}$, we have induced differentials
\[d_r:E_r^{p,q}\to E_r^{p+r,q-r+1}.\]
The $r$-th page of the spectral sequence is the total complex
\[\Big(E_r^{\bullet}=\bigoplus_{p+q=\bullet}E_r^{p,q},d_r\Big).\]
The $r+1$-th page is canonically isomorphic to the cohomology of the $r$-th page: 
\[E_{r+1}^{p,q}\simeq \mathrm{H}^{p,q}(E_r^{\bullet},d_r).\]

A filtration as above can be constructed from a \textbf{differential graded ideal} 
\[\mathcal{I}^{\bullet}\subset  \Omega^{\bullet}(A),\]
by using the powers of $\mathcal{I}$, as follows
\[\mathcal{F}^{p}_{\mathcal{I}}\Omega^{\bullet}(A,V):=\big(\wedge^p\mathcal{I}\wedge\Omega(A,V)\big)^{\bullet}\subset \Omega^{\bullet}(A,V).\]

A Lie subalgebroid $i: L\hookrightarrow A$ yields a differential graded ideal
\[\mathcal{I}^{\bullet}_{L}:=\big(\ker i^*:\Omega^{\bullet}(A)\to \Omega^{\bullet}(L)\big)\subset \Omega^{\bullet}(A),\]
and so, a filtration
\[\mathcal{F}^{p}_{L}\Omega^{\bullet}(A,V):=\mathcal{F}^{p}_{\mathcal{I}_{L}}\Omega^{\bullet}(A,V)\subset \Omega^{\bullet}(A,V).\]
The corresponding spectral sequence will be called \textbf{Serre spectral sequence} associated to the Lie subalgebroid $i: L\hookrightarrow A$. If $N\subset M$ is the base of $L$, note that $\mathcal{I}_L^0=\mathcal{I}_N$ is given by the vanishing ideal of $N$.

\begin{remark}\rm
    Likewise, one can use an arbitrary Lie algebroid morphism $\Phi: L\to A$ to obtain a differential graded ideal and so a filtration
        \[\mathcal{I}^{\bullet}_{\Phi}:=(\ker \Phi^*)^{\bullet}\subset \Omega^{\bullet}(A), \qquad \mathcal{F}^{p}_{\mathcal{I}_{\Phi}}\Omega^{\bullet}(A,V)\subset \Omega^{\bullet}(A,V),\]
    giving rise to a spectral sequence. While Theorem \ref{theorem:convergence} can still be formulated and proven in this more general setting, in this paper we exclusively discuss the Serre spectral sequence arising from Lie subalgebroids.
\end{remark}

We have the following result regarding convergence.

\begin{theorem}\label{theorem:convergence}
Let $i:L\hookrightarrow A$ be a Lie subalgebroid of $A\Ato M$ with base $N\subset M$ and $V\neq 0_M$ a representation of $A$. The following are equivalent:
\begin{enumerate}[(a)]
\item $N$ is dense in $M$;
\item $\mathcal{I}_{L}^0=0$;
\item The filtration $\big\{\mathcal{F}^{p}_{L}\Omega^{\bullet}(A,V)\big\}_{p\geq 0}$ is finite;
\item The filtration $\big\{\mathcal{F}^{p}_{L}\Omega^{\bullet}(A,V)\big\}_{p\geq 0}$ is Hausdorff.
\end{enumerate}
If either condition holds, then 
\[\mathcal{F}^{p}_{L}\Omega^{\bullet}(A,V)=0,\quad \textrm{for} \quad p>\bullet,\]
and in particular the Serre spectral sequence stabilises at the page $r+1$, where $r=\mathrm{rank}(A)$, and therefore it converges
\[\mathrm{H}^{\bullet}(A,V)\simeq \bigoplus_{p+q=\bullet}E^{p,q}_{r+1}.\]
\end{theorem}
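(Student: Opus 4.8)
The plan is to prove the cycle of implications (a) $\Rightarrow$ (b) $\Rightarrow$ (c) $\Rightarrow$ (d) $\Rightarrow$ (a), and then to read off the convergence statement from the finiteness obtained in (c). For (a) $\Leftrightarrow$ (b) I would unwind the identification $\mathcal{I}_L^0=\mathcal{I}_N$: a degree-zero element of $\mathcal{I}_L$ is exactly a smooth function on $M$ restricting to $0$ on $N$. If $N$ is dense, continuity forces such a function to vanish on $\overline{N}=M$, so $\mathcal{I}_L^0=0$; conversely, if $\overline{N}\neq M$, any nonzero bump function supported in the nonempty open set $M\setminus\overline{N}$ is a nonzero element of $\mathcal{I}_L^0$.

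The implication (b) $\Rightarrow$ (c), which simultaneously yields the displayed vanishing $\mathcal{F}^p_L\Omega^\bullet(A,V)=0$ for $p>\bullet$, is a grading argument. Since $i^*$ is degree-preserving, $\mathcal{I}_L$ is a homogeneous ideal, and the hypothesis $\mathcal{I}_L^0=0$ says every element of $\mathcal{I}_L$ has form-degree at least $1$. Hence $\wedge^p\mathcal{I}_L$ is concentrated in degrees $\geq p$, so $\mathcal{F}^p_L\Omega^k(A,V)=0$ whenever $k<p$. As $\Omega^k(A,V)=0$ for $k>r=\mathrm{rank}(A)$, this forces $\mathcal{F}^{r+1}_L\Omega^\bullet(A,V)=0$, so the filtration is finite. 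The implication (c) $\Rightarrow$ (d) is immediate, since a finite filtration is automatically Hausdorff.

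The main point is (d) $\Rightarrow$ (a), for which I would argue the contrapositive and produce a nonzero element of $\bigcap_p\mathcal{F}^p_L\Omega^\bullet(A,V)$ when $\overline{N}\neq M$. In degree zero the filtration reads $\mathcal{F}^p_L\Omega^0(A,V)=\mathcal{I}_N^p\,\Gamma(V)$, so it suffices to exhibit one section lying in $\mathcal{I}_N^p\Gamma(V)$ for all $p$ at once. Fixing $x_0\in M\setminus\overline{N}$, I would choose a bump function $\rho$ with $\rho(x_0)=1$ supported in $M\setminus\overline{N}$, together with a second bump $\chi$ that is $\equiv 1$ on $\mathrm{supp}\,\rho$ and supported in $M\setminus\overline{N}$; the identity $\rho=\rho\,\chi^{p-1}$ then exhibits $\rho\in\mathcal{I}_N^p$ for every $p$ simultaneously. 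Using $V\neq 0_M$ to select $\eta\in\Gamma(V)$ with $\eta(x_0)\neq 0$, the section $\rho\,\eta$ is a nonzero element of every $\mathcal{F}^p_L\Omega^0(A,V)$, so the filtration is not Hausdorff. I expect this to be the main obstacle: it is the only place where the hypothesis $V\neq 0_M$ is genuinely used (one must guarantee a nonvanishing value at a point outside $\overline{N}$), and it requires the simultaneous-membership trick $\rho=\rho\,\chi^{p-1}$ rather than the naive $\rho^p$, which would give a different element for each $p$.

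Finally, assuming any of the equivalent conditions, the filtration is finite, so the standard convergence theorem for bounded filtrations applies: the induced filtration on $\mathrm{H}^\bullet(A,V)$ is finite and $E^{p,q}_\infty\simeq \mathcal{F}^p\mathrm{H}^{p+q}(A,V)/\mathcal{F}^{p+1}\mathrm{H}^{p+q}(A,V)$. For stabilisation, the nonzero columns lie in $0\leq p\leq r$ by the vanishing above, while the differential $d_s$ shifts the filtration degree by $s$; hence for every $s\geq r+1$ the map $d_s$ has either vanishing source or vanishing target, giving $E_{r+1}=E_\infty$. Summing over $p+q=\bullet$ then recovers $\mathrm{H}^\bullet(A,V)\simeq\bigoplus_{p+q=\bullet}E^{p,q}_{r+1}$.
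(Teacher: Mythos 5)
Your proof is correct and follows essentially the same route as the paper: the same cycle $(a)\Rightarrow(b)\Rightarrow(c)\Rightarrow(d)\Rightarrow(a)$, the same degree-counting argument for $(b)\Rightarrow(c)$ together with the vanishing $\mathcal{F}^{r+1}_L\Omega(A,V)=0$, and the same contrapositive strategy for $(d)\Rightarrow(a)$. The only difference is in the construction of the witness for non-Hausdorffness: you exhibit a single function lying in every power of $\mathcal{I}_N$ via the identity $\rho=\rho\,\chi^{p-1}$ with a second bump $\chi\equiv 1$ on $\mathrm{supp}\,\rho$, whereas the paper takes a nonnegative bump $f$ supported off $\overline{N}$ admitting smooth roots $f^{1/p}$ for all $p$, so that $f=(f^{1/p})^p\in\mathcal{I}_N^p$; both tricks are valid and serve the same purpose.
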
 

\begin{proof}
Condition $(a)$ is equivalent to the restriction $C^{\infty}(M)\to C^{\infty}(N)$ being injective, which is equivalent to $(b)$. 

If $(b)$ holds, then 
\[\mathcal{F}_{L}^{p}\Omega(A,V)=\wedge^p\mathcal{I}_{L}\wedge \Omega(A,V)\subset \bigoplus_{k=p}^{r}\Omega^{k}(A,V),\qquad \textrm{hence} \qquad 
\mathcal{F}_{L}^{r+1}\Omega(A,V)=0.\]
So we obtain $(c)$ and that the spectral sequence stabilises at the page $r+1$. 

Clearly, $(c)$ implies $(d)$. 

Assume now that $N$ is not dense in $M$. By the standard construction of bump functions, we find a non-zero function $f\in C^{\infty}(M)$, with support in $M\setminus N$, such that $f\geq 0$, and $f^{\frac{1}{p}}\in C^{\infty}(M)$, for all $p\geq 1$. Hence $f\in (\mathcal{I}_{L}^0)^p$, for all $p\geq 0$. Choose $\eta\in \Gamma(V)$ such that $f\eta\neq 0$. It follows that
\[f\eta\in \bigcap_{p\geq 0}\mathcal{F}^p_{L}\Omega^0(A,V),\]
thus the filtration is not Hausdorff. This shows that $(d)$ implies $(a)$.
\end{proof}

Even if $N\subset M$ is not dense, one can make use of the spectral sequence. If $N\subset M$ is a closed and embedded submanifold of positive codimension, by Theorem \ref{theorem:convergence} the induced filtration on $\Omega^\bullet(A)$ is neither Hausdorff nor finite. 
Instead, the spectral sequence converges to formal cohomology along $N$ of forms on $A$ with values in $V$.

\begin{theorem}\label{theorem:convergence_submanifold}
    Let $L\Ato N$ be a Lie subalgebroid of $A\Ato M$ over a closed, embedded submanifold $N\subset M$ and fix a representation $V\to M$ of $A$.
    The Serre spectral sequence associated to $L\Ato N$ converges to the formal cohomology of $A$ around $N$ with values in $V$.
\end{theorem}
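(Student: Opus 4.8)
The plan is to reduce the statement to the complete convergence theorem for filtered complexes, by passing to the completion of $\Omega^\bullet(A,V)$ with respect to the subalgebroid filtration and identifying this completion with the complex computing formal cohomology.

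First I would set up a local model near $N$. Using a tubular neighbourhood and an adapted frame $e_1,\dots,e_l,f_1,\dots,f_m$ of $A$ --- with $e_a|_N$ framing $L$ and the dual covectors $f^j$ annihilating $L$ along $N$ --- one checks that the differential graded ideal decomposes as $\mathcal{I}_L=\mathcal{I}_N\,\Omega^\bullet(A)+\langle f^1,\dots,f^m\rangle$, where $\mathcal{I}_N\subset C^\infty(M)$ is the vanishing ideal of $N$ and $\langle f^1,\dots,f^m\rangle$ is the ideal generated by these $1$-forms. Writing $\Omega^n(A,V)=\bigoplus_{s+u=n}\Omega^{s,u}$ for the local bigrading by $L$-degree $s$ and conormal degree $u$, this yields
\[\mathcal{F}^p_L\Omega^n(A,V)=\bigoplus_{s+u=n}\mathcal{I}_N^{\max(p-u,0)}\cdot\Omega^{s,u}.\]
From this I read off globally that $\bigcap_{p}\mathcal{F}^p_L\Omega^\bullet(A,V)=\mathcal{I}_N^\infty\cdot\Omega^\bullet(A,V)$, the forms whose coefficients are flat along $N$; this is exactly the obstruction to the Hausdorff property recorded in Theorem \ref{theorem:convergence}.

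Next I pass to the completion $\widehat{\Omega}^\bullet(A,V):=\varprojlim_p \Omega^\bullet(A,V)/\mathcal{F}^p_L\Omega^\bullet(A,V)$. Because the conormal degree $u$ is bounded by $m=\mathrm{rank}(A)-\mathrm{rank}(L)$, the $\mathcal{F}_L$-adic and $\mathcal{I}_N$-adic filtrations are cofinal in each fixed form degree, so $\widehat{\Omega}^\bullet(A,V)$ is canonically the complex of $\infty$-jets of $V$-valued forms on $A$ along $N$; since $d_A$ is first order it is continuous and extends, and by definition $\mathrm{H}^\bullet(\widehat{\Omega}^\bullet(A,V),d_A)$ is the formal cohomology of $A$ around $N$ with values in $V$. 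As $\mathcal{F}^\bullet_L$ is a filtration by subcomplexes (the ideal $\mathcal{I}_L$ being differential graded), each truncation $Q_p:=\Omega^\bullet(A,V)/\mathcal{F}^p_L\Omega^\bullet(A,V)$ is a quotient complex, the transition maps $Q_{p+1}\twoheadrightarrow Q_p$ are surjective, and the induced filtration on $\widehat{\Omega}^\bullet(A,V)=\varprojlim_p Q_p$ is exhaustive, Hausdorff and complete. The completion map $\Omega^\bullet(A,V)\to\widehat{\Omega}^\bullet(A,V)$ is filtered and, as always for completions, induces an isomorphism on associated graded complexes; it therefore induces an isomorphism of spectral sequences from the zeroth page onward. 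Thus the Serre spectral sequence of $L$ is identified, page by page, with the spectral sequence of $\widehat{\Omega}^\bullet(A,V)$, and it remains to prove that the latter converges to $\mathrm{H}^\bullet(\widehat{\Omega}^\bullet(A,V))$.

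For this I would invoke the complete convergence theorem (e.g.\ \cite[Theorem 5.5.10]{Weib94}): the filtration on $\widehat{\Omega}$ is exhaustive, Hausdorff and complete, so it suffices to establish strong convergence. The decisive device is that on each $Q_p$ the induced filtration is finite (indeed $\mathcal{F}^{j}_LQ_p=0$ for $j\geq p$), so the spectral sequence of $Q_p$ converges strongly to $\mathrm{H}^\bullet(Q_p)$; moreover $\mathrm{gr}^a Q_p=\mathrm{gr}^a\Omega^\bullet(A,V)$ for $a<p$, so in each fixed bidegree the pages $E_r^{a,b}(Q_p)$ stabilise to $E_r^{a,b}(\widehat{\Omega})$ as $p\to\infty$. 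Combining the strong convergence on each $Q_p$ with the completeness of the filtration on $\widehat{\Omega}$ and the surjectivity, hence Mittag--Leffler property, of the tower $\{Q_p\}$ gives $E_\infty^{a,b}\simeq \mathrm{gr}^a \mathrm{H}^{a+b}(\widehat{\Omega})$ with complete Hausdorff filtration, i.e.\ strong convergence to formal cohomology. I expect this convergence step to be the main obstacle: unlike the wide case of Theorem \ref{theorem:convergence}, the filtration on $\Omega^\bullet(A,V)$ is both non-Hausdorff and unbounded (there are nonzero $E_\infty^{a,b}$ for infinitely many $a$ in each total degree, reflecting the formal power series in the normal directions), so convergence genuinely requires completing the complex and controlling the derived inverse limit; the surjective tower $\{Q_p\}$ is precisely what forces the relevant $\varprojlim^1$ terms to vanish.
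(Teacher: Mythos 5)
Your proposal is correct and follows essentially the same route as the paper's proof: both identify $\bigcap_p\mathcal{F}^p_L\Omega^\bullet(A,V)$ with $\mathcal{I}_N^\infty\Omega^\bullet(A,V)$ (the paper by a degree count, you by the local frame decomposition that reappears in Lemma \ref{lemma:submanifold_spaces_E0}), replace $\Omega^\bullet(A,V)$ by its separated completion with respect to the filtration, observe that this does not change the spectral sequence from $E_0$ onward, and conclude by a general convergence statement for complete, Hausdorff, exhaustive filtrations (the paper via the Mapping Lemma and \cite{Weib94}, you via the Complete Convergence Theorem). The one step you should make explicit is that your completion $\varprojlim_p \Omega^\bullet(A,V)/\mathcal{F}^p_L\Omega^\bullet(A,V)$ coincides with the paper's quotient $\mathscr{J}^\infty_N\Omega^\bullet(A,V)=\Omega^\bullet(A,V)/\mathcal{I}_N^\infty\Omega^\bullet(A,V)$ --- which is how formal cohomology is defined here --- and this identification uses Borel's lemma on the surjectivity of the jet map.
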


To define formal cohomology we first recall the notion of jets of sections of a vector bundle. Let a vector bundle $E\to M$ be given and $N\subset M$ a closed, embedded submanifold with vanishing ideal $\mathcal{I}_N$. We denote the space of $\infty$-jets of sections of $E$ along $N$ by
\[ \mathscr{J}_N^\infty\Gamma(E):= \Gamma(E) / \mathcal{I}_N^\infty \Gamma(E), \]
where we define $\mathcal{I}_N^{\infty}:=\bigcap_{\ell\geq 0} \mathcal{I}_N^{\ell}$. 


For any closed and embedded submanifold $N\subset M$, the $\infty$-jets of forms on $A$ along $N$ inherit a differential. 

\begin{lemma}\label{lemma:subcomplexes_for_submanifolds}
The set $\mathcal{I}_N^\infty\Omega^\bullet(A,V)\subset \Omega^\bullet(A,V)$ is a differential ideal.
\end{lemma}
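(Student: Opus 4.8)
The plan is to verify the two defining properties separately: that $\mathcal{I}_N^\infty\Omega^\bullet(A,V)$ is an ideal for the wedge action of $\Omega^\bullet(A)$, and that it is preserved by $d_A$. The ideal property is immediate. Since $\mathcal{I}_N^\infty=\bigcap_{\ell\geq 0}\mathcal{I}_N^\ell$ is an ideal in $C^\infty(M)$ and the wedge product $\Omega^\bullet(A)\otimes\Omega^\bullet(A,V)\to\Omega^\bullet(A,V)$ is $C^\infty(M)$-bilinear, for $\omega\in\Omega^\bullet(A)$ and a general element $\xi=\sum_i f_i\eta_i$ with $f_i\in\mathcal{I}_N^\infty$ and $\eta_i\in\Omega^\bullet(A,V)$ we get $\omega\wedge\xi=\sum_i f_i(\omega\wedge\eta_i)\in\mathcal{I}_N^\infty\Omega^\bullet(A,V)$.

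For closedness under $d_A$, I would first use the Leibniz rule to reduce everything to a statement about functions. Writing $\xi=\sum_i f_i\eta_i$ as above, we have $d_A\xi=\sum_i\big(d_Af_i\wedge\eta_i+f_i\,d_A\eta_i\big)$. The terms $f_i\,d_A\eta_i$ lie in $\mathcal{I}_N^\infty\Omega^\bullet(A,V)$ by definition, and by the ideal property just established it suffices to prove that $d_Af\in\mathcal{I}_N^\infty\Omega^1(A)$ for every $f\in\mathcal{I}_N^\infty$: once this is known, writing $d_Af=\sum_j h_j\beta_j$ with $h_j\in\mathcal{I}_N^\infty$ gives $d_Af\wedge\eta_i=\sum_j h_j(\beta_j\wedge\eta_i)\in\mathcal{I}_N^\infty\Omega^\bullet(A,V)$.

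The core of the argument is purely algebraic: \emph{every derivation of $C^\infty(M)$ preserves $\mathcal{I}_N^\infty$}. Indeed, if $X$ is a vector field, then applying the Leibniz rule to a product $g_1\cdots g_\ell$ of generators $g_j\in\mathcal{I}_N$ shows $X(g_1\cdots g_\ell)\in\mathcal{I}_N^{\ell-1}$, whence $X(\mathcal{I}_N^\ell)\subseteq\mathcal{I}_N^{\ell-1}$ for all $\ell\geq 1$. Since $\mathcal{I}_N^\infty\subseteq\mathcal{I}_N^{\ell+1}$ for every $\ell$, this gives $X(\mathcal{I}_N^\infty)\subseteq\bigcap_{\ell}\mathcal{I}_N^\ell=\mathcal{I}_N^\infty$. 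Applying this with $X=\sharp\alpha$, the pairing $(d_Af)(\alpha)=\mathscr{L}_{\sharp\alpha}f$ lies in $\mathcal{I}_N^\infty$ for every section $\alpha$; equivalently, in any local frame $\{e^a\}$ of $A^*$ the coefficients of $d_Af=\sum_a(\mathscr{L}_{\sharp e_a}f)\,e^a$ are flat along $N$.

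It remains to upgrade this local flatness to the global statement $d_Af\in\mathcal{I}_N^\infty\Omega^1(A)$. Here I would invoke the identification underlying the very definition of $\mathscr{J}_N^\infty\Gamma(E)$: for a vector bundle $E\to M$ and a closed embedded $N\subset M$, a section belongs to $\mathcal{I}_N^\infty\Gamma(E)$ precisely when it is flat along $N$. This rests on the standard fact that for a closed embedded submanifold the power $\mathcal{I}_N^\ell$ consists exactly of the functions vanishing to order $\ell$ along $N$, so that $\mathcal{I}_N^\infty$ is the ideal of functions flat along $N$; the passage from functions to sections of $E$ is then carried out with a partition of unity subordinate to trivialising charts, using that multiplying a locally flat coefficient by a bump function yields a genuine global element of $\mathcal{I}_N^\infty$. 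Combining the three steps gives $d_A\xi\in\mathcal{I}_N^\infty\Omega^\bullet(A,V)$, proving the lemma. I expect this last local-to-global step to be the only genuine obstacle: flatness along $N$ is visibly a local, frame-independent condition, but matching it with the algebraically defined submodule $\mathcal{I}_N^\infty\Omega^\bullet(A,V)$ is exactly where the embedded submanifold structure of $N$ and a gluing argument are needed.
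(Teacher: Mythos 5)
Your proof is correct in outline, but it takes a genuinely different and considerably longer route than the paper. The paper's entire proof is: by Nagel's factorization theorem \cite[Theorem 1]{Nag73}, every $f\in\mathcal{I}_N^\infty$ can be written as $f=gh$ with $g,h\in\mathcal{I}_N^\infty$; then
\[
d_A(f\eta)=d_A\big(g\,(h\eta)\big)=h\,\big(d_Ag\wedge\eta\big)+g\,d_A(h\eta)\in\mathcal{I}_N^\infty\Omega^\bullet(A,V),
\]
since each summand retains a factor from $\mathcal{I}_N^\infty$. This completely sidesteps the question you spend most of your effort on, namely whether $d_Af$ itself lies in $\mathcal{I}_N^\infty\Omega^1(A)$. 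Your route instead establishes that every derivation of $C^\infty(M)$ preserves $\mathcal{I}_N^\infty$ (correct, via $X(\mathcal{I}_N^\ell)\subseteq\mathcal{I}_N^{\ell-1}$), concludes that all pairings $(d_Af)(\alpha)$ are flat along $N$, and then must upgrade this to membership of $d_Af$ in the submodule $\mathcal{I}_N^\infty\Gamma(A^*)$. That last step is exactly where you are least precise: you need that a section whose local-frame coefficients are flat along $N$ lies in the module $\mathcal{I}_N^\infty\Gamma(E)$ of \emph{finite} sums $\sum_i f_i s_i$, and a partition-of-unity argument a priori only produces a locally finite infinite sum. This can be repaired cleanly (embed $E$ as a direct summand of a trivial bundle, as in the proof of Lemma \ref{lemma:tensor_vb}, so that $s=\sum_a\langle i(s),e^a\rangle\,p(e_a)$ is a finite sum with flat coefficients), and it also uses the nontrivial fact that for a closed embedded $N$ the ideal $\mathcal{I}_N^\ell$ consists precisely of functions vanishing to order $\ell$. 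So your argument is salvageable and even yields the slightly finer statement $d_A(\mathcal{I}_N^\infty)\subseteq\mathcal{I}_N^\infty\Omega^1(A)$, but the factorization trick buys a two-line proof with a single citation, whereas your approach trades that citation for a chain of standard-but-nontrivial facts about flat functions and a local-to-global gluing that you have only sketched.
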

\begin{proof}
    By \cite[Theorem 1]{Nag73} every function in $\mathcal{I}_N^\infty$ is the product of two functions in $\mathcal{I}_N^\infty$. Then the Leibniz rule of the differential on $\Omega^\bullet(A,V)$ implies the statement.
\end{proof}

Lemma \ref{lemma:subcomplexes_for_submanifolds} allows to define \textbf{formal cohomology} of a Lie algebroid around a submanifold as the cohomology of the quotient complex $\mathscr{J}_N^\infty \Omega^\bullet(A,V)$. Having this notion clarified, we move on to the proof of Theorem \ref{theorem:convergence_submanifold}.

\begin{proof}[Proof of Theorem \ref{theorem:convergence_submanifold}]
    For any Lie subalgebroid $L\Ato N$ of $A\Ato M$, the ideal $\mathcal{I}_L$ is given by
    \[\mathcal{I}_L\cap \Omega^q(A) =\{\omega\in \Omega^q(A)\, | \, i^\ast\omega=0 \} \supset \mathcal{I}_N\Omega^q(A),\]
    which is an equality in degree $q=0$. Moreover, by counting degrees we find that elements in $\mathcal{F}_L^p \Omega^q(A,V)$ can be written as sums of elements of the form
    \[ \omega_{i_1}\wedge\ldots\wedge\omega_{i_p}\wedge \eta, \]
    where $\omega_{i_j}\in \mathcal{I}_L\cap \Omega^{i_j}(A)$, $q \geq i_1+\ldots+i_p=k$, $i_1,\ldots,i_p\geq 0$, and $\eta\in \Omega^{q-k}(A,V)$. In particular, if $p>q$, at least $p-q$ of the indices $i_1,\ldots,i_k$ have to be zero, thus $\mathcal{F}_L^p\Omega^q(A,V)\subset \mathcal{I}_N^{p-q}\Omega^q(A,V)$. Together, for $p>q$ we obtain
    \begin{equation*}
        \mathcal{I}_N^p \Omega^q(A,V)\subset \mathcal{F}_L^p\Omega^q(A,V)\subset \mathcal{I}_N^{p-q}\Omega^q(A,V),
    \end{equation*}
    which implies 
    \[ \bigcap_{p=0}^\infty \mathcal{F}^p_L\Omega^\bullet(A,V)=  \mathcal{I}_N^\infty \Omega^\bullet(A,V). \]
The rest of the proof is a general argument which applies to spectral sequences corresponding to a filtered complex (see e.g.\ \cite[Exercise 5.4.2]{Weib94}).
First, the induced filtration on the quotient complex $\mathscr{J}_N^\infty \Omega^\bullet(A,V)$
    \[ \hat{\mathcal{F}}_L^p \mathscr{J}^\infty_N\Omega^\bullet(A,V):= \mathcal{F}^p_L\Omega^\bullet(A,V) /  \mathcal{I}_N^\infty \Omega^\bullet(A,V) \subset \mathscr{J}_N^\infty \Omega^\bullet(A,V) \]
    is Hausdorff, and the induced spectral sequence converges to formal cohomology around $N$ of forms of $A$ with values in $V$. The quotient map $\Omega^\bullet(A,V)\to \mathscr{J}_N^\infty \Omega^\bullet(A,V)$ preserves the respective filtrations and thus induces a map between spectral sequences. This map is an isomorphism on the zeroth page as
    \[  \hat{E}_0^{p,q}=\frac{\hat{\mathcal{F}}_L^p \mathscr{J}^\infty_N\Omega^{p+q}(A,V)}{\hat{\mathcal{F}}_L^{p+1} \mathscr{J}^\infty_N\Omega^{p+q}(A,V)}\simeq \frac{\mathcal{F}^{p}_L\Omega^{p+q}(A,V)}{\mathcal{F}^{p+1}_L\Omega^{p+q}(A,V)}=E_0^{p,q}. \]
    By the Mapping Lemma (\cite[Lemma 5.2.4]{Weib94}, see also \cite[Theorem 3.5]{McC00}) the two spectral sequences are isomorphic, showing that the Serre spectral sequence converges to the formal Lie algebroid cohomology around $N$ with values in $V$.
\end{proof}

In the rest of the section, we discuss in detail the structure on the zeroth page of the Serre spectral sequences in case of a wide Lie subalgebroid (Section \ref{sec:wide_subalgebroids}, applying Theorem \ref{theorem:convergence}) and a Lie subalgebroid over a closed embedded submanifold (Section \ref{sec:submanifolds_general}, applying Theorem \ref{theorem:convergence_submanifold}).

\subsection{Wide Lie subalgebroids}\label{sec:wide_subalgebroids} 
Throughout this section, we fix a Lie algebroid $A\Ato M$,
a wide Lie subalgebroid $L\subset A$ (i.e.\ a Lie subalgebroid over the same base),
 and a representation $V$ of $A$. By Theorem \ref{theorem:convergence}, the filtration corresponding to $L$ is finite. The filtration can be given a more classical description.

\begin{lemma}\label{theorem:classical_form} If $p>n$, then $\mathcal{F}_L^{p}\Omega^{n}(A,V)=0$, and if $p\leq n$ then
\[\mathcal{F}_L^{p}\Omega^{n}(A,V)=\{\omega\in \Omega^{n}(A,V)\, |\, \omega(\alpha_1,\ldots,\alpha_{n})=0, \textrm{ if } \alpha_1,\ldots, \alpha_{n-p+1}\in \Gamma(L)\}.\]
\end{lemma}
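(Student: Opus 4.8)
The plan is to realise the filtration as the sections of an explicit subbundle and then compare the two inclusions separately. Since $L\subset A$ is wide, the annihilator $L^\circ=(A/L)^*\subset A^*$ is a genuine vector bundle over all of $M$, and the degree-one part of the ideal is $\mathcal{I}_L^1=\Gamma(L^\circ)$. Choosing locally a splitting $A=L\oplus C$ and identifying $C^*\cong L^\circ$, the restriction $i^*$ becomes the projection onto the $\wedge^\bullet L^*$-summand of $\wedge^\bullet A^*$, so $\mathcal{I}_L$ is the ideal generated by $\Gamma(L^\circ)$, i.e.\ it consists exactly of forms having at least one conormal ($L^\circ$) factor. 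I would record this structural description first, as both inclusions rest on it.

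The vanishing statement for $p>n$ is immediate from the proof of Theorem \ref{theorem:convergence}: since $L$ is wide, condition $(b)$ holds, and there it is shown that $\mathcal{F}^p_L\Omega^\bullet(A,V)=\wedge^p\mathcal{I}_L\wedge\Omega(A,V)\subset\bigoplus_{k\geq p}\Omega^k(A,V)$, whence $\mathcal{F}^p_L\Omega^n(A,V)=0$ whenever $p>n$. For $p\leq n$, write $W$ for the right-hand side. The inclusion $\mathcal{F}^p_L\Omega^n(A,V)\subseteq W$ is a pointwise pigeonhole argument: every element of $\wedge^p\mathcal{I}_L\wedge\Omega(A,V)$ is a sum of forms with at least $p$ factors in $L^\circ$; evaluating such a form on $n$ arguments of which $n-p+1$ lie in $\Gamma(L)$, each nonzero term of the permutation expansion would have to pair every one of the $\geq p$ conormal factors with one of the mere $p-1$ arguments lying outside $L$, which is impossible, so the form vanishes.

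The reverse inclusion $W\subseteq\mathcal{F}^p_L\Omega^n(A,V)$ is the heart of the matter, and I expect it to be the main obstacle. I would realise $W$ as the global sections of the subbundle $\mathcal{W}\subset\wedge^nA^*\otimes V$ whose fibre is the set of forms annihilated by contraction with any $n-p+1$ vectors from $L$; through a local splitting this is the constant-rank subbundle $\bigoplus_{j\geq p}\wedge^{n-j}L^*\otimes\wedge^jL^\circ\otimes V$, so indeed $W=\Gamma(\mathcal{W})$. The wedge (multiplication) map $\wedge^pL^\circ\otimes\wedge^{n-p}A^*\otimes V\to\wedge^nA^*\otimes V$ is a vector bundle morphism whose image is precisely $\mathcal{W}$. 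A surjective bundle morphism onto a subbundle induces a surjection on global sections, and moreover $\Gamma(\wedge^pL^\circ)=\wedge^p\Gamma(L^\circ)\subseteq\wedge^p\mathcal{I}_L$ together with $\Gamma(E\otimes F)\cong\Gamma(E)\otimes_{C^\infty(M)}\Gamma(F)$ (Lemma \ref{lemma:tensor_vb}) shows that every section of $\mathcal{W}$ can be written as a finite sum $\sum_a\xi_a\wedge\theta_a$ with $\xi_a\in\Gamma(\wedge^pL^\circ)$ and $\theta_a\in\Omega^{n-p}(A,V)$, hence lies in $\mathcal{F}^p_L\Omega^n(A,V)$.

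The key technical input, and the only place where the wideness of $L$ is genuinely used, is that $L^\circ$ is a vector bundle over the whole of $M$ of constant rank: this is what makes $\mathcal{W}$ a subbundle, guarantees that the wedge morphism admits a bundle-theoretic right inverse, and thereby legitimises the passage from the local adapted-frame expressions to global wedge products (assembled by a partition of unity) that underlies the surjectivity on sections. For a non-wide subalgebroid $L^\circ$ degenerates over $M\setminus N$ and this step breaks down, which is consistent with the separate treatment required in Theorem \ref{theorem:convergence_submanifold}.
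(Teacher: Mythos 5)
Your argument is correct and follows essentially the same route as the paper: after splitting $A=L\oplus C$, both identify $\mathcal{F}^p_L\Omega^{n}(A,V)$ with $\bigoplus_{k\geq p}\Gamma(\wedge^{n-k}C^{\circ}\otimes\wedge^{k}L^{\circ}\otimes V)$, the key input in the reverse inclusion being Lemma \ref{lemma:tensor_vb}~\eqref{App_1}, which converts sections of exterior powers of $L^{\circ}$ into sums of products of sections, i.e.\ into elements of $\wedge^p\mathcal{I}_L$. The only cosmetic differences are that the paper fixes a global complement $C$ from the start (so no partition of unity is needed) and states the resulting equality of direct summands directly rather than packaging the surjectivity through a wedge bundle morphism.
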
 
\begin{proof}
For clarity, choose a vector subbundle $C\subset A$ which is a complement of $L$ in $A$
\[A=L\oplus C.\]
This gives a dual decomposition $A^*=C^{\circ}\oplus L^{\circ}$, where $L^{\circ}$ is the annihilator of $L$ and $C^{\circ}$ is the annihilator of $C$. This induces a decomposition on the level of forms:
\[\Omega^{n}(A,V)=\bigoplus_{k=0}^{n}\Gamma(\wedge^{n-k}C^{\circ}\otimes\wedge^kL^{\circ} \otimes V).\]
Using the similar decomposition for $\Omega^{n}(A)$, one obtains that
\[\mathcal{I}_{L}^{n}=\bigoplus_{k=1}^{n}\Gamma(\wedge^{n-k}C^{\circ}\otimes\wedge^kL^{\circ}).\]
We have that 
\[\wedge^p\mathcal{I}_{L}^{n}=\bigoplus_{k=p}^{n}\Gamma( \wedge^{n-k}C^{\circ}\otimes\wedge^kL^{\circ}).\]
That the left-hand side is included in the right-hand side is obvious, the other inclusion follows by applying repeatedly Lemma \ref{lemma:tensor_vb} \eqref{App_1}. A similar argument shows that
\begin{equation}\label{eq:decomposition}
    \mathcal{F}_L^{p}\Omega^{n}(A,V)=\bigoplus_{k=p}^{n}\Gamma(\wedge^{n-k}C^{\circ}\otimes\wedge^kL^{\circ}\otimes V).
\end{equation}
This is equivalent to the intrinsic description in the statement. 
\end{proof}

We go on to identify the first page of the spectral sequence. For this, recall that $L$ has a canonical representation on the ``normal bundle'' $A/L$, induced by the Lie bracket,
\begin{equation}\label{Bott_connection}
\nabla:\Gamma(L)\times \Gamma(A/L)\to \Gamma(A/L),\quad \nabla_{\beta}(\overline{\alpha}):=\overline{[\beta,\alpha]}, 
\end{equation}
for all $\alpha\in \Gamma(A)$ and $\beta\in \Gamma(L)$. Here $\overline{\delta}\in \Gamma(A/L)$ denotes the image of $\delta\in \Gamma(A)$ under the projection $A\to A/L$. This representation is also called the \textbf{Bott-connection}. This induces the dual representation on $(A/L)^*$, the exterior power representation on $\wedge^p(A/L)^*$, and finally, the tensor product representation of $L$ on $\wedge^p(A/L)^*\otimes V$,
\begin{equation}\label{eq:rep_on_powers}
\nabla:\Gamma(L)\times \Gamma(\wedge^p(A/L)^*\otimes V)\to \Gamma(\wedge^p(A/L)^*\otimes V).
\end{equation}

We have the following.

\begin{theorem}\label{theorem:first_page}
For the first page of the Serre spectral sequence associated to $L$, there is a canonical isomorphism
\[E_1^{p,q}\simeq \mathrm{H}^{q}(L,\wedge^p(A/L)^*\otimes V).\]
More precisely, there exists a canonical isomorphism
\[E_0^{p,q}\simeq \Omega^q(L,\wedge^p(A/L)^{*}\otimes V),\]
under which the differential $d_0$ 
corresponds to the differential of the  representation \eqref{eq:rep_on_powers}.
\end{theorem}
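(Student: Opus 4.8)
The plan is to make the zeroth page explicit via a canonical comparison map and then to recognise the induced differential $d_0$ as the Lie algebroid differential of $L$ with the representation \eqref{eq:rep_on_powers}. Recall that $E_0^{p,q}=\mathcal{F}_L^p\Omega^{p+q}(A,V)/\mathcal{F}_L^{p+1}\Omega^{p+q}(A,V)$, with $d_0$ induced by $d_A$. Rather than reading off the associated graded from the splitting \eqref{eq:decomposition} (which depends on the choice of complement $C$), I would write the canonical map directly. Using the intrinsic description of the filtration in Lemma \ref{theorem:classical_form}, I send $\omega\in\mathcal{F}_L^p\Omega^{p+q}(A,V)$ to the form $\Phi(\omega)\in\Omega^q(L,\wedge^p(A/L)^*\otimes V)$ determined by
\[ \Phi(\omega)(\beta_1,\ldots,\beta_q)(\overline{a_1},\ldots,\overline{a_p}):=\omega(a_1,\ldots,a_p,\beta_1,\ldots,\beta_q), \]
for $\beta_i\in\Gamma(L)$ and arbitrary lifts $a_j\in\Gamma(A)$ of $\overline{a_j}\in\Gamma(A/L)$. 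This map is manifestly independent of the complement $C$.

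The first block of work is to verify that $\Phi$ is well defined and induces an isomorphism on the quotient. Independence of the chosen lifts follows because changing some $a_j$ by a section of $L$ introduces a term in which $q+1$ arguments of $\omega$ lie in $\Gamma(L)$, hence vanishes by Lemma \ref{theorem:classical_form}. The same lemma identifies $\ker\Phi$ with $\mathcal{F}_L^{p+1}\Omega^{p+q}(A,V)$, since $\Phi(\omega)=0$ says precisely that $\omega$ vanishes whenever $q$ of its arguments lie in $\Gamma(L)$. Surjectivity is immediate from \eqref{eq:decomposition}: the summand $\Gamma(\wedge^q C^{\circ}\otimes\wedge^p L^{\circ}\otimes V)$ maps isomorphically, using the canonical identification $L^{\circ}\simeq(A/L)^*$ and the restriction isomorphism $C^{\circ}\simeq L^*$. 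Thus $\Phi$ descends to a canonical isomorphism $E_0^{p,q}\simeq\Omega^q(L,\wedge^p(A/L)^*\otimes V)$, which is the second displayed isomorphism of the theorem.

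The main step is to compute $d_0$ under $\Phi$, that is, to evaluate $(d_A\omega)(a_1,\ldots,a_p,\beta_0,\ldots,\beta_q)$ by the Koszul formula \eqref{eq:Koszul} and to discard everything landing in $\mathcal{F}_L^{p+1}$. The key bookkeeping is that a term survives only if the copy of $\omega$ appearing in it is evaluated on at most $q$ sections of $L$. This kills, firstly, every derivative term $\nabla_{\sharp a_j}$ taken along a transverse argument, and secondly, every bracket term $[a_i,a_j]$ of two transverse arguments, since in both cases all $q+1$ sections $\beta_\bullet\in\Gamma(L)$ remain as arguments of $\omega$. What survives are exactly three families: the derivatives $\nabla_{\sharp\beta_j}$ along $L$; the brackets $[\beta_i,\beta_j]$, which lie in $\Gamma(L)$ because $L$ is a subalgebroid; and the mixed brackets $[a_i,\beta_j]$, of which only the class $\overline{[a_i,\beta_j]}=-\nabla_{\beta_j}\overline{a_i}\in\Gamma(A/L)$ contributes (the $L$-component again producing $q+1$ arguments in $\Gamma(L)$), where $\nabla$ is the Bott connection \eqref{Bott_connection}.

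The final step is to recognise these survivors as the Koszul expression for $d_L^{\nabla}\Phi(\omega)$ with respect to the representation \eqref{eq:rep_on_powers}. The $\nabla_{\sharp\beta_j}$ terms together with the mixed brackets assemble into $\nabla_{\beta_j}$ acting on $\wedge^p(A/L)^*\otimes V$, the $V$-part coming from the former and the dual Bott part $-\sum_k\eta(\ldots,\nabla_{\beta_j}\overline{a_k},\ldots)$ from the latter, while the $[\beta_i,\beta_j]$ terms give the bracket contribution of $d_L$. Carrying out the sign bookkeeping yields $\Phi(d_A\omega)=(-1)^p\,d_L^{\nabla}\Phi(\omega)$; the overall sign $(-1)^p$ is constant along each complex $(E_0^{p,\bullet},d_0)$ and hence does not affect its cohomology, so passing to cohomology in $q$ gives $E_1^{p,q}\simeq\mathrm{H}^q(L,\wedge^p(A/L)^*\otimes V)$. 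I expect the main obstacle to be exactly this last identification: tracking the signs produced by moving the bracketed argument past the $p$ transverse slots, and matching the mixed-bracket terms with the dual Bott connection so that the tensor-representation differential is reproduced on the nose, up to the harmless global sign $(-1)^p$.
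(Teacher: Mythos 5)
Your proof is correct and follows essentially the same route as the paper: the paper also realises $E_0^{p,q}$ via the evaluation map $\mathrm{pr}(\omega)(\alpha_1,\ldots,\alpha_q):=\omega(\alpha_1,\ldots,\alpha_q,\cdot,\ldots,\cdot)$ fitting into the short exact sequence $0\to\mathcal{F}_L^{p+1}\to\mathcal{F}_L^{p}\to\Omega^q(L,\wedge^p(A/L)^*\otimes V)\to 0$, and then identifies $d_0$ by the same term-by-term Koszul analysis (which it delegates to \cite[Proposition 7.4.3]{Mack05} rather than writing out). The only difference is your ordering convention (transverse slots first), which is the source of the harmless global sign $(-1)^p$ that disappears with the paper's ordering; you handle this correctly.
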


\begin{proof}
The identification follows by using the short exact sequence:
\[0\to \mathcal{F}_L^{p+1}\Omega^{p+q}(A,V)\to \mathcal{F}_L^{p}\Omega^{p+q}(A,V)\stackrel{\mathrm{pr}}{\to} \Omega^{q}(L,\wedge^{p}(A/L)^*\otimes V)\to 0,\]
where the map $\mathrm{pr}$ acts as
\[\mathrm{pr}(\omega)(\alpha_1,\ldots,\alpha_{q}):=\omega(\alpha_1,\ldots,\alpha_{q},\cdot,\ldots,\cdot)\in \Gamma(\wedge^{p}(A/L)^*\otimes V),\]
and we use the canonical isomorphism $(A/L)^*\simeq L^{\circ}$. That the above is indeed a short exact sequence follows immediately from the definitions. 
A direct calculation (see e.g.\ the proof of \cite[Proposition 7.4.3]{Mack05}) implies the statement about the differentials.
 \end{proof}

The description of the filtration from Lemma \ref{theorem:classical_form} shows that the Serre spectral sequence generalises classical constructions.

\begin{example}
Let $\gg$ be a Lie algebra and $V$ a representation of $\gg$. The filtration induced by a Lie subalgebra $\hh\subset \gg$ is given by 
\[\mathcal{F}^p_{\hh}\wedge^{p+q}\gg^*\otimes V=(\wedge^p\hh^{\circ})\wedge(\wedge^{q}\gg^*)\otimes V,\]
and the resulting spectral sequence coincides with the Hochschild-Serre spectral sequence for Lie algebras \cite{HoSe53}. By Theorem \ref{theorem:first_page}, the first page is given by
\[E_1^{p,q}\simeq \mathrm{H}^{q}(\mathfrak{h},\wedge^p(\mathfrak{g}/\mathfrak{h})^*\otimes V).\]
\end{example}

\begin{example}\label{example:Leray-Serre1}
A representation of $TM\Ato M$ is the same as a bundle $V\to M$ endowed with a flat connection. The Lie algebroid cohomology of $TM$ with coefficients in $V$ can be understood as de Rham cohomology of $M$ with \emph{local/twisted coefficients}. A surjective submersion $\pi:M\to Q$ yields a wide subalgebroid $\ker d\pi\subset TM$. The induced filtration on $\Omega^{\bullet}(M,V)$ is given by
\[\mathcal{F}^p_{\pi}\Omega^{p+q}(M,V)=\{\omega\in \Omega^{p+q}(M,V)\, |\, \omega(v_1,\ldots,v_{p+q})=0, \textrm{ if } v_1,\ldots, v_{q+1}\in \ker d\pi\}.\]
The resulting spectral sequence coincide with the Leray–Serre spectral sequence in de Rham cohomology with local coefficients in $V$, which was carefully developed in \cite{Hatt60}. We will come back to this example in Subsection \ref{subsection:Leray-Serre}.
\end{example}

\begin{example}\label{example:foliations}
By Frobenius' Theorem, wide Lie subalgebroids of $TM$ are the same as foliations $\mathcal{F}$ on $M$. Denote the tangent bundle of a foliation $\mathcal{F}$ by $T\mathcal{F}\Ato M$. Forms on the Lie algebroid $T\mathcal{F}$ are called \emph{foliated forms} and will be denoted by $\Omega^{\bullet}(\mathcal{F})$. The representation \eqref{Bott_connection} becomes the Bott connection of $T\mathcal{F}$ on the normal bundle $\nu_{\mathcal{F}}=TM/T\mathcal{F}$. 

The Serre spectral sequence of the inclusion $T\mathcal{F}\subset TM$ converges to the cohomology of $M$, and Theorem \ref{theorem:first_page} shows that its first page is given by 
\[E_1^{p,q}\simeq \mathrm{H}^q(\mathcal{F},\wedge^p\nu_{\mathcal{F}}^*).\]
For later use, we describe the differential on the first page in position on $E_1^{0,q}$, i.e.\
\[d_1: \mathrm{H}^q(\mathcal{F})\to
\mathrm{H}^q(\mathcal{F},\nu_{\mathcal{F}}^*).\]
Let $c\in \mathrm{H}^q(\mathcal{F})$, with representative a foliated $q$-form $\eta\in \Omega^q(\mathcal{F})$, i.e.\ $\eta$ is a smoothly varying family of closed $q$-forms on the leaves of $\mathcal{F}$. Let $\tilde{\eta}\in \Omega^q(M)$ be an extension of $\eta$ to a $q$-form on $M$. Since the pullback of 
$d\tilde{\eta}$ to the leaves of $\mathcal{F}$ vanishes, we obtain an element
\[(d\tilde{\eta})_{1,q}\in \Omega^{q}(\mathcal{F},\nu_{\mathcal{F}}^*),
\qquad (d\tilde{\eta})_{1,q}(X_1,\dots, X_q):=(d\tilde{\eta})(X_1,\ldots,X_q,\cdot)\in \nu_{\mathcal{F}}^*,
\]
where we regard $\nu_{\mathcal{F}}^*$ as the annihilator of $T\mathcal{F}$. Moreover, $(d\tilde{\eta})_{1,q}$ is closed for the complex computing foliated cohomology with values in $\nu_{\mathcal{F}}^*$, and its class is independent of the extension $\tilde{\eta}$ of $\eta$ or even on the chosen representative $\eta$ of $c$. With these, we have that
\[d_1[\eta]=[(d\tilde{\eta})_{1,q}]\in \mathrm{H}^q(\mathcal{F},\nu_{\mathcal{F}}^*).\]
\end{example}

In the generality of Theorem \ref{theorem:first_page}, not much more can be said about the Serre spectral sequence of wide Lie subalgebroids. Starting from Section \ref{sec:liealgebroid_extensions}, we will restrict to the class of Lie subalgebroids that are kernels of surjective morphisms, which will enable us to reveal more information about their associated Serre spectral sequence.

\subsection{Lie subalgebroids over closed submanifolds}\label{sec:submanifolds_general}

In this section, we fix a Lie algebroid $A\Ato M$,
a Lie subalgebroid $L\subset A$ over a closed, embedded submanifold $N\subset M$, and a representation $V$ of $A$. By Theorem \ref{theorem:convergence_submanifold}, we obtain a spectral sequence converging to the formal cohomology of $A$ around $N$ with values in $V$, for which we have the following.

\begin{theorem}\label{theorem:first_page_submanifold}
    There exists an isomorphism
    \[E_1^{p,q}\simeq \mathrm{H}(L,\wedge^p(\nu_N^*\oplus L^{\circ})\otimes V|_N)^q,\]
    where $\nu_N^*\oplus L^{\circ}$ is the representation up to homotopy of $L$ corresponding to the VB-algebroid
      \begin{equation*}
\begin{tikzpicture}[baseline=(current bounding box.center)]
\usetikzlibrary{arrows}
\node (2) at (1.2,1.2) {$ \nu_L(A)^* $};
\node (3) at (3,1.2) {$ L^\circ $};

\node (w) at (1.2,0) {$ L $};
\node (e) at (3,0) {$ N $};
\draw[-Implies,double equal sign distance]
(2) -- (3);
\draw[-Implies,double equal sign distance]
(w) -- (e);
\path[->]
(2) edge node[above]{$ $} (w)
(3) edge node[above]{$ $} (e);
\end{tikzpicture}
\end{equation*}
\end{theorem}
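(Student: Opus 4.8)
The plan is to mirror the two-step argument of the wide case (Theorem~\ref{theorem:first_page}): first identify the zeroth page $E_0^{p,\bullet}$ as an explicit complex, then recognise its cohomology as the rep-up-to-homotopy cohomology of $L$. The new feature is that the $\mathcal{I}_L$-filtration now mixes two kinds of generators. A factor of $\mathcal{I}_L$ of positive form degree contributes, along $N$, an exterior generator in the annihilator $L^\circ=\nu_L(A)^*|_N$, whereas a degree-zero factor is a function in $\mathcal{I}_N$ and contributes a jet in the normal directions of $N$. Using the canonical identification $\mathcal{I}_N^a/\mathcal{I}_N^{a+1}\simeq S^a\nu_N^*$ of the associated graded of the vanishing ideal with the symmetric conormal powers, the two contributions assemble into the symmetric and exterior parts of $\wedge^p(\nu_N^*\oplus L^\circ)=\bigoplus_{a+b=p}S^a\nu_N^*\otimes\wedge^bL^\circ$, where $\nu_N^*$ is placed in (odd) degree $-1$ and $L^\circ$ in degree $0$; this placement is forced by, and reproduces, the two special cases of Theorem~\ref{introduction:theorem:submanifold}.

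First I would establish the identification
\[
E_0^{p,q}\;\simeq\;\bigoplus_{a+b=p}\Omega^{q+a}\big(L,\,S^a\nu_N^*\otimes\wedge^bL^\circ\otimes V|_N\big),
\]
which is precisely the total-degree-$q$ part of $\Omega^\bullet(L,\wedge^p(\nu_N^*\oplus L^\circ)\otimes V|_N)$. Starting from the description of $\mathcal{F}^p_L\Omega^{p+q}(A,V)$ as a span of monomials $\omega_{i_1}\wedge\cdots\wedge\omega_{i_p}\wedge\eta$ with $\omega_{i_j}\in\mathcal{I}_L\cap\Omega^{i_j}(A)$ from the proof of Theorem~\ref{theorem:convergence_submanifold}, I would pass to the associated graded: modulo $\mathcal{F}^{p+1}_L$ only the minimal monomials survive, namely $a$ degree-zero factors and $b$ degree-one factors, the latter contributing $b$ legs in $L^\circ$ and forcing the remaining factor $\eta$ to be a $(q+a)$-form on $L$ (the degree count being $p+q-b=q+a$). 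Choosing an auxiliary splitting $A|_N=L\oplus C$ and a normal splitting for jets and taking leading symbols then yields a projection map generalising the map $\mathrm{pr}$ of Theorem~\ref{theorem:first_page}; the requisite bookkeeping is handled by repeated use of Lemma~\ref{lemma:tensor_vb}, exactly as in the proof of Lemma~\ref{theorem:classical_form}.

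The heart of the proof is to show that under this identification the induced differential $d_0$ is the differential $D$ of the representation up to homotopy of $L$ on $\nu_N^*\oplus L^\circ$ attached to the conormal VB-algebroid $\nu_L(A)^*$. I would compute $d_0$ from the Koszul formula~\eqref{eq:Koszul} on representatives and sort it by its effect on the bidegree $(a,b)$. Three pieces survive on the associated graded: a component of $L$-form degree $+1$ preserving $(a,b)$, which is the covariant differential of the Bott-type $L$-connections (cf.~\eqref{Bott_connection}) on $S^a\nu_N^*\otimes\wedge^bL^\circ\otimes V|_N$; a component of $L$-form degree $0$ taking a symmetric $\nu_N^*$-generator to an $L^\circ$-generator, which is the derivation extension of $\partial=\sharp^*|_{\nu_N^*}\colon\nu_N^*\to L^\circ$ (well defined because $\sharp$ maps $L$ into $TN$, so that $d_Af\in\mathcal{I}_L$ for $f\in\mathcal{I}_N$, in the spirit of the class $(d\tilde\eta)_{1,q}$ of Example~\ref{example:foliations}); and a curvature component of $L$-form degree $+2$ taking an $L^\circ$-generator to a $\nu_N^*$-generator. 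These are exactly the structure maps of the representation up to homotopy determined by $\nu_L(A)^*$ under the correspondence of~\cite{AbCr12}, so that taking $d_0$-cohomology gives $E_1^{p,q}\simeq\mathrm{H}(L,\wedge^p(\nu_N^*\oplus L^\circ)\otimes V|_N)^q$.

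I expect the curvature component to be the main obstacle: it is the second-order part of $d_A$ which only becomes visible on the associated graded, and it measures the failure of the chosen splittings to be $L$-invariant. The cleanest way to keep it under control, and to make the final answer manifestly independent of the auxiliary choices, is to construct the VB-algebroid structure on $\nu_L(A)^*$ intrinsically and invoke the VB-algebroid/rep-up-to-homotopy dictionary, so that the quasi-isomorphism class of the coefficient complex---and hence $E_1$---is canonical even though the individual components $D_0,D_1,D_2$ are not. As a final check I would specialise to $N=M$, where $\nu_N^*=0$, $D=D_1$ reduces to the honest Bott representation and one recovers Theorem~\ref{theorem:first_page}, and to $L=A|_N$ over an invariant submanifold, where $L^\circ=0$, $\partial=0$, and the formula collapses to $E_1^{p,q}\simeq\mathrm{H}^{p+q}(A|_N,S^p\nu_N^*\otimes V|_N)$.
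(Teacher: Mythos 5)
Your proposal is correct and follows essentially the same route as the paper: first identify $E_0^{p,q}$ with $\bigoplus_{a+b=p}\Omega^{q+a}(L,S^a\nu_N^*\otimes\wedge^bL^\circ\otimes V|_N)$ using a tubular neighbourhood, a splitting $A|_N=L\oplus C$, and the isomorphism $\mathcal{I}_N^a/\mathcal{I}_N^{a+1}\simeq S^a\nu_N^*$, then compute $d_0$ on generators, sort it into the connection, core-map and curvature components of the representation up to homotopy attached to $\nu_L(A)^*$, and extend to $p>1$ via the Leibniz rule. This is exactly the content of Lemma~\ref{lemma:submanifold_spaces_E0} and the subsequent proof of Theorem~\ref{theorem:first_page_submanifold}.
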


\begin{remark}\rm
We denote the normal bundle of a closed, embedded submanifold $Y\subset X$ by
\[ \nu_Y(X)=TX|_Y / TY, \]
or $\nu_Y$ if the ambient manifold $X$ is clear from context. 
\end{remark}

\begin{remark}\rm
As we will discuss below, $\nu_L(A)^\ast\to L$ is canonically a VB-algebroid, and therefore, with the aid of a splitting, it can be regarded as a two-term representation up to homotopy of $L$ \cite[Theorem 4.11]{GrMe10}.
\end{remark}

\begin{remark}\rm
    Theorem \ref{theorem:first_page_submanifold} can be seen as a generalisation of Theorem \ref{theorem:first_page}, since for $N=M$ the representation up to homotopy of $L$ with values in $\wedge^p L^{\circ}\otimes V$ coincides with the classical representation \eqref{Bott_connection}.
\end{remark}

After recalling some of the necessary tools and objects, we will prove Theorem \ref{theorem:first_page_submanifold}.

First, we recall the notion of a representation up to homotopy \cite[Definition 3.1]{AbCr12}.

\begin{definition}
    A \textbf{representation up to homotopy} of $L\Ato N$ on a $\mathbb{Z}$-graded vector bundle $E^\bullet$ over the same base $N$ is a differential $$D: \Omega(L,E)^\bullet\to \Omega(L,E)^{\bullet+1},$$ where the total degree is defined as
    \[ \Omega(L,E)^q=\bigoplus_{i+j=q}\Omega^i(L,E^j), \]
    satisfying, for any $\omega\in \Omega^k(L)$ and $\eta\in \Omega(L,E)^\bullet$, the Leibniz rule
    \begin{equation}\label{eq:ruth_leibniz}
        D(\omega\wedge \eta)= d_L\omega\wedge \eta +(-1)^k\omega\wedge D\eta.
    \end{equation}
\end{definition}

Note that the differential of a representation up to homotopy of $L$ on $E^\bullet$ is determined by its values on $\Omega^0(L,E^\bullet)=\Gamma(E^\bullet)$, because of the Leibniz rule \eqref{eq:ruth_leibniz}. 
Let us also note that one can construct duals and tensor powers of representations up to homotopy \cite[Section 4]{AbCr12}. 

Next we quickly recall the notion of a VB-algebroid and the construction of the corresponding representation up to homotopy following \cite{GrMe10}. Let a double vector bundle
\begin{equation}\label{eq:diagram_VB_algebroid}
\begin{tikzpicture}[baseline=(current bounding box.center)]
\usetikzlibrary{arrows}
\node (2) at (1.5,1) {$ B $};
\node (3) at (3,1) {$ E^0 $};

\node (w) at (1.5,0) {$ L $};
\node (e) at (3,0) {$ N $};
\path[->]
(2) edge node[above]{$ $} (w)
(3) edge node[above]{$ $} (e)

(2) edge node[above]{$ $} (3)
(w) edge node[above]{$ $} (e)
;
\end{tikzpicture}
\end{equation}
with core $E^{-1}:=\ker \pi_{B\to E^0}\cap \ker \pi_{B\to L}$ be given. The bundle $B\to E^0$ has two distinguished classes of sections, called  \textbf{linear} and \textbf{core sections},
\begin{equation*}
    \begin{aligned}
        \Gamma_{\mathrm{core}}(B)&:=\{c\circ \pi_{E^0\to N}+_L 0_{E^0\to B}\,|\, c\in \Gamma(E^{-1})\}\subset \Gamma(B\to E^0)\\
        \Gamma_{\mathrm{lin}}(B)&:= \{ b\,|\, b: E^0\to B \text{ is a vector bundle morphism} \}\subset  \Gamma(B\to E^0).
    \end{aligned}
\end{equation*}

Two Lie algebroid structures $B\Ato E^0$ and $L\Ato N$ yield a \textbf{VB-algebroid} structure on the double vector bundle $B$ if and only if the following compatibility conditions hold:
\[ \begin{aligned}
    [\Gamma_{\mathrm{lin}}(B),\Gamma_{\mathrm{lin}}(B)]&\subset \Gamma_{\mathrm{lin}}(B),\\
    [\Gamma_{\mathrm{lin}}(B),\Gamma_{\mathrm{core}}(B)]&\subset \Gamma_{\mathrm{core}}(B) \text{ and}\\
    [\Gamma_{\mathrm{core}}(B),\Gamma_{\mathrm{core}}(B)  ]&=0.
\end{aligned} \]

To describe how the VB-algebroid \eqref{eq:diagram_VB_algebroid} encodes a representation up to homotopy of $L$ on $E^\bullet=E^{-1}\oplus E^0$, we recall that linear sections are sections of a Lie algebroid denoted by $\hat{L}\to N$. This Lie algebroid fits into a short exact sequence
\begin{equation}\label{eq:ruth_splitting}
    0\to \mathrm{Hom}(E^0,E^{-1})\to \hat{L}\to L\to 0.
\end{equation}
In \eqref{eq:ruth_splitting}, the map $\hat{L}\to L$ is given on sections by projecting a linear section to its base map, which is necessarily a section of $L$. 
Choosing a splitting $\sigma: L\to \hat{L}$ of \eqref{eq:ruth_splitting} allows to define 
\begin{itemize}
    \item An $L$-connection $\nabla^{E^{-1}}$ on $ E^{-1} $ given by 
    \[ \nabla^{E^{-1}}_a(c)=[\sigma(a),c]_B\] 
    for $a\in \Gamma(L)$ and $c\in \Gamma(E^{-1})=\Gamma_{\mathrm{core}}(B)$.
    \item An $L$-connection $\nabla^{E^0}$ on $E^0$ with dual connection given by
    \[ \nabla^{(E^0)^\ast}_a \xi =\sharp_B(\sigma(a)) \xi, \]
    where $a\in \Gamma(L)$ and $\xi\in \Gamma((E^0)^\ast)$ is considered as a linear function on $E^0$. 
    \item A $\mathrm{Hom}(E^0,E^{-1})$-valued two-form $\gamma$ on $L$ given by the curvature of $\sigma$, i.e.\ 
    \[ \gamma(a_1,a_2)=[\sigma(a_1),\sigma(a_2)]-\sigma([a_1,a_2]) \]
    for $a_1,a_2\in \Gamma(L)$.
    \item Finally, independent of the splitting, there is the core map $\partial : E^{-1}\to E^0$, defined to be minus the anchor of $B$ from the core of $B$ to the core $E^0$ of $TE^0$.
\end{itemize}
These maps piece together to the restriction of the differential $D$ of $\Omega(L,E)^\bullet$ to $\Gamma(E^\bullet)$.

\begin{example}\label{example:double_LA_normalbundle}
    For a Lie subalgebroid $L\Ato N$ of $A\Ato M$ there is a VB algebroid
\begin{equation}\label{eq:normal_VB_algebroid}
\begin{tikzpicture}[baseline=(current bounding box.center)]
\usetikzlibrary{arrows}
\node (2) at (1,1.5) {$ \nu_L(A) $};
\node (3) at (3,1.5) {$ \nu_N(M) $};

\node (w) at (1,0) {$ L $};
\node (e) at (3,0) {$ N $};
\draw[-Implies,double equal sign distance]
(2) -- (3);
\draw[-Implies,double equal sign distance]
(w) -- (e);
\path[->]
(2) edge node[above]{$ $} (w)
(3) edge node[above]{$ $} (e);
\end{tikzpicture}
\end{equation}
with $E^0=\nu_N(M)$ and core $E^{-1}=A|_N/L$ \cite{MePi21}. The bracket of $\nu_L(A)\Ato \nu_N(M)$ is defined such that the map induced by the normal bundle functor  
\begin{equation}\label{eq:normal_functor}
\nu:\Gamma(A,L)\to \Gamma(\nu_L(A)),\quad \nu(a)(X\, \mathrm{mod}\, TN)=(d a)(X)\, \mathrm{mod}\, TL, \quad X\in TM|_N
\end{equation}
is bracket preserving, where $\Gamma(A,L)$ denotes the set of sections of $A$ that restrict to $L$ along $N$. The image of the map $\nu$ are precisely the linear sections. To obtain a splitting and identify the core sections, we choose a vector bundle isomorphism $A|_E\simeq \mathrm{pr}^\ast A|_N$, where $\mathrm{pr}: E\to N$ is a tubular neighbourhood, and we choose a complement $A|_N=L\oplus C$. Then core sections are sections of $C\simeq E^{-1}$. We define a splitting of \eqref{eq:ruth_splitting} by using the map $\nu$ from \eqref{eq:normal_functor}, via
\[ \sigma: \Gamma(L)\to \Gamma(\hat{L}),\qquad 
 a\mapsto \nu(\mathrm{pr}^\ast a).\] 
 The $L$-connections are given as follows. For $a\in \Gamma(L)$ and $c\in \Gamma(C)$,  
\begin{equation}\label{eq:submanifold_rep_on_core}
    \nabla^C_a c=\mathrm{pr}_C [\mathrm{pr}^\ast a,\mathrm{pr}^\ast c]|_N
\end{equation}
and for $a\in \Gamma(L)$ and $f\in \mathcal{I}_N$, with $df|_N\in \Gamma(\nu_N^*)$, 
\begin{equation}\label{eq:submanifold_rep_on_conormal}
    \nabla^{\nu_N^\ast}_a df|_N=d (\sharp(\mathrm{pr}^\ast a)f)|_N.
\end{equation}
\end{example}

\begin{remark}\label{remark:dual_VB}\rm
    Given a VB-algebroid as in \eqref{eq:diagram_VB_algebroid} one can dualise over $L$ to obtain a new VB-algebroid
    \begin{equation*}
\begin{tikzpicture}[baseline=(current bounding box.center)]
\usetikzlibrary{arrows}
\node (2) at (1.2,1.2) {$ B^* $};
\node (3) at (3,1.2) {$ (E^{-1})^* $};

\node (w) at (1.2,0) {$ L $};
\node (e) at (3,0) {$ N $};
\draw[-Implies,double equal sign distance]
(2) -- (3);
\draw[-Implies,double equal sign distance]
(w) -- (e);
\path[->]
(2) edge node[above]{$ $} (w)
(3) edge node[above]{$ $} (e);
\end{tikzpicture}
    \end{equation*}
    with core $(E^0)^*$. If $\sigma$ is a splitting of \eqref{eq:diagram_VB_algebroid} with curvature $\gamma$, then the structure maps corresponding to the representation up to homotopy induced by the dual VB-algebroid are given by $-\partial^\ast$, $\nabla^{(E^0)^\ast}$, $\nabla^{(E^{-1})^\ast}$ and $-\gamma^\ast$. Note that the different signs compared to \cite[Example 4.1]{AbCr12} come from a degree shift.
\end{remark}

\begin{example}\label{example:conormal_VB}
Taking the dual of the VB-algebroid $\nu_L(A)\to L$, we obtain the conormal VB-algebroid 
   \begin{equation*}
\begin{tikzpicture}[baseline=(current bounding box.center)]
\usetikzlibrary{arrows}
\node (2) at (1.2,1.2) {$ \nu_L(A)^* $};
\node (3) at (3,1.2) {$ L^\circ $};

\node (w) at (1.2,0) {$ L $};
\node (e) at (3,0) {$ N $};
\draw[-Implies,double equal sign distance]
(2) -- (3);
\draw[-Implies,double equal sign distance]
(w) -- (e);
\path[->]
(2) edge node[above]{$ $} (w)
(3) edge node[above]{$ $} (e);
\end{tikzpicture}
\end{equation*}
with core $\nu_N^\ast$. Fix a tubular neighbourhood $\mathrm{pr}: E\to N$ of $N$ in $M$, a vector bundle isomorphism $A|_E\simeq \mathrm{pr}^\ast (A|_N)$ and a complement $A|_N=L\oplus C$.
Then we obtain a representation up to homotopy of $L$ on the graded bundle
\[\nu_N^\ast \oplus  L^\circ,\] where $\mathrm{deg}(\nu_N^\ast)=-1$ and
$\mathrm{deg}(L^\circ)=0$, and the differential is defined as explained in Example \ref{example:double_LA_normalbundle} and Remark \ref{remark:dual_VB}. Taking the (graded) exterior power of this representation up to homotopy, in the sense of \cite{AbCr12}, we obtain a representation up to homotopy of $L$ on the graded vector bundle
\[\wedge^p(\nu_{N}^*\oplus L^{\circ})=\bigoplus_{k=0}^pS^{k}(\nu_{N}^*)\otimes \wedge^{p-k}(L^{\circ}),\]
where $\mathrm{deg}(S^{k}(\nu_{N}^*)\otimes \wedge^{p-k}(L^{\circ}))=-k$.
\end{example}


\begin{lemma}\label{lemma:submanifold_spaces_E0}
   Using the notation and choices from Example \ref{example:conormal_VB}, we can identify
    \begin{equation}\label{eq:submanifold_E0}
        E_0^{p,q}\simeq\bigoplus_{i=0}^{p} \Omega^{p+q-i}(L, S^{p-i}(\nu_N^\ast)\otimes \wedge^i (L^\circ) \otimes V|_N).
    \end{equation}
   The differential induced by $d_0$ satisfies a Leibniz rule, turning 
    \[ (E_0^{p,\bullet},d_0)\simeq(\Omega(L, \wedge^p( \nu_N^\ast\oplus L^\circ)\otimes V|_N)^\bullet, D)  \]
    into a representation of $L$ up to homotopy.
\end{lemma}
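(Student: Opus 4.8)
The plan is to reduce to the formal picture, compute the associated graded $E_0^{p,q}$ explicitly in the coordinates supplied by Example \ref{example:conormal_VB}, and then identify the induced differential $d_0$ with the representation-up-to-homotopy differential $D$ by inspecting the Koszul formula \eqref{eq:Koszul} modulo one filtration degree.

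First, by the proof of Theorem \ref{theorem:convergence_submanifold} the zeroth page is unchanged upon passing to the formal complex $\mathscr{J}_N^\infty\Omega^\bullet(A,V)$, so I would work with $\infty$-jets along $N$. Fixing the tubular neighbourhood $\mathrm{pr}\colon E\to N$, the identification $A|_E\simeq\mathrm{pr}^\ast(A|_N)$, and the complement $A|_N=L\oplus C$ from Example \ref{example:conormal_VB}, a form in $\Omega^{p+q}(A,V)$ is, formally along $N$, a $C^\infty$-combination of monomials $f\,\lambda^{(a)}\wedge\mu^{(i)}\otimes v$ with $\lambda^{(a)}\in\wedge^a L^\ast$, $\mu^{(i)}\in\wedge^i L^\circ$ (using $L^\circ\simeq C^\ast$), $v\in V|_N$, and $f$ a jet along $N$. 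The filtration degree of such a monomial is $i+c$, where $i$ counts the $L^\circ$-legs (each lies in $\mathcal{I}_L$) and $c$ is the vanishing order of $f$ along $N$ (since $\mathcal{I}_N^c=(\mathcal{I}_L^0)^c\subset\wedge^c\mathcal{I}_L$). Passing to the associated graded and using $\mathcal{I}_N^c/\mathcal{I}_N^{c+1}\simeq\Gamma(S^c\nu_N^\ast)$, the constraints $i+c=p$ and $a+i=p+q$ force $c=p-i$ and $a=p+q-i$, which yields the decomposition \eqref{eq:submanifold_E0}. Reindexing by the symmetric power $k:=p-i$, each summand becomes $\Omega^{q+k}(L,S^k\nu_N^\ast\otimes\wedge^{p-k}L^\circ\otimes V|_N)$, i.e.\ exactly the total-degree-$q$ part of $\Omega(L,\wedge^p(\nu_N^\ast\oplus L^\circ)\otimes V|_N)^\bullet$ for the grading $\deg(S^k\nu_N^\ast\otimes\wedge^{p-k}L^\circ)=-k$ of Example \ref{example:conormal_VB}. (As a sanity check, when $N=M$ only $k=0$ survives and one recovers Theorem \ref{theorem:first_page}.)

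Next I would compute $d_0$ by applying $d_A$ to a monomial representative and projecting to $\mathcal{F}_L^p/\mathcal{F}_L^{p+1}$ in degree $p+q+1$. Reading off \eqref{eq:Koszul}, the surviving contributions split into three types according to how they move the pair $(k,\text{form degree on }L)$: (i) terms whose derivatives and brackets remain within $\Gamma(L)$ keep $k$ fixed and raise the $L$-form degree by one, reproducing the tensor-connection differential $d_L^\nabla$ built from $\nabla^{L^\circ}$ and $\nabla^{\nu_N^\ast}$ of \eqref{eq:submanifold_rep_on_core}, \eqref{eq:submanifold_rep_on_conormal}; (ii) terms where the anchor of a $C$-direction acts on a factor $f\in\mathcal{I}_N^{p-i}$ produce, via $\sharp\alpha(f)|_N=\langle df|_N,\sharp\alpha\bmod TN\rangle$, a new $L^\circ$-leg while dropping the vanishing order by one, i.e.\ $k\mapsto k-1$ at fixed $L$-form degree — this is precisely the dualised core map $-\partial^\ast\colon\nu_N^\ast\to L^\circ$ of Remark \ref{remark:dual_VB}, extended as a derivation on symmetric powers; (iii) the non-integrability of the complement $C$, i.e.\ the normal behaviour of $\mathrm{pr}_C[\alpha_i,\alpha_j]$ for $\alpha_i,\alpha_j\in\Gamma(L)$, raises the $L$-form degree by two and sends $k\mapsto k+1$, matching the curvature term $-\gamma^\ast$ of the splitting $\sigma$. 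Comparing with the structure maps assembled in Example \ref{example:double_LA_normalbundle} and Remark \ref{remark:dual_VB}, these are exactly the components of the differential $D$ of the $p$-th exterior power representation up to homotopy.

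Finally, since $d_0$ is induced from the derivation $d_A$ and the wedge product descends to the associated graded, the Leibniz rule \eqref{eq:ruth_leibniz} for $D$ follows from that of $d_A$; together with $D^2=0$ (inherited from $d_A^2=0$ on $E_0$) this shows that $(E_0^{p,\bullet},d_0)$ is isomorphic, as a differential, to the representation up to homotopy of Example \ref{example:conormal_VB}. The main obstacle I anticipate lies in steps (ii)--(iii): organising the Koszul differential modulo $\mathcal{F}_L^{p+1}$ so that the normal-derivative and non-integrability contributions are matched, with the correct signs and the degree shift flagged in Remark \ref{remark:dual_VB}, to the core map and the curvature of the conormal VB-algebroid.
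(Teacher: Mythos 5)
Your proposal is correct and follows essentially the same route as the paper: restrict to the tubular neighbourhood, decompose forms via the splitting $A|_N=L\oplus C$, read the filtration degree off as the number of $L^\circ$-legs plus the vanishing order of the coefficient, identify the associated graded using $\mathcal{I}_N^{c}\Gamma(F)/\mathcal{I}_N^{c+1}\Gamma(F)\simeq\Gamma(S^c\nu_N^\ast\otimes F|_N)$, and deduce the Leibniz rule from the derivation property of $d_A$ together with the fact that $d_A\mathrm{pr}^\ast\omega-\mathrm{pr}^\ast d_L\omega\in\mathcal{I}_L$. Your explicit matching of the three components of $d_0$ with the connection differential, $-\partial^\ast$ and $-\gamma^\ast$ goes beyond what this lemma requires and is precisely the computation the paper defers to the proof of Theorem \ref{theorem:first_page_submanifold}.
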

\begin{proof}
    Over $E$ we decompose $A|_E=\mathrm{pr}^\ast L\oplus \mathrm{\mathrm{pr}^\ast C}$, and then forms on $A|_E$ decompose as
        \[ \Omega^k(A|_E,V|_E)=\bigoplus_{i=0}^k \Gamma(\wedge^i\mathrm{pr}^\ast C^\ast\otimes \wedge^{k-i}\mathrm{pr}^\ast L^\ast\otimes \mathrm{pr}^\ast V|_N), \]
    as in the proof of Lemma \ref{theorem:classical_form}.
    Moreover, note that the ideal $ \mathcal{I}_L $, i.e.\ the kernel of the pullback map $i^*:\Omega^{\bullet}(A|_E)\to \Omega^\bullet(L)$, is generated by $ \mathcal{I}_N$ and $\Omega^1(\mathrm{pr}^\ast C) $. From this it follows that 
    \[ \mathcal{F}_L^p\Omega^k(A|_E,V|_E)= \bigoplus_{i=0}^{k} \mathcal{I}_N^{p-i}\Gamma(\wedge^i\mathrm{pr}^\ast C^\ast\otimes \wedge^{k-i}\mathrm{pr}^\ast L^\ast\otimes \mathrm{pr}^\ast V|_N). \]
    Equation \eqref{eq:submanifold_E0} follows by using the canonical isomorphism
    \begin{equation*}
        \mathcal{I}_N^p \Gamma(F) / \mathcal{I}_N^{p+1} \Gamma(F)\simeq
        \Gamma (S^p \nu_N^\ast\otimes F|_N),
    \end{equation*}
     which hold for any vector bundle $F\to M$, and the identification
     \[ \frac{ \mathcal{F}^p_L \Omega^{\bullet}(A,V)}{\mathcal{F}^{p+1}_L \Omega^{\bullet}(A,V)} = \frac{\mathcal{F}^p_L \Omega^\bullet(A|_E,V|_E)}{\mathcal{F}^{p+1}_L \Omega^\bullet(A|_E,V|_E)}.\]

    Finally, we check the Leibniz rule \eqref{eq:ruth_leibniz}. For $\eta\in \mathcal{F}_L^p \Omega^\bullet(A|_E,V|_E)$ and $\omega\in \Omega^k(L)$, we have 
        \begin{align*}
            d_0 (\omega\wedge[\eta])&= [ d_A(\mathrm{pr}^\ast\omega\wedge\eta) ]\\
            &=[ d_A(\mathrm{pr}^\ast\omega)\wedge\eta+(-1)^k \mathrm{pr}^\ast\omega\wedge d_A\eta ]\\
            &=[\mathrm{pr}^\ast d_L(\omega)\wedge\eta+(-1)^k \mathrm{pr}^\ast\omega\wedge d_A\eta + \underbrace{(d_A \mathrm{pr}^\ast \omega - \mathrm{pr}^\ast d_L\omega)}_{\in \mathcal{I}_L}\wedge \eta]\\
            &=d_L(\omega)\wedge [\eta]+(-1)^k\omega\wedge d_0[\eta].\qedhere
        \end{align*}
\end{proof}

To complete the proof of Theorem \ref{theorem:first_page_submanifold} we need to show that the differential we obtain via the identification \eqref{eq:submanifold_E0} is given by graded antisymmetric powers of the representation up to homotopy described in Example \ref{example:conormal_VB}.

\begin{proof}[Proof of Theorem \ref{theorem:first_page_submanifold}] 
First we determine the differential on 
\[E_0^{1,\bullet}=\Omega^{\bullet+1}(L,\nu_N^\ast\otimes V|_N)\oplus \Omega^\bullet(L,L^\circ \otimes V|_N)\]
under the identification \eqref{eq:submanifold_E0}.
For $f\in \mathcal{I}_N$ and $v\in \Gamma(V|_N)$ we obtain 
\[d_0 (df|_N\otimes v)\in \Omega^1 (L,\nu_N^\ast\otimes V|_N)\oplus \Gamma(L^\circ\otimes V|_N)\]
for degree reasons. 
To compute the first summand, let $a\in \Gamma(L)$ be given. Then
\begin{equation*}
    \begin{aligned}
        d_0(df|_N\otimes v)(a)&=[d_A(f)(\mathrm{pr}^\ast a)\otimes \mathrm{pr}^\ast v + f\nabla^V_{\mathrm{pr}^\ast a} \mathrm{pr}^\ast v]\\
        &=d(\sharp(\mathrm{pr}^\ast a)f)|_N\otimes v +df|_N\otimes \nabla^{V|_N}_a v,
    \end{aligned}
\end{equation*}
which is \eqref{eq:submanifold_rep_on_conormal}. To compute the term in $\Gamma(L^\circ\otimes V|_N)$, let $c\in \Gamma(C)$ be given. Then
\begin{equation*}
    \begin{aligned}
        d_0(df|_N\otimes v)(c)&= d_A(f\otimes \mathrm{pr}^\ast v)|_N (c)\\
        &= df|_N (\sharp c)\otimes v + f|_N \nabla^V_{\mathrm{pr}^\ast c}\mathrm{pr}^\ast v |_N\\
        &= \sharp^\ast (df|_N)(c) \otimes v.
    \end{aligned}
\end{equation*}
Now let $\gamma \in \Gamma(L^\circ) $ be given. Then 
\[ d_0( \gamma \otimes v) \in \Omega^1(L, L^\circ)\oplus \Omega^2(L, \nu_N^\ast). \]
Thus, let $a,b\in \Gamma(L)$ and $c\in \Gamma(C)$ be given. Then
\begin{equation*}
    \begin{aligned}
        d_0 (\gamma\otimes v)(a)(c)&= d_A (\mathrm{pr}^\ast \gamma\otimes \mathrm{pr}^\ast v)(\mathrm{pr}^\ast a, \mathrm{pr^\ast c})\\
        &=\sharp(\mathrm{pr}^\ast a)(\mathrm{pr}^\ast\gamma(c))\otimes \mathrm{pr}^\ast v |_N+\mathrm{pr}^\ast\gamma(c) \nabla^{V}_{\mathrm{pr}^\ast a}\mathrm{pr}^\ast v |_N-\mathrm{pr}^\ast\gamma([\mathrm{pr}^\ast a, \mathrm{pr}^\ast c])|_N\\
        &= \sharp a(\gamma(c))+\gamma(c)\nabla^{V|_N}_a v-\gamma([\mathrm{pr}^\ast a, \mathrm{pr}^\ast c]|_N) v,
    \end{aligned}
\end{equation*}
which is dual to \eqref{eq:submanifold_rep_on_core}.
Finally, for the contribution in $\Omega^2(L, \nu_N^\ast)$ we find
\begin{equation*}
    \begin{aligned}
        d_0 (\gamma\otimes v) (a,b)&=- d(\mathrm{pr}^\ast\gamma([\mathrm{pr}^\ast a,\mathrm{pr}^\ast b]))|_N   \otimes v\\
        &=-d \mathrm{pr}^\ast\gamma ([\mathrm{pr}^\ast a,\mathrm{pr}^\ast b]-\mathrm{pr}^\ast[a, b])|_N \otimes v,
    \end{aligned}
\end{equation*}
which shows that $(E_0^{1,\bullet},d_0)$ is indeed dual of the representation up to homotopy given by the dual of \eqref{eq:normal_VB_algebroid} by Remark \ref{remark:dual_VB}.

To complete the proof of Theorem~\ref{theorem:first_page_submanifold}, we check that the differential on $E_0^{p,\bullet}$ for $p> 1$ is given by tensor powers of the differential on $E_0^{1,\bullet}$. 
Again, by the Leibniz rule \eqref{eq:ruth_leibniz} it is enough to calculate $d_0$ on $\Gamma(\wedge^p( \nu_N^\ast\oplus L^\circ)\otimes V|_N)$. 
Let $f\in \mathcal{I}_N$ and $\eta\in \mathcal{F}_L^{p-1}\Omega(A,V)^\bullet$ be given. 
Then $f\eta\in \mathcal{F}^p_L\Omega(A,V)^\bullet$ and, using the Leibniz rule of $d_A$,
\begin{equation*}
    \begin{aligned}
        d_0[f \eta]&=[ d_A (f\eta)  ]\\
        &=[d_A f \wedge \eta + f d_A\eta]\\
        &=d_0[f]\wedge [\eta]+ [f]\wedge d_0[\eta].
    \end{aligned}
\end{equation*}
Similarly, a graded Leibniz rule holds for $\gamma\wedge $, where $\gamma\in \Gamma(L^\circ)$.   
\end{proof}

\subsubsection{Invariant submanifolds}\label{sec:linearisable_submanifold}
In this section let $N\subset M$ be a closed, embedded and invariant submanifold of $A\Ato M$, i.e.\ \[\sharp (A|_N)\subset TN.\] 
In this case, $L=A|_N$ is a subalgebroid, and $L^{\circ}=0_N$. Moreover, the filtration becomes
\begin{equation}\label{eq:filtration_for_A|_N}
\mathcal{F}_L^p\Omega^\bullet(A,V)=\mathcal{I}_N^p\Omega^\bullet(A,V).
\end{equation}
The VB-algebroid $\nu_{A|_N}(A)\Ato \nu_N$ (recall Example \ref{example:double_LA_normalbundle}) is the \textbf{action Lie algebroid}
\[ A|_N\ltimes \nu_N\Ato \nu_N,\]
corresponding to the canonical representation of $A|_N$ on the normal bundle $\nu_N$, given by
\[ \nabla_a (X|_N \text{ mod }TN)=[\sharp \tilde{a},X]|_N \text{ mod }TN \]
for $X\in \Gamma(TM) $ and $\tilde{a}\in \Gamma(A)$ some extension of $a\in \Gamma(A|_N)$. We briefly recall the construction of the action Lie algebroid for our case, following \cite[Theorem 2.4]{HiMa90}. Writing $ \mathrm{pr}: \nu_N\to N $ for the projection, the vector bundle structure is $ A|_N\ltimes \nu_N: =\mathrm{pr}^* A|_N\to \nu_N $. 
On pullback sections the anchor $ \sharp_\ltimes : A|_N\ltimes\nu_N\to T\nu_N $ is defined by
	\begin{equation*}
	[\sharp_\ltimes (\mathrm{pr}^* a),V^\mathrm{ver}  ]=\big( \nabla_a V \big)^\mathrm{ver}
	\end{equation*}
	for $ a\in \Gamma(A|_N) $ and $ V\in \Gamma(\nu_N) $. Here, we denote by 
    \[ \cdot\,^\mathrm{ver}: \Gamma(\nu_N)\to \Gamma(T\nu_N) \]
    the vertical lift. 
    In particular, $ \sharp_\ltimes $ maps pullback sections into linear vector fields, i.e.\ vector fields of homogeneous degree $0$. The bracket $ [\cdot,\cdot]_\ltimes $ on $ \Gamma(A|_N\ltimes \nu_N) $ is defined on pullback sections by
	\begin{equation*}
	[\mathrm{pr}^* a,\mathrm{pr}^* b]_\ltimes =\mathrm{pr}^* ([a,b]_{A|_N}),
	\end{equation*}
	where $ a,b\in \Gamma(A|_N) $, and is extended to all sections using the Leibniz rule. 
 
 The action Lie algebroid is linear in the following sense.

\begin{lemma}\label{lemma:linearisable_scalar_multiplication_is_morph}
    The scalar multiplication on $\nu_N$ induces a Lie algebroid automorphism
\[ m_\lambda: A|_N\ltimes \nu_N\diffto A|_N\ltimes \nu_N, \qquad \lambda\in \mathbb{R}\backslash \{0\}.\]
\end{lemma}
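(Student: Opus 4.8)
The plan is to build the lift of fibrewise scalar multiplication explicitly and to verify the Lie algebroid morphism axioms on pullback sections, where everything is transparent. Write $m_\lambda\colon \nu_N\to\nu_N$, $v\mapsto \lambda v$, for the scalar multiplication. Since $m_\lambda$ preserves fibres we have $\mathrm{pr}\circ m_\lambda=\mathrm{pr}$, so the pullback bundle $A|_N\ltimes\nu_N=\mathrm{pr}^*A|_N$ carries a canonical vector bundle isomorphism $\hat m_\lambda$ covering $m_\lambda$, given on total spaces by $(v,a)\mapsto(\lambda v,a)$, with inverse $\hat m_{\lambda^{-1}}$. The observation that makes the whole argument run is that $\hat m_\lambda$ fixes pullback sections: a direct check gives $\hat m_\lambda\circ \mathrm{pr}^*a=\mathrm{pr}^*a\circ m_\lambda$, i.e.\ $(\hat m_\lambda)_*\mathrm{pr}^*a=\mathrm{pr}^*a$ for every $a\in\Gamma(A|_N)$.

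Next I would reduce the claim to two compatibilities, checked on pullback sections, which span each fibre of $\mathrm{pr}^*A|_N$ and locally generate $\Gamma(\mathrm{pr}^*A|_N)$ as a $C^\infty(\nu_N)$-module. For the anchor, morphism compatibility on a section $s$ reads $\sharp_\ltimes\big((\hat m_\lambda)_*s\big)=(m_\lambda)_*\big(\sharp_\ltimes s\big)$. Taking $s=\mathrm{pr}^*a$ and using the observation above, the left side is $\sharp_\ltimes(\mathrm{pr}^*a)$; since $\sharp_\ltimes(\mathrm{pr}^*a)$ is a linear vector field, i.e.\ homogeneous of degree $0$, it is invariant under $(m_\lambda)_*$ for every $\lambda\neq 0$, so the right side equals $\sharp_\ltimes(\mathrm{pr}^*a)$ as well. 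This is the conceptual heart of the lemma: linearity of the anchor on pullback sections is precisely invariance under scalar multiplication. As this is an identity of vector bundle maps and pullback sections span the fibres, anchor compatibility follows on all of $\mathrm{pr}^*A|_N$.

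For the bracket I would first note that on pullback sections it is immediate: using $[\mathrm{pr}^*a,\mathrm{pr}^*b]_\ltimes=\mathrm{pr}^*[a,b]_{A|_N}$ together with $(\hat m_\lambda)_*\mathrm{pr}^*a=\mathrm{pr}^*a$, both $(\hat m_\lambda)_*[\mathrm{pr}^*a,\mathrm{pr}^*b]_\ltimes$ and $[(\hat m_\lambda)_*\mathrm{pr}^*a,(\hat m_\lambda)_*\mathrm{pr}^*b]_\ltimes$ equal $\mathrm{pr}^*[a,b]_{A|_N}$. It then remains to propagate this to arbitrary sections. Writing general sections as $C^\infty(\nu_N)$-combinations of pullback sections and using $(\hat m_\lambda)_*(f\,s)=\big((m_\lambda^{-1})^*f\big)\,(\hat m_\lambda)_*s$, the Leibniz rule for $[\cdot,\cdot]_\ltimes$ reduces the claim to the standard relation $\big(\phi_*X\big)\big((\phi^{-1})^*g\big)=(\phi^{-1})^*\big(X(g)\big)$ between pushforward of a vector field $X$ along a diffeomorphism $\phi=m_\lambda$ and pullback of a function $g$, which we have already secured through anchor compatibility. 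Hence $\hat m_\lambda$ preserves brackets of all sections.

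Finally, $\hat m_\lambda$ is a vector bundle isomorphism compatible with anchors and brackets, hence a Lie algebroid morphism; applying the same argument to $\lambda^{-1}$ shows that its inverse $\hat m_{\lambda^{-1}}$ is a morphism as well, so $\hat m_\lambda$ is a Lie algebroid automorphism, as claimed. I expect the only genuinely delicate point to be the bracket extension in the third step: because $(\hat m_\lambda)_*$ twists the $C^\infty(\nu_N)$-module structure by $m_\lambda^*$ and is merely $\mathbb{R}$-linear, while $[\cdot,\cdot]_\ltimes$ is only Leibniz, the two compatibilities must be combined carefully rather than treated separately. All remaining verifications are routine.
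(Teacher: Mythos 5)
Your proof is correct and follows essentially the same route as the paper's: both rest on the observation that $\sharp_\ltimes(\mathrm{pr}^*a)$ is a linear vector field, hence invariant under scalar multiplication, and that bracket compatibility is immediate on pullback sections. You merely spell out in more detail the extension from pullback sections to general sections via the Leibniz rule, a step the paper leaves implicit.
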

\begin{proof}
    First note that for all $ a\in \Gamma(A|_N) $ we have
		\begin{equation*}
		m_\lambda^\ast \sharp_\ltimes (\mathrm{pr}^* a)=\sharp_\ltimes (\mathrm{pr}^* a)
		\end{equation*}
	since $ \sharp_\ltimes(\mathrm{pr}^* a)  $ is a linear vector field. Then compatibility with the bracket follows immediately since it clearly holds on pullback sections.
\end{proof}

For later use, we introduce the following notion.
\begin{definition}
A Lie algebroid $A\Ato M$ is called \textbf{linearisable} around the invariant submanifold $N\subset M$ if $A$ is isomorphic  in a neighbourhood of $N$ to the action Lie algebroid $A|_N\ltimes \nu_N\Ato \nu_N$ restricted to a neighbourhood of $N$. 
\end{definition}

 The dual VB-algebroid $\nu_{A|_N}(A)^*\Ato N$ is the semi-direct product associated to the conormal representation of $A|_N$ on \[\nu_N^\ast\to N.\]

The corresponding representation up to homotopy is therefore the classical representation
on $\nu_N^*$, and thus, we obtain the following simplification of Theorem \ref{theorem:first_page_submanifold}.

\begin{theorem}\label{theorem:invariant}
    Let $N\subset M$ be closed, embedded submanifold which is invariant for $A\Ato M$. The Serre spectral sequence of the Lie subalgebroid $L=A|_N$ converges to the formal Lie algebroid cohomology $\mathrm{H}^\bullet(\mathscr{J}^\infty_N\Omega(A,V))$, and its first page is given by
        \[ E_1^{p,q}\simeq\mathrm{H}^{p+q}(A|_N, S^p\nu_N^\ast\otimes V|_N). \]
\end{theorem}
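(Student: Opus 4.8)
The statement combines the two general results already established for subalgebroids over closed embedded submanifolds, specialised to the invariant case, so the plan is short. For convergence I would simply invoke Theorem \ref{theorem:convergence_submanifold}: because $N$ is invariant we have $\sharp(A|_N)\subset TN$, so $L=A|_N$ is genuinely a Lie subalgebroid over the closed, embedded submanifold $N$, and that theorem yields convergence of the associated Serre spectral sequence to the formal cohomology $\mathrm{H}^\bullet(\mathscr{J}^\infty_N\Omega(A,V))$.

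For the first page I would start from the general identification
\[E_1^{p,q}\simeq \mathrm{H}(L,\wedge^p(\nu_N^\ast\oplus L^\circ)\otimes V|_N)^q\]
of Theorem \ref{theorem:first_page_submanifold} and simplify the coefficient representation up to homotopy. The decisive input is that $L^\circ=0_N$ in the invariant case. Substituting this into the decomposition of Example \ref{example:conormal_VB},
\[\wedge^p(\nu_N^\ast\oplus L^\circ)=\bigoplus_{k=0}^p S^k(\nu_N^\ast)\otimes\wedge^{p-k}(L^\circ),\]
kills every summand except $k=p$, leaving the single-degree bundle $S^p(\nu_N^\ast)$ placed in degree $-p$. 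Moreover, as recorded just before the statement, the dual VB-algebroid $\nu_{A|_N}(A)^\ast$ is the semidirect product for the conormal representation of $A|_N$ on $\nu_N^\ast$; structurally this is because the core map $\partial$ and curvature $\gamma$ from the construction following \eqref{eq:ruth_splitting} both vanish, so the representation up to homotopy on $\nu_N^\ast$ is literally the classical conormal representation. Taking its graded $p$-th exterior power then produces the classical representation on $S^p\nu_N^\ast$, concentrated in degree $-p$.

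It remains only to unwind the total-degree convention of the representation up to homotopy complex. With coefficients $E=S^p\nu_N^\ast\otimes V|_N$ concentrated in degree $j=-p$, the defining grading $\Omega(L,E)^q=\bigoplus_{i+j=q}\Omega^i(L,E^j)$ collapses to $\Omega(L,E)^q=\Omega^{p+q}(L,S^p\nu_N^\ast\otimes V|_N)$, and the differential $D$ becomes the ordinary Lie algebroid differential $d_{A|_N}$. Passing to cohomology gives $E_1^{p,q}\simeq\mathrm{H}^{p+q}(A|_N,S^p\nu_N^\ast\otimes V|_N)$, as claimed. I do not expect any genuine difficulty here; the one point requiring care is the bookkeeping of this degree shift, namely checking that the graded exterior power of the odd (degree $-1$) object $\nu_N^\ast$ is the symmetric power $S^p\nu_N^\ast$ in degree $-p$, and that this shift is exactly what converts the total degree $q$ into the classical cohomological degree $p+q$. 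Since all structural input has already been assembled in Theorem \ref{theorem:first_page_submanifold} and in the preceding discussion of the action Lie algebroid $A|_N\ltimes\nu_N$, the argument reduces to this verification.
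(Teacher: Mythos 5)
Your proposal is correct and follows the same route as the paper: convergence is quoted from Theorem \ref{theorem:convergence_submanifold}, and the first page is obtained from Theorem \ref{theorem:first_page_submanifold} by observing that $L^\circ=0_N$ collapses $\wedge^p(\nu_N^\ast\oplus L^\circ)$ to $S^p\nu_N^\ast$ in degree $-p$, that the representation up to homotopy degenerates to the classical conormal representation (the paper phrases this via the dual VB-algebroid being the semidirect product for the conormal representation), and that the degree shift turns total degree $q$ into cohomological degree $p+q$.
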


This theorem has a straightforward consequence.

\begin{corollary}\label{corollary:vanishing:cohomology}
If $\mathrm{H}^k(A|_N,S^p\nu_N^*\otimes V|_N)=0$ for all $p\geq 0$, then
\[\mathrm{H}^{k}(\mathscr{J}^\infty_N\Omega(A,V))=0.\]
\end{corollary}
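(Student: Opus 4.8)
The plan is to read the vanishing off directly from the first page supplied by Theorem~\ref{theorem:invariant} and then propagate it to the limit via convergence. Recall that for the Lie subalgebroid $L=A|_N$, Theorem~\ref{theorem:invariant} gives a canonical identification
\[
E_1^{p,q}\simeq \mathrm{H}^{p+q}(A|_N, S^p\nu_N^\ast\otimes V|_N),
\]
and asserts that the Serre spectral sequence converges to $\mathrm{H}^\bullet(\mathscr{J}^\infty_N\Omega(A,V))$. The key observation is that the terms contributing to a fixed total degree $k$ sit on the anti-diagonal $p+q=k$, where the first-page term specialises to $E_1^{p,k-p}\simeq \mathrm{H}^{k}(A|_N, S^p\nu_N^\ast\otimes V|_N)$ for every $p\geq 0$.

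First I would fix the cohomological degree $k$ and invoke the hypothesis. Since $\mathrm{H}^k(A|_N,S^p\nu_N^\ast\otimes V|_N)=0$ for all $p\geq 0$, the isomorphism above forces $E_1^{p,q}=0$ for every pair $(p,q)$ with $p+q=k$ (recall that $p\geq 0$ throughout). Next, because each $E_r^{p,q}$, and in particular $E_\infty^{p,q}$, is a subquotient of $E_1^{p,q}$, the vanishing persists to the final page: $E_\infty^{p,q}=0$ whenever $p+q=k$.

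Finally I would use convergence to pass from $E_\infty$ to the limit. Convergence equips $\mathrm{H}^k(\mathscr{J}^\infty_N\Omega(A,V))$ with a decreasing filtration $F^\bullet$ satisfying $F^0=\mathrm{H}^k(\mathscr{J}^\infty_N\Omega(A,V))$ and $F^p/F^{p+1}\simeq E_\infty^{p,k-p}$. As all these graded pieces vanish, $F^p=F^{p+1}$ for all $p\geq 0$, whence $\mathrm{H}^k(\mathscr{J}^\infty_N\Omega(A,V))=F^0=\bigcap_{p\geq 0}F^p$. The filtration is Hausdorff — this is exactly what is established in the proof of Theorem~\ref{theorem:convergence_submanifold}, where the induced filtration on the jet complex $\mathscr{J}^\infty_N\Omega(A,V)$ is shown to be Hausdorff — so this intersection is zero and $\mathrm{H}^k(\mathscr{J}^\infty_N\Omega(A,V))=0$.

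The argument is essentially formal once Theorem~\ref{theorem:invariant} is in hand, and the only point demanding genuine care, hence the main (if minor) obstacle, is this last step: ensuring that the vanishing of the associated graded forces the vanishing of the filtered object. Because the codimension of $N$ is positive, the filtration is infinite rather than finite, so one cannot conclude by a finite induction and must instead rely on the Hausdorff property of the filtration on formal cohomology.
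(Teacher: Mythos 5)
Your argument is correct and is precisely the deduction the paper has in mind: Corollary~\ref{corollary:vanishing:cohomology} is presented there as a ``straightforward consequence'' of Theorem~\ref{theorem:invariant} with no separate proof, and reading the vanishing off the anti-diagonal $p+q=k$ of the first page and then passing through convergence is exactly that deduction. The only cosmetic imprecision is in your final step: what you need is Hausdorffness of the induced filtration on the cohomology $\mathrm{H}^k(\mathscr{J}^\infty_N\Omega(A,V))$, which is part of what ``converges'' asserts in Theorem~\ref{theorem:convergence_submanifold}, rather than literally the Hausdorffness of the filtration on the jet complex itself.
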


The assumptions of the corollary hold, for example in the following cases:
\begin{itemize}
    \item $N$ is a point, the Lie algebra $\mathfrak{g}=A|_N$ is semisimple and $k=1$ or $k=2$. This is the Whitehead Lemma.
        \item $A$ is the Lie algebroid of a Hausdorff Lie groupoid $\mathcal{G}\rightrightarrows N$, such that: (1) $\mathcal{G}$ is proper, i.e.\ the map $(t,s):\mathcal{G}\to M\times M$ is proper; (2) the fibres of $t$ have zero de Rham cohomology in degree $i$, with $1\leq i \leq k$; and (3) $V|_N$ integrates to a representation of $\mathcal{G}$. See \cite[Proposition 1 \& Theorem 4]{Crai03}.
    \item $A$ is the Lie algebroid of a Hausdorff Lie groupoid $\mathcal{G}\rightrightarrows N$, such that: (1) the target map $t:\mathcal{G}\to N$ is proper; (2) the fibres of $t$ have zero de Rham cohomology in degree $k$; and (3) $V|_N$ integrates to a representation of $\mathcal{G}$. See \cite[Lemma C.1]{Mar14}. 
\end{itemize}

\begin{remark}\rm
Formal cohomology plays an important role in formal linearisation problems in Poisson geometry (see Section \ref{sec:coupling_poisson} for more on Poisson geometry, including definitions and references). A Poisson manifold $(M,w)$ has an associated cotangent Lie algebroid $A=T^*M$, whose cohomology is called the Poisson cohomology of $(M,w)$. We say that $w$ is \textbf{linearisable} around an invariant submanifold $N\subset M$ (also called a Poisson submanifold), if the Lie algebroid $T^*M$ is linearisable around $N$ (for leaves, this is equivalent to the more standard notion of linearisation discussed in Section \ref{sec:normal_forms_presympl_leaves}, see \cite[Theorem 8.2]{FeMa22}). In degree two, Poisson cohomology encodes infinitesimal deformations of the Poisson structure. Therefore, the formal infinitesimal rigidity of $w$ around the Poisson submanifold $N\subset M$ translates to
\begin{equation}\label{eq:inf_rig}
    \mathrm{H}^2(\mathscr{J}^\infty_N\Omega(T^*M))=0.
\end{equation}
Formal rigidity was studied first around points, i.e.\ when $N=\{x\}$, and it was shown that if $\mathfrak{g}:=A_x$ is semisimple, then the Poisson structure $w$ is formally linearisable around $x$ \cite[Theorem 6.1]{Wein83}. This result was extended to symplectic leaves in \cite[Theorem 7.1]{IKV98}, and to general Poisson submanifolds $N$ in \cite[Theorem 1.1]{Mar12}, where it was shown that Poisson structures satisfying $\mathrm{H}^2(T^*M|_N,S^p\nu_N^*)=0$ for all $p$ are formally rigid. Corollary \ref{corollary:vanishing:cohomology} can be seen as the infinitesimal version of this result, as it gives the infinitesimal version of formal rigidity, i.e.\ that \eqref{eq:inf_rig} holds. 
\end{remark}

Next we show that if the Lie algebroid is linearisable around the invariant submanifold, then the Serre spectral sequence stabilises at $E_1$.

\begin{theorem}\label{theorem:linearisable}
  Let $N\subset M$ be closed, embedded submanifold which is invariant for $A\Ato M$. Suppose that $A$ is linearisable around $N$. Then the Serre spectral sequence associated to the subalgebroid $L=A|_N$ stabilises at $E_1$. Hence, the formal algebroid cohomology is given by 
  \[ \mathrm{H}^\bullet (\mathscr{J}^\infty_N\Omega^\bullet(A))\simeq \prod_{j=0}^\infty \mathrm{H}^\bullet(A|_N,S^j\nu_N^\ast). \]
\end{theorem}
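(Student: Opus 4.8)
The plan is to use the scalar–multiplication automorphisms $m_\lambda$ of the linear model to equip the formal complex with an extra grading that refines the filtration \eqref{eq:filtration_for_A|_N} and is preserved by $d_A$; degeneration at $E_1$ and the product formula then both follow from this splitting. First I would reduce to the linear model. Since the formal cohomology around $N$, the filtration \eqref{eq:filtration_for_A|_N}, and hence the entire Serre spectral sequence depend only on the $\infty$-jets of $A$ along $N$, the linearisation isomorphism between $A$ and $A|_N\ltimes \nu_N$ on a neighbourhood of $N$ induces an isomorphism of filtered complexes $\mathscr{J}^\infty_N\Omega^\bullet(A)\simeq \mathscr{J}^\infty_N\Omega^\bullet(A|_N\ltimes\nu_N)$ (the map preserves $\mathcal{I}_N$, hence the filtration), and by the Mapping Lemma the two spectral sequences are isomorphic. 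It therefore suffices to treat the action Lie algebroid.

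Next I would introduce the weight grading. On $A|_N\ltimes \nu_N=\mathrm{pr}^\ast A|_N\to \nu_N$ the forms $\Omega^k=\Gamma(\nu_N,\wedge^k\mathrm{pr}^\ast(A|_N)^\ast)$ decompose by homogeneity along the fibres of $\nu_N\to N$; the component of fibre-degree $j$ is $\Omega^k(A|_N,S^j\nu_N^\ast)$, on which $m_\lambda$ acts by $\lambda^j$. By Lemma \ref{lemma:linearisable_scalar_multiplication_is_morph}, $m_\lambda$ is a Lie algebroid automorphism, so $m_\lambda^\ast d_A=d_A m_\lambda^\ast$; differentiating in $\lambda$ shows that the Euler operator generating $m_\lambda$ commutes with $d_A$, i.e.\ $d_A$ preserves the weight. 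Passing to $\infty$-jets, which are the full Taylor expansions along $N$, the formal complex splits as a product of weight-pure subcomplexes
\[ \mathscr{J}^\infty_N\Omega^\bullet(A|_N\ltimes\nu_N)\simeq \prod_{j=0}^\infty\big(\Omega^\bullet(A|_N,S^j\nu_N^\ast),d_{A|_N}\big), \]
the degree-$j$ factor being the $A|_N$-complex with coefficients in the representation $S^j\nu_N^\ast$, consistent with the computation of $d_0$ behind Theorem \ref{theorem:invariant}.

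I would then match the weight to the filtration. By \eqref{eq:filtration_for_A|_N} the filtration is $\hat{\mathcal{F}}^p=\mathcal{I}_N^p$, whose associated graded in degree $p$ is exactly the weight-$p$ summand, so $\hat{\mathcal{F}}^p=\prod_{j\geq p}(\text{weight }j)$ and the filtration is split by the weight grading. For such a grading-induced filtration the spectral sequence degenerates at $E_1$: every class in $E_1^{p,q}$ has a pure weight-$p$ representative $\omega_p$, and since $d_A\omega_p$ is again pure of weight $p$, the $E_1$-cocycle condition $d_A\omega_p\in\hat{\mathcal{F}}^{p+1}$ (weight $\geq p+1$) already forces $d_A\omega_p=0$; consequently all $d_r$ with $r\geq 1$ vanish. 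Reading off the limit, equivalently using that cohomology commutes with products, yields $\mathrm{H}^\bullet(\mathscr{J}^\infty_N\Omega^\bullet(A))\simeq \prod_{j\geq 0}\mathrm{H}^\bullet(A|_N,S^j\nu_N^\ast)$.

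The main obstacle I expect is the bookkeeping in the formal setting: verifying that $d_A$ is \emph{strictly} weight-preserving on the linear model, which rests on the anchor sending pullback sections to weight-zero linear vector fields and on the bracket being weight-preserving on pullback sections, and checking that the identification of the weight-$p$ subcomplex with $(\Omega^\bullet(A|_N,S^p\nu_N^\ast),d_{A|_N})$ agrees with the one used to compute $E_1$ in Theorem \ref{theorem:invariant}, so that the degeneration statement and the product formula match the spectral sequence under consideration.
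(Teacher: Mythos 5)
Your proposal is correct and follows essentially the same route as the paper: reduce to the action Lie algebroid $A|_N\ltimes\nu_N$, identify $\Gamma(S^p\nu_N^\ast)$ with fibrewise homogeneous polynomials, and use Lemma \ref{lemma:linearisable_scalar_multiplication_is_morph} (that $m_\lambda$ is a Lie algebroid automorphism, hence commutes with $d_A$) to conclude that the homogeneity decomposition is preserved by the differential. The paper packages this as a differential-compatible splitting $\sigma=\mathrm{pr}^\ast\otimes\mathrm{Pol}$ of the sequence \eqref{eq:ses_for_linearisable} whose image is the weight-$p$ subcomplex, which is exactly your observation that every $E_1$-class admits a pure-weight, $d_A$-closed representative.
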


\begin{proof}
The formal cohomology of $A$ around $N$ is canonically isomorphic to the formal cohomology of $A|_U$, for any neighbourhood $U$ of $N$. Since $A\Ato M$ is linearisable around $N$, we may therefore assume that $A=A|_N\ltimes \nu_N$. To deduce the result, it suffices to show that the short exact sequence of cochain complexes
\begin{equation}\label{eq:ses_for_linearisable}
    0\to \mathcal{F}_{A|_N}^{p+1}\Omega^{p+q}(A)\to \mathcal{F}_{A|_N}^{p}\Omega^{p+q}(A)\to E_0^{p,q}\to 0
\end{equation}
admits a splitting $\sigma: E_0^{p,q}\to \mathcal{F}_{A|_N}^{p}\Omega^{p+q}(A)$ which is compatible with the differentials. Indeed, if $\eta\in E_0^{p,q}$ is $d_0$-closed then $\sigma(\eta)$ is $d_A$-closed. Thus all subsequent differentials $d_r$, $r>0$ are zero, $E_\infty^{p,q}=E_1^{p,q}$ and the statement follows.

To build the splitting, we use the canonical identification
    \[ \mathrm{Pol}: \Gamma(S^p\nu_N^\ast)\diffto \mathrm{Pol}^p(\nu_N),\]
    where $\mathrm{Pol}^p(\nu_N)\subset C^\infty(\nu_N)$ denotes the homogeneous polynomials of degree $p$ on $\nu_N$. This yields the splitting  $\sigma:=\mathrm{pr}^\ast\otimes \mathrm{Pol}: \Omega^{p+q}(A|_N,S^p\nu_N^\ast)\to \Omega^{p+q}(A)$ of \eqref{eq:ses_for_linearisable}. To show that $\sigma$ is compatible with the differentials, it suffices to show that its image is a subcomplex. For this, note that the image of $\sigma$ can be characterised as forms $\theta\in \Omega^{p+q}(A)$ such that,
    \[ m_\lambda^\ast \theta=\lambda^p \theta,\qquad \textrm{for all }\quad \lambda\in \mathbb{R}\backslash\{0\}. \] 
Since $m_\lambda$ is an automorphism of $A$ (Lemma \ref{lemma:linearisable_scalar_multiplication_is_morph}), it follows that $d_A\circ m_{\lambda}^*= m_{\lambda}^*\circ d_A$. Therefore, the image of $\sigma$ is a subcomplex. 
\end{proof}

\subsubsection{Finite jets along invariant submanifolds}

In this subsection we discuss a version of the spectral sequence for finite order jets. 

Let $N\subset M$ be a closed, embedded invariant submanifold of $A\Ato M$. For $k\in \mathbb{N}$, the space of $k$-th order jets along $N$ of forms on $A$ with values in $V$ is defined as
\[ \mathscr{J}^k_N\Omega^\bullet(A,V)=\Omega^\bullet(A,V) / \mathcal{I}_N^{k+1} \Omega^\bullet(A,V).\]
By \eqref{eq:filtration_for_A|_N}, $\mathcal{I}_N^{k+1} \Omega^\bullet(A,V)$ is a differential graded $\Omega^\bullet(A)$-submodule of $\Omega^\bullet(A,V)$. Consequently, there is an induced differential on $\mathscr{J}^{k}_N\Omega^\bullet(A,V)$, giving rise to cohomology of finite jets.

If $L\Ato N$ is a Lie subalgebroid, the inclusion induces a filtration on $\mathscr{J}^k_N\Omega^\bullet(A,V)$ by
\begin{equation}\label{eq:filtration_finite_jets}
\mathcal{F}^p_L\mathscr{J}^k_N\Omega^\bullet(A,V):=\mathcal{F}^p_L\Omega^\bullet(A,V) / (\mathcal{I}_N^{k+1} \Omega^\bullet(A,V)\cap \mathcal{F}^p_L\Omega^\bullet(A,V) ). 
\end{equation}
The corresponding spectral sequence always converges.

\begin{theorem}
    Let $L\Ato N$ be a Lie subalgebroid of $A\Ato M$ over the closed, embedded invariant submanifold $N\subset M$. Fix $k\in \mathbb{N}$. 
    The spectral sequence induced by the filtration     \eqref{eq:filtration_finite_jets} stabilises at page $r+1$, where $r:=k+\mathrm{rank}(A)-\mathrm{rank}(L)$, and converges to $\mathrm{H}^\bullet(\mathscr{J}^k_N\Omega(A,V))$.
     The zeroth page is given by 
        \[ E_0^{p,q}\simeq \bigoplus_{i=\mathrm{max}\{0,p-k\}}^p \Omega^{p+q-i}(L,S^{p-i}\nu_N^\ast\otimes \Lambda^iL^\circ \otimes V|_N)
     \]
    with $d_0$ corresponding to the representation up to homotopy of $L$ on $\wedge^p (\nu_N^*\oplus L^{\circ})\otimes V|_N$. 
\end{theorem}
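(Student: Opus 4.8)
The plan is to deduce the theorem from the $\infty$-jet computation of Lemma~\ref{lemma:submanifold_spaces_E0} and Theorem~\ref{theorem:first_page_submanifold} via the projection to finite jets, exploiting throughout the invariance of $N$. First I would check that the finite-jet complex is well defined. Since $N$ is invariant, $\sharp(a)(\mathcal{I}_N)\subset\mathcal{I}_N$ for every $a\in\Gamma(A)$, so by the Leibniz rule $\sharp(a)(\mathcal{I}_N^{k+1})\subset\mathcal{I}_N^{k+1}$, and hence $d_Af\in\mathcal{I}_N^{k+1}\Omega^1(A)$ for $f\in\mathcal{I}_N^{k+1}$. Thus $\mathcal{I}_N^{k+1}\Omega^\bullet(A,V)$ is a differential graded submodule and \eqref{eq:filtration_finite_jets} is a differential filtration of the quotient complex $\mathscr{J}^k_N\Omega^\bullet(A,V)$.

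Next I would compute $E_0$ in the local model of Lemma~\ref{lemma:submanifold_spaces_E0}: over a tubular neighbourhood $\mathrm{pr}:E\to N$ with $A|_N=L\oplus C$ and $A|_E=\mathrm{pr}^\ast L\oplus\mathrm{pr}^\ast C$, grade forms by the number $i$ of $\mathrm{pr}^\ast C^\ast$-factors, so that $\mathcal{F}^p_L\Omega^{p+q}(A|_E,V|_E)=\bigoplus_i\mathcal{I}_N^{\max\{0,p-i\}}\Gamma(F_i)$ with $F_i=\wedge^i\mathrm{pr}^\ast C^\ast\otimes\wedge^{p+q-i}\mathrm{pr}^\ast L^\ast\otimes\mathrm{pr}^\ast V|_N$. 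Passing to $\mathscr{J}^k_N$ and to the associated graded, the $i$-th summand of $E_0^{p,q}=\mathcal{F}^p_L/(\mathcal{F}^{p+1}_L+\mathcal{I}_N^{k+1}\Omega\cap\mathcal{F}^p_L)$ becomes $\mathcal{I}_N^{\,p-i}\Gamma(F_i)/(\mathcal{I}_N^{\,p-i+1}\Gamma(F_i)+\mathcal{I}_N^{k+1}\Gamma(F_i))$ for $0\le i\le p$ (and vanishes for $i>p$); this is zero precisely when $p-i\ge k+1$, and otherwise equals $\Gamma(S^{p-i}\nu_N^\ast\otimes\wedge^iL^\circ\otimes V|_N)$ by the isomorphism $\mathcal{I}_N^{m}\Gamma(F)/\mathcal{I}_N^{m+1}\Gamma(F)\simeq\Gamma(S^{m}\nu_N^\ast\otimes F|_N)$. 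The surviving range is $\max\{0,p-k\}\le i\le p$, yielding the stated formula for $E_0^{p,q}$.

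For the differential I would use that the jet projection $\mathscr{J}^\infty_N\Omega^\bullet(A,V)\to\mathscr{J}^k_N\Omega^\bullet(A,V)$ is a surjective map of filtered complexes, inducing on zeroth pages exactly the projection killing the summands with $p-i\ge k+1$. Writing $j:=p-i$ for the $S^j\nu_N^\ast$-power, Theorem~\ref{theorem:first_page_submanifold} identifies the $\infty$-jet $d_0$ with the $p$-th exterior power $D$ of the representation up to homotopy of $L$ on $\nu_N^\ast\oplus L^\circ$ from Example~\ref{example:conormal_VB}. The invariance of $N$ enters decisively here: the core map of the VB-algebroid \eqref{eq:normal_VB_algebroid} is $A|_N/L\to\nu_N$, $[c]\mapsto\sharp(c)\bmod TN=0$, so the dual core map $\partial^\ast$ vanishes and the only structure maps of $D$ are the connection terms (which preserve $j$) and the curvature term $\gamma^\ast$ (which raises $j$ by one). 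Thus $D$ never lowers $j$, the subspace $\{j\ge k+1\}$ is a $D$-subcomplex of the $\infty$-jet $E_0^{p,\bullet}$, and the finite-jet page $E_0^{p,\bullet}$ is the corresponding quotient complex; its induced differential is therefore the representation up to homotopy of $L$ on $\wedge^p(\nu_N^\ast\oplus L^\circ)\otimes V|_N$, read off on the truncated graded bundle. This identification is the main obstacle: without invariance the core map would lower $j$ and the truncated spaces would fail to form a quotient complex, so it is the vanishing $\partial=0$ that makes the clean statement possible.

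Finally, for stabilisation and convergence I would note that the filtration degree of a nonzero homogeneous element of $\mathscr{J}^k_N\Omega^\bullet(A,V)$ equals its number of $C^\ast$-factors plus its $\mathcal{I}_N$-order, hence is at most $\mathrm{rank}(C)+k=\mathrm{rank}(A)-\mathrm{rank}(L)+k=r$. Therefore $\mathcal{F}^{r+1}_L\mathscr{J}^k_N\Omega^\bullet(A,V)=0$, so the filtration is bounded of length $r$ and $E_s^{p,q}=0$ unless $0\le p\le r$. Since $d_s:E_s^{p,q}\to E_s^{p+s,q-s+1}$, for $s\ge r+1$ every differential into or out of $E_s^{p,q}$ has a vanishing source or target, so the spectral sequence stabilises at page $r+1$; the standard convergence theorem for bounded filtrations then yields $\mathrm{H}^\bullet(\mathscr{J}^k_N\Omega(A,V))\simeq\bigoplus_{p+q=\bullet}E^{p,q}_{r+1}$.
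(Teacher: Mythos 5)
Your proposal is correct and follows essentially the same route as the paper: restrict to a tubular neighbourhood, use the explicit description of $\mathcal{F}^p_L$ from (the proof of) Lemma \ref{lemma:submanifold_spaces_E0} to compute the filtration and the zeroth page on $k$-jets, conclude $\mathcal{F}^{r+1}_L\mathscr{J}^k_N\Omega^\bullet(A,V)=0$ and hence stabilisation at page $r+1$, and identify $d_0$ as in Theorem \ref{theorem:first_page_submanifold}. Your additional observation that invariance of $N$ forces the core map $\partial$ to vanish, so that the truncation $\{p-i\geq k+1\}$ is a subcomplex and the finite-jet $E_0$ is the corresponding quotient complex, is a correct and welcome elaboration of the step the paper dismisses as ``completely analogous''.
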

\begin{proof}
    We restrict to a tubular neighbourhood $E\subset M$ of $N$. By the proof of Lemma \ref{lemma:submanifold_spaces_E0} and the definition \eqref{eq:filtration_finite_jets}, 
    \[  \mathcal{F}^p_L\mathscr{J}^k_N\Omega^\bullet(A|_E,V|_E)= \bigoplus_{i=\mathrm{max}\{0,p-k\}}^{p} \mathcal{I}_N^{p-i}\Gamma(\wedge^i\mathrm{pr}^\ast L^\circ\otimes \wedge^{{\bullet}-i}\mathrm{pr}^\ast L^\ast\otimes \mathrm{pr}^\ast V|_N).\]
    This implies that $\mathcal{F}^p_L\mathscr{J}^k_N\Omega^\bullet(A,V)=0$ if $p\geq r+1$. Hence the filtration is finite and the spectral sequence stabilises as claimed. The rest of the proof is completely analogous the proof of Theorem \ref{theorem:first_page_submanifold}.
\end{proof}

For $L=A|_N$, we obtain the following simplifications, analogue to Theorems \ref{theorem:invariant} and  \ref{theorem:linearisable}.

\begin{theorem}
    Let $N\subset M$ be a closed, embedded invariant submanifold of $A\Ato M$.     \begin{enumerate}
        \item The first page of the spectral sequence converging to $\mathrm{H}^\bullet(\mathscr{J}^k_N\Omega(A,V))$ is given by
        \[ E_1^{p,q}\simeq\begin{cases}
            \mathrm{H}^{p+q}(A|_N, S^p\nu_N^\ast\otimes V|_N)&\text{ for }p\leq k\\
            0 &\text{ otherwise.}
        \end{cases} \]
        \item If $A\Ato M$ is linearisable around $N$, then the spectral sequence stabilises at $E_1$ and so
    \[ \mathrm{H}^\bullet (\mathscr{J}^k_N\Omega^\bullet(A))\simeq \prod_{j=0}^{k} \mathrm{H}^\bullet(A|_N,S^j\nu_N^\ast). \]
    \end{enumerate}
\end{theorem}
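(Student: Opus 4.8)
The plan is to obtain both statements as the finite-jet counterparts of Theorems~\ref{theorem:invariant} and~\ref{theorem:linearisable}, by specialising the preceding finite-jet theorem to the subalgebroid $L=A|_N$. For part~(1), I would start from the zeroth-page description established just above,
\[ E_0^{p,q}\simeq \bigoplus_{i=\max\{0,p-k\}}^p \Omega^{p+q-i}(L,S^{p-i}\nu_N^\ast\otimes \wedge^i L^\circ \otimes V|_N). \]
Since $L=A|_N$ is all of $A$ along $N$, its annihilator vanishes, $L^\circ=0_N$, so $\wedge^i L^\circ=0$ for every $i\geq 1$ and only the $i=0$ summand can contribute. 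That summand appears precisely when $\max\{0,p-k\}=0$, i.e.\ when $p\leq k$; for $p>k$ the summation index satisfies $i\geq p-k\geq 1$, whence $E_0^{p,q}=0$. For $p\leq k$ we are left with $E_0^{p,q}\simeq \Omega^{p+q}(L,S^p\nu_N^\ast\otimes V|_N)$, and because $L^\circ=0$ the representation up to homotopy degenerates to the classical action-algebroid representation on $S^p\nu_N^\ast$, exactly as in Theorem~\ref{theorem:invariant}. Passing to $d_0$-cohomology then yields the asserted value of $E_1^{p,q}$.

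For part~(2), assume $A$ is linearisable around $N$. As formal and finite-jet cohomology depend only on a neighbourhood of $N$, I may replace $A$ by the action Lie algebroid $A|_N\ltimes \nu_N$ and then repeat the splitting argument of Theorem~\ref{theorem:linearisable}. Using the polynomial identification $\mathrm{Pol}:\Gamma(S^p\nu_N^\ast)\diffto \mathrm{Pol}^p(\nu_N)$ together with the pullback, the map $\sigma:=\mathrm{pr}^\ast\otimes \mathrm{Pol}$ splits the short exact sequence defining $E_0^{p,q}$, and its image consists of the $m_\lambda$-homogeneous forms of weight $p$, i.e.\ those $\theta$ with $m_\lambda^\ast\theta=\lambda^p\theta$. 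Since $m_\lambda$ is a Lie algebroid automorphism (Lemma~\ref{lemma:linearisable_scalar_multiplication_is_morph}) it commutes with $d_A$, so this image is a subcomplex and $\sigma$ is compatible with the differentials. The additional point beyond Theorem~\ref{theorem:linearisable} is that we work in the quotient $\mathscr{J}^k_N\Omega^\bullet(A)$: the homogeneity decomposition descends to the jet complex, and quotienting by $\mathcal{I}_N^{k+1}$ simply discards the homogeneity weights exceeding $k$. Hence the finite-jet complex splits as the finite direct sum of the subcomplexes of weights $0\leq j\leq k$, each isomorphic to $\Omega^\bullet(A|_N,S^j\nu_N^\ast)$. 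As closed classes in each weight lift via $\sigma$ to genuinely $d_A$-closed forms, all higher differentials $d_r$ with $r\geq 1$ vanish, the spectral sequence stabilises at $E_1$, and the cohomology equals the (finite, hence product) sum $\prod_{j=0}^k \mathrm{H}^\bullet(A|_N,S^j\nu_N^\ast)$.

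The step I expect to require the most care is matching the jet truncation with the homogeneity grading in part~(2): one must check that, under the polynomial identification afforded by linearisability, the ideal $\mathcal{I}_N^{k+1}$ corresponds exactly to the homogeneity weights strictly greater than $k$, so that passing to $\mathscr{J}^k_N$ neither mixes distinct weights nor truncates inside a single homogeneity component. Once this bookkeeping is in place, the result follows from the finiteness of the filtration supplied by the preceding theorem together with the automorphism property of $m_\lambda$; part~(1) is then essentially formal, reducing as it does to the vanishing $L^\circ=0$.
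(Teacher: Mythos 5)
Your proposal is correct and follows exactly the route the paper intends: the paper gives no separate proof of this theorem, stating only that it is obtained "analogous to Theorems \ref{theorem:invariant} and \ref{theorem:linearisable}", and your argument carries out precisely that specialisation — part (1) by setting $L^\circ=0$ in the zeroth page of the preceding finite-jet theorem, part (2) by rerunning the homogeneity splitting $\sigma=\mathrm{pr}^\ast\otimes\mathrm{Pol}$ in the jet quotient. Your explicit check that $\mathcal{I}_N^{k+1}$ matches the homogeneity weights $>k$ under the linear model is the right point to worry about and is handled correctly.
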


\subsubsection{Transverse submanifolds and locality}

Another special class of submanifolds is given by transversals. A \textbf{transversal} $\iota: X\hookrightarrow M$ is an embedded submanifold, such that the inclusion is transverse to the anchor. Then $\iota^! A=\sharp^{-1}(TX)$ is a Lie subalgebroid. Note that any Lie subalgebroid of $A$ over $X$ is contained in $\iota^!A$. By \cite[Theorem 4.1]{BLM19} there exists a tubular neighbourhood $\mathrm{pr}: E\to X$ of $X$ over which there is an isomorphism of Lie algebroids 
\begin{equation}\label{eq:tranverse_trivial}
 A|_E\simeq \mathrm{pr}^! \iota^! A.
 \end{equation}

This simple local form also manifests itself at the level of cohomology. 

\begin{theorem}\label{theorem:formal_of_transversal}
    Let $\iota: X\hookrightarrow M$ be a closed transversal of $A\Ato M$. Then the first page of the Serre spectral sequence associated $\iota^! A$ is given by
    \[ E_1^{p,q}\simeq \mathrm{H}^{p+q}(\iota^! A, V|_X). \]
    Moreover, the spectral sequence stabilises at the first page, and so
     \begin{equation}\label{eq:iso:formal:restr}
\mathrm{H}^\bullet(\mathscr{J}_X^\infty\Omega^\bullet(A,V))\simeq \mathrm{H}^\bullet(\iota^! A,V|_X). 
\end{equation}
\end{theorem}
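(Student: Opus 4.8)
The plan is to combine the general first-page computation of Theorem~\ref{theorem:first_page_submanifold} with the special geometry of a transversal, namely that transversality forces the core map of the associated VB-algebroid to be an isomorphism. Concretely, write $L=\iota^!A=\sharp^{-1}(TX)$, which is a Lie subalgebroid over the closed, embedded submanifold $X$, so that Theorem~\ref{theorem:convergence_submanifold} guarantees convergence of the Serre spectral sequence to the formal cohomology $\mathrm{H}^\bullet(\mathscr{J}_X^\infty\Omega^\bullet(A,V))$, while Theorem~\ref{theorem:first_page_submanifold} identifies $E_1^{p,q}\simeq \mathrm{H}(L,\wedge^p(\nu_X^\ast\oplus L^\circ)\otimes V|_X)^q$. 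The whole statement then reduces to understanding the conormal representation up to homotopy $\nu_X^\ast\oplus L^\circ$ of Example~\ref{example:conormal_VB} in the transversal situation.

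First I would analyse the core map of the VB-algebroid $\nu_L(A)\Ato\nu_X$ from Example~\ref{example:double_LA_normalbundle}. Its core map $\partial\colon A|_X/L\to\nu_X$ is (minus) the map induced on quotients by the anchor $\sharp\colon A|_X\to TM|_X$, which is well defined because $\sharp(L)\subset TX$. I claim transversality makes $\partial$ an isomorphism: the condition $\sharp(A_x)+T_xX=T_xM$ shows that $\partial$ is fibrewise surjective, and since $\mathrm{rank}(A|_X/L)=\mathrm{rank}(A)-\mathrm{rank}(L)=\mathrm{codim}(X)=\mathrm{rank}(\nu_X)$ (again a consequence of transversality, via rank-nullity applied to $L_x=\sharp^{-1}(T_xX)$), it is a fibrewise isomorphism. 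Dualising over $L$ as in Remark~\ref{remark:dual_VB}, the core map $\nu_X^\ast\to L^\circ$ of the conormal representation up to homotopy equals $-\partial^\ast$, hence is also an isomorphism.

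Next I would feed this into the exterior-power description of Example~\ref{example:conormal_VB}. Using $\wedge^p(\nu_X^\ast\oplus L^\circ)=\bigoplus_{k=0}^{p}S^k(\nu_X^\ast)\otimes\wedge^{p-k}(L^\circ)$, the internal (core) part of the representation-up-to-homotopy differential is exactly the Koszul differential built from the isomorphism $-\partial^\ast$, and for every $p\geq 1$ the Koszul complex of an isomorphism is acyclic. To upgrade acyclicity of this complex of vector bundles to vanishing of $L$-cohomology, I would filter $\Omega(L,\wedge^p(\nu_X^\ast\oplus L^\circ)\otimes V|_X)$ by internal degree: since the coefficient bundle is bounded (degrees $-p$ to $0$), the resulting auxiliary spectral sequence converges, and its first page is $\Omega(L,-)$ applied to the cohomology of the core complex, which is zero for $p\geq 1$. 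Hence $E_1^{p,q}=0$ for all $p\geq 1$, while for $p=0$ the coefficient reduces to the trivial line and $E_1^{0,q}\simeq\mathrm{H}^q(L,V|_X)=\mathrm{H}^q(\iota^!A,V|_X)$; in particular the first page is concentrated in the single column $p=0$.

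Finally, because $E_1$ is supported in one column, every differential $d_r$ with $r\geq 1$ has either its source or its target in a vanishing column, so the spectral sequence stabilises at $E_1$ and $E_\infty=E_1$. Convergence from Theorem~\ref{theorem:convergence_submanifold} then gives $\mathrm{H}^\bullet(\mathscr{J}_X^\infty\Omega^\bullet(A,V))\simeq E_1^{0,\bullet}\simeq\mathrm{H}^\bullet(\iota^!A,V|_X)$, which is the isomorphism \eqref{eq:iso:formal:restr}. This is precisely the cohomological shadow of the local normal form \eqref{eq:tranverse_trivial}: under $A|_E\simeq\mathrm{pr}^!\iota^!A$ the normal directions of $X$ are the fibres of $\mathrm{pr}$, which are formally acyclic, so formal cohomology along $X$ only sees $\iota^!A$. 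I expect the genuinely nontrivial step to be the third paragraph, namely deducing the vanishing of the $L$-cohomology of $\wedge^p(\nu_X^\ast\oplus L^\circ)$ for $p\geq 1$ from acyclicity of the core complex; the filtration-by-internal-degree argument (equivalently, the homotopy invariance of representations up to homotopy in the sense of \cite{AbCr12}) is what makes this rigorous, the isomorphy of the core map being the decisive input furnished by transversality.
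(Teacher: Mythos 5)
Your proposal is correct, but it takes a genuinely different route from the paper. The paper deduces Theorem \ref{theorem:formal_of_transversal} as an immediate consequence of Lemma \ref{lemma:coregular_homotopy} applied with $L=\iota^!A$: using the normal form $A|_E\simeq \mathrm{pr}^!\iota^!A$ of \cite[Theorem 4.1]{BLM19}, one builds the Euler-vector-field homotopy operator $h$ of Remark \ref{remark:cohomology_for_transversals}, checks that $h$ and the flow $\mu_t$ respect the filtration, and concludes via the Mapping Lemma that the Serre spectral sequence of $L\hookrightarrow A$ is isomorphic from page $1$ on to that of $L\hookrightarrow\iota^!A$ --- which for $L=\iota^!A$ is trivially concentrated in the column $p=0$ and degenerate. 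You instead stay entirely inside the algebraic description of $E_1$ from Theorem \ref{theorem:first_page_submanifold}: transversality makes the core map $\partial\colon A|_X/L\to\nu_X$ of the VB-algebroid $\nu_L(A)$ a fibrewise isomorphism (your surjectivity-plus-rank-count argument is right, since $\ker(A_x\to\nu_{X,x})=\sharp^{-1}(T_xX)=L_x$), hence the internal differential of $\wedge^p(\nu_X^\ast\oplus L^\circ)$ is an acyclic Koszul complex for $p\geq 1$, and the filtration-by-form-degree argument correctly upgrades this to $E_1^{p,q}=0$ for $p\geq 1$; degeneration and \eqref{eq:iso:formal:restr} then follow as you say. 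Your route avoids invoking the splitting theorem of \cite{BLM19} and the explicit homotopy operator, at the price of relying on the full representation-up-to-homotopy machinery of Theorem \ref{theorem:first_page_submanifold}; the paper's route buys more, since Lemma \ref{lemma:coregular_homotopy} also handles arbitrary subalgebroids $L$ over $N\subset X$ and is reused for coregular submanifolds, whereas your argument is specific to $L=\iota^!A$.
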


Theorem \ref{theorem:formal_of_transversal} follows immediately from the following lemma, which emphasises the local nature of the Serre spectral sequence for formal cohomology.

\begin{lemma}\label{lemma:coregular_homotopy}
    Let $\iota : X\to M$ be a closed transversal of $A\Ato M$, $L\Ato N$ be a Lie subalgebroid over a closed embedded submanifold $N\subset X$ and $V\to M$ be a representation of $A\Ato M$. Let $\{E_r^{\bullet,\bullet}\}$ and $\{\hat{E}_r^{\bullet,\bullet}\}$ denote the spectral sequences arising from the inclusions $L\hookrightarrow A$ and $L\hookrightarrow \iota^! A$, respectively.
    Then the inclusion $j\colon \iota^! A\hookrightarrow A$ induces an isomorphism
    \[ j^\ast\colon \{E_r^{\bullet,\bullet}\}_{r>0}\diffto \{\hat{E}_r^{\bullet,\bullet}\}_{r>0}.  \]
\end{lemma}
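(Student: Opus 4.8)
The plan is to show that $j^\ast$ is a morphism of spectral sequences which is already an isomorphism on the first page, and then invoke the Mapping Lemma \cite[Lemma 5.2.4]{Weib94} to upgrade this to an isomorphism on every page $E_r$ with $r>0$. First I would check filtration compatibility: since $L\subset \iota^! A\subset A$, the restriction $\Omega^\bullet(A)\to \Omega^\bullet(L)$ factors through $j^\ast\colon \Omega^\bullet(A)\to \Omega^\bullet(\iota^! A)$, so $j^\ast(\mathcal I_L)\subset \hat{\mathcal I}_L$ and hence $j^\ast\big(\mathcal F^p_L\Omega^\bullet(A,V)\big)\subset \hat{\mathcal F}^p_L\Omega^\bullet(\iota^! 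A,V|_X)$. Thus $j^\ast$ induces a morphism $\{E_r^{\bullet,\bullet}\}\to\{\hat E_r^{\bullet,\bullet}\}$, and it remains to analyse it on $E_1$.

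By Theorem \ref{theorem:first_page_submanifold} the two first pages are
\[ E_1^{p,q}\simeq \mathrm H\big(L,\wedge^p(\nu_N(M)^\ast\oplus L^\circ)\otimes V|_N\big)^q,\qquad \hat E_1^{p,q}\simeq \mathrm H\big(L,\wedge^p(\nu_N(X)^\ast\oplus \hat L^\circ)\otimes V|_N\big)^q, \]
where $L^\circ$ and $\hat L^\circ$ denote the annihilators of $L$ in $A^\ast$ and in $(\iota^! A)^\ast$, and the coefficients are the exterior powers of the conormal representations up to homotopy of Example \ref{example:conormal_VB}. By naturality of that construction, $j^\ast$ is induced on $E_1$ by the morphism of conormal VB-algebroids $\nu_L(A)^\ast\to \nu_L(\iota^! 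A)^\ast$ dual to the inclusion $\nu_L(\iota^! A)\hookrightarrow \nu_L(A)$; equivalently, by a morphism of representations up to homotopy $\Psi\colon W:=\nu_N(M)^\ast\oplus L^\circ \to \hat W:=\nu_N(X)^\ast\oplus \hat L^\circ$. Hence it suffices to prove that $\Psi$ is a homotopy equivalence of representations up to homotopy, since exterior powers and $\mathrm H(L,-\otimes V|_N)$ preserve such equivalences.

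Here the transversality of $X$ enters decisively. Restricting the defining exact sequence $0\to \iota^! A\to A|_X\to \nu_X(M)\to 0$ to $N$, quotienting by $L$, and dualising yields short exact sequences of graded bundles in degrees $-1$ and $0$ respectively,
\[ 0\to \nu_X(M)^\ast|_N\to \nu_N(M)^\ast\to \nu_N(X)^\ast\to 0,\qquad 0\to \nu_X(M)^\ast|_N\to L^\circ\to \hat L^\circ\to 0, \]
exhibiting $\Psi$ as a surjection with kernel $\mathcal A=\big(\nu_X(M)^\ast|_N\ \text{in degree }-1\big)\oplus\big(\nu_X(M)^\ast|_N\ \text{in degree }0\big)$. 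The core map of $W$ is $\partial^\ast\colon \nu_N(M)^\ast\to L^\circ$, dual to $\partial\colon \bar a\mapsto \sharp(a)\bmod TN$ (Example \ref{example:double_LA_normalbundle}); since $\partial$ postcomposed with the projection $\nu_N(M)\to \nu_X(M)|_N$ equals the anchor-to-conormal map $A|_N/L\to \nu_X(M)|_N$, which is surjective by transversality, the dual $\partial^\ast$ restricts to an isomorphism between the two copies of $\nu_X(M)^\ast|_N$. Thus $\mathcal A$ is a two-term complex with invertible core map, hence contractible; as the sequences above split over $N$, the quotient $\Psi\colon W\to \hat W$ is a homotopy equivalence. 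This gives the isomorphism $j^\ast\colon E_1\diffto \hat E_1$ and, by the Mapping Lemma, on all later pages.

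The main obstacle is the homotopy-theoretic content of the last paragraph at the level of representations up to homotopy: one must verify that $\mathcal A$ — beyond having an invertible core map — is genuinely contractible as a representation up to homotopy (so that its connections and curvature, and not only the core map, are accounted for), that a degreewise-split short exact sequence with contractible kernel yields a homotopy equivalence on the quotient, and that $\wedge^p$ and $\mathrm H(L,-\otimes V|_N)$ send such equivalences to isomorphisms. These are standard facts about the monoidal homotopy theory of representations up to homotopy from \cite{AbCr12}; spelling out the contracting homotopies explicitly — or, alternatively, reducing to the local model $A|_E\cong \mathrm{pr}^!\iota^! A$ of \cite[Theorem 4.1]{BLM19} and contracting the fibre directions of $\mathrm{pr}$ — is where the genuine bookkeeping lies.
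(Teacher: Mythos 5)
Your strategy is genuinely different from the paper's. The paper reduces to the local model $A|_E\simeq \mathrm{pr}^!\iota^!A$ and then shows that the explicit homotopy operator $h=\int_0^1\tfrac1t\mu_t^*\mathrm{i}_a(\cdot)\,dt$ of Remark \ref{remark:cohomology_for_transversals}, built from the Euler section $a$, is compatible with the filtration (because $\sharp a$ vanishes on $X$ and $\mu_t$ stabilises $\iota^!A$, hence stabilises $L$); therefore $h$ descends to $E_0$ and exhibits $j^*\colon E_0\to \hat E_0$ as a quasi-isomorphism, after which the Mapping Lemma finishes the argument. You instead try to work one page later, identifying both $E_1$'s via Theorem \ref{theorem:first_page_submanifold} and arguing that the comparison map is induced by a quotient of representations up to homotopy with contractible kernel. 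Your linear-algebra analysis of that kernel is correct: the two copies of $\nu_X(M)^*|_N$ sit in degrees $-1$ and $0$, and transversality makes the core map restrict to an isomorphism between them. The trade-off is that the paper's route needs only one filtered homotopy on the cochain level, while yours requires the full homotopical machinery of \cite{AbCr12}.

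However, as written your argument is a programme rather than a proof, and the part you defer to the last paragraph is precisely the part that carries the mathematical weight. Two points in particular are not established. First, the isomorphisms of Theorem \ref{theorem:first_page_submanifold} depend on choices (a tubular neighbourhood of $N$, an identification $A|_E\simeq\mathrm{pr}^*A|_N$, a complement $A|_N=L\oplus C$), made separately for $N\subset M$ and $N\subset X$; the claim that $j^*$ on $E_1$ ``is induced by'' the bundle map $\Psi$ under these identifications is a naturality statement that must be verified by making compatible choices (e.g.\ $\hat C\subset C$, tubular neighbourhoods adapted to $X$) and re-running the computation in the proof of that theorem — this is where the normal form $A|_E\simeq\mathrm{pr}^!\iota^!A$ would inevitably re-enter. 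Second, you check only that the core map of $\mathcal{A}$ is invertible; you do not check that $\mathcal{A}$ is closed under the connections and curvature (so that it is a subobject at all, and $\Psi$ a morphism of representations up to homotopy), nor that the kernel of $\wedge^p\Psi$ — which is $\mathcal{A}\wedge\wedge^{p-1}W$, not $\wedge^p\mathcal{A}$ — is acyclic. These statements are all plausible and presumably true with compatible choices, but until they are carried out the proof is incomplete; the paper's filtered-homotopy argument avoids every one of them.
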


\begin{remark}\label{remark:cohomology_for_transversals} \rm
In fact, a local version of the isomorphism \eqref{eq:iso:formal:restr} in Theorem \ref{theorem:formal_of_transversal} holds as well. Namely, for a tubular neighbourhood $E$ of $N$ above which \eqref{eq:tranverse_trivial} holds, we have that
\begin{equation}\label{eq:cohomology_vb}
\mathrm{H}^\bullet(A|_E, V|_E)\simeq \mathrm{H}^\bullet(\iota^! A,V|_X).
\end{equation}
A way to obtain this isomorphism, which will be used in the proof of Lemma \ref{lemma:coregular_homotopy}, is 
by constructing a homotopy operator from an Euler vector field. Denote $B=\iota^!A$, and identify $\mathrm{pr}^!B\simeq A|_E$. Let $a\in \Gamma(\ker d\mathrm{pr})$ be the Euler vector field of $E$, and $\Phi^a_{t}: \mathrm{pr}^! B\to \mathrm{pr}^! B$ be the flow of $a$, viewed as a section $a\in \Gamma(\mathrm{pr}^!B)$. Then $\mu_t:=\Phi^a_{\log t}$ extends to $t=0$ as $\mu_0=j\circ \mathrm{pr}^!$, where $j:B\hookrightarrow \mathrm{pr}^!B$ is the inclusion of $B$ into $\mathrm{pr}^!B$ over the zero section. Moreover, $\mu_1=\mathrm{id}_{\mathrm{pr}^! B}$ and $\mu_t|_{j(B)}=\mathrm{id}_{j(B)}$. Using parallel transport along the $\mathrm{pr}^!B$-paths $\mu_t$, we obtain a compatible isomorphism of representations $V|_E\simeq {\mathrm{pr}}^!(V|_X)$. Analogous to the proof of the Relative Poincar\'{e} Lemma \cite{Wein71}, for $\omega\in \Omega^\bullet(\mathrm{pr}^! B,{\mathrm{pr}}^!(V|_X))$ one finds    
\begin{equation*}
			\begin{aligned}
			\omega-(\mathrm{pr}^!)^\ast j^\ast \omega&= \mu^\ast_1\omega-\mu^\ast_0\omega\\
			&=\int_0^1 \frac{d}{d t}\mu^\ast_t\omega d t\\
			&=\int_0^1 \frac{1}{t} \mu_t^\ast \mathscr{L}_a \omega d t\\
			&= \int_0^1 \frac{1}{t} \mu_t^\ast ( d_{\mathrm{pr}^!B} \mathrm{i}_a+\mathrm{i}_a d_{\mathrm{pr}^!B} )\omega d t\\
			&= ( d_{\mathrm{pr}^!B}\circ h+h\circ d_{\mathrm{pr}^!B} )\omega,
			\end{aligned}
			\end{equation*}
			where 
			\begin{equation*}
			h(\omega) = \int_0^1 \frac{1}{t} \mu_t^\ast\mathrm{i}_a\omega d t.
			\end{equation*}
Thus $j^\ast:\Omega^\bullet (\mathrm{pr}^!B,{\mathrm{pr}}^!(V|_X))\to  \Omega^\bullet(B,V|_X)$ is a quasi-isomorphism with quasi-inverse $(\mathrm{pr}^!)^*$. Alternatively, one can view $\mu_t$ as a retraction of $\mathrm{pr}^! B$ to $j(B)$ and obtain the result that way, see \cite[Theorem 5.1, Remark 6.7]{BrPa20}, \cite[Theorem 11]{Bal12}, or use spectral sequence arguments \cite[Theorem 2]{Crai03}, see Section \ref{sec:pullback_liealgebroids} for details.
    
\end{remark}

\begin{proof}[Proof of Lemma \ref{lemma:coregular_homotopy}]
    We can assume that $A=A|_E=\mathrm{pr}^! \iota^! A$. First note that $j^\ast$ and $(\mathrm{pr}^!)^\ast$ respect the filtrations. In fact, we even have 
    \[ j^\ast \mathcal{F}^p_L\Omega^\bullet(A,V)=\hat{\mathcal{F}}_L^p\Omega^\bullet(\iota^! A, V|_X). \]
    Thus, both maps descend to $E_0$ and we obtain a map between spectral sequences
    \[ j^\ast\colon \{E_r^{\bullet,\bullet}\}\to \{\hat{E}_r^{\bullet,\bullet}\}.   \]
    To show that $j^\ast\colon E_0\to \hat{E}_0$ is a quasi-isomorphism, we show that the homotopy $h$ from Remark \ref{remark:cohomology_for_transversals} is compatible with the filtration on $\Omega^\bullet(A,V)$. Since the Euler vector field vanishes on $X$ we immediately obtain $i_a\mathcal{F}^p_L\Omega^\bullet(A,V)\subset \mathcal{F}^p_L\Omega^{\bullet-1}(A,V)$. Moreover, the flow $\mu_t$ stabilises $\iota^! A$, showing that $h$ indeed respects the filtration and descends to $E_0$. 
    
    Thus, $j^\ast\colon \{E_1^{\bullet,\bullet}\}\diffto \{\hat{E}_1^{\bullet,\bullet}\}$ is an isomorphism, and by the Mapping Lemma \cite[Lemma 5.2.4]{Weib94} the statement follows.
\end{proof}

\subsubsection{Coregular submanifolds}

A submanifold $N\subset M$ is called \textbf{coregular submanifold} for a Lie algebroid $A\Ato M$ if 
\[W_N:=\sharp (A|_N) + TN\subset TM|_N\] is a subbundle. 
Then $L=\sharp^{-1}(TN)$ is a Lie subalgebroid, and as for transversals, any Lie subalgebroid of $A$ with base $N$ is also a Lie subalgebroid of $L$. 
In fact $N$ is a coregular submanifold if and only if $L=\sharp^{-1}(TN)$ is a subbundle of $A|_N$. Coregular submanifolds interpolate between invariant submanifolds 
($W_N=TN$) and transverse submanifolds ($W_N=TM|_N$). As for these extreme cases, we find that the first page of the Serre spectral sequence associated to $L$ is given by the cohomology of $L$ with values in a classical representation.

\begin{theorem}\label{theorem:coregular_E1}
    Let $N\subset M$ be a closed, embedded coregular submanifold of $A\Ato M$ and $V\to M$ a representation.
    Then $L:=\sharp^{-1}(TN)\Ato N$ has a canonical representation on 
    \[W_N^\circ \subset T^\ast M|_N,\] and the first page of the Serre spectral sequence associated to $L$ is given by
    \[ E_1^{p,q}\simeq \mathrm{H}^{p+q}(L,S^p (W_N^\circ)\otimes V|_N).\]
\end{theorem}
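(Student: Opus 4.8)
The plan is to deduce this from Theorem~\ref{theorem:first_page_submanifold}, whose general formula
$E_1^{p,q}\simeq \mathrm{H}(L,\wedge^p(\nu_N^\ast\oplus L^\circ)\otimes V|_N)^q$ applies verbatim to $L=\sharp^{-1}(TN)$, and then to show that in the coregular case the two-term representation up to homotopy $\nu_N^\ast\oplus L^\circ$ collapses onto the single classical representation $W_N^\circ$. First I would pin down the core map $\partial\colon\nu_N^\ast\to L^\circ$ of the conormal VB-algebroid of Example~\ref{example:conormal_VB}. Since $\sharp(L)\subset TN$, the anchor descends to $\bar\sharp\colon A|_N/L\to\nu_N$, which is (up to sign) the core map of the normal VB-algebroid~\eqref{eq:normal_VB_algebroid}; dualising via Remark~\ref{remark:dual_VB} identifies $\partial$, up to sign, with $\bar\sharp^\ast$. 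Coregularity is precisely the statement that $\bar\sharp$ is a bundle monomorphism onto $W_N/TN$, so $\partial$ is a surjection and, under the identification $\nu_N^\ast=TN^\circ\subset T^\ast M|_N$, its kernel is $\ker\partial=\mathrm{Ann}(W_N/TN)=W_N^\circ$, a subbundle of constant rank.

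For the first assertion I would extract the canonical representation of $L$ on $W_N^\circ$ directly from the representation-up-to-homotopy structure equations. Writing $(\partial,\nabla^{\nu_N^\ast},\nabla^{L^\circ},\gamma)$ for the structure data, the parallellism $\partial\circ\nabla^{\nu_N^\ast}=\nabla^{L^\circ}\circ\partial$ shows that $W_N^\circ=\ker\partial$ is invariant under $\nabla^{\nu_N^\ast}$, while the curvature of $\nabla^{\nu_N^\ast}$ factors through $\partial$ (as $\gamma\circ\partial$), hence vanishes on $\ker\partial$. Thus $\nabla^{\nu_N^\ast}$ restricts to a \emph{flat} $L$-connection on $W_N^\circ$, i.e.\ a classical representation, explicitly the restriction of the conormal representation~\eqref{eq:submanifold_rep_on_conormal}.

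The heart of the argument is the collapse. Because $\partial$ is fibrewise surjective with kernel the constant-rank subbundle $W_N^\circ$ (this is exactly where coregularity is needed, as otherwise $\ker\partial$ and $\mathrm{coker}\,\partial$ would jump in rank), the two-term complex $(\nu_N^\ast\xrightarrow{\partial}L^\circ)$ deformation-retracts onto $W_N^\circ$ placed in degree $-1$: choosing a splitting $\nu_N^\ast=W_N^\circ\oplus K$ with $\partial|_K\colon K\xrightarrow{\sim}L^\circ$ exhibits a homotopy equivalence of representations up to homotopy between $\nu_N^\ast\oplus L^\circ$ and the classical representation $W_N^\circ$ in degree $-1$. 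Since the constructions of \cite{AbCr12} are functorial under homotopy equivalence and commute with graded exterior powers, and since the graded exterior power of an odd bundle is its symmetric power, I obtain $\wedge^p(\nu_N^\ast\oplus L^\circ)\simeq S^p(W_N^\circ)$ concentrated in degree $-p$. As cohomology is a homotopy invariant, Theorem~\ref{theorem:first_page_submanifold} yields
\[ E_1^{p,q}\simeq \mathrm{H}\big(L,\wedge^p(\nu_N^\ast\oplus L^\circ)\otimes V|_N\big)^q\simeq \mathrm{H}\big(L,S^p(W_N^\circ)\otimes V|_N\big)^q, \]
with $S^p(W_N^\circ)$ placed in degree $-p$. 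Unwinding the total-degree convention $\Omega(L,E)^q=\bigoplus_{i+j=q}\Omega^i(L,E^j)$, a single-degree placement in degree $-p$ shifts $q$ to $q+p$, giving $E_1^{p,q}\simeq\mathrm{H}^{p+q}(L,S^p(W_N^\circ)\otimes V|_N)$.

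The main obstacle will be making this collapse rigorous at the level of the spectral sequence rather than only on the abstract first page: one must check that the retraction is realised by a \emph{filtered} homotopy on $E_0^{p,\bullet}$ compatible with $d_0$, so that the homotopy equivalence of representations up to homotopy genuinely computes $E_1$. A cleaner route that avoids the abstract machinery is to imitate the explicit splitting computation of Lemma~\ref{lemma:submanifold_spaces_E0}: choose the complement $K$ globally, refine the decomposition of $E_0^{p,\bullet}$ by the number of $K$-factors, and apply the homological perturbation lemma to cancel the acyclic pair $(K\xrightarrow{\sim}L^\circ)$ term by term. Either way, once the collapse is in place the remaining degree bookkeeping is routine.
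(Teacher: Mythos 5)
Your proposal is correct in outline, but it takes a genuinely different route from the paper. The paper does not touch the representation-up-to-homotopy machinery at this point: it first establishes the representation on $W_N^\circ$ by a direct, choice-free computation (Lemma \ref{lemma:coregular_representation}), then embeds $N$ in a \emph{minimal transversal} $X$, so that $N$ becomes an \emph{invariant} submanifold of $\iota_X^!A$ with $L=(\iota_X^!A)|_N$, invokes the locality result (Lemma \ref{lemma:coregular_homotopy}, the Euler-vector-field homotopy) to identify the two Serre spectral sequences on all pages $r>0$, and then applies Theorem \ref{theorem:invariant}, where the ruth is already classical with values in $S^p\nu_N(X)^*\simeq S^p(W_N^\circ)$. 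You instead stay on $M$, compute the core map $\partial$ of the conormal VB-algebroid as (a sign times) $\bar\sharp^*$, observe that coregularity makes $\partial$ surjective with constant-rank kernel $W_N^\circ$, and collapse the two-term ruth onto $W_N^\circ[-1]$ by a homotopy equivalence compatible with graded exterior powers. Your identification of $\ker\partial$, the flatness of the restricted connection via the structure equations, its independence of the splitting $\sigma$ (the ambiguity $\lambda\circ\partial$ dies on $\ker\partial$), and the final degree bookkeeping are all right. What your route buys is that the mechanism of the collapse is made explicit and no transversal needs to be chosen; what it costs is that you must supply the (standard but not developed in the paper) facts that a homotopy equivalence of representations up to homotopy induces a quasi-isomorphism of the complexes $\Omega(L,-)$ — e.g.\ by the finite filtration by form degree plus the Mapping Lemma, or by homological perturbation as you suggest — and that graded exterior powers kill the contractible summand. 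The paper's detour through the minimal transversal avoids all of this and additionally yields the comparison of the higher pages for free.
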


We first describe the representation.

\begin{lemma}\label{lemma:coregular_representation}
 The following defines a representation of $L:=\sharp^{-1}(TN)\Ato N$ on $W_N^{\circ}$.  \begin{equation}\label{eq:coregular_representation}
        \nabla_b (df|_N)=d(\sharp (\tilde{b})(f))|_N
    \end{equation}
    for $\tilde{b}\in \Gamma(A)$ with $\tilde{b}|_N=b$ and $f\in \mathcal{I}_N$ such that $df|_N\in (\sharp A|_N)^\circ$.
\end{lemma}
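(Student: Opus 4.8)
The plan is to check, in order, that the right-hand side of \eqref{eq:coregular_representation} is independent of the two choices involved (the extension $\tilde b$ of $b$ and the function $f$ representing a given section of $W_N^\circ$) and lands in $W_N^\circ$, and then that $\nabla$ satisfies the connection axioms \eqref{eq:rep:1} and the flatness condition \eqref{eq:rep:2}. Since every local section of $W_N^\circ\subset\nu_N^\ast$ can be written as $df|_N$ with $f\in\mathcal{I}_N$ and $df|_N\in(\sharp A|_N)^\circ$, such a verification determines $\nabla$ completely. Two elementary facts will be used repeatedly: for $g\in\mathcal{I}_N$ one has $dg|_N=0$ if and only if $g\in\mathcal{I}_N^2$; and the two hypotheses translate into derivation statements, namely $b\in\Gamma(L)$ forces $\sharp\tilde b|_N\in TN$, so that $\sharp\tilde b$ preserves $\mathcal{I}_N$ (and hence $\mathcal{I}_N^2$), while $df|_N\in(\sharp A|_N)^\circ$ is equivalent to $\sharp\tilde a(f)\in\mathcal{I}_N$ for \emph{every} $\tilde a\in\Gamma(A)$, because $\sharp\tilde a(f)|_N=\langle df|_N,\sharp\tilde a|_N\rangle$ and $\sharp\tilde a|_N\in\sharp A|_N$.

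For well-definedness I would argue as follows. Fixing $\tilde b$ and varying $f$: if $df|_N=df'|_N$ then $f-f'\in\mathcal{I}_N^2$, and since $\sharp\tilde b$ preserves $\mathcal{I}_N^2$ we get $\sharp\tilde b(f-f')\in\mathcal{I}_N^2$, so the two differentials agree. Fixing $f$ and varying the extension: if $\tilde b,\tilde b'$ both restrict to $b$ then $a:=\tilde b-\tilde b'$ vanishes along $N$, so locally $a=\sum_i h_i a_i$ with $h_i\in\mathcal{I}_N$ and $a_i\in\Gamma(A)$; then $\sharp a(f)=\sum_i h_i\,\sharp a_i(f)\in\mathcal{I}_N^2$ precisely because each $\sharp a_i(f)\in\mathcal{I}_N$ by the reformulation of the condition $df|_N\in(\sharp A|_N)^\circ$, whence $d(\sharp a(f))|_N=0$. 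Finally, to see that $d(\sharp\tilde b(f))|_N\in W_N^\circ$, note that $\sharp\tilde b(f)\in\mathcal{I}_N$ gives membership in $(TN)^\circ$, and for the annihilation of $\sharp A|_N$ I would use that $\sharp$ is a bracket homomorphism to write, for any $\tilde a\in\Gamma(A)$,
\[ \sharp\tilde a\big(\sharp\tilde b(f)\big)=\sharp\tilde b\big(\sharp\tilde a(f)\big)+\sharp[\tilde a,\tilde b](f), \]
both terms of which vanish on $N$: the first because $\sharp\tilde a(f)\in\mathcal{I}_N$ and $\sharp\tilde b|_N\in TN$, the second because $\sharp[\tilde a,\tilde b]|_N\in\sharp A|_N$ and $df|_N\in(\sharp A|_N)^\circ$.

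The axioms \eqref{eq:rep:1} then follow by a short computation. Writing $h\,df|_N=d(\tilde h f)|_N$ for an extension $\tilde h\in C^\infty(M)$ of $h\in C^\infty(N)$ and expanding $\sharp\tilde b(\tilde h f)=(\sharp\tilde b\,\tilde h)f+\tilde h\,\sharp\tilde b(f)$ by the Leibniz rule, all terms carrying the factor $f|_N=0$ or $\sharp\tilde b(f)|_N=0$ drop out, and using $\mathscr{L}_{\sharp\tilde b}\tilde h|_N=\mathscr{L}_{\sharp b}h$ one obtains $\nabla_b(h\,df|_N)=h\,\nabla_b(df|_N)+(\mathscr{L}_{\sharp b}h)\,df|_N$; the same bookkeeping yields $C^\infty(N)$-linearity in $b$. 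For flatness I would iterate \eqref{eq:coregular_representation}, which is legitimate because $\nabla_{b}(df|_N)$ is represented by the admissible function $\sharp\tilde b(f)\in\mathcal{I}_N$, to get
\[ \nabla_{b_1}\nabla_{b_2}(df|_N)-\nabla_{b_2}\nabla_{b_1}(df|_N)=d\big([\sharp\tilde b_1,\sharp\tilde b_2](f)\big)\big|_N=d\big(\sharp[\tilde b_1,\tilde b_2](f)\big)\big|_N. \]
Since $L$ is a Lie subalgebroid, $[\tilde b_1,\tilde b_2]$ restricts along $N$ to an extension of $[b_1,b_2]_L$, so by the independence of the extension established above the right-hand side equals $\nabla_{[b_1,b_2]_L}(df|_N)$, which is exactly \eqref{eq:rep:2}.

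The genuine content, as opposed to bookkeeping, lies in the well-definedness, and in particular in recognising that the two hypotheses play complementary roles: independence of the representing function $f$ uses only $b\in\Gamma(L)$, whereas independence of the extension $\tilde b$ uses only $df|_N\in(\sharp A|_N)^\circ$. The single observation that unlocks everything --- both these invariances and the closure $\nabla_b\big(\Gamma(W_N^\circ)\big)\subseteq\Gamma(W_N^\circ)$ --- is the reformulation of the annihilator condition as ``$\sharp\tilde a(f)\in\mathcal{I}_N$ for all $\tilde a\in\Gamma(A)$''; once this is in place, the remaining steps reduce to the Leibniz rule and to $\sharp$ being a homomorphism of brackets.
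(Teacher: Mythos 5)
Your proposal is correct and follows essentially the same route as the paper: independence of the extension $\tilde b$ via writing $\tilde b-\hat b=\sum_i g_ia_i$ with $g_i\in\mathcal{I}_N$, membership in $W_N^\circ$ via the identity $\sharp\tilde a(\sharp\tilde b(f))=\sharp\tilde b(\sharp\tilde a(f))+\sharp[\tilde a,\tilde b](f)$, and flatness from the fact that $[\tilde b_1,\tilde b_2]$ extends $[b_1,b_2]$. You supply some details the paper leaves implicit (independence of the representing function $f$ and the explicit Leibniz-rule check), but the substance is identical.
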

\begin{proof}
    Since $N$ is coregular, $W_N^{\circ}\to N$ is a vector bundle. Note that, if 
    $f\in \mathcal{I}_N$ satisfies $df|_N\in (\sharp A|_N)^\circ$ then $df|_N\in \Gamma(W_N^{\circ})$. Since $N$ is closed and embedded, all sections of $W_N^{\circ}$ can be written in this form.
    
    To see that \eqref{eq:coregular_representation} is independent of the choice of the extension $\tilde{b}$, let $b\in \Gamma(L)$ and $\tilde{b},\hat{b}\in \Gamma(A)$ be two extensions. 
    Then $(\tilde{b}-\hat{b})|_N=0$. Thus, we can write 
    \[ \tilde{b}-\hat{b}=\sum\nolimits_i g_i a_i \]
    for suitable $a_i\in \Gamma(A)$ and $g_i\in \mathcal{I}_N$. For $f\in C^\infty(M)$ such that $df|_N\in (\sharp A|_N)^\circ$
    we have that
    \[ \sharp (\tilde{b}-\hat{b})(f)=\sum\nolimits_i g_i df(\sharp (a_i)) \]
    vanishes to second order along $N$, hence \eqref{eq:coregular_representation} does not depend on the choice of $\tilde{b}$. 
    
    Next, we show that \eqref{eq:coregular_representation} defines a section of $W_N^{\circ}$. 
    For $f\in \mathcal{I}_N$ we have $\sharp(\tilde{b})(f)|_N=0$ as $\sharp(\tilde{b})|_N\in \Gamma(TN)$. Thus $ \nabla_b (df|_N)\in \Gamma(TN^\circ)$. 
    To see that $\nabla_b (df|_N)$ annihilates $\sharp(A|_N)$ let $a\in \Gamma(A)$ be given. Then
    \begin{equation*}
        \begin{aligned}
            \nabla_b (df|_N) (\sharp(a)|_N)&=\sharp(a)(\sharp(\tilde{b}) f)|_N\\
            &=\underbrace{\sharp([a,\tilde{b}])f|_N}_{=0}+\sharp(\tilde{b})(\underbrace{\sharp(a)f}_{\in \mathcal{I_N}})|_N=0.
        \end{aligned}
    \end{equation*}
    In conclusion, \eqref{eq:coregular_representation} is well-defined. 
    Clearly, \eqref{eq:coregular_representation} gives an $L$-connection on $W_N^{\circ}$, and flatness follows from the fact that, for $b_1,b_2\in \Gamma(L)$, $[\tilde{b_1},\tilde{b_2}]\in \Gamma(A)$ extends $[b_1,b_2]$.
\end{proof}

To prove Theorem \ref{theorem:coregular_E1} we use the fact that any coregular submanifold $N\subset M$ is contained in some \textbf{minimal transversal} $X\subset M$, i.e.\ one satisfying (see \cite[Section 9.2]{FeMa22})
\[TX|_N\cap W_N=TN,\qquad TX|_N + W_N=TM|_N.\]
Then $N\subset X$ is an invariant submanifold for $\iota_X^!A\Ato X$, and we have $L=(\iota_X^! A)|_N$. A direct consequence of Lemma \ref{lemma:coregular_homotopy} and Lemma \ref{theorem:invariant} is the following observation.

\begin{lemma}\label{lemma:coregular_E0}
    Let $A\Ato M$ be a Lie algebroid and $V\to M$ a representation of $A$. Let $ N\subset M $ be a closed, embedded coregular submanifold, $L=\sharp^{-1}(TN)$ and $\iota_X: X\to M$ a minimal transversal containing $N$. 
    Then the Serre spectral sequences arising from the inclusions $L\hookrightarrow A$ and $L\hookrightarrow \iota_X^! A$ are isomorphic on all pages $r>0$. In particular,
    \[ E_1^{p,q}\simeq \mathrm{H}^{p+q}(L,S^p\nu_N(X)^\ast \otimes V|_N). \]
\end{lemma}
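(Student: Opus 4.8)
The plan is to obtain the statement as a direct consequence of Lemma~\ref{lemma:coregular_homotopy} and Theorem~\ref{theorem:invariant}, using the minimal transversal to turn the coregular situation into an invariant one. Write $B:=\iota_X^!A\Ato X$ for the pullback Lie algebroid, endowed with the pullback representation $\iota_X^\ast V\to X$. Two structural facts recorded in the discussion preceding the lemma are the starting point. Since a minimal transversal satisfies $TX|_N\cap W_N=TN$, any $a\in A|_N$ with $\sharp a\in TX|_N$ satisfies $\sharp a\in TX|_N\cap W_N=TN$ (using $\sharp(A|_N)\subset W_N$), so that $(\iota_X^!A)|_N=\sharp^{-1}(TN)=L$; consequently the anchor of $B$ satisfies $\sharp_B(B|_N)=\sharp(L)\subset TN$, i.e.\ $N\subset X$ is an invariant submanifold of $B$, and $L=B|_N$ is a Lie subalgebroid of $B$.

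First I would apply Lemma~\ref{lemma:coregular_homotopy} to the closed transversal $X$ and the Lie subalgebroid $L\Ato N$ over the closed, embedded submanifold $N\subset X$. This is legitimate because $N$ is closed and embedded in $X$, and $L$ is a Lie subalgebroid of both $A$ and $B=\iota_X^!A$. It produces an isomorphism of spectral sequences $j^\ast\colon\{E_r^{\bullet,\bullet}\}\diffto\{\hat E_r^{\bullet,\bullet}\}$ for all $r>0$, where $\{E_r^{\bullet,\bullet}\}$ arises from $L\hookrightarrow A$ and $\{\hat E_r^{\bullet,\bullet}\}$ from $L\hookrightarrow B$. In particular the two first pages are canonically isomorphic.

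Next I would compute $\hat E_1$. Because $N$ is invariant for $B$ and $L=B|_N$, the spectral sequence $\{\hat E_r^{\bullet,\bullet}\}$ is exactly the Serre spectral sequence of an invariant submanifold, so Theorem~\ref{theorem:invariant} applies verbatim to the triple $(B\Ato X,\,N,\,\iota_X^\ast V)$. Here the ambient manifold is the base $X$, so the normal bundle featuring in the coefficients is $\nu_N(X)=TX|_N/TN$, and the restricted representation is $(\iota_X^\ast V)|_N=V|_N$; thus $\hat E_1^{p,q}\simeq\mathrm{H}^{p+q}(L,S^p\nu_N(X)^\ast\otimes V|_N)$. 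Combining this with the isomorphism $E_1\simeq\hat E_1$ yields the asserted formula.

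The argument is largely bookkeeping, so the only point that demands care is the identification of the coefficient bundle: one must keep track that the normal bundle appearing in Theorem~\ref{theorem:invariant} is $\nu_N(X)$, the normal bundle of $N$ \emph{inside the transversal} $X$, rather than $\nu_N(M)$, and that pulling $V$ back to $B$ and then restricting to $N$ recovers $V|_N$. Both are immediate from the definitions, and the defining conditions $TX|_N\cap W_N=TN$ and $TX|_N+W_N=TM|_N$ of a minimal transversal are precisely what forces $L=B|_N$ together with the invariance of $N$, so no genuine obstacle remains. If $X$ is not globally closed, one first shrinks it to a closed piece containing $N$; since all pages with $r>0$ depend only on a neighbourhood of $N$, this is harmless.
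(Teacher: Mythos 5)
Your proof is correct and follows exactly the paper's route: the paper likewise deduces the lemma by combining Lemma \ref{lemma:coregular_homotopy} (applied to the closed transversal $X$ and the subalgebroid $L$ over $N\subset X$) with Theorem \ref{theorem:invariant} for the invariant submanifold $N$ of $\iota_X^!A$, using that $L=(\iota_X^!A)|_N$. Your explicit verification that the minimal-transversal conditions force $L=(\iota_X^!A)|_N$ and the invariance of $N$ is a welcome elaboration of what the paper states without proof.
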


\begin{proof}[Proof of Theorem \ref{theorem:coregular_E1}]
    By Lemma \ref{lemma:coregular_E0} we have $E_1^{p,q}\simeq \mathrm{H}^{p+q}(L,S^p\nu_N(X)^\ast \otimes V|_N)$ for a minimal transversal $X\hookrightarrow M$ of $N$. There is a canonical isomorphism $\nu_N(X)^\ast\simeq W_N^{\circ}$ induced by the inclusion $X\hookrightarrow M$, under which the respective representations of $L$ coincide. Thus Theorem \ref{theorem:coregular_E1} follows.
\end{proof}

The main tool in proving the splitting theorem for Lie algebroid transversals in \cite{BLM19} are Euler-like sections. In the following theorem we use a more general version of these sections. 
Such sections appear for example when blowing up a transversal of codimension one (called an \textbf{elementary modification} in \cite{GuLi14}), see \cite[Lemma 5.8]{Sch24}). We obtain the following generalisations (with trivial coefficients) to Remark \ref{remark:cohomology_for_transversals}, Lemma \ref{lemma:coregular_homotopy} and Theorem \ref{theorem:formal_of_transversal}.

\begin{theorem}
    Let $N\subset M$ be a closed, embedded coregular submanifold of $A\Ato M$ and $B=\sharp^{-1}(TN)$. Suppose there exists a section $a\in \Gamma(A)$ such that $\sharp a \in \Gamma(TM)$ is Euler-like along $N$ (i.e.\  $\sharp a$ is the Euler vector field of a tubular neighbourhood $E$ of $N$) and the inner derivation
    \begin{equation}\label{eq:eulerlike_vanishing_inner_derivation}
        [a|_N,\cdot]_B:\Gamma(B)\to \Gamma(B)
    \end{equation}
    vanishes identically. 
    \begin{enumerate}
        \item The inclusion $j:B\hookrightarrow A|_E$ induces an isomorphism in Lie algebroid cohomology
        \[ j^\ast:\mathrm{H}^\bullet(A|_E)\diffto \mathrm{H}^\bullet(B). \]
        \item\label{theorem:coregular_cohomology:item:lemma} Let $L\Ato Y$ be a Lie subalgebroid of $A$ with $Y\subset N$, such that $a|_Y\in \Gamma(L)$. Then, denoting the spectral sequences arising from the inclusions $L\hookrightarrow A$ and $L\hookrightarrow B$ by $\{E_r^{\bullet,\bullet}\}$ and $ \{\hat{E}_r^{\bullet,\bullet}\}$ respectively, the map $j^\ast: \{E_r^{\bullet,\bullet}\}\to \{\hat{E}_r^{\bullet,\bullet}\}$ induced by the inclusion of $B$ is a quasi-isomorphism on page zero and an isomorphism on all pages $r>0$. 
        \item  We have 
    \[ j^\ast:\mathrm{H}^\bullet(\mathscr{J}_N^\infty\Omega^\bullet(A))\diffto \mathrm{H}^\bullet(B). \]
    \end{enumerate}
\end{theorem}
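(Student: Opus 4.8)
The plan is to adapt the homotopy-operator argument of Remark \ref{remark:cohomology_for_transversals} and Lemma \ref{lemma:coregular_homotopy} to the present setting, where the explicit pullback presentation $A|_E\simeq \mathrm{pr}^!B$ used there is unavailable and is replaced by the two hypotheses on $a$. Working on the tubular neighbourhood $E$, the section $a$ generates a flow $\Phi^a_t$ of Lie algebroid automorphisms of $A|_E$ covering the flow of the Euler-like field $\sharp a$; set $\mu_s:=\Phi^a_{\log s}$ for $s\in(0,1]$, so that $\mu_1=\mathrm{id}$ and the base flow is multiplication by $s$, contracting $E$ onto $N$. The Euler-like condition lets $\mu_s$ extend to $s=0$ with base map $\mathrm{pr}:E\to N$, and since $\mu_0$ is then a Lie algebroid morphism covering $\mathrm{pr}$, anchor compatibility gives $\sharp\circ\mu_0=d\mathrm{pr}\circ\sharp\in TN$, so $\mu_0$ factors as $\mu_0=j\circ r$ with $r:A|_E\to B$ covering $\mathrm{pr}$. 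The main obstacle is exactly this construction: without the pullback form one must argue abstractly that the backward Euler flow extends smoothly to $s=0$ as a retraction landing in $B$ (and not merely in $A|_N$), which is where the coregularity of $N$ is used, and that the integral below converges despite the factor $1/s$ — the analytic content of the Relative Poincar\'e Lemma \cite{Wein71}.

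For part (1), the vanishing inner derivation enters as follows. Since $\sharp(a|_N)=0$ we have $a|_N\in\Gamma(B)$; the restricted isotropy flow $\Phi^a_t|_{A|_N}$ is generated by $\mathrm{ad}_{a|_N}$, and $[a|_N,\cdot]_B=0$ forces it to fix $B$ pointwise, so $\mu_s|_{j(B)}=\mathrm{id}$ for all $s$ and in particular $r\circ j=\mathrm{id}_B$. The Cartan formula $\tfrac{d}{ds}\mu_s^*=\tfrac1s\mu_s^*(d_A i_a+i_a d_A)$, integrated over $(0,1]$, then yields
\[\mathrm{id}-(j\circ r)^*=d_A\circ h+h\circ d_A,\qquad h(\omega)=\int_0^1\tfrac1s\,\mu_s^*\,i_a\omega\;ds,\]
so that $j^*$ is an isomorphism on $\mathrm{H}^\bullet$ with quasi-inverse $r^*$.

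For part (2), it suffices to check that $h$, $j^*$ and $r^*$ respect the filtration $\mathcal{F}^\bullet_L$. As $a|_Y\in\Gamma(L)$, naturality of contraction gives $i_L^*(i_a\omega)=i_{a|_Y}(i_L^*\omega)=0$ for $\omega\in\mathcal{I}_L$, so $i_a$ preserves the ideal $\mathcal{I}_L$ and hence each $\mathcal{F}^p_L$, dropping form degree by one. Moreover $L\subseteq B$ because $\sharp(L)\subseteq TY\subseteq TN$, so $\mu_s|_{j(B)}=\mathrm{id}$ gives $\mu_s\circ i_L=i_L$ and thus $\mu_s^*\mathcal{I}_L=\mathcal{I}_L$; the same reasoning applies to $j^*$ and $r^*$. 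Consequently $h$ descends to $E_0$, and the homotopy identity above shows that $j^*$ is a quasi-isomorphism on the zeroth page; by the Mapping Lemma \cite[Lemma 5.2.4]{Weib94} it is then an isomorphism on every page $r>0$.

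Finally, for part (3) I apply part (2) with $L=B$ and $Y=N$, which is legitimate since $\sharp(a|_N)=0$ forces $a|_N\in\Gamma(B)$ and $[a|_N,\cdot]_B=0$ is the standing hypothesis. The spectral sequence $\{\hat E_r\}$ of the identity inclusion $B\hookrightarrow B$ has trivial ideal $\mathcal{I}_B=0$, hence is concentrated in $p=0$ with $\hat E_1^{0,q}\simeq\mathrm{H}^q(B)$ and stabilises there. By part (2), the spectral sequence $\{E_r\}$ of $B\hookrightarrow A$ is isomorphic to $\{\hat E_r\}$ for all $r>0$, so it is likewise concentrated in $p=0$ and stabilises at $E_1$. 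Since $N$ is closed and embedded, Theorem \ref{theorem:convergence_submanifold} guarantees that $\{E_r\}$ converges to $\mathrm{H}^\bullet(\mathscr{J}^\infty_N\Omega^\bullet(A))$, which therefore equals $\mathrm{H}^\bullet(B)$ via the map induced by $j^*$.
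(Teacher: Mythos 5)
Your proposal is correct and follows essentially the same route as the paper: the flow $\mu_s=\Phi^a_{\log s}$ of the Euler-like section with $\mu_0$ corestricted to $B$ replacing $\mathrm{pr}^!$, the homotopy operator $h=\int_0^1 \tfrac{1}{s}\mu_s^*\,\mathrm{i}_a\,ds$ for part (1), compatibility of $h$ with the filtration (via $\mu_s|_{j(B)}=\mathrm{id}$ and $\mathrm{i}_a\mathcal{F}^p_L\subset\mathcal{F}^p_L$ from $a|_Y\in\Gamma(L)$) plus the Mapping Lemma for part (2), and part (3) as the case $L=B$ combined with convergence to formal cohomology. You merely spell out the details that the paper defers to its earlier remark on transversals and the coregular homotopy lemma.
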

\begin{proof}
    First note that $\sharp a|_N=0$ implies $a|_N\in \Gamma(B)$, thus \eqref{eq:eulerlike_vanishing_inner_derivation} is well-defined.
    The proof of the first part is completely analogous to Remark \ref{remark:cohomology_for_transversals} using the flow of $a$. 
    The assumption \eqref{eq:eulerlike_vanishing_inner_derivation} ensures that $\mu_t|_{j(B)}=\mathrm{id}_{j(B)}$, and $\mu_0$ when considered a map onto its image (note that $\mathrm{im} \mu_0 =B$) replaces the map $\mathrm{pr}^!$. 
    For the second part, following the proof of Lemma \ref{lemma:coregular_homotopy} we only need to show that the homotopy operator still respects the filtration. 
    The flow $\mu_t^\ast$ does because $\mu_t|_{j(B)}=\mathrm{id}_{j(B)}$, i.e.\ $\mu_t$ stabilises $L$. 
    Moreover, since by assumption $a|_Y\in \Gamma(L)$, we have $\mathrm{i}_a \mathcal{F}^p_L\Omega^\bullet(A)\subset \mathcal{F}^p_L\Omega^{\bullet-1}(A)$.
    In conclusion, the homotopy operator $h$ descends to $E_0$, showing that $j^\ast:E_0^{\bullet,\bullet}\to \hat{E}_0^{\bullet,\bullet}$ is indeed a quasi-isomorphism. 
    The Mapping Lemma \cite[Lemma 5.2.4]{Weib94} then implies the second part. 
    The last part follows from \ref{theorem:coregular_cohomology:item:lemma} with $L=B$.
\end{proof}

\section{Lie algebroid extensions}\label{sec:liealgebroid_extensions} 

In this section, we consider the spectral sequence corresponding to the kernel of a Lie algebroid submersion $\Pi:A\to B$. More precisely, we fix a diagram of Lie algebroid maps
\begin{equation}\label{eq:ss_Liealgebroid_ausgangslage}
\begin{tikzpicture}[baseline=(current bounding box.center)]
\usetikzlibrary{arrows}
\node (1) at (0,1) {$ 0 $};
\node (2) at (1.5,1) {$ L $};
\node (3) at (3,1) {$ A $};
\node (4) at (4.5,1) {$ B $};
\node (5) at (6,1) {$ 0 $};

\node (w) at (1.5,0) {$ M $};
\node (e) at (3,0) {$ M $};
\node (r) at (4.5,0) {$ Q $};
\draw[-Implies,double equal sign distance]
(2) -- (w);
\draw[-Implies,double equal sign distance]
(3) -- (e);
\draw[-Implies,double equal sign distance]
(4) -- (r);
\path[->]
(1) edge node[]{$  $} (2)
(2) edge node[above]{$ i $} (3)
(3) edge node[above]{$ \Pi $} (4)
(4) edge node[]{$  $} (5)
(w) edge node[above]{$\mathrm{id}_M $} (e)
(e) edge node[above]{$ \pi $} (r);
\end{tikzpicture}
\end{equation}
which is exact, i.e.\ for each  $x\in M$, we have a short exact sequence of vector spaces
\[0\to L_x\to A_x\to B_{\pi(x)}\to 0,\]
and the base map $\pi: M\to Q $ is a surjective submersion. Fix also a representation $V$ of $A$.

In the case when $A$ and $B$ are over the same base, i.e., $M=Q$ and $\pi=\mathrm{id}_{M}$, the Serre spectral sequence associated to the $L\subset A$ has been studied extensively in \cite[Section 7]{Mack05}. For certain proofs, we will use this reference. Also over the same base, this spectral sequence was studied in great detail in the holomorphic setting in \cite{BMRT15} and in more algebraic setting in \cite{Bruz17}.

The Serre spectral sequence in the full generality of \eqref{eq:ss_Liealgebroid_ausgangslage} was first considered in \cite[Section 3]{Brah10}. Using the notion of a generalised representation defined in Section \ref{sec:basics}, in the next theorem we make rigorous the interpretation from \cite[Section 3]{Brah10} of the first two pages.
    

\begin{theorem}\label{theorem:first_page_with_differential}
The Serre spectral sequence associated to $i:L\to A$ satisfies
\[E_2^{p,q}\simeq \mathrm{H}^p(B,\mathrm{H}^q(L,V)).\]
More precisely, the following hold.
\begin{enumerate}[(a)]
\item
We have a canonical isomorphism  (see also \eqref{eq:forms:in:module})
\[E_0^{p,q}\simeq \Omega^p(B,\Omega^{q}(L,V)),\]
where the $C^{\infty}(Q)$-module structure on $\Omega^{q}(L,V)$ is induced by the inclusion \[\pi^*:C^{\infty}(Q)\hookrightarrow C^{\infty}(M).\]
\item The differential $d_0$ is $C^{\infty}(Q)$-linear and, under the isomorphism from (a), it becomes
\[d_0(\omega)(\beta_1,\ldots,\beta_p)=(-1)^p d_L(\omega(\beta_1,\ldots,\beta_p)), \]
for all $\beta_1,\ldots ,\beta_p\in \Gamma(B)$. 
\item For the induced $C^{\infty}(Q)$-module structure on $\mathrm{H}^q(L,V)$, we have isomorphisms
\[E_1^{p,q}\simeq \Omega^p(B,\mathrm{H}^q(L,V)).\]
\item There is a generalised representation of $B$ on the $C^{\infty}(Q)$-module  $\mathrm{H}^q(L,V)$
\[\nabla:\Gamma(B)\times \mathrm{H}^q(L,V)\to \mathrm{H}^q(L,V).\]
\item Under the isomorphism from (c), $d_1:E_1^{p,q}\to E_1^{p+1,q}$ corresponds to the differential calculating cohomology of $B$ with values in the generalised representation from (d).
\end{enumerate}
\end{theorem}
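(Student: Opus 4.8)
The plan is to prove the five items in order and to read off $E_2^{p,q}\simeq \mathrm{H}^p(B,\mathrm{H}^q(L,V))$ as a formal consequence of (c)--(e). The starting observation is that the surjection $\Pi$ identifies the normal bundle $A/L$ with the pullback bundle $\pi^*B$, and dually $(A/L)^*\simeq \pi^*B^*$ and $\wedge^p(A/L)^*\simeq \pi^*\wedge^pB^*$. Since $L$ is a wide subalgebroid, Theorem \ref{theorem:first_page} already gives $E_0^{p,q}\simeq \Omega^q(L,\wedge^p(A/L)^*\otimes V)$, so for (a) I would rewrite the right-hand side as $\Omega^q(L,\pi^*\wedge^pB^*\otimes V)=\Gamma(\wedge^qL^*\otimes \pi^*\wedge^pB^*\otimes V)$ and reorganise it, using \eqref{eq:iso:forms:tensor} together with the base-change isomorphism $\Gamma(\wedge^pB^*)\otimes_{C^\infty(Q)}C^\infty(M)\simeq \Gamma(\pi^*\wedge^pB^*)$ for the submersion $\pi$, into $\Omega^p(B)\otimes_{C^\infty(Q)}\Omega^q(L,V)\simeq \Omega^p(B,\Omega^q(L,V))$. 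Concretely the isomorphism is the natural swap exchanging the $q$ form-arguments in $\Gamma(L)$ with the $p$ covector-arguments in $(A/L)^*\simeq\pi^*B^*$; this is exactly where the $C^\infty(Q)$-module structure on $\Omega^q(L,V)$ via $\pi^*$ enters.

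For (b), under the identification of Theorem \ref{theorem:first_page} the differential $d_0$ is the Lie algebroid differential of $L$ for the tensor representation \eqref{eq:rep_on_powers} on $\wedge^p(A/L)^*\otimes V$. The key point I would isolate is that pullback sections are flat for the Bott connection: if $b\in\Gamma(B)$ and $\beta\in\Gamma(L)$, then any projectable lift $\tilde b$ satisfies $[\beta,\tilde b]\in\Gamma(L)$, because $\Pi$ is a morphism over the submersion $\pi$ and $\Pi\beta=0$; hence $\nabla_\beta(\pi^*b)=\overline{[\beta,\tilde b]}=0$ in $A/L\simeq\pi^*B$, and dually $\nabla_\beta(\pi^*\mu)=0$ for $\mu\in\Gamma(\wedge^pB^*)$. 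Consequently the representation on $\pi^*\wedge^pB^*\otimes V$ acts only on the $V$-factor, so the Koszul formula \eqref{eq:Koszul} for $d_0$ reduces to $d_L$ applied to the $\Omega^q(L,V)$-values, the sign $(-1)^p$ being produced by commuting $d_L$ past the degree-$p$ exterior factor. For $\pi=\mathrm{id}$ this is precisely the computation behind \cite[Proposition 7.4.3]{Mack05}.

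With (a) and (b) in hand, (c) is purely homological: $(E_0^{p,\bullet},d_0)\simeq(\Omega^p(B)\otimes_{C^\infty(Q)}\Omega^\bullet(L,V),\,(-1)^p\,1\otimes d_L)$, and since $\Omega^p(B)=\Gamma(\wedge^pB^*)$ is finitely generated projective, hence flat, over $C^\infty(Q)$, taking cohomology commutes with the tensor product, giving $E_1^{p,q}\simeq \Omega^p(B)\otimes_{C^\infty(Q)}\mathrm{H}^q(L,V)\simeq \Omega^p(B,\mathrm{H}^q(L,V))$. For (d) I would define, for $b\in\Gamma(B)$ with projectable lift $\tilde b\in\Gamma(A)$ and $\eta\in\Omega^q(L,V)$ with arbitrary extension $\tilde\eta\in\Omega^q(A,V)$, the operator $\nabla_b\eta:=i^*(\mathscr{L}_{\tilde b}\tilde\eta)$, where $\mathscr{L}_{\tilde b}=i_{\tilde b}d_A+d_Ai_{\tilde b}$. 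Using $[\tilde b,\Gamma(L)]\subset\Gamma(L)$ one checks that $i^*(\mathscr{L}_{\tilde b}\tilde\eta)$ depends only on $\eta=i^*\tilde\eta$, so the expression is well defined and equals $(\nabla_b\eta)(\alpha_1,\dots,\alpha_q)=\nabla^V_{\tilde b}(\eta(\alpha_1,\dots,\alpha_q))-\sum_k\eta(\alpha_1,\dots,[\tilde b,\alpha_k],\dots,\alpha_q)$; since $\mathscr{L}_{\tilde b}$ commutes with $d_A$ and $i^*$ is a chain map, $\nabla_b$ commutes with $d_L$ and descends to $\mathrm{H}^q(L,V)$.

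It remains to check that $\nabla$ is a generalised representation and to identify $d_1$. On cohomology $\nabla_b$ is independent of the lift, since two lifts differ by $\lambda\in\Gamma(L)$ and $\mathscr{L}_\lambda=[i_\lambda,d_L]$ is null-homotopic on $\Omega^\bullet(L,V)$; the axioms \eqref{eq:rep:1} follow from $\mathscr{L}_{\pi^*g\,\tilde b}=\pi^*g\,\mathscr{L}_{\tilde b}+d_A(\pi^*g)\wedge i_{\tilde b}$ together with $d_A(\pi^*g)\in\Gamma(L^\circ)$ and $\sharp_A\tilde b(\pi^*g)=\pi^*(\mathscr{L}_{\sharp_Bb}g)$, while flatness \eqref{eq:rep:2} follows from $[\mathscr{L}_{\tilde b_1},\mathscr{L}_{\tilde b_2}]=\mathscr{L}_{[\tilde b_1,\tilde b_2]}$ once one notes that $[\tilde b_1,\tilde b_2]$ is a (possibly non-horizontal) lift of $[b_1,b_2]_B$ and invokes lift-independence on cohomology. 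Finally, for (e) I would represent a class in $E_1^{p,q}$ by $\omega\in\mathcal{F}^p_L\Omega^{p+q}(A,V)$ with $d_A\omega\in\mathcal{F}^{p+1}_L\Omega^{p+q+1}(A,V)$, set $d_1[\omega]=[d_A\omega]$, and evaluate $d_A\omega$ on horizontal lifts $\tilde b_0,\dots,\tilde b_p$ and sections $\alpha_1,\dots,\alpha_q\in\Gamma(L)$ via \eqref{eq:Koszul}: the $\sharp\tilde b_i$-terms reassemble into $\nabla_{b_i}$, the $[\tilde b_i,\tilde b_j]$-terms into the $B$-bracket (their $L$-parts dropping out on cohomology), while the $\nabla_{\alpha_k}$- and $[\alpha_k,\alpha_l]$-terms yield $d_L$-exact contributions that vanish on $E_1$, matching the Koszul differential of $B$ with values in the generalised representation of (d). I expect this last bookkeeping, and the careful tracking of the $(-1)^p$ signs throughout, to be the main obstacle; the same computation for $\pi=\mathrm{id}$ is carried out in \cite[Section 7.4]{Mack05}, and the only genuinely new ingredient is the systematic use of $d_A(\pi^*C^\infty(Q))\subset\Gamma(L^\circ)$ to control the $C^\infty(Q)$-linear structure.
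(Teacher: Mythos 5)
Your argument is correct, and for items (a), (c) and (d) it is essentially the paper's: the identification $E_0^{p,q}\simeq\Omega^p(B)\otimes_{C^\infty(Q)}\Omega^q(L,V)$ via $A/L\simeq\pi^*B$ and the base-change isomorphisms of Lemma \ref{lemma:tensor_vb}, the exactness of $\Omega^p(B)\otimes_{C^\infty(Q)}(\cdot)$ for (c), and the operator $[\,i^*(\mathscr{L}_{\tilde b}\tilde\eta)\,]$ for (d) all appear in the paper's proof (the paper writes the connection as $[i^*(\mathrm{i}_{\tilde\beta}d_A\tilde\eta)]$, which agrees with your Lie-derivative formula up to the exact term $d_Li^*\mathrm{i}_{\tilde\beta}\tilde\eta$). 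Where you genuinely diverge is in identifying the differentials $d_0$ and $d_1$: you compute directly with the Koszul formula \eqref{eq:Koszul}, using the observation that pullback sections of $A/L\simeq\pi^*B$ are flat for the Bott connection (equivalently $[\Gamma(L),\tilde b]\subset\Gamma(L)$ and $d_A(\pi^*C^\infty(Q))\subset\Gamma(L^\circ)$), and for $d_1$ you evaluate $d_A\omega$ on lifts and sort the resulting terms. The paper instead exploits the $\Omega^\bullet(B)$-module structure $\omega\cdot\eta=\Pi^*\omega\wedge\eta$ and the derivation identity \eqref{eq:derivation:d_A_d_B}: both $d_0$ and $d_1$ satisfy the same graded Leibniz rule over $\Omega^\bullet(B)$, so it suffices to identify them in degree $p=0$, which avoids the Hochschild--Serre-style bookkeeping entirely. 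Your route is closer to \cite[Section 7.4]{Mack05} and makes the sign $(-1)^p$ and the role of each bracket term explicit, at the cost of the longer computation you yourself flag as the main obstacle (in particular, be careful that the convention here puts the $B$-arguments first, whereas Theorem \ref{theorem:first_page} puts the $L$-arguments first, which costs an extra sign $(-1)^{pq}$ if you import that identification verbatim); the paper's module-theoretic argument is shorter but hides the explicit formulas. Both are complete proofs.
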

\begin{proof}
The first identification follows from the short exact sequence
\[0\to \mathcal{F}_L^{p+1}\Omega^{p+q}(A,V)\to \mathcal{F}_L^{p}\Omega^{p+q}(A,V)\stackrel{\mathrm{pr}}{\to} \Omega^{p}(B,\Omega^q(L,V))\to 0,\]
where the map $\mathrm{pr}$ acts as
\[\mathrm{pr}(\omega)(\beta_1,\ldots,\beta_p)(\lambda_1,\ldots,\lambda_q):=
\omega(\tilde{\beta}_1,\ldots,\tilde{\beta}_p,\lambda_1,\ldots,\lambda_q),
\]
for all $\beta_1,\ldots, \beta_p\in \Gamma(B)$ and $\lambda_1,\ldots,\lambda_q\in \Gamma(L)$.  Here, for a section $\beta\in\Gamma(B)$, we have denoted by $\tilde{\beta}\in \Gamma(A)$ any lift of $\beta$, i.e.\ $\Pi\circ \tilde{\beta}=\beta\circ \pi$. That $\mathrm{pr}$ is well-defined and that the sequence is exact in the middle follow directly from the description of the filtration given in Lemma \ref{theorem:classical_form}. To show surjectivity, and also for later use, we choose a complement of $L$ in $A$ as in the proof of Lemma \ref{theorem:classical_form}, $A=L\oplus C$. Note that $\Pi$ induces a vector bundle isomorphism $C\simeq \pi^*B$. Using these maps, and Lemma \ref{lemma:tensor_vb} \eqref{App_1} and \eqref{App_3} we obtain isomorphisms
\begin{equation}
\Omega^{\bullet}(A,V)\simeq \bigoplus_{p+q=\bullet}\Omega^p(\pi^*B)\otimes_{C^{\infty}(M)}\Omega^{q}(L,V)\simeq \bigoplus_{p+q=\bullet} \Omega^p(B)\otimes_{C^{\infty}(Q)}\Omega^{q}(L,V).\label{eq:huge_iso}
\end{equation}
Under these isomorphisms, we have that 
 \[\mathcal{F}_L^{p}\Omega^{p+q}(A,V)\simeq \bigoplus_{0\leq i\leq q} \Omega^{p+i}(B)\otimes_{C^{\infty}(Q)}\Omega^{q-i}(L,V),\]
 and $\mathrm{pr}$ becomes the projection onto the first component. This ensures its surjectivity. So, we obtain isomorphisms
 \begin{equation}\label{eq:small_iso}
 E_0^{p,q}\simeq \Omega^p(B)\otimes_{C^{\infty}(Q)}\Omega^q(L,V).
 \end{equation}
 
Next, we need to identify the differential $d_{0}$. First note that for $p=0$ the map $\mathrm{pr}$ is just the pullback map along the Lie algebroid map $i:L\to A$, therefore a cochain map, and so we have a short exact sequence of cochain complexes 
\[0\to (\mathcal{F}_L^{1}\Omega^{\bullet}(A,V), d_A)\to (\Omega^{\bullet}(A,V),d_A)\stackrel{\mathrm{pr}}{\to} (\Omega^{\bullet}(L,V),d_L)\to 0.\]
This implies that the operator $d_0$ on $E_0^{0,\bullet}$ corresponds to $d_L$. 

From the diagram \eqref{eq:ss_Liealgebroid_ausgangslage} it follows that the anchor map of $L$ maps to $\ker d\pi$. This implies that the Lie bracket on $\Gamma(L)$ and the representation on $\Gamma(V)$ are $C^{\infty}(Q)$-linear. Therefore, the differential $d_L$ on $\Omega^{\bullet}(L,V)$ is indeed $C^{\infty}(Q)$-linear. 

Next, let us note that the isomorphism from \eqref{eq:huge_iso}
is $\Omega(B)$-linear, where the multiplication on the right is the obvious one, and the one on the left uses the map $\Pi:A\to B$ 
\[\omega\cdot \eta:=\Pi^*(\omega)\wedge \eta,\]
for $\omega\in \Omega(B)$ and $\eta\in \Omega(A,V)$. Moreover, since $\Pi$ is a Lie algebroid map, it follows that the $\Omega(B)$-module structure is compatible with the differentials 
\begin{equation}\label{eq:derivation:d_A_d_B}
d_A(\omega\cdot  \eta)=d_B(\omega)\cdot  \eta+(-1)^p\omega\cdot d_A(\eta),
\end{equation}
where $\omega\in \Omega^{p}(B)$. This fact and the description of the filtration imply that the differential $d_0$ on $E_0$ is $\Omega^{p}(B)$-linear in the following sense:
\[d_0(\omega\cdot \eta)=(-1)^p\omega\cdot d_0(\eta).\]
This implies that, under the isomorphism \eqref{eq:small_iso}, $d_0$ becomes:
\[(-1)^p\mathrm{id}\otimes d_L :\Omega^p(B)\otimes_{C^{\infty}(Q)}\Omega^q(L,V)\to\Omega^p(B)\otimes_{C^{\infty}(Q)}\Omega^{q+1}(L,V),\]
which is equivalent to the formula given in (b).

Item (c) follows from Lemma \ref{lemma:tensor_vb} \eqref{App_4}
\[E_1^{p,q}\simeq \mathrm{H}^{q}(\Omega^p(B)\otimes_{C^{\infty}(Q)}\Omega^{\bullet}(L,V), \mathrm{id}\otimes d_L)\simeq \Omega^p(B)\otimes_{C^{\infty}(Q)}\mathrm{H}^{q}(L,V).\]

We start by calculating $d_1:E_1^{0,q}\to E_1^{1,q}$. Let $c\in \mathrm{H}^{q}(L,V)$ with representative a closed $q$-from $\eta\in \Omega^{q}(L,V)$. Let $\tilde{\eta}\in \Omega^{q}(A,V)$ be an extension of $\eta$. Then we have that 
\[d_A\tilde{\eta}\in \mathcal{F}^{1}\Omega^{q+1}(A,V),\]
and $d_1c\in \Omega^1(B,\mathrm{H}^q(L,V))$ can be calculated as
\begin{equation}\label{eq:def_generalised_connection}
    d_1c(\beta)=[i^*(\mathrm{i}_{\tilde{\beta}}d_A\tilde{\eta})]\in \mathrm{H}^q(L,V),
\end{equation}
where $\tilde{\beta}\in \Gamma(A)$ is a lift of $\beta\in \Gamma(B)$, $i^*:\Omega^{\bullet}(A,V)\to \Omega^{\bullet}(L,V)$ is the pullback, and where we note that $i^*(\mathrm{i}_{\tilde{\beta}}d_A\tilde{\eta})$ is $d_L$-closed, and so it defines a cohomology class in $\mathrm{H}^{q}(L,V)$. We will exploit the fact that this operation is indeed well-defined, i.e.\ independent of the choices of extension $\tilde{\eta}$ and lift $\tilde{\beta}$. Define the operator from item (d) via the formula
\[\nabla:\Gamma(B)\times \mathrm{H}^{q}(L,V)\to \mathrm{H}^q(L,V),\qquad \nabla_\beta c:=d_1c(\beta).\]
It is easy to see that the operator is $C^{\infty}(Q)$-linear in $\beta$. For the other component, to simplify the computation, choose $\tilde{\eta}$ and $\tilde{\beta}$ such that $\mathrm{i}_{\tilde{\beta}}\tilde{\eta}=0$. Then, for $f\in C^{\infty}(Q)$, we have that
\[\mathrm{i}_{\tilde{\beta}}d_A(\widetilde{f\eta})=\mathrm{i}_{\tilde{\beta}}d_A(\pi^*(f)\tilde{\eta})=
\mathrm{i}_{\tilde{\beta}}(\pi^*(d_B f)\wedge \tilde{\eta})+\mathrm{i}_{\tilde{\beta}}(\pi^*(f)d_A \tilde{\eta})=
\pi^*(\mathscr{L}_{\sharp\beta}f) \tilde{\eta}+\pi^*(f)\mathrm{i}_{\tilde{\beta}}d_A \tilde{\eta},
\]
which yields the second condition in \eqref{eq:rep:1}. To show \eqref{eq:rep:2}, note that we can also write 
\[\nabla_\beta c=[i^*(\mathscr{L}_{\tilde{\beta}}\tilde{\eta})],\qquad \textrm{where}\qquad \mathscr{L}_{\tilde{\beta}}=\mathrm{i}_{\tilde{\beta}}\circ d_{A}+ d_{A}\circ \mathrm{i}_{\tilde{\beta}}.\]
The commutator formula 
\[\mathscr{L}_{\tilde{\beta_1}}\circ \mathscr{L}_{\tilde{\beta_2}}-\mathscr{L}_{\tilde{\beta_2}}\circ \mathscr{L}_{\tilde{\beta_1}}=\mathscr{L}_{[\tilde{\beta}_1,\tilde{\beta_2}]},\]
and the fact that $[\tilde{\beta}_1,\tilde{\beta_2}]$ is a lift of $[\beta_1,\beta_2]$ yield now \eqref{eq:rep:2}. Thus, $\nabla$ defines a generalised representation of $B$ on $\mathrm{H}^{q}(L,V)$.

To obtain (e), we need to show that the map corresponding to $d_1:E_1^{p,q}\to E_1^{p+1,q}$ under the isomorphism from (c) coincides with the differential calculating Lie algebroid cohomology
\[d_B:\Omega^{p}(B, \mathrm{H}^q(L,V))\to \Omega^{p+1}(B, \mathrm{H}^q(L,V)).\]
By the definition of $\nabla$, this holds in degree $p=0$. Next, one can easily show that both operators satisfy the derivation rule with respect to $\Omega(B)$ \eqref{eq:derivation:d_A_d_B}. By also using the isomorphism \eqref{eq:iso:forms:tensor}, these two properties imply that the differentials must coincide in all degrees $p\geq 0$. 
\end{proof}

\begin{remark}\rm\label{remark:nice:setting}
In the setting of Theorem \ref{theorem:first_page_with_differential}, a natural question is whether the generalised representation $\mathrm{H}^q(L,V)$ comes from a classical representation. A candidate for the vector bundle is obtained as follows. First note that, because the anchor of $L$ is tangent to the fibres of $\pi$, for any $x\in Q$, $L_x:=L|_{\pi^{-1}(x)}$ is a Lie subalgebroid of $L$ and $V_x:=V|_{\pi^{-1}(x)}$ is endowed with the pullback representation. Consider the collection of vector spaces  $\mathcal{H}^{\bullet}(L,V)\to Q$, 
\[
\mathcal{H}^{\bullet}(L,V):=\sqcup_{x\in Q}\mathrm{H}^{\bullet}(L_x,V_x).\]
In some special, but interesting cases, which will be discussed in the sequel, $\mathcal{H}^{\bullet}(L,V)\to Q$ is a smooth (finite dimensional) vector bundle, and we have an isomorphism \[\mathrm{H}^{\bullet}(L,V)\simeq \Gamma(\mathcal{H}^{\bullet}(L,V)),\]
given as follows: to $c\in \mathrm{H}^{\bullet}(L,V)$, we assign the section  
\[Q\ni x\mapsto \iota_x^*c\in \mathrm{H}^{\bullet}(L_x,V_x),\]
where $\iota_x:L_x\to L$ denotes the inclusion. 

These properties might fail to hold for various reasons: the fibres $\mathrm{H}^{\bullet}(L_x,V_x)$ are infinite dimensional, or their dimension varies with the point, or not every element in $\mathrm{H}^{\bullet}(L_x,V_x)$ can be extended to an element in $\mathrm{H}^{\bullet}(L,V)$, etc. However, when the properties do hold, then Theorem \eqref{theorem:first_page_with_differential} implies that $\mathcal{H}^{\bullet}(L,V)$ is a classical representation of $B$, and that
\[E^{p,q}_2\simeq \mathrm{H}^{p}(B,\mathcal{H}^{q}(L,V)).\]
\end{remark}

\begin{example}
Let $\gg$ be a Lie algebra, $V$ be a representation of $\gg$, and $\mathfrak{h}\subset \gg$ an ideal. As in \cite{HoSe53}, the second page of the Serre spectral sequence of the inclusion $\hh\subset \gg$ is 
\[E_2^{p,q}\simeq \mathrm{H}^p(\mathfrak{g}/\mathfrak{h},\mathrm{H}^{q}(\mathfrak{h},V)).\]  
\end{example}

\begin{example}
Let $A\Ato M$ be a transitive Lie algebroid, which means that the anchor $\sharp:A\to TM$ is a surjective. Then we have a short exact sequence of Lie algebroids over $M$
\[0\to L\to A\to TM\to 0,\]
where $L:=\ker \sharp$ is called the isotropy bundle of $A$. The corresponding Serre spectral sequence is discussed in detail in \cite[Section 7]{Mack05}. We have that $L\Ato M$ is a locally trivial bundle of Lie algebras. Moreover, for any representation $V$ of $A$, we can locally trivialise $L$ and make its action on $V$ constant at the same time. Using this, one obtains the setting of Remark \ref{remark:nice:setting}, i.e.\ 
$\mathcal{H}^{\bullet}(L,V)\to M$ is a vector bundle with a flat connection, and there is a canonical isomorphism $\Gamma(\mathcal{H}^{\bullet}(L,V))\simeq \mathrm{H}^{\bullet}(L,V)$. Therefore, the second page of the spectral sequence contains the cohomology of $M$ with coefficients in this flat bundle \cite[Theorem 7.4.5]{Mack05}
\[E_2^{p,q}\simeq \mathrm{H}^{p}(M,\mathcal{H}^q(L,V)).\]
\end{example}

\subsection{The Leray-Serre spectral sequence}\label{subsection:Leray-Serre}

A surjective submersion $\pi:M\to Q$ yields a short exact sequence of Lie algebroids
\begin{equation}\label{eq:ss_Liealgebroid:A=TM,submersion}
\begin{tikzpicture}[baseline=(current bounding box.center)]
\usetikzlibrary{arrows}
\node (1) at (0,1) {$ 0 $};
\node (2) at (1.5,1) {$ \ker d \pi $};
\node (3) at (3,1) {$ TM $};
\node (4) at (4.5,1) {$ TQ $};
\node (5) at (6,1) {$ 0 $};

\node (w) at (1.5,0) {$ M $};
\node (e) at (3,0) {$ M $};
\node (r) at (4.5,0) {$ Q $};
\draw[-Implies,double equal sign distance]
(2) -- (w);
\draw[-Implies,double equal sign distance]
(3) -- (e);
\draw[-Implies,double equal sign distance]
(4) -- (r);
\path[->]
(1) edge node[]{$  $} (2)
(2) edge node[above]{$ i $} (3)
(3) edge node[above]{$ d \pi $} (4)
(4) edge node[]{$  $} (5)
(w) edge node[above]{$\mathrm{id}_M $} (e)
(e) edge node[above]{$ \pi $} (r);
\end{tikzpicture}
\end{equation}
As remarked in Example \ref{example:Leray-Serre1}, the spectral sequence associated to the subalgebroid $\ker d \pi$ of $TM$ is the classical Leray–Serre spectral sequence in de Rham cohomology, which was worked out for example in \cite{Hatt60}. We will discuss this construction here in detail, and in the following two subsections, we will discuss two different extensions in the setting of Lie algebroids.

The cohomology $\mathrm{H}^{\bullet}(\ker d \pi)$ is the foliated cohomology of the foliation on $M$ induced by $\pi$. The bundle $\mathcal{H}^{q}(\ker d\pi)\to Q$ has as fibres the de Rham cohomology of the fibres of $\pi$
\[\mathcal{H}^{q}(\ker d\pi)_x=\mathrm{H}^{q}(\pi^{-1}(x)).\] 

Under appropriate extra conditions, the properties from Remark \ref{remark:nice:setting} hold in this setting.

\begin{theorem}\label{theorem:serre-leray-ss-TQ}
	Assume that $\pi:M\to Q$ is a locally trivial fibre bundle with typical fibre a manifold $F$. If $ \mathrm{H}^q(F) $ is finite dimensional then $\mathcal{H}^q(\ker d\pi)$ is a smooth vector bundle with
    \begin{equation}\label{eq:identify_Hq_with_sections}
        \mathrm{H}^q(\ker d\pi)\simeq \Gamma(\mathcal{H}^q(\ker d\pi)).
    \end{equation}
Therefore, the second page of the Serre spectral sequence of $\ker d\pi$ is isomorphic to the cohomology of $Q$ with twisted coefficients in the flat bundle $\mathcal{H}^q(\ker d \pi)\to Q$
\[E_2^{p,q}\simeq \mathrm{H}^p(Q,\mathcal{H}^q(\ker d \pi)).\]
\end{theorem}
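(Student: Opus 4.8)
The plan is to verify that, in this situation, the generalised $B$-representation produced by Theorem~\ref{theorem:first_page_with_differential} is in fact classical, so that the criterion of Remark~\ref{remark:nice:setting} applies. Throughout I take $B=TQ$, $L=\ker d\pi$ and trivial coefficients $V=\mathbb{R}$. By Remark~\ref{remark:nice:setting} it suffices to establish two claims: (A) the collection $\mathcal{H}^q(\ker d\pi)\to Q$ is a smooth finite-rank vector bundle, and (B) the restriction map $c\mapsto(x\mapsto\iota_x^\ast c)$ is an isomorphism $\mathrm{H}^q(\ker d\pi)\xrightarrow{\sim}\Gamma(\mathcal{H}^q(\ker d\pi))$. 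Granting these, $\mathcal{H}^q(\ker d\pi)$ becomes a classical representation of $TQ$, i.e.\ a flat vector bundle, and Theorem~\ref{theorem:first_page_with_differential} together with Example~\ref{example:Leray-Serre1} yields $E_2^{p,q}\simeq\mathrm{H}^p(TQ,\mathcal{H}^q(\ker d\pi))\simeq \mathrm{H}^p(Q,\mathcal{H}^q(\ker d\pi))$, the de Rham cohomology of $Q$ with local coefficients.

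For (A) I would use local triviality directly. Over an open $U\subset Q$ with a bundle chart $\pi^{-1}(U)\cong U\times F$, the isomorphism carries $\ker d\pi$ to the vertical foliation of $U\times F$ and identifies $\mathcal{H}^q(\ker d\pi)_x\cong\mathrm{H}^q(F)$ for every $x\in U$; since $\mathrm{H}^q(F)$ is finite dimensional these charts furnish local trivialisations of $\mathcal{H}^q(\ker d\pi)$ as a vector bundle of rank $\dim\mathrm{H}^q(F)$. On overlaps the transition maps are the isomorphisms induced on $\mathrm{H}^q$ by the transition diffeomorphisms of $F$; by homotopy invariance of de Rham cohomology these depend only on the relevant homotopy class and are therefore locally constant in the base point. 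Thus $\mathcal{H}^q(\ker d\pi)$ is a vector bundle with locally constant transition functions, i.e.\ a flat (Gauss--Manin) bundle, and its flat connection is the one underlying the generalised representation of Theorem~\ref{theorem:first_page_with_differential}(d), as one checks by comparing the defining formula \eqref{eq:def_generalised_connection} with Lie derivatives along horizontal lifts.

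The heart of the matter is (B), which is local in $Q$. The key computation is the product case: for $U\times F\to U$ one must show that the natural map $C^\infty(U)\otimes_{\mathbb{R}}\mathrm{H}^q(F)\to\mathrm{H}^q(\ker d\,\mathrm{pr}_U)$, $f\otimes[\omega]\mapsto[\mathrm{pr}_U^\ast f\cdot\mathrm{pr}_F^\ast\omega]$, is an isomorphism (this is the foliated de Rham computation of \cite{Hatt60}). Writing foliated forms on $U\times F$ as smooth families $C^\infty(U,\Omega^\bullet(F))$ with fibrewise differential $d_F$, surjectivity is immediate from a choice of closed representatives of a basis of $\mathrm{H}^q(F)$, while injectivity is a \emph{Poincar\'e lemma with smooth parameters}: a family $\Theta\in C^\infty(U,\Omega^q(F))$ that is fibrewise closed and fibrewise exact must admit a smoothly varying primitive $\Xi\in C^\infty(U,\Omega^{q-1}(F))$. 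This is where I expect the main obstacle to lie: it requires a continuous $\mathbb{R}$-linear chain contraction $h$ of $(\Omega^\bullet(F),d_F)$ onto a finite-dimensional complex isomorphic to $\mathrm{H}^\bullet(F)$, for then $\Xi:=h\circ\Theta$ is the desired primitive, the contraction not depending on $u\in U$. Such an $h$ is furnished by Hodge theory when $F$ is compact; for general $F$ with finite-dimensional cohomology it is exactly the point at which the finiteness hypothesis is used, and I would invoke \cite{Hatt60}.

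It remains to globalise (B) from the product charts. I would pass to the sheaf $\mathscr{A}^\bullet:=\pi_\ast\Omega^\bullet(\ker d\pi)$ on $Q$, a bounded complex of fine sheaves of $C^\infty_Q$-modules (fine because they admit partitions of unity pulled back from $Q$). Its global sections recompute $\mathrm{H}^q(\ker d\pi)$, while the product computation identifies its cohomology sheaves with the sheaves of smooth sections of the flat bundles $\mathcal{H}^q(\ker d\pi)$, which are again fine. Comparing the two hyper-cohomology spectral sequences of $\mathscr{A}^\bullet$---one degenerating because the terms are acyclic, the other because the cohomology sheaves are acyclic---yields the canonical isomorphism $\mathrm{H}^q(\ker d\pi)\simeq\Gamma(\mathcal{H}^q(\ker d\pi))$, which is precisely the restriction map of Remark~\ref{remark:nice:setting}. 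With (A) and (B) in hand the theorem follows as explained above.
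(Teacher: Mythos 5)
Your overall strategy coincides with the paper's: first show that $\mathcal{H}^q(\ker d\pi)\to Q$ is a flat vector bundle via bundle charts and homotopy invariance (your part (A) is essentially the paper's Lemma \ref{lemma:H(kerdpi)_locally_trivial}, including the identification of the flat structure with the Gauss--Manin connection), then show that restriction to fibres identifies $\mathrm{H}^q(\ker d\pi)$ with $\Gamma(\mathcal{H}^q(\ker d\pi))$ so that Remark \ref{remark:nice:setting} applies. Where you genuinely diverge is in part (B). The paper (Lemma \ref{lemma:iso_of_sections}) works directly with the restriction map: smoothness of the resulting section is checked by integrating over cycles, injectivity is reduced to the existence of a smooth family of primitives via \cite[Corollary 2]{CrMa15}, local primitives are glued with a partition of unity, and surjectivity is checked locally using pullback along $d\mathrm{pr}$. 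You instead reduce to the product case via a continuous chain contraction and then globalise by a fine-sheaf hypercohomology comparison. Your globalisation is arguably cleaner and more systematic than the paper's partition-of-unity gluing; both sheaves in your comparison are indeed fine ($C^\infty_Q$-modules, using that $d_{\ker d\pi}$ is $C^\infty(Q)$-linear), so the two hypercohomology spectral sequences degenerate as you claim.

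Two caveats. First, you have the roles of injectivity and surjectivity in the product computation reversed: injectivity of $C^\infty(U)\otimes_{\mathbb{R}}\mathrm{H}^q(F)\to\mathrm{H}^q(\ker d\,\mathrm{pr}_U)$ is the easy direction (restrict a foliated primitive to each fibre and use that the $[\omega_i]$ form a basis), whereas it is \emph{surjectivity} that requires the Poincar\'e lemma with smooth parameters, since one must show that the fibrewise-exact difference $\Theta-\sum\mathrm{pr}_U^\ast f_i\,\mathrm{pr}_F^\ast\omega_i$ is exact in the foliated complex. This is a bookkeeping slip, not a gap: you correctly identify the parametrized Poincar\'e lemma as the crux and as the point where finite-dimensionality of $\mathrm{H}^q(F)$ enters. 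Second, the existence of a continuous $\mathbb{R}$-linear chain contraction of $(\Omega^\bullet(F),d_F)$ onto its cohomology is clear for compact $F$ (Hodge theory) but is a stronger statement than what is needed for general $F$; the weaker statement actually required---that a smooth family of fibrewise-exact closed forms admits a smooth family of primitives---is exactly \cite[Corollary 2]{CrMa15}, which the paper invokes, and your deferral to \cite{Hatt60} serves the same purpose.
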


To prove Theorem \ref{theorem:serre-leray-ss-TQ}, we first show that $\mathcal{H}^q(\ker d\pi)$ has a smooth vector bundle structure which carries the so-called Gauss-Manin flat connection. Then we prove that \eqref{eq:identify_Hq_with_sections} holds and that the generalised representation is induced by the Gauss-Manin connection.

\begin{lemma}\label{lemma:H(kerdpi)_locally_trivial}
    Let $\pi:M\to Q$ be a locally trivial fibre bundle with typical fibre $F$. 
    \begin{enumerate}
        \item $\mathcal{H}^q(\ker d\pi)\to Q$ is a locally trivial bundle of vector spaces. There exist canonical local trivialisations with locally constant transition functions.
        \item If $ \mathrm{H}^q(F) $ is finite dimensional then $\mathcal{H}^q(\ker d\pi)$ is a smooth vector bundle endowed with a flat connection.
    \end{enumerate}
\end{lemma}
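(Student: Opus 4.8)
The plan is to derive both statements from the local triviality of $\pi$ combined with the homotopy invariance of de Rham cohomology. First I would fix, for each trivialising open set $U\subset Q$, a diffeomorphism $\Phi_U\colon \pi^{-1}(U)\simeq U\times F$ commuting with the projections to $U$. For every $x\in U$ the restriction of $\Phi_U$ is a diffeomorphism $\pi^{-1}(x)\simeq F$ and hence induces an isomorphism $\mathrm{H}^q(F)\simeq \mathrm{H}^q(\pi^{-1}(x))=\mathcal{H}^q(\ker d\pi)_x$. Letting $x$ range over $U$ assembles these into a bijection $U\times \mathrm{H}^q(F)\to \mathcal{H}^q(\ker d\pi)|_U$, which I take as the candidate local trivialisation. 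This exhibits $\mathcal{H}^q(\ker d\pi)\to Q$ as a bundle of (a priori possibly infinite-dimensional) vector spaces that is locally trivial in the required sense.

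The key step is to show that the resulting transition functions are locally constant. On an overlap $U\cap U'$ the two trivialisations of $\pi$ differ by a map of the form $(x,f)\mapsto (x,g_{U'U}(x)(f))$, where $g_{U'U}\colon U\cap U'\to \mathrm{Diff}(F)$ is smooth; the induced transition function on $\mathcal{H}^q$ is $x\mapsto (g_{U'U}(x))^{\ast}\in GL(\mathrm{H}^q(F))$. Restricting $g_{U'U}$ to a smooth path inside a connected component of $U\cap U'$ produces a smooth isotopy of $F$, hence a smooth homotopy between the corresponding self-maps; since homotopic maps induce the same pullback on de Rham cohomology, the assignment $x\mapsto (g_{U'U}(x))^{\ast}$ is constant on each connected component. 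This proves that the transition functions are locally constant, completing part (1).

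For part (2), I would assume $\mathrm{H}^q(F)$ finite-dimensional. Then each fibre is a finite-dimensional vector space of the fixed dimension $\dim \mathrm{H}^q(F)$, and the transition functions of part (1) become locally constant, hence smooth, maps into $GL(\mathrm{H}^q(F))$; therefore the local trivialisations endow $\mathcal{H}^q(\ker d\pi)$ with the structure of a smooth vector bundle. Finally, a vector bundle whose transition functions are locally constant is flat: declaring the constant sections in each chart $U\times \mathrm{H}^q(F)$ to be horizontal defines a connection, and local constancy of the cocycle $g_{U'U}^{\ast}$ guarantees that these local prescriptions agree on overlaps and patch to a globally defined flat connection---the Gauss-Manin connection.

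The main obstacle is precisely the local-constancy step: it is the only place where genuine geometric input enters, relying on homotopy invariance of de Rham cohomology together with the fact that a smooth family of diffeomorphisms over a connected parameter space is an isotopy and so supplies the needed homotopy. I expect the remaining bookkeeping---checking that the assembled maps are genuine trivialisations and that the horizontal prescriptions patch---to be routine. It is worth noting that finite-dimensionality of $\mathrm{H}^q(F)$ is exactly what upgrades the locally constant bundle of part (1) to a smooth, flat vector bundle in part (2); without it one only retains a bundle of vector spaces with locally constant transition isomorphisms.
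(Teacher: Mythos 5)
Your argument is correct and follows essentially the same route as the paper: trivialise $\mathcal{H}^q(\ker d\pi)$ via the fibre-bundle trivialisations, identify the transition functions as pullbacks along a smooth family of diffeomorphisms of $F$, and use homotopy invariance of de Rham cohomology to conclude they are locally constant, which yields the flat structure once $\mathrm{H}^q(F)$ is finite dimensional. No gaps.
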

\begin{proof}One obtains local trivialisations for $\mathcal{H}^q(\ker d\pi)$ as follows. Any local trivialisation $\lambda_U: F\times U\diffto \pi^{-1}(U)$ induces a local trivialisation 
    \begin{equation}\label{eq:local:trivia}
    \lambda_U^*:\mathcal{H}^{q}(\ker d\pi )|_{U}\diffto \mathrm{H}^{q}(F)\times U.
    \end{equation}
    Two trivialisations $\lambda_U: F\times U\diffto \pi^{-1}(U)$ and $\lambda_{U'}: F\times U'\diffto \pi^{-1}(U')$ are related by a smooth family of diffeomorphisms on the overlap $\delta_{U,U'}:U\cap U'\to \mathrm{Diff}(F)$, which yields transition maps in cohomology
    \[\delta_{U,U'}^*:\mathrm{H}^{q}(F)\times U\cap U'\diffto  \mathrm{H}^{q}(F)\times U\cap U',\qquad (c,x)\mapsto (\delta_{U,U'}(x)^*(c),x).\]
    Since isotopic diffeomorphisms induce the same map in cohomology, it follows that the transition map $\delta_{U,U'}^*$ is locally constant, giving rise to a flat connection on $\mathcal{H}^q(\ker d\pi)$. If $ \mathrm{H}^q(F) $ is finite dimensional, one obtains a smooth flat vector bundle.
\end{proof}

\begin{definition}
    The flat connection from Lemma \ref{lemma:H(kerdpi)_locally_trivial} is called the \textbf{Gauss-Manin connection}. More precisely, its flat sections are locally constant in the trivialisations \eqref{eq:local:trivia}.
\end{definition}

\begin{lemma}\label{lemma:iso_of_sections}
   Let $\pi:M\to Q$ be a locally trivial fibre bundle with typical fibre $F$. If $ \mathrm{H}^q(F) $ is finite dimensional then the assignment that sends $[\omega]\in \mathrm{H}^q(\ker d\pi) $ to the section of $\mathcal{H}^q(\ker d \pi)$, 
    \begin{equation}\label{eq:assignment_class_to_section}
        Q\ni x\mapsto [\iota_x^\ast \omega]\in \mathrm{H}^q(\pi^{-1}(x)),
    \end{equation}
    where $\iota_x: \pi^{-1}(x)\to M$ is the inclusion, is an isomorphism of $C^\infty(Q)$-modules 
    \[ \mathrm{H}^q(\ker d \pi)\simeq \Gamma(\mathcal{H}^q(\ker d \pi)). \]
    Under this identification, the generalised connection on $\mathrm{H}^q(\ker d \pi)$ from Theorem \ref{theorem:first_page_with_differential} (d) corresponds to the Gauss-Manin connection.
\end{lemma}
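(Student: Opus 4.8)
The plan is to verify that the assignment \eqref{eq:assignment_class_to_section}, which I write as $\Phi\colon \mathrm{H}^q(\ker d\pi)\to \Gamma(\mathcal{H}^q(\ker d\pi))$, is well defined, a $C^\infty(Q)$-module isomorphism, and intertwines the two connections. Independence of the representative is immediate, since for $\omega=d_{\ker d\pi}\eta$ one has $\iota_x^\ast\omega=d(\iota_x^\ast\eta)$, so $[\iota_x^\ast\omega]=0$ on every fibre. To see that the resulting section is smooth I would work in a local trivialisation $\lambda_U\colon F\times U\diffto\pi^{-1}(U)$, where $\lambda_U^\ast\omega$ becomes a smooth family $\{\omega_u\}_{u\in U}$ of closed $q$-forms on $F$ and, composed with \eqref{eq:local:trivia}, $\Phi([\omega])$ reads $u\mapsto[\omega_u]\in\mathrm{H}^q(F)$. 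Fixing smooth $q$-cycles $\sigma_1,\dots,\sigma_N$ whose classes form a basis of $\mathrm{H}_q(F;\mathbb{R})$ (finite, as $\mathrm{H}^q(F)$ is finite dimensional), the functions $u\mapsto\int_{\sigma_i}\omega_u$ are smooth, and by nondegeneracy of the de Rham pairing they are the coordinates of $[\omega_u]$ in the dual basis; hence $u\mapsto[\omega_u]$ is smooth. Finally $\Phi$ is $C^\infty(Q)$-linear because the module structure on the source is via $\pi^\ast$ (Theorem \ref{theorem:first_page_with_differential}(a)) and $\pi^\ast f$ restricts to the constant $f(x)$ on $\pi^{-1}(x)$, giving $\Phi(f\cdot[\omega])(x)=f(x)[\iota_x^\ast\omega]$.

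For surjectivity I would argue globally with a partition of unity, using crucially that the foliated differential only differentiates along the fibres. Pick a locally finite cover $\{U_\alpha\}$ of $Q$ by trivialising opens with subordinate partition of unity $\{\rho_\alpha\}$, and a basis $e_1,\dots,e_N$ of $\mathrm{H}^q(F)$ represented by closed forms $\theta_i$ on $F$. Given a smooth section $s$, write $s|_{U_\alpha}=\sum_i c_i^\alpha\,e_i$ in the trivialisation with $c_i^\alpha\in C^\infty(U_\alpha)$, and transport $\sum_i (c_i^\alpha\circ\mathrm{pr}_U)\,\mathrm{pr}_F^\ast\theta_i$ to a foliated-closed form $\omega_\alpha$ on $\pi^{-1}(U_\alpha)$ with $[\iota_x^\ast\omega_\alpha]=s(x)$. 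Since $\rho_\alpha\circ\pi$ is constant along the fibres, $\omega:=\sum_\alpha(\rho_\alpha\circ\pi)\,\omega_\alpha$ is a globally defined foliated-closed $q$-form, with no cross terms appearing in $d_{\ker d\pi}\omega$, and $[\iota_x^\ast\omega]=\sum_\alpha\rho_\alpha(x)\,s(x)=s(x)$, so $\Phi([\omega])=s$.

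Injectivity reduces, by the same trick, to a local statement. If $\Phi([\omega])=0$ then each $\iota_x^\ast\omega$ is exact, and the key local claim is that over a trivialisation a smooth family of fibrewise-exact closed forms admits a smooth family of primitives, i.e. a foliated $(q-1)$-form $\eta_\alpha$ on $\pi^{-1}(U_\alpha)$ with $d_{\ker d\pi}\eta_\alpha=\omega$. Granting this, $\eta:=\sum_\alpha(\rho_\alpha\circ\pi)\,\eta_\alpha$ satisfies $d_{\ker d\pi}\eta=\sum_\alpha(\rho_\alpha\circ\pi)\,\omega=\omega$ (again because $\rho_\alpha\circ\pi$ is foliated-constant), so $[\omega]=0$. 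I expect this local claim to be the main obstacle: it is a parametrised Poincaré lemma, which I would obtain from a smooth family of primitives $\eta_u=h(\omega_u)$ produced by a continuous linear homotopy operator $h$ contracting the de Rham complex of $F$ onto its cohomology, so that $d(h\omega_u)=\omega_u$ once the cohomology part vanishes. Such an $h$ is furnished by Hodge theory when $F$ is compact; in general its existence rests on the finite dimensionality of $\mathrm{H}^q(F)$, which makes the cohomology Hausdorff and hence the image of $d$ closed, allowing the complex to be split continuously in the relevant degrees. This is the delicate point of the whole argument.

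To identify the transported generalised representation with the Gauss-Manin connection, I would first note that the generalised representation satisfies the Leibniz rule \eqref{eq:rep:1} and has trivial curvature \eqref{eq:rep:2}, so under the isomorphism $\Phi$ it becomes a genuine flat connection on the vector bundle $\mathcal{H}^q(\ker d\pi)$; as the Gauss-Manin connection is also flat, it suffices to check that they agree on a local frame of Gauss-Manin-flat sections. In a trivialisation such a frame is given by the constant sections $e_i$, represented by $\omega=\mathrm{pr}_F^\ast\theta_i$, which is an honest form on the total space and serves as its own extension $\tilde\omega$. Taking as lift of a vector field $\beta$ on $Q$ the field $\tilde\beta$ tangent to the $U$-directions, the formula $\nabla_\beta[\omega]=[i^\ast(\mathscr{L}_{\tilde\beta}\tilde\omega)]$ from Theorem \ref{theorem:first_page_with_differential}(d) yields $\mathscr{L}_{\tilde\beta}\mathrm{pr}_F^\ast\theta_i=0$, whence $\nabla_\beta[\omega]=0$. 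Thus the transported connection annihilates the local Gauss-Manin-flat frame and therefore coincides with the Gauss-Manin connection.
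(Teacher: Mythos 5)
Your overall route is the same as the paper's: smoothness of the section via integration over cycles representing a basis of $\mathrm{H}_q(F;\mathbb{R})$, $C^\infty(Q)$-linearity from the $\pi^\ast$-module structure, surjectivity by showing constant sections (pullbacks of closed forms on $F$) lie in the image and gluing with a partition of unity pulled back from $Q$ (legitimate since $d_{\ker d\pi}(\rho_\alpha\circ\pi)=0$), injectivity reduced to a parametrised Poincar\'e lemma, and identification of the connections by checking that the generalised representation annihilates the flat frame $[\mathrm{pr}_F^\ast\theta_i]$. All of those steps are correct and match the paper, which delegates the smoothness and local-injectivity statements to \cite[Lemma 3, Corollary 2]{CrMa15}.

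The one genuine gap is in your proposed proof of the parametrised Poincar\'e lemma, which you rightly flag as the delicate point. You argue that finite dimensionality of $\mathrm{H}^q(F)$ makes the image of $d$ closed and that this allows the de Rham complex of $F$ to be ``split continuously''. Closedness of $\mathrm{im}\,d$ does follow, but it does not yield a continuous \emph{linear} homotopy operator: a closed subspace of a Fr\'echet space need not be complemented, and a continuous linear surjection of Fr\'echet spaces need not admit a continuous linear right inverse (the open mapping theorem only gives continuous, generally nonlinear, sections; linear splittings require additional structure, e.g.\ compactness of $F$ and Hodge theory, or finite-type/good-cover arguments). Since the lemma allows noncompact fibres (only $\mathrm{H}^q(F)$ is assumed finite dimensional), your Hodge-theoretic fallback does not cover the general case, and the functional-analytic claim as stated is unjustified. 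The statement you actually need --- a fibrewise-exact smooth family of closed forms admits a smooth family of primitives --- is true, but it is proved in \cite[Corollary 2]{CrMa15} by a Mayer--Vietoris/good-cover induction rather than by a global continuous splitting; either cite that result or reproduce such an argument in place of the splitting claim.
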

\begin{proof}
    Note that \eqref{eq:assignment_class_to_section} gives a set-theoretic section of the vector bundle $\mathcal{H}^q(\ker d \pi)$, and that the assignment is compatible with the $C^\infty(Q)$-module structure. 
    
    To show that \eqref{eq:assignment_class_to_section} is an isomorphism we follow the arguments of \cite[Lemma 3]{CrMa15}. Fix a basis $\{e_i\}$ of $\mathrm{H}_q(F)$, with dual basis $ \{e^i\}$ of $\mathrm{H}^q(F)\simeq \mathrm{H}_q(F)^*$. Using a trivialisation $\pi^{-1}(U)\simeq F\times U\to U$ we obtain the flat local frame $\{  \underline{e}^i \}$ for $\mathcal{H}^q(\ker d\pi)|_U\simeq \mathrm{H}^q(F)\times U\to U$. Then, for $[\omega]\in \mathrm{H}^q(\ker d\pi)$, the coefficients of the section \eqref{eq:assignment_class_to_section} in this frame are given by integrating $\omega$ over representatives $\sigma_i$ of $e_i$, i.e.\ $x\mapsto \int_{\sigma_i}\iota_{x}^*\omega$. Smoothness of $\omega$ implies that these coefficients are smooth. Thus, \eqref{eq:assignment_class_to_section} is well-defined. 
 
    Local injectivity follows from \cite[Corollary 2]{CrMa15}, which shows that if $[\iota_x^\ast\omega]=0$ for all $x\in U$ then there exists a smooth family of primitives for $\iota^\ast_x\omega$, i.e.\ $[\omega]=0$ is exact. To go global, note that we can glue local primitives using a partition of unity on $Q$ subordinate to local trivialisations.
    
    Similarly, it suffices to show local surjectivity. Note that constant sections are in the image of the map \eqref{eq:assignment_class_to_section}, because they are obtained via pulling back by the Lie algebroid map $d\mathrm{pr}:\ker d \pi|_U\simeq TF\times U \to TF$. These sections generate everything. 
Also note that the classes obtained by the pullback along $d\mathrm{pr}$ are in fact restrictions of de Rham classes on $\pi^{-1}(U)$, and therefore they are flat elements of $\mathrm{H}^{q}(\ker d \pi|_{\pi^{-1}}(U))$. Therefore, the two connections coincide. 
\end{proof}

\begin{remark}\rm\label{remark:kerdpi_in_infty_dim} In the case when the fibres of the locally trivial fibration $\pi:M\to Q$ do not have finite dimensional cohomology, note that Lemma \ref{lemma:iso_of_sections} still provides local trivializations for $\mathcal{H}^q(\ker d\pi)\to Q$ with locally constant transition maps. A direct extension of Theorem \ref{theorem:serre-leray-ss-TQ} to this setting would require a notion of smooth sections of this infinite dimensional bundle. Instead, we explain here a different approach, based on the homology bundle
\[\mathcal{H}_q(\ker d \pi)\to Q, \qquad 
\mathcal{H}_q(\ker d \pi)_x:=\mathrm{H}_q(\pi^{-1}(x),\mathbb{Z}).
\]
As in Lemma \ref{lemma:H(kerdpi)_locally_trivial}, we can build local trivialisations 
\[\mathcal{H}_q(\ker d \pi)|_U\simeq \mathrm{H}_q(F,\mathbb{Z})\times U,
\]
for which the transition maps come from pushforwards along diffeomorphisms, hence are locally constant group automorphisms. Therefore,
$\mathcal{H}_q(\ker d \pi)$ is a smooth, locally trivial bundle of discrete groups over $Q$. Consider the space of smooth 1-cocycles on $\mathcal{H}_q(\ker d \pi)$
\[Z^1(\mathcal{H}_q(\ker d \pi))=\big\{\varphi\in C^{\infty}(\mathcal{H}_q(\ker d \pi))\, |\, \varphi(c_1+c_2)=\varphi(c_1)+\varphi(c_2), \, c_i\in \mathrm{H}_q(\pi^{-1}(x),\mathbb{Z})\big\}. \]
We have the following version of Theorem \ref{theorem:serre-leray-ss-TQ}. 
\begin{theorem}\label{theorem:kerdpi_in_infty_dim}
Pullback to fibres followed by integration yields an isomorphism
\[\mathrm{H}^q(\ker d \pi)\simeq Z^1(\mathcal{H}_q(\ker d \pi)), \qquad [\omega]\mapsto \Big( \mathrm{H}_q(\pi^{-1}(x),\mathbb{Z})\ni c \mapsto \int_c\iota_x^*\omega\Big).\]
\end{theorem}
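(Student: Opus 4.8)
The plan is to show that the \emph{period map} $P\colon [\omega]\mapsto \big(c\mapsto \int_c \iota_x^\ast\omega\big)$ is a well-defined $C^\infty(Q)$-linear isomorphism, following the strategy of Lemma \ref{lemma:iso_of_sections} but replacing each step that relied on finite-dimensionality of $\mathrm{H}^q(F)$. First I would verify well-definedness: since $\omega$ is $d_{\ker d\pi}$-closed, each $\iota_x^\ast\omega$ is a closed $q$-form on $\pi^{-1}(x)$, so by Stokes $\int_c\iota_x^\ast\omega$ depends only on the class $c\in\mathrm{H}_q(\pi^{-1}(x),\mathbb{Z})$, is additive in $c$, and vanishes whenever $\omega$ is foliated-exact. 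Smoothness in $x$, i.e.\ that $P[\omega]$ is a genuine smooth $1$-cocycle, follows by integrating the smooth family $\omega$ over representatives of the canonical frames with locally constant transition maps of $\mathcal{H}_q(\ker d\pi)\to Q$ established above.

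For injectivity I would argue exactly as in Lemma \ref{lemma:iso_of_sections}, noting that argument never used finiteness. If all periods of $\omega$ vanish, then for each $x$ the class $[\iota_x^\ast\omega]$ pairs trivially with all of $\mathrm{H}_q(\pi^{-1}(x),\mathbb{Z})$, hence is zero under the de Rham pairing $\mathrm{H}^q_{\mathrm{dR}}(\pi^{-1}(x))\to\mathrm{Hom}(\mathrm{H}_q(\pi^{-1}(x),\mathbb{Z}),\mathbb{R})$, which is an isomorphism over $\mathbb{R}$ (the $\mathrm{Ext}$ term of universal coefficients vanishing). Thus $\iota_x^\ast\omega$ is fibrewise exact for every $x$, and \cite[Corollary 2]{CrMa15} upgrades this over each trivialising open $U\subset Q$ to a \emph{smooth} foliated primitive $\beta_U$. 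These glue: for a partition of unity $\{\rho_\alpha\}$ on $Q$, the form $\sum_\alpha \pi^\ast\rho_\alpha\,\beta_{U_\alpha}$ is a global primitive of $\omega$, since $\pi^\ast\rho_\alpha$ is foliated-constant and so $d_{\ker d\pi}(\pi^\ast\rho_\alpha)=0$. Hence $[\omega]=0$.

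The main obstacle is surjectivity, where the finite-dimensional proof — writing a section in a finite flat frame — breaks down, because a smooth $1$-cocycle now encodes infinitely much data that cannot be assembled into a form naively. I would resolve this sheaf-theoretically on $Q$. Pushing the foliated de Rham complex forward along $\pi$ gives a complex of sheaves $\mathcal{A}^\bullet$ on $Q$ with $\mathcal{A}^q(U)=\Omega^q(\ker d\pi|_{\pi^{-1}(U)})$, whose differential is $C^\infty(Q)$-linear (again because $\ker d\pi$ is tangent to the fibres), so each $\mathcal{A}^q$ is a fine sheaf. The crucial local input is a \emph{parametrised de Rham theorem}: over a trivialising $U$ the complex $\big(C^\infty(U,\Omega^\bullet(F)),d_F\big)$ is the space of global sections of a fine resolution of the constant sheaf with stalk $C^\infty(U)$ on $F$ — the parametrised Poincaré lemma furnishes a $C^\infty(U)$-linear, smoothness-preserving homotopy — so its cohomology is $\mathrm{H}^\bullet(F,C^\infty(U))$; as $C^\infty(U)$ is divisible, universal coefficients carries no $\mathrm{Ext}$ term and this equals $\mathrm{Hom}(\mathrm{H}_\bullet(F,\mathbb{Z}),C^\infty(U))=Z^1(\mathcal{H}_\bullet(\ker d\pi))|_U$, the comparison isomorphism being precisely $P$. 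This identifies the cohomology sheaf $\mathcal{H}^q(\mathcal{A}^\bullet)$ with the fine $C^\infty(Q)$-module sheaf $\mathcal{Z}^q$ whose sections over $U$ are the $C^\infty(U)$-valued homomorphisms on $\mathcal{H}_q(\ker d\pi)|_U$, i.e.\ whose global sections are $Z^1(\mathcal{H}_q(\ker d\pi))$.

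Finally I would conclude via the standard spectral sequence of the complex of fine sheaves $\mathcal{A}^\bullet$: its second page is $E_2^{p,q}=\mathrm{H}^p(Q,\mathcal{Z}^q)$ and it converges to $\mathrm{H}^{p+q}(\Gamma(Q,\mathcal{A}^\bullet))=\mathrm{H}^{p+q}(\ker d\pi)$, the fineness of each $\mathcal{A}^q$ guaranteeing that global sections compute the abutment. Since $\mathcal{Z}^q$ is a $C^\infty(Q)$-module sheaf, hence fine, soft and $\Gamma$-acyclic, only the row $p=0$ survives, giving $\mathrm{H}^q(\ker d\pi)\simeq\Gamma(Q,\mathcal{Z}^q)=Z^1(\mathcal{H}_q(\ker d\pi))$, and tracing the identifications shows the isomorphism is $P$. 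The hardest technical point is the parametrised Poincaré lemma together with the divisibility/universal-coefficients step, since this is exactly what converts the failed finite-frame argument into a statement valid for fibres with infinite-dimensional cohomology; the globalisation over $Q$ is then formal, resting only on the $C^\infty(Q)$-linearity of $d_{\ker d\pi}$.
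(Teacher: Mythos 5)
Your proposal is correct, but it reaches the conclusion by a genuinely different route from the paper. The paper's proof is very short: it takes the local statement --- that over a trivialising chart pullback-and-integration gives $\mathrm{H}^{q}(TF\times U)\simeq \mathrm{Hom}_{\mathbb{Z}}(\mathrm{H}_q(F,\mathbb{Z}),C^{\infty}(U))$ --- as a black box from \cite[Lemma 3]{CrMa15}, and then globalises exactly as in Lemma \ref{lemma:iso_of_sections}, i.e.\ by partition-of-unity gluing of local primitives (injectivity) and of local preimages (surjectivity), both of which work because $d_{\ker d\pi}$ and the period map are $C^{\infty}(Q)$-linear. Your injectivity argument coincides with the paper's; where you diverge is that (i) you reprove the local statement via a parametrised Poincar\'e lemma, the fine resolution of the constant sheaf $C^{\infty}(U)$ on $F$, and the universal coefficient theorem with the divisibility of $C^{\infty}(U)$ killing the $\mathrm{Ext}$ term, and (ii) you globalise surjectivity through the hypercohomology spectral sequence of the complex of fine sheaves $\mathcal{A}^\bullet$ on $Q$, using that the cohomology sheaf $\mathcal{Z}^q$ is a $C^{\infty}(Q)$-module sheaf, hence soft and acyclic, so only the row $p=0$ survives. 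Both routes are sound. What yours buys is self-containedness and a clear isolation of the exact point where the finite-dimensionality hypothesis of Theorem \ref{theorem:serre-leray-ss-TQ} becomes unnecessary (divisibility of $C^{\infty}(U)$ rather than a finite flat frame); what it costs is heavier machinery for a step the paper dispatches with a partition of unity, and it silently reproves the cited \cite[Lemma 3]{CrMa15}. Two small points of care if you write this up: fineness of $\mathcal{A}^q$ and of $\mathcal{Z}^q$ comes from their $C^{\infty}(Q)$-module structure, not from the $C^{\infty}(Q)$-linearity of the differential (the latter is what you need for the presheaf-level gluing and for the identification of the cohomology sheaf), and you should note explicitly that the edge homomorphism of your spectral sequence is the map ``closed global form $\mapsto$ its class in the cohomology sheaf'', which under the local period isomorphisms is precisely the map $P$ in the statement.
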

The proof follows the same lines as that of Lemma \ref{lemma:iso_of_sections} and is also based on 
\cite[Lemma 3]{CrMa15}, which gives the result for a local trivialisation $F\times U\to U$. Namely, the cited result shows that pullback followed by integration gives an isomorphism
\[\mathrm{H}^{q}(TF\times U)\simeq \mathrm{Hom}_{\mathbb{Z}}(\mathrm{H}_q(F,\mathbb{Z}),C^{\infty}(U)),\]
and clearly, the second set can be regarded as $Z^1(\mathrm{H}_q(F,\mathbb{Z})\times U)$.
\end{remark}

\subsection{Pullback Lie algebroids}\label{sec:pullback_liealgebroids}
The Leray-Serre spectral sequence can be generalised to the following setting. Consider the pullback Lie algebroid $\pi^!B\Ato M$ of a Lie algebroid $B\Ato Q$ along a surjective submersion $\pi: M\to Q$ (recall \eqref{eq:pullback_def}). This fits into a short exact sequence
\begin{equation}\label{eq:ss_Liealgebroid:pullback,submersion}
\begin{tikzpicture}[baseline=(current bounding box.center)]
\usetikzlibrary{arrows}
\node (1) at (0,1) {$ 0 $};
\node (2) at (1.5,1) {$ \ker d\pi$};
\node (3) at (3,1) {$ \pi^! B $};
\node (4) at (4.5,1) {$ B $};
\node (5) at (6,1) {$ 0 $};

\node (w) at (1.5,0) {$ M $};
\node (e) at (3,0) {$ M $};
\node (r) at (4.5,0) {$ Q $};
\draw[-Implies,double equal sign distance]
(2) -- (w);
\draw[-Implies,double equal sign distance]
(3) -- (e);
\draw[-Implies,double equal sign distance]
(4) -- (r);
\path[->]
(1) edge node[]{$  $} (2)
(2) edge node[above]{$ i $} (3)
(3) edge node[above]{$ \pi^! $} (4)
(4) edge node[]{$  $} (5)
(w) edge node[above]{$\mathrm{id}_M $} (e)
(e) edge node[above]{$ \pi $} (r);
\end{tikzpicture}
\end{equation}

Using Theorems \ref{theorem:first_page_with_differential} and \ref{theorem:serre-leray-ss-TQ} we obtain the following.

\begin{theorem}\label{theorem:E_2_pullback_LA}
    Let $B\Ato Q$ be a Lie algebroid over the base of a fibre bundle $\pi: M\to Q$ with typical fibre $F$. Assume that $\mathrm{H}^\bullet(F)$ is finite dimensional. The Serre spectral sequence 
   associated to $\ker d\pi\subset\pi^! B$ converges to $\mathrm{H}^\bullet(\pi^! B)$ and satisfies
    \[ E_2^{p,q}\simeq\mathrm{H}^p(B,\mathcal{H}^q(\ker d\pi)), \]
    where the representation of $B$ on $\mathcal{H}^q(\ker d\pi)$ is the Gauss-Manin connection factored through the anchor, i.e.\ $\nabla_b=\nabla_{\sharp b}^{GM}$.
\end{theorem}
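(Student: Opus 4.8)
The plan is to present the short exact sequence \eqref{eq:ss_Liealgebroid:pullback,submersion} as the top row of a morphism of Lie algebroid extensions whose bottom row is the tangent sequence \eqref{eq:ss_Liealgebroid:A=TM,submersion}, and to transport the Gauss--Manin identification from the Leray--Serre case (Theorem~\ref{theorem:serre-leray-ss-TQ}) along this morphism.

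First I would record convergence and the shape of the second page. Since $\ker d\pi\subset \pi^!B$ is a wide Lie subalgebroid, Theorem~\ref{theorem:convergence} guarantees convergence to $\mathrm{H}^\bullet(\pi^!B)$, and Theorem~\ref{theorem:first_page_with_differential} gives
\[
 E_2^{p,q}\simeq \mathrm{H}^p\big(B,\mathrm{H}^q(\ker d\pi)\big),
\]
where $\mathrm{H}^q(\ker d\pi)$ carries the generalised $B$-representation $\nabla^B$ defined by \eqref{eq:def_generalised_connection}. What remains is to identify $\nabla^B$ with the Gauss--Manin connection factored through the anchor.

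The conceptual core is the observation that the anchor $\sharp:\pi^!B\to TM$ is a Lie algebroid morphism over $\mathrm{id}_M$ which restricts to the inclusion $\ker d\pi\hookrightarrow TM$ on the kernel and covers $\sharp_B:B\to TQ$ on the quotient; it thus defines a morphism from \eqref{eq:ss_Liealgebroid:pullback,submersion} to \eqref{eq:ss_Liealgebroid:A=TM,submersion}. I would then feed compatible choices into \eqref{eq:def_generalised_connection}: for a closed $\eta\in\Omega^q(\ker d\pi)$ and $\beta\in\Gamma(B)$, extend $\eta$ to $\hat\eta\in\Omega^q(M)$, set $\tilde\eta:=\sharp^\ast\hat\eta$, and lift $\beta$ to $\tilde\beta\in\Gamma(\pi^!B)$. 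Because $\sharp^\ast$ is a cochain map one has $d_{\pi^!B}\tilde\eta=\sharp^\ast d\hat\eta$ and $\mathrm{i}_{\tilde\beta}\sharp^\ast=\sharp^\ast\mathrm{i}_{\sharp\tilde\beta}$, while $\sharp\tilde\beta$ is $\pi$-related to $\sharp_B\beta$ and $\sharp\circ i$ equals the inclusion $\ker d\pi\hookrightarrow TM$. Substituting into \eqref{eq:def_generalised_connection} yields $\nabla^B_\beta[\eta]=\nabla^{GM}_{\sharp_B\beta}[\eta]$, exhibiting $\nabla^B$ as the pullback of the $TQ$-representation along the anchor.

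Finally, under the hypothesis $\dim \mathrm{H}^\bullet(F)<\infty$, Lemma~\ref{lemma:iso_of_sections} identifies $\mathrm{H}^q(\ker d\pi)\simeq\Gamma(\mathcal{H}^q(\ker d\pi))$ as $C^\infty(Q)$-modules and the $TQ$-generalised representation with the Gauss--Manin connection; combined with the previous paragraph, the generalised $B$-representation becomes the classical representation $\nabla_b=\nabla^{GM}_{\sharp b}$, and the claimed formula follows from Theorem~\ref{theorem:first_page_with_differential}. The main obstacle is the compatibility computation of the third paragraph: the generalised connection \eqref{eq:def_generalised_connection} is only well defined after passing to cohomology, so one must check that the specific choice $\tilde\eta=\sharp^\ast\hat\eta$ is admissible and that $\sharp\tilde\beta$ is a genuine lift of $\sharp_B\beta$ for the tangent extension --- both of which follow precisely from the ladder being a morphism of extensions.
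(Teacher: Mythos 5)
Your proof is correct and reaches the paper's conclusion, but it is organised differently enough to be worth comparing. The paper also reduces everything to identifying the generalised $B$-representation of Theorem \ref{theorem:first_page_with_differential}(d) with $\nabla^{GM}_{\sharp}$, and it also exploits that $\sharp_{\pi^!B}^{\,*}$ is a cochain map; however, it does so only locally: over a trivialising chart $\pi^{-1}(U)\simeq F\times U$ it checks that the classes $[\mathrm{pr}^*\theta]$, for $\theta\in\Omega^q(F)$ closed, are flat for both connections (using $\tilde\eta=\sharp_{\pi^!B}^{\,*}\mathrm{pr}^*\theta$, which is $d_{\pi^!B}$-closed), and then concludes from the fact that such classes span $\Gamma(\mathcal{H}^q(\ker d\pi))|_U$ over $C^\infty(U)$ together with the Leibniz rule \eqref{eq:rep:1}. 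Your argument instead treats an arbitrary class globally, by viewing the anchor as a morphism from the extension \eqref{eq:ss_Liealgebroid:pullback,submersion} to \eqref{eq:ss_Liealgebroid:A=TM,submersion} and substituting the admissible choices $\tilde\eta=\sharp^*\hat\eta$ and $\tilde\beta$ into \eqref{eq:def_generalised_connection}; the identities $d_{\pi^!B}\circ\sharp^*=\sharp^*\circ d$ and $\mathrm{i}_{\tilde\beta}\circ\sharp^*=\sharp^*\circ\mathrm{i}_{\sharp\tilde\beta}$ then give $\nabla_\beta=\nabla^{TQ}_{\sharp_B\beta}$ directly, after which Lemma \ref{lemma:iso_of_sections} converts the generalised $TQ$-representation into the Gauss--Manin connection. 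This buys a cleaner, choice-free statement — the generalised $B$-representation is literally the pullback along $\sharp_B$ of the one for the tangent extension, an identification that does not yet need the finiteness of $\mathrm{H}^\bullet(F)$ — at the small cost of verifying the two admissibility points you flag, which indeed follow from $\sharp\circ i$ being the inclusion $\ker d\pi\hookrightarrow TM$ and from $d\pi\circ\sharp_{\pi^!B}=\sharp_B\circ\pi^!$. Both routes are complete; yours avoids the local spanning argument entirely.
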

\begin{proof}
    Using Theorem \ref{theorem:serre-leray-ss-TQ} all that is left to show is the statement about the representation of $B$. We argue locally; $\pi^{-1}(U)\simeq F\times U\stackrel{\mathrm{pr}}{\to}U$. If $\theta\in \Omega^\bullet(F)$ is closed then $[\mathrm{pr}^\ast \theta]$ is a flat section of $\mathcal{H}^\bullet(\ker d \pi)|_{U}$ for the Gauss-Manin connection, and so also flat for the $B$-connection $\nabla^{GM}_{\sharp}$. On the other hand, note that 
      \[\tilde{\eta}:=(\sharp_{\pi^!B})^\ast \mathrm{pr}^\ast \theta\in \Omega^\bullet(\pi^! B|_{\pi^{-1}(U)})\]
     satisfies $d_{\pi^! B}\tilde{\eta}=0$. Therefore, the definition of the representation of $B$ from \eqref{eq:def_generalised_connection} implies that $[\mathrm{pr}^*\theta]$ is flat also with respect to $\nabla$. Using that sections of the form $[\mathrm{pr}^*\theta]$ span all sections of $\mathcal{H}^{\bullet}(\ker d \pi)|_U$ and the Leibniz rule \eqref{eq:rep:1}, we find that the two $B$-connections coincide.
\end{proof}

When the cohomology of the typical fibre is fairly simple, one can get more precise results. The following was obtained in \cite[Theorem 2]{Crai03}, using the same spectral sequence.

\begin{corollary}
    Let $B\Ato Q$ be a Lie algebroid and $\pi: M\to Q$ fibre bundle such that the typical fibre $F$ is $k$-connected, i.e.\ has cohomology
    \[ \mathrm{H}^q(F)=\begin{cases}
        \mathbb{R} &\text{ if }q=0\\
        0 &\text{ if }1\leq q\leq k.
    \end{cases} \]
Then the map 
    \[ (\pi^!)^\ast: \mathrm{H}^{q}(B)\to \mathrm{H}^{q}(\pi^! B) \] 
is an isomorphism for $q=0,\dots,k$ and is injective for $q=k+1$.
\end{corollary}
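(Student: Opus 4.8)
The plan is to read the statement off the Serre spectral sequence of the wide subalgebroid $\ker d\pi\subset\pi^!B$ provided by Theorem \ref{theorem:E_2_pullback_LA}. Since $\ker d\pi$ is wide, Theorem \ref{theorem:convergence} guarantees that the filtration is finite and the spectral sequence converges to $\mathrm{H}^\bullet(\pi^!B)$, and by Theorem \ref{theorem:first_page_with_differential} its second page is $E_2^{p,q}\simeq \mathrm{H}^p(B,\mathcal{H}^q(\ker d\pi))$. First I would translate the $k$-connectedness hypothesis into a statement about the coefficient bundles $\mathcal{H}^q(\ker d\pi)\to Q$, whose fibre over $x$ is $\mathrm{H}^q(\pi^{-1}(x))\simeq \mathrm{H}^q(F)$. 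For $1\le q\le k$ this fibre vanishes, so by Lemma \ref{lemma:iso_of_sections} (applicable since $\mathrm{H}^q(F)=0$ is finite dimensional) we get $\mathrm{H}^q(\ker d\pi)=0$ and hence $E_2^{p,q}=0$ for all $p$. For $q=0$ the bundle is the trivial line bundle, and I would check that the Gauss--Manin connection is the trivial flat connection: the fibrewise constant function $1$ is a global flat section, so $\mathcal{H}^0(\ker d\pi)\simeq\underline{\mathbb{R}}$ as a $B$-representation (the action factoring through $\sharp$ being automatically trivial). Consequently $E_2^{p,0}\simeq \mathrm{H}^p(B)$ with trivial coefficients.

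Next I would run the standard bookkeeping on the bottom row $q=0$. Outgoing differentials $d_r\colon E_r^{n,0}\to E_r^{n+r,1-r}$ land in negative $q$-degree and therefore vanish. The only differentials into $E_r^{n,0}$ come from $E_r^{n-r,r-1}$, and for this source to be nonzero already at $E_2$ one needs $r-1>k$, i.e.\ $r\ge k+2$, while $n-r\ge 0$ forces $r\le n$. Hence for every $n\le k+1$ there is no room for an incoming differential, and the bottom-row terms survive unchanged, $E_\infty^{n,0}\simeq E_2^{n,0}\simeq \mathrm{H}^n(B)$.

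For $n\le k$ I would then note that all other contributions to $\mathrm{H}^n(\pi^!B)=\bigoplus_{p+q=n}E_\infty^{p,q}$ vanish: when $p<n$ the index $q=n-p$ satisfies $1\le q\le n\le k$, so $E_2^{p,q}=0$ and thus $E_\infty^{p,q}=0$. The induced decreasing filtration therefore collapses onto its bottom piece, $\mathrm{H}^n(\pi^!B)=F^n=E_\infty^{n,0}\simeq\mathrm{H}^n(B)$, and the edge homomorphism $E_2^{n,0}\to \mathrm{H}^n(\pi^!B)$, which is exactly $(\pi^!)^\ast$, is an isomorphism. For $n=k+1$ the same computation gives $E_\infty^{k+1,0}\simeq E_2^{k+1,0}\simeq \mathrm{H}^{k+1}(B)$ at the bottom $F^{k+1}\mathrm{H}^{k+1}(\pi^!B)$; now the top piece $E_\infty^{0,k+1}$ (coming from the possibly infinite-dimensional $\mathcal{H}^{k+1}$) may be nonzero, so $(\pi^!)^\ast$ factors as the inclusion $E_\infty^{k+1,0}=F^{k+1}\hookrightarrow \mathrm{H}^{k+1}(\pi^!B)$ and is merely injective.

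The only genuinely non-formal point I expect to verify with care is the identification of the edge homomorphism $E_2^{n,0}\to\mathrm{H}^n(\pi^!B)$ with the pullback $(\pi^!)^\ast$. This should follow from the description of the filtration in Lemma \ref{theorem:classical_form}: the top filtration piece $\mathcal{F}^n_{\ker d\pi}\Omega^n$ consists precisely of the fully horizontal forms, which are the pullbacks $(\pi^!)^\ast\Omega^n(B)$, so the map $\mathrm{H}^n(B)=E_2^{n,0}\twoheadrightarrow E_\infty^{n,0}=F^n\mathrm{H}^n(\pi^!B)$ is represented by $(\pi^!)^\ast$. Everything else is routine first-quadrant spectral-sequence bookkeeping, the vanishing of $\mathcal{H}^q$ in the range $1\le q\le k$, and the trivialisation of $\mathcal{H}^0$.
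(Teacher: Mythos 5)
Your proof is correct and follows exactly the route the paper intends: the paper does not write out an argument but simply cites \cite[Theorem 2]{Crai03} and remarks that it is obtained from the same Serre spectral sequence of $\ker d\pi\subset\pi^!B$, and your degeneration-plus-edge-homomorphism bookkeeping (including the care taken to avoid assuming finite-dimensionality of $\mathrm{H}^{q}(F)$ for $q>k$) is the standard way to fill that in. The only slip is the parenthetical claim that $\mathcal{F}^n_{\ker d\pi}\Omega^n$ equals $(\pi^!)^\ast\Omega^n(B)$ --- by Lemma \ref{theorem:classical_form} it is the larger space $\Gamma(\wedge^n(\pi^\ast B)^\ast)\simeq\Omega^n(B)\otimes_{C^\infty(Q)}C^\infty(M)$ of all fully horizontal forms --- but since you only need the inclusion $(\pi^!)^\ast\Omega^n(B)\subset\mathcal{F}^n$ and the induced identity on $E_2^{n,0}\simeq\mathrm{H}^n(B)$, the edge-homomorphism identification and hence the argument stand.
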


\begin{example}
    Another class of fibres with relatively simple cohomology are spheres $F=S^n$. In this case, we obtain a Lie algebroid version of the classical Gysin sequence. On the second page of the associated Serre spectral sequence only the zeroth and $n$-th row are non-trivial, and the only non-trivial differential is on page $E_{n}$. One obtains a long exact sequence (see for example the analogous discussion in \cite{BoTu82}, before Proposition 14.33)
\[\ldots {\longrightarrow} \mathrm{H}^k(\pi^! B)
\stackrel{a}{\longrightarrow} \mathrm{H}^{k-n}(B,\mathcal{H}^n(\ker d\pi))
\stackrel{d_n}{\longrightarrow}\mathrm{H}^{k+1}(B)
\stackrel{b}{\longrightarrow}\mathrm{H}^{k+1}(\pi^! B) 
{\longrightarrow}\ldots
\]
The maps can be described as follows. First, note that we have an isomorphism of flat bundles
\[ \mathcal{H}^n(\ker d\pi) \simeq o(M)\times_{\mathbb{Z}_2}\mathbb{R},\]
where $o(M)\to Q$ is the double cover corresponding to the two possible orientations on the fibres of $M\to Q$. Under this identification, $a$ is the map integrating along the fibres, $d_n=\sharp^\ast e\wedge$, where $e\in \mathrm{H}^{n+1}(Q)$ is the Euler class of the sphere bundle, and $b=(\pi^!)^\ast$ is the pullback. See \cite{Sch24} for details.
\end{example}

\subsection{Submersions by Lie algebroids}\label{sec:submersions_by_LAs}

A different class of Lie algebroid extensions for which the Leray-Serre spectral sequence admits an interesting generalisation are the \textbf{submersions by Lie algebroids}, introduced and studied recently in \cite{Frej19}. These are pairs $(A,\pi)$ composed of a Lie algebroid $A\Ato M$ and a surjective submersion $\pi: M\to Q$ such that $d\pi\circ\sharp$ is surjective. Hence, if $L:=\ker (d\pi\circ \sharp)$, 
there is a short exact sequence of Lie algebroids
\begin{equation}\label{eq:ss_Liealgebroid:submersion_by_liealgebroids}
\begin{tikzpicture}[baseline=(current bounding box.center)]
\usetikzlibrary{arrows}
\node (1) at (0,1) {$ 0 $};
\node (2) at (1.8,1) {$ L $};
\node (3) at (3.6,1) {$ A $};
\node (4) at (5.4,1) {$ TQ $};
\node (5) at (7.2,1) {$ 0 $};

\node (w) at (1.8,0) {$ M $};
\node (e) at (3.6,0) {$ M $};
\node (r) at (5.4,0) {$ Q $};
\draw[-Implies,double equal sign distance]
(2) -- (w);
\draw[-Implies,double equal sign distance]
(3) -- (e);
\draw[-Implies,double equal sign distance]
(4) -- (r);
\path[->]
(1) edge node[]{$  $} (2)
(2) edge node[above]{$ i $} (3)
(3) edge node[above]{$ d\pi\circ\sharp $} (4)
(4) edge node[]{$  $} (5)
(w) edge node[above]{$\mathrm{id}_M $} (e)
(e) edge node[above]{$ \pi $} (r);
\end{tikzpicture}
\end{equation}

For locally trivial submersion by Lie algebroids (recalled below), a Leray-type spectral sequence for $\mathrm{H}^{\bullet}(A,V)$ was constructed in \cite[Section 5]{Frej19}. In particular,  the second page of this spectral sequence contains the \v{C}ech cohomology of $Q$ with values in a certain presheaf.
We show that the sheafification of this presheaf appears naturally in our setting of the Serre spectral sequence associated to the extension, and we explain in Theorem \ref{theorem:comparing_sp_sq} how the two constructions are related. 

Note that, for a submersion by Lie algebroids $(A,\pi)$, the fibres of $\pi$ are transverse submanifolds for $A$. By the normal form theorem \cite[Theorem 4.1]{BLM19} each fibre $\pi^{-1}(x)$ has a tubular neighbourhood $\mathrm{pr}: \mathcal{U}\to \pi^{-1}(x)$, such that there is an isomorphism of Lie algebroids $A|_{\mathcal{U}}\simeq \mathrm{pr}^! L_x$ covering $\mathrm{id}_{\mathcal{U}}$, where $L_x:=L|_{\pi^{-1}(x)}$. 
If we could take $\mathcal{U}$ to be part of a trivialisation of $\pi$, i.e.\ $\mathcal{U}=\pi^{-1}(U)$ and $(\pi,\mathrm{pr}):\pi^{-1}(U)\diffto U\times \pi^{-1}(x)$ is a diffeomorphism (e.g.\ if $\pi$ is proper) then we would obtain a local trivialisation of $A$
\begin{equation}\label{eq:local:triv}
A|_{\pi^{-1}(U)}\simeq TU\times L_x.
\end{equation}
If $Q$ can be covered by such local trivialisations, the submersion by Lie algebroids \eqref{eq:ss_Liealgebroid:submersion_by_liealgebroids} is called \textbf{locally trivial}. A condition equivalent to local triviality is the existence of \textbf{complete Ehresmann connections} \cite[Theorem 3]{Frej19}. This is a splitting of vector bundles $A=L\oplus C$ such that the induced horizontal lift 
\[ hor: \mathfrak{X}(Q)\to \Gamma(C)\subset \Gamma(A) \]
maps complete vector fields to complete sections of $A$ (i.e.\ whose anchor is complete). The resulting parallel transport can be used to locally trivialise $A$. 

Local trivialisations \eqref{eq:local:triv} yield local trivialisations of the ``vector bundle'' $\mathcal{H}^q(L,V)\to Q$ with locally constant transition functions \cite[Lemma 4]{Frej19}. However, since the fibres of $\mathcal{H}^q(L,V)\to Q$ will in general be infinite dimensional (e.g.\ if $L$ is the zero Lie algebroid), the interpretation from Remark \ref{remark:nice:setting} of $\mathrm{H}^\bullet(L,V)$ as sections of a vector bundle is not suitable in this general setting. Instead, we show that elements of $\mathrm{H}^\bullet(L,V)$ that are flat under the representation of $TQ$ constitute a sheaf, which is the sheafification of the presheaf appearing in \cite[Section 5]{Frej19}. Using this, we describe the second page of the Serre spectral sequence in terms of cohomology of $Q$ with values in this sheaf. 

\begin{lemma}\label{lemma:subm_by_LA:sheaf_resolution}
    The assignment, sending an open set $U\subset Q$ to
    \[ \mathcal{S}^q_{L,V}(U)=\big\{ c\in \mathrm{H}^q(L|_{\pi^{-1}(U)},V|_{\pi^{-1}(U)})\, |\,  \nabla c=0  \big\}, \]
    constitutes a sheaf on $Q$, where $\nabla$ is the generalised representation from Theorem \ref{theorem:first_page_with_differential} (d). Moreover, if $A$ is locally trivial, then
    \begin{equation}\label{eq:resolution_by_fine_sheaves}
        0{\longrightarrow} \mathcal{S}^q_{L,V}{\longrightarrow}\Omega^0_Q(\cdot,\mathrm{H}^q(L,V))\stackrel{d^{\nabla}}{\longrightarrow}\Omega^1_Q(\cdot,\mathrm{H}^q(L,V))\stackrel{d^{\nabla}}{\longrightarrow} \ldots
    \end{equation}
    is a resolution of $\mathcal{S}^q_{L,V}$ by fine sheaves.
\end{lemma}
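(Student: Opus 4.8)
The plan is to treat $\mathcal{S}^q_{L,V}$ as the sheaf of \emph{flat classes} of the generalised representation $\nabla$ of $TQ$ on the $C^{\infty}(Q)$-module $\mathrm{H}^q(L,V)$, and then to exhibit the complex in \eqref{eq:resolution_by_fine_sheaves} as a fine resolution of it. The restriction maps making $\mathcal{S}^q_{L,V}$ a presheaf are the pullbacks in $L$-cohomology along the inclusions $L|_{\pi^{-1}(U')}\hookrightarrow L|_{\pi^{-1}(U)}$ for $U'\subset U$. Since the construction of $\nabla$ in Theorem \ref{theorem:first_page_with_differential} (d) is local (it only uses a lift $\tilde\beta$ and an extension $\tilde\eta$ over the relevant open set), it is compatible with these restrictions, so flatness is preserved and $\mathcal{S}^q_{L,V}$ is a presheaf.

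For both sheaf axioms I would use one mechanism: functions pulled back from $Q$ are $d_L$-closed. Indeed, for $\rho\in C^\infty(Q)$ and $\alpha\in\Gamma(L)$ one has $(d_L\pi^\ast\rho)(\alpha)=d\rho(d\pi(\sharp\alpha))=0$, because $\sharp(\Gamma(L))\subset\ker d\pi$. Fixing a cover $\{U_i\}$ of $U$ and a partition of unity $\{\rho_i\}$ on $Q$ subordinate to it, this yields, for $\omega\in\Omega^q(L|_{\pi^{-1}(U)},V)$ with local primitives $\omega|_{\pi^{-1}(U_i)}=d_L\eta_i$, the global identity $d_L\big(\sum_i\pi^\ast\rho_i\,\eta_i\big)=\sum_i\pi^\ast\rho_i\,\omega=\omega$, proving \emph{separation} for the full cohomology presheaf $U\mapsto\mathrm{H}^q(L|_{\pi^{-1}(U)},V)$, and a fortiori for $\mathcal{S}^q_{L,V}$. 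For \emph{gluing}, given compatible flat classes $c_i=[\omega_i]\in\mathcal{S}^q_{L,V}(U_i)$ I set $\omega:=\sum_i\pi^\ast\rho_i\,\omega_i$; the same computation shows $d_L\omega=0$, and writing $\omega_i-\omega_j=d_L\eta_{ij}$ on overlaps gives $\omega-\omega_k=d_L\big(\sum_i\pi^\ast\rho_i\,\eta_{ik}\big)$ on $\pi^{-1}(U_k)$, so $[\omega]|_{U_k}=c_k$. Flatness of the glued class is then automatic: since $\nabla$ commutes with restriction, $\nabla[\omega]\in\Omega^1(U,\mathrm{H}^q(L|_{\pi^{-1}(U)},V))$ restricts to $\nabla c_i=0$ on each $U_i$, whence the separation property just established (applied to each evaluation $\nabla[\omega](X)$) forces $\nabla[\omega]=0$. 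Note that this half of the argument does not use local triviality.

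For the resolution I would first record that each $\Omega^p_Q(\cdot,\mathrm{H}^q(L,V))$ is a sheaf of $C^\infty_Q$-modules, hence fine, and that $d^\nabla$ on $0$-forms is exactly $\nabla$, so its kernel is $\mathcal{S}^q_{L,V}$; this gives exactness at the left end. It remains to prove exactness of the complex of sheaves in positive degrees, which is a stalkwise question. Here local triviality enters: over an open $U$ carrying a trivialisation \eqref{eq:local:triv}, \cite[Lemma 4]{Frej19} provides a local trivialisation of $\mathcal{H}^q(L,V)\to Q$ with locally constant transition functions, under which $\nabla$ becomes the trivial flat connection; concretely $(\Omega^\bullet_Q(\cdot,\mathrm{H}^q(L,V))|_U,d^\nabla)$ is identified with the de Rham complex of $U$ with coefficients in the module $\mathrm{H}^q(L|_{\pi^{-1}(U)},V)$ equipped with the product connection $d$. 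Shrinking $U$ to a ball, the standard Poincaré homotopy operator---which is $\mathbb{R}$-linear and therefore extends verbatim to coefficients in an arbitrary module---supplies a contracting homotopy in positive degrees with $H^0$ the constant sections, i.e.\ $\mathcal{S}^q_{L,V}(U)$. This establishes \eqref{eq:resolution_by_fine_sheaves} as a fine resolution.

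The main obstacle is this last step: because the fibres $\mathrm{H}^q(L_x,V_x)$ are in general infinite-dimensional, one cannot invoke a finite-rank flat-bundle Poincaré lemma. The crux is therefore to turn the local triviality of $A$ (equivalently, a complete Ehresmann connection) into an honest straightening of the generalised representation $\nabla$ to the product connection $d$ over a trivialising chart, so that the coefficient module is fixed along $U$ and the $\mathbb{R}$-linear homotopy operator applies coefficient-wise. Verifying this compatibility---that the locally constant transition maps of \cite[Lemma 4]{Frej19} are precisely the $\nabla$-flat frames---is the one genuinely non-formal point; everything else is the standard partition-of-unity and Poincaré-lemma bookkeeping.
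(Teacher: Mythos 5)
Your treatment of the sheaf axioms is correct and coincides with the paper's: both rest on the $C^{\infty}(Q)$-linearity of $d_L$ (coming from $\sharp(\Gamma(L))\subset\ker d\pi$), the partition-of-unity construction of global primitives for separation, the convex combination $\sum_i\pi^{\ast}\rho_i\,\omega_i$ for gluing, and locality of $\nabla$ for flatness of the glued class. That half of the proposal is fine.

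The exactness of \eqref{eq:resolution_by_fine_sheaves} in positive degrees, however, has a genuine gap exactly at the point you flag but do not resolve. The claim that the Poincar\'e homotopy operator ``extends verbatim to coefficients in an arbitrary module'' is false: $\Omega^p(U,\mathfrak{M})=\Omega^p(U)\otimes_{C^{\infty}(U)}\mathfrak{M}$ is a tensor product over $C^{\infty}(U)$, and the operator $h\alpha=\int_0^1\frac{1}{t}\mathrm{i}_{\xi}\mu_t^{\ast}\alpha\,dt$ involves the rescaling $y\mapsto ty$, which acts on $C^{\infty}(U)$ and therefore does not descend to such a tensor product for an abstract module $\mathfrak{M}$. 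It would descend if $\mathfrak{M}\simeq C^{\infty}(U,W)$ for a fixed vector space $W$, but even after the trivialisation $(A|_{\pi^{-1}(U)},V|_{\pi^{-1}(U)})\simeq(TU\times L_x,\mathrm{pr}_2^!V_x)$ of \cite[Lemma 4]{Frej19}, the coefficient module $\mathrm{H}^q(L|_{\pi^{-1}(U)},V|_{\pi^{-1}(U)})$ is \emph{not} of this form: taking $d_{L_x}$-cohomology does not commute with forming smooth families over $U$ (a family of exact forms need not admit a smooth family of primitives), so the ``honest straightening to the product connection'' you ask for is not available at the level of cohomology. The paper's proof circumvents this by working with cochain representatives: one decomposes $\Omega^{\bullet}(A|_{\pi^{-1}(U)},V)\simeq\bigoplus_{p+q=\bullet}\Omega^p(U,\Omega^q(\mathrm{pr}_2^!L_x,\mathrm{pr}_2^!V_x))$ with $d_A=d_{L_x}+(-1)^qd$; given $c=[\eta]$ with $d_{\nabla}c=0$, one first lifts $[d\eta]=0$ to an identity of forms $d\eta=d_{L_x}\theta$ (possible because each coefficient of $d\eta$ is $d_{L_x}$-exact), then applies $h$ --- which does make sense here, since the coefficients are smooth families valued in the fixed spaces $\Omega^{\bullet}(L_x,V_x)$ --- and uses the commutation $h\circ d_{L_x}=d_{L_x}\circ h$ to conclude $d_{L_x}h\eta=0$ and $\eta-dh\eta=d_{L_x}h\theta$, whence $c=\pm d_{\nabla}[h\eta]$. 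Without this descent-through-representatives step your argument does not close.
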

\begin{proof}
 We first check that the presheaf $\mathcal{S}^q_{L,V}$ is complete. 

Let $\{U_i\}_i$ be a family of open subsets of $Q$ and let $U:=\cup_i U_i$. Consider $c\in \mathcal{S}^q_{L,V}(U)$ such that $c|_{U_i}=0$, for all $i$. We want to show that $c=0$. Let $\omega\in \Omega^q(L|_{\pi^{-1}(U)},V|_{\pi^{-1}(U)})$ be a representative of $c$. Then $\omega|_{\pi^{-1}(U_i)}=d_L \eta_i$, for some with $\eta_i\in \Omega^{q-1}(L|_{\pi^{-1}(U_i)},V|_{\pi^{-1}(U_i)})$. Choose a partition of unity $\{ \chi_i\}_i$ on $U$ subordinate to the cover $\{U_i\}_i$. Define the $q-1$-form 
\[\eta:=\sum_i \pi^\ast\chi_i\, \eta_i\in \Omega^{q-1}(L|_{\pi^{-1}(U)},V|_{\pi^{-1}(U)}).\]
Since $d_L$ is $C^{\infty}(Q)$-linear, we have that 
\[d_L\eta=\sum_i \pi^\ast\chi_i\, d_L\eta_i=\sum_i \pi^\ast\chi_i\, \omega=\omega,\]
hence indeed $c=0$. 

Next, consider classes $c_i\in \mathcal{S}^q_{L,V}(U_i)$ such that
    \[ c_i|_{U_i\cap U_j}=c_j|_{U_i\cap U_j}.\]
    To glue these elements to a global section over $U$, let $\omega_i\in \Omega^q(L|_{\pi^{-1}(U_i)},V|_{\pi^{-1}(U_i)})$ be representatives of $c_i$. 
    Then we can write \[\omega_i|_{U_i\cap U_j}-\omega_j|_{U_i\cap U_j}=d_L\theta_{i,j},\quad \textrm{with}\quad \theta_{i,j}\in \Omega^{q-1}(L|_{\pi^{-1}(U_{i}\cap U_j)},V|_{\pi^{-1}(U_{i}\cap U_j)}).\] 
    Define
    \[ \omega=\sum_i \pi^\ast\chi_i\, \omega_i\in \Omega^q(L|_{\pi^{-1}(U)},V|_{\pi^{-1}(U)}).\]
    Using that $d_L$ is $C^{\infty}(Q)$-linear, it follows that $\omega$ is closed and that 
    \[\omega|_{U_j}=\sum_i \pi^\ast\chi_i (\omega_j|_{U_{i}\cap U_j}+d_L\theta_{i,j})=\omega_j+d_L(\sum_i \pi^\ast\chi_i\, \theta_{i,j}).\]
Hence $c:=[\omega]\in H^{q}(L|_{\pi^{-1}(U)},V|_{\pi^{-1}(U)})$ satisfies $c|_{U_j}=c_j$. To show that $\nabla c=0$, we use that flatness is a local property. Namely, for any vector field $X\in \Gamma(TU)$ we have that $\nabla_Xc|_{U_i}=\nabla_{X|_{U_i}}c_i=0$, which implies, as in the first part, that $\nabla_Xc=0$. 

We conclude that $\mathcal{S}^q_{L,V}$ is a sheaf.

Exactness of \eqref{eq:resolution_by_fine_sheaves} at $\Omega^0_Q(\cdot,\mathrm{H}^q(L,V))$ is clear. We show exactness in degree $p\geq 1$ and at some point $x\in Q$. First, by the proof of \cite[Lemma 4]{Frej19}, the fact that $A$ admits a complete connection implies that we can simultaneously trivialise $A$ and the representation
\begin{equation}\label{eq:local:triv_with_representation}
(A|_{\pi^{-1}(U)}, V|_{\pi^{-1}(U)})\simeq (TU\times L_x, \mathrm{pr}_2^!V_x),
\end{equation}
where $V_x:=V|_{\pi^{-1}(x)}$, $U$ is a small neighbourhood of $x$, and $\mathrm{pr}_2:U\times \pi^{-1}(x)\to \pi^{-1}(x)$ is the second projection. We shrink $U$ to admit coordinates $\{y_i\}$ in which $U$ corresponds to a ball. Using this isomorphism, we can decompose 
\[\Omega^{\bullet}(A|_{\pi^{-1}(U)},V|_{\pi^{-1}(U)})\simeq \bigoplus_{p+q=\bullet} \Omega^p(U,\Omega^q(\mathrm{pr}_2^!L_x,\mathrm{pr}_2^!V_{x})).\]
 A form $\eta \in\Omega^p(U,\Omega^q(\mathrm{pr}_2^!L_x,\mathrm{pr}_2^!V_{x}))$ can be written uniquely as 
\[ \eta=\frac{1}{p!}\sum_{i_1\ldots i_p}{\eta}_{i_1\ldots i_p} dy_{i_1}\wedge  \ldots \wedge dy_{i_p}, \]
    where the fully skew-symmetric coefficients ${\eta}_{i_1 \ldots i_p} \in \Omega^q(\mathrm{pr}_2^!L_x,\mathrm{pr}_2^!V_{x})$ can be thought of as smooth families of forms on $L_x$ with values in $V_x$
    \[U\ni y\mapsto \eta_{i_1 \ldots i_p}(y)\in \Omega^q(L_x,V_{x}).\]
Under these identifications, the $A$-differential decomposes as $d_A=d_{L_x}+(-1)^qd$, where $d_{L_x}$ has bidegree $(0,1)$ and is the  differential of $L_x$ and $d$ has bidegree $(1,0)$ and is the `de Rham' differential on $U$, i.e.\
\begin{align*}
d_{L_x} \eta&=\frac{1}{p!}\sum_{i_1 \ldots i_p}(d_{L_x}\eta_{i_1 \ldots i_p})dy_{i_1}\wedge \ldots \wedge d y_{i_p}\qquad \textrm{and}\\
d \eta&=\frac{1}{p!}\sum_{i_0 \ldots i_p} \partial_{y_{i_{0}}}(\eta_{i_1 i_2 \ldots i_p})dy_{i_0}\wedge dy_{i_1}\wedge \ldots \wedge d y_{i_p}.
\end{align*}

To show exactness of \eqref{eq:resolution_by_fine_sheaves}, let $c=[\eta]\in \Omega^p(U,\mathrm{H}^q(L,V))$ be such that $d_{\nabla}c=0$, where $U$ is some neighbourhood of $x$. We need to show that, after possibly shrinking $U$, there is some $e\in \Omega^{p-1}(U,\mathrm{H}^q(L,V))$ such that $c=d_{\nabla}e$. So 
 we may assume that there is a trivialisation \eqref{eq:local:triv_with_representation} above $U$ and there are coordinates $\{y_i\}$ centred at $x$ in which $U$ corresponds to a ball. Then, under the above identifications, $d_{\nabla}$ becomes the de Rham differential, so 
\[0=d_{\nabla}c=d_{\nabla}[\eta]=[d_A\eta]=(-1)^q[d\eta]\in \Omega^{p+1}(U,\mathrm{H}^q(L,V)).\]
Hence, any coefficient $(d\eta)_{i_0i_1\ldots i_p}$ of $d\eta$ is $d_{L_x}$ exact, so we can write 
\[d\eta=d_{L_x}\theta,\qquad \textrm{with}\quad \theta\in \Omega^{p+1}(U,\Omega^{q-1}(\mathrm{pr}_2^!L_x,\mathrm{pr}_2^!V_{x})).\] Next, consider the standard homotopy operators from the Poincar\'e Lemma on $U$, corresponding to the contraction $\mu_t(y)=ty$ in the coordinates $\{y_i\}$, i.e.\ if $\xi=\sum_i y_i\partial_{y_i}$, let
\[h \alpha=\int_0^1\frac{1}{t}\mathrm{i}_{\xi} \mu_t^*\alpha \, d t= 
\int_0^1\frac{t^{l-1}}{(l-1)!}\sum_{i_1 \dots i_l}y_{i_1}\alpha_{i_1\ldots i_p}(ty)d y_{i_2}\wedge\ldots \wedge d y_{i_{l}}\, dt.\]
Note that these operators make sense for $\alpha\in \Omega^{l}(U,\Omega^{\bullet}(\mathrm{pr}_2^!L_x,\mathrm{pr}_2^!V_{x}))$, $l\geq 1$, and still satisfy the homotopy relation and, moreover, commute with $d_{L_x}$, i.e.\
\[\alpha=dh \alpha+ hd  \alpha\qquad \textrm{and} \qquad d_{L_x}h\alpha =hd_{L_x}\alpha.\]
This implies that 
\[\eta=dh\eta+hd\eta=dh\eta+h d_{L_x}\theta=dh\eta+d_{L_x}h\theta,\]
and also that
\[d_{L_x}h\eta=hd_{L_x}\eta=0.\]
Thus the element $h\eta\in \Omega^{p-1}(U,\Omega^q(\mathrm{pr}_2^!L_x,\mathrm{pr}_2^!V_{x}))$ defines a class $e:=[h\eta]\in \Omega^{p-1}(U,\mathrm{H}^q(L,V))$ which satisfies
\[d_{\nabla}e=(-1)^q[d h\eta]=(-1)^q[\eta- d_{L_x}h\theta]=(-1)^q[\eta]=(-1)^qc.\]
Therefore $c$ is $d_{\nabla}$ exact. 

Finally, the resolution is by fine sheaves because $\Omega^\bullet(U,\mathrm{H}^q(L,V))$ is a $C^{\infty}(U)$-module.
\end{proof}


\begin{theorem}\label{theorem:page:2:submersion}
    Let $(A\Ato M,\pi:M\to Q)$ be a locally trivial submersion by Lie algebroids, and $V\to M$ a representation of $A$. The spaces on the second page of the associated Serre spectral sequence are isomorphic to the cohomology of $Q$ with coefficients in the sheaf $\mathcal{S}^q_{L,V}$
    \[ E_2^{p,q}\simeq\mathrm{H}^p(Q,\mathcal{S}^q_{L,V}). \]
\end{theorem}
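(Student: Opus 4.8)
The plan is to chain together Theorem \ref{theorem:first_page_with_differential} and Lemma \ref{lemma:subm_by_LA:sheaf_resolution} via the abstract de Rham theorem, so that the only new work is a bookkeeping identification of complexes.

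First I would apply Theorem \ref{theorem:first_page_with_differential} to the extension \eqref{eq:ss_Liealgebroid:submersion_by_liealgebroids}, whose quotient Lie algebroid is $B=TQ$. Parts (a)--(e) of that theorem yield a canonical isomorphism
\[ E_2^{p,q}\simeq \mathrm{H}^p(TQ,\mathrm{H}^q(L,V)), \]
where $\mathrm{H}^q(L,V)$ is equipped with the generalised representation $\nabla$ of $TQ$ from part (d). Unwinding the definition of Lie algebroid cohomology with values in a generalised representation, and using that $\Omega^\bullet(TQ,\mathfrak{M})=\Omega^\bullet(Q,\mathfrak{M})$ with differential $d^{\nabla}$, this identifies $E_2^{p,q}$ with the $p$-th cohomology of the complex $(\Omega^\bullet(Q,\mathrm{H}^q(L,V)),d^{\nabla})$.

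Next I would invoke Lemma \ref{lemma:subm_by_LA:sheaf_resolution}: since $A$ is locally trivial, the sequence \eqref{eq:resolution_by_fine_sheaves} exhibits $(\Omega^\bullet_Q(\cdot,\mathrm{H}^q(L,V)),d^{\nabla})$ as a resolution of the sheaf $\mathcal{S}^q_{L,V}$ by fine sheaves, and the $\nabla$ appearing there is precisely the representation produced above. Taking global sections recovers exactly the complex from the first step, since the global sections of $\Omega^p_Q(\cdot,\mathrm{H}^q(L,V))$ are $\Omega^p(Q,\mathrm{H}^q(L,V))$.

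Finally I would apply the abstract de Rham theorem. On the paracompact manifold $Q$, fine sheaves are soft and hence acyclic, so a fine resolution computes sheaf cohomology: the cohomology of the global sections complex equals $\mathrm{H}^p(Q,\mathcal{S}^q_{L,V})$. Composing the three identifications gives $E_2^{p,q}\simeq \mathrm{H}^p(Q,\mathcal{S}^q_{L,V})$, as claimed. The substantive analytic input, namely completeness of the presheaf and exactness of the resolution via the fibrewise Poincar\'e lemma, has already been carried out in Lemma \ref{lemma:subm_by_LA:sheaf_resolution}; thus the only genuine remaining point is to confirm that the generalised representation of Theorem \ref{theorem:first_page_with_differential} (d) agrees on the nose with the $\nabla$ in the resolution, which holds by construction, so no real obstacle is left.
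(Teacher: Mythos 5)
Your proposal is correct and follows essentially the same route as the paper: the paper's proof likewise combines Theorem \ref{theorem:first_page_with_differential} (identifying $E_2^{p,q}$ with $\mathrm{H}^p(TQ,\mathrm{H}^q(L,V))$) with Lemma \ref{lemma:subm_by_LA:sheaf_resolution} and the abstract de Rham theorem for fine resolutions (citing \cite[Theorem 5.25]{War83}). The extra checks you mention, such as matching the two occurrences of $\nabla$ and identifying global sections, are exactly the bookkeeping the paper leaves implicit.
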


\begin{proof}
    By Lemma \ref{lemma:subm_by_LA:sheaf_resolution} and \cite[Theorem 5.25]{War83} we obtain a canonical isomorphism 
    \[ \mathrm{H}^\bullet(Q,\mathcal{S}^q_{L,V})\simeq \mathrm{H}^\bullet(Q,\mathrm{H}^q(L,V)), \]
    since \eqref{eq:resolution_by_fine_sheaves} is a resolution by fine sheaves. This together with Theorem \ref{theorem:first_page_with_differential} proves the claim.
\end{proof}

\begin{remark}\label{remark:Pedro's_spectral_Sequence}\rm
In \cite{Frej19}, a different spectral sequence was constructed for a submersion by Lie algebroids $(A,\pi)$. Namely, as explained in \cite[Proposition 3]{Frej19}, the \v{C}ech-Lie algebroid double complex of an open cover $\mathcal{U}=\{U_i\}$ of $Q$,
\begin{equation}\label{eq:Cech-Lie_alg}
    C^{p}(\mathcal{U},\Omega^{q}(A,V)):=\prod_{i_0 \ldots i_p}\Omega^q(A|_{\pi^{-1}(U_{i_0<\ldots< i_p})},V|_{\pi^{-1}(U_{i_0\ldots i_p})}),\quad  \mathrm{d}=\delta+(-1)^p d_A,
\end{equation}
    is quasi-isomorphic to $(\Omega^{\bullet}(A,V),d_A)$, and so the associated spectral sequence $\check{E}_{r}^{p,q}(\mathcal{U})$ converges to $\mathrm{H}^{\bullet}(A,V)$. Moreover, the second page is the \v{C}ech cohomology of $\mathcal{U}$ with values in the pushforward presheaf $\mathcal{P}^q=\pi_*(\mathrm{H}^q(A,V))$
    \begin{equation}\label{eq:second_page_cech}
       \check{E}_2^{p,q}(\mathcal{U})\simeq \check{\mathrm{H}}^p(\mathcal{U}, \mathcal{P}^q). 
    \end{equation}
    
    
    Assume now that the submersion by Lie algebroids is \emph{locally trivial}. Then, as shown in \cite[Lemma 4]{Frej19}, $\mathcal{P}^q$ is a locally constant presheaf: if $x\in Q$ and $U$ is a  contractible neighbourhood of $x$, which admits a trivialisation as in \eqref{eq:local:triv_with_representation}, then the inclusion $L_x\hookrightarrow A$ induces an isomorphism 
    \begin{equation}\label{eq:locally:constant:sheaf}
    \mathcal{P}^q(U)=\mathrm{H}^{q}(A|_{\pi^{-1}(U)},V|_{\pi^{-1}(U)})\simeq \mathrm{H}^q(TU\times L_x, \mathrm{pr}^*_2(V_x))\simeq \mathrm{H}^q(L_x, V_x).
    \end{equation}

To relate \cite[Proposition 3]{Frej19} to our Theorem \ref{theorem:page:2:submersion}, we first note the following. 
    
    \begin{lemma}\label{lemma:sheafification}
        If the submersion by Lie algebroids is locally trivial, then the sheafification of the presheaf $\mathcal{P}^q$ is canonically isomorphic to the sheaf $\mathcal{S}^q_{L,V}$.
    \end{lemma}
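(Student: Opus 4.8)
The plan is to produce a canonical morphism of presheaves $\Phi\colon\mathcal{P}^q\to\mathcal{S}^q_{L,V}$ induced by the inclusion $i\colon L\hookrightarrow A$, and then to show that it becomes an isomorphism after sheafification by checking it is an isomorphism on every stalk. For an open set $U\subset Q$, pullback along $i$ gives
\[ \Phi_U\colon \mathcal{P}^q(U)=\mathrm{H}^q(A|_{\pi^{-1}(U)},V|_{\pi^{-1}(U)})\to \mathrm{H}^q(L|_{\pi^{-1}(U)},V|_{\pi^{-1}(U)}),\qquad [\tilde\eta]\mapsto [i^*\tilde\eta]. \]
The first thing to verify is that the image lands in the flat sections $\mathcal{S}^q_{L,V}(U)$. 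Here the generalised representation $\nabla$ is that of $B=TQ$ from Theorem \ref{theorem:first_page_with_differential}~(d), and the point is clean: given a $d_A$-closed representative $\tilde\eta$, one may take $\tilde\eta$ itself as an extension of $i^*\tilde\eta$ to $A$, so that by \eqref{eq:def_generalised_connection} we get $\nabla_\beta[i^*\tilde\eta]=[i^*(\mathrm{i}_{\tilde{\beta}}d_A\tilde\eta)]=0$ for every $\beta\in\Gamma(TQ)$. Since the $\Phi_U$ commute with restrictions, they assemble into a morphism of presheaves $\Phi$.

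Because $\mathcal{S}^q_{L,V}$ is a sheaf by Lemma \ref{lemma:subm_by_LA:sheaf_resolution}, the universal property of sheafification yields a unique factorisation of $\Phi$ through a morphism of sheaves $\tilde\Phi\colon (\mathcal{P}^q)^+\to\mathcal{S}^q_{L,V}$. As sheafification preserves stalks, and as a morphism of sheaves is an isomorphism precisely when it is one on all stalks, it suffices to show that $\Phi$ induces an isomorphism on the stalk at each $x\in Q$; equivalently, that $\Phi_U$ is an isomorphism for a cofinal family of neighbourhoods of $x$.

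The concrete step is then to fix $x\in Q$ and a connected neighbourhood $U$ admitting a trivialisation \eqref{eq:local:triv_with_representation}, and to factor $\Phi_U$ through restriction to the fibre. I would analyse the composite
\[ \mathcal{P}^q(U)\xrightarrow{\ \Phi_U\ }\mathcal{S}^q_{L,V}(U)\xrightarrow{\ r\ }\mathrm{H}^q(L_x,V_x), \]
where $r$ restricts a class to $L_x$. Since $L_x\hookrightarrow A|_{\pi^{-1}(U)}$ factors as $L_x\hookrightarrow L|_{\pi^{-1}(U)}\hookrightarrow A|_{\pi^{-1}(U)}$, the composite $r\circ\Phi_U$ is exactly restriction of $A$-classes to $L_x$, which is the isomorphism \eqref{eq:locally:constant:sheaf}. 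It then remains to argue that $r$ itself is an isomorphism: under the trivialisation \eqref{eq:local:triv_with_representation} the connection $\nabla$ becomes the de Rham differential on $U$ (as shown in the proof of Lemma \ref{lemma:subm_by_LA:sheaf_resolution}), so over the connected $U$ the flat elements are precisely the constant ones, identified with $\mathrm{H}^q(L_x,V_x)$ by evaluation at $x$, i.e.\ by $r$. Hence $\Phi_U=r^{-1}\circ(\text{iso})$ is an isomorphism.

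From this, $\tilde\Phi$ is an isomorphism on stalks and therefore an isomorphism of sheaves, and it is canonical because it is built from $i^*$. The main obstacle, and the only place where local triviality of the submersion by Lie algebroids is genuinely used, is the identification of $r$ as an isomorphism: one must know that, under \eqref{eq:local:triv_with_representation}, being $\nabla$-flat corresponds to being locally constant in the cohomology $\mathrm{H}^q(L_x,V_x)$. Everything else is formal bookkeeping with the universal property of sheafification and the factorisation through the fibre.
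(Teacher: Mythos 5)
Your overall strategy is the same as the paper's: build the canonical presheaf morphism $\mathcal{P}^q\to\mathcal{S}^q_{L,V}$ from $i^*$, check (exactly as you do, via \eqref{eq:def_generalised_connection} with $\tilde\eta$ as its own extension) that it lands in flat classes, and then reduce to showing the map is bijective over a trivialising neighbourhood $U$ as in \eqref{eq:local:triv_with_representation}, using that the composite with restriction to the fibre $L_x$ is the isomorphism \eqref{eq:locally:constant:sheaf}. Your packaging through the universal property of sheafification and stalks is fine and equivalent to the paper's ``show it is an isomorphism locally''.

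The one place where you assert rather than prove is the claim that the restriction map $r\colon\mathcal{S}^q_{L,V}(U)\to\mathrm{H}^q(L_x,V_x)$ is an isomorphism because ``over the connected $U$ the flat elements are precisely the constant ones''. This is not a formal consequence of $\nabla$ becoming the de Rham differential: the coefficients form the $C^\infty(U)$-module $\mathrm{H}^q(L|_{\pi^{-1}(U)},V|_{\pi^{-1}(U)})$, which is in general infinite-dimensional and is \emph{not} a priori identified with smooth maps $U\to\mathrm{H}^q(L_x,V_x)$; moreover ``constant'' has to mean that $\eta-\mathrm{pr}_2^*\eta(0)$ is $d_{L_x}$-exact \emph{with a smoothly varying primitive}, which is strictly stronger than $[\eta(y)]=[\eta(0)]$ for each $y$. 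This is exactly the content of the paper's surjectivity step: from $d_\nabla[\eta]=0$ one extracts smooth families $\theta^i$ with $\partial_{y_i}\eta=d_{L_x}\theta^i(y)$ (using the isomorphism $\Omega^1(U,\mathrm{H}^q(L,V))\simeq\Omega^1(U)\otimes\mathrm{H}^q(L,V)$ from Lemma \ref{lemma:tensor_vb}) and then integrates, $\eta(y)-\eta(0)=d_{L_x}\sum_i\int_0^1 y_i\,\theta^i(ty)\,dt$, to conclude $c=[\eta(0)]$. You correctly locate this as ``the main obstacle'', and injectivity of $r$ does come for free from the composite being \eqref{eq:locally:constant:sheaf} together with surjectivity, but the integration argument (or an equivalent one) must actually be supplied to close the proof.
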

    \begin{proof}
    For every $U\subset Q$, the pullback along the inclusion $L\hookrightarrow A$ induces a map 
    \begin{equation}\label{eq:map_of_presh}
    \mathrm{H}^{q}(A|_{\pi^{-1}(U)},V|_{\pi^{-1}(U)})
    \to \mathrm{H}^{q}(L|_{\pi^{-1}(U)},V|_{\pi^{-1}(U)}).
    \end{equation}
    From the definition of the connection on $\mathrm{H}^{q}(L,V)$, we see that this map takes values in flat section. Thus, we have a canonical map of presheaves $\mathcal{P}^q\to \mathcal{S}^q_{L,V}$. We show that this is an isomorphism locally, which will imply the claim. Let $x\in Q$. Fix an open subset $U\subset Q$ as in the second half of the proof of Lemma \ref{lemma:subm_by_LA:sheaf_resolution}, i.e.\ $U$ corresponds to a ball centred at $0$ in the coordinates $\{y_i\}$ and we fix an trivialisation over $U$ as in \eqref{eq:local:triv_with_representation}. By \eqref{eq:locally:constant:sheaf}, the composition
     \[\mathrm{H}^{q}(A|_{\pi^{-1}(U)},V|_{\pi^{-1}(U))})\to\mathrm{H}^{q}(L|_{\pi^{-1}(U)},V|_{\pi^{-1}(U)})\to \mathrm{H}^q(L_x, V_x)\]
     is an isomorphism, hence the map \eqref{eq:map_of_presh} is injective. To show surjectivity, fix $c=[\eta]\in \mathrm{H}^{q}(L|_{\pi^{-1}(U)},V|_{\pi^{-1}(U)})$. As in the proof of Lemma \ref{lemma:subm_by_LA:sheaf_resolution}, we regard $\eta\in \Omega^q(\mathrm{pr}_2^!L_x,\mathrm{pr}_2^!V_x)$ as a family of forms $\eta(y)\in \Omega^q(L_x,V_x)$ depending smoothly on $y\in U$ and satisfying $d_{L_x}\eta(y)=0$. On the other hand, we have that 
        \[ 0=d_\nabla [\eta]=(-1)^q[d\eta]=\sum_i [\partial_{y_i}\eta]dy_i. \]
        Thus we can write $\partial_{y_i}\eta=d_{L_x} \theta^i(y)$, for some $\theta^i\in \Omega^{q-1}(\mathrm{pr}_2^!L_x,\mathrm{pr}_2^!V_x)$. Therefore,
        \begin{align*} 
        \eta(y)-\eta(0)&=\int_0^1\frac{d}{dt}\big|_{t=0}\eta(ty) dt=\sum_i \int_0^1y_i\partial_{y_i}\eta(ty) dt=d_{L_x}\sum_i \int_0^1y_i\theta^i(ty) dt.
        \end{align*}
        Hence $c=[\eta(0)]$, which is clearly in the image of the map \eqref{eq:map_of_presh}. Namely, the trivialisation \eqref{eq:local:triv_with_representation} yields a Lie algebroid map $\mathrm{pr}_2:A|_{\pi^{-1}(U)}\to L_x$ which respects representations, and the class $[\mathrm{pr}_2^*\eta(0)]$ is mapped under \eqref{eq:map_of_presh} to $c$. 
    \end{proof}

For \textbf{good covers} $\mathcal{U}$, we obtain that the second page of the spectral sequence $\check{E}_r^{p,q}(\mathcal{U})$ constructed in \cite{Frej19} is isomorphic to the second page of the Serre spectral sequence $E_r^{p,q}$ 
corresponding to the extension \eqref{eq:ss_Liealgebroid:submersion_by_liealgebroids}--both converging to $\mathrm{H}^{\bullet}(A,V)$.

\begin{theorem}\label{theorem:comparing_sp_sq}
Let $\mathcal{U}$ be a good cover of $Q$ (i.e.\ all intersections $U_{i_0\ldots i_p}$ are contractible), so that over each $U_i\in \mathcal{U}$ a trivialisation as in \eqref{eq:local:triv_with_representation} exists. There are isomorphisms
\[\check{E}_2^{p,q}(\mathcal{U})\simeq \check{\mathrm{H}}^p(\mathcal{U},\mathcal{P}^q)
\simeq 
\check{\mathrm{H}}^p(\mathcal{U},\mathcal{S}^q_{L,V})
\simeq \mathrm{H}^p(Q,\mathcal{S}^q_{L,V})\simeq E_{2}^{p,q}.
\]
\end{theorem}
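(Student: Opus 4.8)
The four isomorphisms in the statement split into two that are already available and two that form the substance of the theorem. The outer isomorphisms $\check{E}_2^{p,q}(\mathcal{U})\simeq \check{\mathrm{H}}^p(\mathcal{U},\mathcal{P}^q)$ and $\mathrm{H}^p(Q,\mathcal{S}^q_{L,V})\simeq E_2^{p,q}$ are precisely \eqref{eq:second_page_cech} (from \cite[Proposition 3]{Frej19}) and Theorem \ref{theorem:page:2:submersion}, respectively. The plan is therefore to establish the two middle isomorphisms, namely that the \v{C}ech complexes of the presheaf $\mathcal{P}^q$ and of the sheaf $\mathcal{S}^q_{L,V}$ agree on the good cover $\mathcal{U}$, and that this cover computes the sheaf cohomology of $\mathcal{S}^q_{L,V}$.

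For the isomorphism $\check{\mathrm{H}}^p(\mathcal{U},\mathcal{P}^q)\simeq \check{\mathrm{H}}^p(\mathcal{U},\mathcal{S}^q_{L,V})$, I would use the canonical morphism of presheaves $\mathcal{P}^q\to \mathcal{S}^q_{L,V}$ constructed in Lemma \ref{lemma:sheafification}. Being a morphism of presheaves, it commutes with restriction maps and hence induces a chain map between the \v{C}ech complexes $\check{C}^\bullet(\mathcal{U},\mathcal{P}^q)\to \check{C}^\bullet(\mathcal{U},\mathcal{S}^q_{L,V})$. Each term of these complexes is a product of the respective (pre)sheaf evaluated on the finite intersections $U_{i_0\ldots i_p}$. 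Since $\mathcal{U}$ is a good cover and each $U_i$ carries a trivialisation as in \eqref{eq:local:triv_with_representation}, every nonempty intersection $U_{i_0\ldots i_p}$ is contractible (and, as usual for good covers, may be taken diffeomorphic to a ball) and inherits, by restriction, such a trivialisation. Thus the local statement proven inside Lemma \ref{lemma:sheafification} applies and shows that $\mathcal{P}^q(U_{i_0\ldots i_p})\to \mathcal{S}^q_{L,V}(U_{i_0\ldots i_p})$ is an isomorphism on each such piece. Consequently the chain map is an isomorphism in every degree, and therefore induces an isomorphism in \v{C}ech cohomology.

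For the isomorphism $\check{\mathrm{H}}^p(\mathcal{U},\mathcal{S}^q_{L,V})\simeq \mathrm{H}^p(Q,\mathcal{S}^q_{L,V})$, I would invoke Leray's theorem. By Lemma \ref{lemma:sheafification} the sheaf $\mathcal{S}^q_{L,V}$ is the sheafification of the locally constant presheaf $\mathcal{P}^q$ (cf.\ \cite[Lemma 4]{Frej19}), hence it is a locally constant sheaf of real vector spaces. On each contractible intersection $U_{i_0\ldots i_p}$ of the good cover its restriction is therefore a constant sheaf, which is acyclic in positive degrees. Thus $\mathcal{U}$ is a Leray (acyclic) cover for $\mathcal{S}^q_{L,V}$, and Leray's theorem (see e.g.\ \cite{War83}) yields the desired identification of \v{C}ech cohomology with sheaf cohomology.

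The main obstacle is the middle isomorphism: one must ensure that the presheaf $\mathcal{P}^q$ and its sheafification $\mathcal{S}^q_{L,V}$ genuinely coincide on every intersection appearing in the \v{C}ech complex, not merely at the level of stalks. This is exactly where both hypotheses on the good cover are used, contractibility together with the existence of the trivialisation \eqref{eq:local:triv_with_representation} on each member, which together place every intersection within the scope of the local computation of Lemma \ref{lemma:sheafification}. Once these two middle isomorphisms are in place, concatenating them with \eqref{eq:second_page_cech} and Theorem \ref{theorem:page:2:submersion} produces the full chain $\check{E}_2^{p,q}(\mathcal{U})\simeq \check{\mathrm{H}}^p(\mathcal{U},\mathcal{P}^q)\simeq \check{\mathrm{H}}^p(\mathcal{U},\mathcal{S}^q_{L,V})\simeq \mathrm{H}^p(Q,\mathcal{S}^q_{L,V})\simeq E_2^{p,q}$, with both outer spectral sequences converging to $\mathrm{H}^\bullet(A,V)$.
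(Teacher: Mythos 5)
Your proposal is correct and follows essentially the same route as the paper: the outer isomorphisms are quoted from \eqref{eq:second_page_cech} and Theorem \ref{theorem:page:2:submersion}, the middle equality of \v{C}ech complexes comes from applying the local computation in Lemma \ref{lemma:sheafification} to each (contractible, trivialised) intersection $U_{i_0\ldots i_p}$, and the identification of \v{C}ech with sheaf cohomology is Leray's theorem for a good cover and a locally constant sheaf.
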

\begin{proof}
The first isomorphism was observed already in \eqref{eq:second_page_cech}. The second isomorphism follows from the proof of Lemma \ref{lemma:sheafification}, which shows that, for any intersection $U_{i_0\ldots i_p}$ of elements in $\mathcal{U}$,
\[\mathcal{P}^q(U_{i_0\ldots i_p})=\mathcal{S}_{L,V}^q(U_{i_0\ldots i_p});\]
hence, the \v{C}ech complexes computing the two cohomology groups coincide. For the third isomorphism, we use Leray's result  \cite{Ler46}, which says that a good cover of a manifold can be used to calculate cohomology with values in a locally constant sheaf (see, e.g.\ \cite[Theorem 15.30]{GaQu22}). The last isomorphism was the content of Theorem \ref{theorem:page:2:submersion}.
\end{proof}
\end{remark}

\subsubsection{Coupling Poisson and Dirac structures}\label{sec:coupling_poisson}

We apply the tools developed in this section to a class of submersions by Lie algebroids coming from Poisson geometry which were introduced by Vorobjev in the study of normal forms for Poisson structures around symplectic leaves. In particular, we recover the description from \cite[Theorem 4.8]{VBVo18} of the first Poisson cohomology group of coupling Poisson structures.

A \textbf{Poisson structure} (see e.g.\ \cite{Wein83}) on a manifold $M$ is a bivector field $w\in \Gamma(\wedge^2TM)$ satisfying $[w,w]=0$ for the Schouten bracket. A Poisson structure yields a Lie algebroid structure on the cotangent bundle $T^*M$ with anchor and bracket given by
\[\sharp\alpha:=w(\alpha, \cdot), \qquad 
[\alpha,\beta]_{w}:=\mathscr{L}_{\sharp \alpha}\beta-\mathrm{i}_{\sharp \beta}d \alpha,\qquad \alpha,\beta\in \Gamma(T^*M).\]
The \textbf{Poisson cohomology} of $(M,w)$ is defined as the Lie algebroid cohomology of $T^*M\Ato M$, and can be computed using the complex of multivector fields and differential $d_{w}:=[w,\cdot]$, i.e.\
\begin{equation}\label{eq:iso:Poisson:cohom}
(\Omega^\bullet(T^*M),d_{T^*M})\simeq (\Gamma(\wedge^\bullet TM),d_{w}).    
\end{equation}

A \textbf{Dirac structure} is a subbundle $D\subset TM\oplus T^*M$ which is Lagrangian for the canonical,  split signature pairing on $TM\oplus T^*M$ and whose space of sections is closed under the Dorfman bracket (for basics on Dirac geometry, see \cite{Cou90,Bur13,CFM21}). Any Dirac structure is a Lie algebroid with the Dorfman bracket. Dirac structures generalise simultaneously Poisson structures and closed forms (via their graphs), as well as foliations ($D=T\mathcal{F}\oplus (T\mathcal{F})^{\circ}$).

A Poisson structure $w$ on the total space of a surjective submersion $\pi:M\to Q$ is called 
\textbf{horizontally nondegenerate} \cite{Vor01,Vor05} if it satisfies 
\[\ker d\pi\oplus \sharp(\ker d \pi)^{\circ}=TM,\]
where $(\ker d \pi)^{\circ}\subset T^*M$ is the annihilator of $\ker d \pi$, i.e.\ the space of `horizontal' 1-forms. 

More generally, a Dirac structure on $D\subset TM\oplus T^*M$ on the total space of a surjective submersion $\pi:M\to Q$ is called horizontally nondegenerate \cite{Wade08}, if it satisfies 
\[D\cap \big(\ker d\pi\oplus (\ker d \pi)^{\circ}\big) =0.\]
As explained in \cite{Vor01,Vor05} for Poisson and in \cite{Wade08} in general, a horizontally nondegenerate Dirac structure $D$ gives rise to the following \textbf{coupling data}:
\begin{enumerate}[(i)]
\item A vertical bivector field $\mathcal{W}\in \Gamma(\wedge^2 \ker d \pi)$;
\item An Ehresmann connection, i.e.\ a subbundle $H\subset TM$ such that $TM=H\oplus \ker d\pi$;
\item A horizontal 2-form $\mathbb{F}\in \Gamma(\wedge^2 (\ker d \pi)^{\circ})$.
\end{enumerate}
The relation between the Dirac structure and the coupling data is 
\begin{equation}\label{eq:D_triple}
D=\{(X,\mathrm{i}_X\mathbb{F})\, |\, X\in H\}\oplus\{(\mathcal{W}^{\sharp}\alpha,\alpha)\, |\, \alpha\in H^{\circ}\}.
\end{equation}
In fact, \cite[Theorem 2.9]{Wade08} shows that this equality yields a one-to-one correspondence between horizontally nondegenerate Dirac structures $D$ and triples $(\mathcal{W},H,\mathbb{F})$ satisfying the following four conditions (see \cite[Proposition 2.4]{Vor05} for the Poisson case, and also \cite{Mar13})
\begin{enumerate}[(1)]
\item $\mathcal{W}$ is a Poisson structure, i.e.\ 
\[[\mathcal{W},\mathcal{W}]=0;\]
\item the parallel transport of $H$ preserves $\mathcal{W}$, i.e.\ 
\begin{equation}\label{eq:W is inv}
\mathscr{L}_{\mathrm{hor}(X)}\mathcal{W}=0,\qquad \textrm{for all } X\in \mathfrak{X}(Q),
\end{equation}
where $\mathrm{hor}(X)\in \Gamma(H)$ denotes the $H$-horizontal lift of $X$;
\item $\mathbb{F}$ is horizontally closed, i.e.\
\[d\mathbb{F}(\mathrm{hor}(X_1),\mathrm{hor}(X_2),\mathrm{hor}(X_3))=0,\qquad \textrm{for all}\ X_1,X_2,X_3\in \mathfrak{X}(Q);\]
\item the curvature of $H$, defined as
\[R_H(\mathrm{hor}(X_1),\mathrm{hor}(X_2)):=
\mathrm{hor}([X_1,X_2])-[\mathrm{hor}(X_1),\mathrm{hor}(X_2)]\in \Gamma(\ker d\pi),\] 
satisfies
\[R_H(\mathrm{hor}(X_1),\mathrm{hor}(X_2))=\mathcal{W}^{\sharp}d(\mathbb{F}(\mathrm{hor}(X_1),\mathrm{hor}(X_2))),\qquad \textrm{for all}\ X_1,X_2\in \mathfrak{X}(Q).\]
\end{enumerate}

The horizontally nondegenerate Dirac structure $D$ is the graph of a Poisson structure $w$ if and only if $\mathbb{F}$ is nondegenerate when regarded as a 2-form on the vector bundle $TM/(\ker d\pi)$.

A horizontally non-degenerate Dirac structure $D$ yields a submersion by Lie algebroids

\begin{equation}\label{eq:Vorobjev:submersion_by_liealgebroids}
\begin{tikzpicture}[baseline=(current bounding box.center)]
\usetikzlibrary{arrows}
\node (1) at (0,1) {$ 0 $};
\node (2) at (1.8,1) {$ (\ker d \pi)^* $};
\node (3) at (3.6,1) {$ D $};
\node (4) at (5.4,1) {$ TQ $};
\node (5) at (7.2,1) {$ 0 $};

\node (w) at (1.8,0) {$ M $};
\node (e) at (3.6,0) {$ M $};
\node (r) at (5.4,0) {$ Q $};
\draw[-Implies,double equal sign distance]
(2) -- (w);
\draw[-Implies,double equal sign distance]
(3) -- (e);
\draw[-Implies,double equal sign distance]
(4) -- (r);
\path[->]
(1) edge node[]{$  $} (2)
(2) edge node[above]{$ i $} (3)
(3) edge node[above]{$ d\pi\circ\sharp $} (4)
(4) edge node[]{$  $} (5)
(w) edge node[above]{$\mathrm{id}_M $} (e)
(e) edge node[above]{$ \pi $} (r);
\end{tikzpicture}
\end{equation}
Here we have used the decomposition \eqref{eq:D_triple} and the following isomorphism 
\[\ker d \pi\circ \sharp=\{(\mathcal{W}^{\sharp}(\alpha),\alpha)\ | \ \alpha\in H^{\circ}\}\simeq H^{\circ}\simeq (\ker d \pi)^*.\] 
Under this identification, the Lie algebroid structure on $(\ker d \pi)^*$ becomes the family of cotangent Lie algebroids $T^*M_x$, $x\in Q$, corresponding to the Poisson manifolds $(M_x,\mathcal{W}_x)$, where $M_x=\pi^{-1}(x)$ and $\mathcal{W}_x=\mathcal{W}|_{M_x}$. 

Since $D$ is transverse to the Dirac structure $\ker d\pi\oplus (\ker d\pi)^{\circ}$, the pairing of the Courant algebroid $TM\oplus T^*M$ gives a canonical isomorphism $D^*\simeq \ker d\pi\oplus (\ker d\pi)^{\circ}$. 
This yields a decomposition of the space of forms on $D$
\[\Omega^{\bullet}(D)=\bigoplus_{p+q=\bullet}\Omega^p(Q,\Gamma(\wedge^q \ker d\pi)).\]
Under this identification, the differential decomposes as $d_D=d^{(0,1)}+d^{(1,0)}+d^{(2,-1)}$, where
\begin{align*}
(d^{(0,1)}\eta)(X_1\ldots X_p)=&(-1)^p[\mathcal{W},\eta(X_1 \ldots X_p)]\\
(d^{(1,0)}\eta)(X_1\ldots X_{p+1})=&\sum_{i}(-1)^{i+1}[\mathrm{hor}(X_i),\eta(X_1\ldots \hat{X}_i\ldots X_{p+1})]\\
&+\sum_{i<j}(-1)^{i+j}\eta([X_i,X_j],X_1\ldots \hat{X}_i\ldots \hat{X}_j\ldots X_{p+1})\\
(d^{(2,-1)}\eta)(X_1\ldots X_{p+2})=&\sum_{\sigma\in \mathrm{Sh}(2,p)}(-1)^{|\sigma|+p}[\mathbb{F}(\mathrm{hor}(X_{\sigma(1)}),\mathrm{hor}(X_{\sigma(2)})),
\eta(X_{\sigma(3)}\ldots X_{\sigma(p+2)})]
\end{align*}
for all $\eta\in \Omega^{p}(Q, \Gamma(\wedge^q \ker d\pi))$, where we used the Schouten bracket, and $\mathrm{Sh}(2,p)$ denotes the set of $(2,p)$-shuffles on $\{1,\ldots p+2\}$. For a proof in the Poisson case see \cite[Proposition 5.3]{CrFe10}, and in the Dirac case \cite[Proposition 4.2.8]{Mar13}. 

By Theorem \ref{theorem:first_page_with_differential} (a) and (b) we obtain that the zeroth-page of the Serre spectral sequence associated to the Lie subalgebroid $(\ker d \pi)^*$ is given by
\[(E_0^{p,q}, d_0)\simeq (\Omega^p(Q,\Gamma(\wedge^q \ker d\pi)),d^{(0,1)}=(-1)^p[\mathcal{W},\cdot]).\]

Inspired by \cite{Vor05}, we will denote the cohomology of the Lie algebroid $(\ker d \pi)^*$ by
\[\mathrm{H}_{V}^{\bullet}(M,\mathcal{W}):=\mathrm{H}^{\bullet}((\ker d\pi)^*).\]
 The subscript is suggestive for the fact that this is the cohomology of the subcomplex of the Lichnerowicz complex of $\mathcal{W}$ consisting of `vertical' multivector fields
\[(\Gamma(\wedge^{\bullet}\ker d\pi), [\mathcal{W},\cdot])\, \subset\, (\Gamma(\wedge^{\bullet}TM), [\mathcal{W},\cdot]).\]
By the heuristic interpretation from Remark \ref{remark:nice:setting}, one can think about elements $c\in \mathrm{H}_{V}^{\bullet}(M,\mathcal{W})$ as smooth families $\{c_x\}_{x\in Q}$ of Poisson cohomology classes $c_x\in \mathrm{H}^{\bullet}(M_x,W_x)$. In degree 0, we obtain the space of Casimir functions of the Poisson structure $\mathcal{W}$
\[\mathrm{H}_{V}^{0}(M,\mathcal{W})=\mathrm{H}^{0}(M,\mathcal{W})=\mathrm{Cas}(M,\mathcal{W})\subset C^{\infty}(M).\]

By Theorem \ref{theorem:first_page_with_differential} (c)-(e), the $C^{\infty}(Q)$-modules $\mathrm{H}_{V}^{\bullet}(M,\mathcal{W})$ have a flat $TQ$-connection $\nabla^H$, 
\[\nabla^H_{X}[V]:=[\mathscr{L}_{\mathrm{hor}(X)}(V)], \quad [V]\in \mathrm{H}_{V}^{\bullet}(M,\mathcal{W}),\]
and we have that the first page of the Serre spectral sequence is given by
\begin{equation}\label{eq:E_1}
(E_1^{p,q}, d_1)\simeq (\Omega^p(Q,\mathrm{H}_{V}^{q}(M,\mathcal{W})),d^{\nabla^H}).
\end{equation}
Of course, $d^{\nabla^H}$ is induced by $d^{(1,0)}$. The differential for $q=0$,
\[d^{\nabla^H}:\Omega^{p}(Q,\mathrm{Cas}(M,\mathcal{W}))\to \Omega^{p+1}(Q,\mathrm{Cas}(M,\mathcal{W})),\]
was used intensively in \cite{Vor01,Vor05}.

For horizontally nondegenerate Poisson structures, a description of Poisson cohomology in degree one was obtained in \cite[Theorem 4.8]{VBVo18}. We recover and extend this result to the Dirac setting in the following theorem, which is a direct consequence of Theorem \ref{theorem:first_page_with_differential}.

\begin{theorem}\label{theorem:Horizontally_nondegenerate_Dirac}
Let $D$ be a horizontally nondegenerate Dirac structure on the total space of a surjective submersion $\pi:M\to Q$, with coupling data $(\mathcal{W},H,\mathbb{F})$. The second page of the Serre spectral sequence associated to the extension \eqref{eq:Vorobjev:submersion_by_liealgebroids} is given by 
\[E_2^{(p,q)}\simeq \mathrm{H}^{p}(Q,\mathrm{H}^{q}_V(M,\mathcal{W})),\]
where the right-hand side is the cohomology of the complex \eqref{eq:E_1}. 

Hence, Dirac cohomology in degree zero is given by $\nabla^H$-flat $\mathcal{W}$-Casimir functions on $M$,
\[\mathrm{H}^0(D)\simeq  E_2^{(0,0)}=\{f\in \mathrm{Cas}(M,\mathcal{W})\, |\, {\nabla}^Hf=0\},\]
and in degree one, we have that 
\[\mathrm{H}^1(D)\simeq E_3^{(0,1)}\oplus E_2^{(1,0)}=\mathrm{ker}\big(d_2:E_2^{(0,1)}\to E_2^{(2,0)}\big) \oplus E_2^{(1,0)},\]
where 
\[
E_2^{(1,0)}\simeq \mathrm{H}^{1}(Q,  \mathrm{Cas}(M,\mathcal{W})),\ \
E_2^{(0,1)}\simeq \mathrm{H}^0(Q,\mathrm{H}^{1}_V(M,\mathcal{W})),\ \ 
E_2^{(2,0)}\simeq \mathrm{H}^2(Q,\mathrm{Cas}(M,\mathcal{W})).\]
\end{theorem}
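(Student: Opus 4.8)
The plan is to deduce the entire statement from Theorem~\ref{theorem:first_page_with_differential} applied to the extension \eqref{eq:Vorobjev:submersion_by_liealgebroids}, followed by a routine low-degree analysis of a first-quadrant spectral sequence. Here the quotient algebroid is $B=TQ$, the kernel is $L=(\ker d\pi)^*$, and the coefficients are trivial, so that $\mathrm{H}^q(L)=\mathrm{H}^q_V(M,\mathcal{W})$. The discussion preceding the statement already identifies the zeroth and first pages as $(E_0^{p,q},d_0)\simeq(\Omega^p(Q,\Gamma(\wedge^q\ker d\pi)),d^{(0,1)})$ and $(E_1^{p,q},d_1)\simeq(\Omega^p(Q,\mathrm{H}^q_V(M,\mathcal{W})),d^{\nabla^H})$ in \eqref{eq:E_1}. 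Passing to cohomology along $d_1=d^{\nabla^H}$ gives $E_2^{p,q}\simeq\mathrm{H}^p(Q,\mathrm{H}^q_V(M,\mathcal{W}))$, which is precisely the cohomology of the complex \eqref{eq:E_1} and establishes the first assertion; this is Theorem~\ref{theorem:first_page_with_differential}(c)--(e) specialised to the present extension.

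Next I would record convergence and the shape of the page. Since $(\ker d\pi)^*$ is a wide subalgebroid (its base is all of $M$), Theorem~\ref{theorem:convergence} applies and the spectral sequence converges to $\mathrm{H}^\bullet(D)$, with the total cohomology splitting as $\mathrm{H}^n(D)\simeq\bigoplus_{p+q=n}E_\infty^{p,q}$ over $\mathbb{R}$. Because both the filtration degree $p$ and the vertical degree $q$ (the form-degree in $(\ker d\pi)^*$) are nonnegative, the spectral sequence is concentrated in the first quadrant, which pins down the surviving terms in low total degree.

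Then comes the low-degree bookkeeping. In total degree $0$ only the corner $(0,0)$ contributes, and no differential $d_r$ with $r\geq 2$ can enter or leave $E_r^{0,0}$ in the first quadrant, so $E_\infty^{0,0}=E_2^{0,0}=\mathrm{H}^0(Q,\mathrm{H}^0_V(M,\mathcal{W}))$; using $\mathrm{H}^0_V(M,\mathcal{W})=\mathrm{Cas}(M,\mathcal{W})$ and $\mathrm{H}^0(Q,-)=\ker d^{\nabla^H}$ identifies $\mathrm{H}^0(D)$ with the $\nabla^H$-flat Casimir functions. In total degree $1$, the position $(1,0)$ is isolated from all higher differentials, giving $E_\infty^{1,0}=E_2^{1,0}=\mathrm{H}^1(Q,\mathrm{Cas}(M,\mathcal{W}))$. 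For $(0,1)$ the only possibly nonzero higher differential is $d_2\colon E_2^{0,1}\to E_2^{2,0}$ (those $d_r$ with $r\geq 3$ land in negative vertical degree, and nothing maps into $(0,1)$), so $E_\infty^{0,1}=E_3^{0,1}=\ker(d_2\colon E_2^{0,1}\to E_2^{2,0})$. Assembling, $\mathrm{H}^1(D)\simeq E_3^{0,1}\oplus E_2^{1,0}$, and reading the coefficient groups off the $E_2$-formula yields $E_2^{0,1}\simeq\mathrm{H}^0(Q,\mathrm{H}^1_V(M,\mathcal{W}))$, $E_2^{1,0}\simeq\mathrm{H}^1(Q,\mathrm{Cas}(M,\mathcal{W}))$ and $E_2^{2,0}\simeq\mathrm{H}^2(Q,\mathrm{Cas}(M,\mathcal{W}))$.

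As the result is essentially a corollary of Theorem~\ref{theorem:first_page_with_differential}, there is no deep obstruction. The only points requiring genuine care are verifying that the abstract generalised $TQ$-representation furnished by Theorem~\ref{theorem:first_page_with_differential}(d) agrees with the explicit connection $\nabla^H$ on $\mathrm{H}^\bullet_V(M,\mathcal{W})$ -- so that $d_1$ really is $d^{\nabla^H}$ and $\mathrm{H}^p(Q,-)$ carries exactly these local coefficients -- and checking the vanishing of all edge differentials in the first-quadrant low-degree corner. Both reduce to the dictionary between the coupling data $(\mathcal{W},H,\mathbb{F})$ and the extension \eqref{eq:Vorobjev:submersion_by_liealgebroids} set up immediately before the statement, so I expect the bulk of the effort to be organisational rather than technical.
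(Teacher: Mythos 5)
Your proposal is correct and follows exactly the route the paper takes: the paper presents this theorem as "a direct consequence of Theorem \ref{theorem:first_page_with_differential}", with the identifications of $E_0$, $E_1$ and the connection $\nabla^H$ already set up in the discussion preceding the statement, and the low-degree claims following from the standard first-quadrant analysis together with the convergence guaranteed by Theorem \ref{theorem:convergence} for the wide subalgebroid $(\ker d\pi)^*$. The two points you flag as needing care (that the generalised representation of part (d) is computed by $\nabla^H_X[V]=[\mathscr{L}_{\mathrm{hor}(X)}V]$, and the vanishing of the edge differentials) are precisely what the paper disposes of in the paragraphs before the theorem, so nothing is missing.
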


The differential $d_2$ which appears in the theorem is recovered by the general construction of the spectral sequence. Namely, consider a class $c\in \mathrm{H}^0(Q,\mathrm{H}^{1}_V(M,\mathcal{W}))$. Choose a representative $V\in \Gamma(\ker d\pi)$. Then $[\mathcal{W},V]=0$ and 
\[d^{\nabla^H}[V]=[d^{(1,0)}V]=0\in \Omega^1(Q,\mathrm{H}^{1}_V(M,\mathcal{W})).\]
Hence, by Lemma \ref{lemma:tensor_vb} (d), there is $\theta\in \Omega^1(Q,C^{\infty}(M))$ such that
\[d^{(1,0)}V=-d^{(0,1)}\theta =[\mathcal{W},\theta].\]
Then $d_2$ is given by 
\[d_2c=[d^{(2,-1)}V+d^{(1,0)}\theta]=[-\mathscr{L}_V\mathbb{F}+d^{(1,0)}\theta]\in \mathrm{H}^2(Q,\mathrm{Cas}(M,\mathcal{W})).\]

Finally, we also mention the following direct consequence of Theorem \ref{theorem:page:2:submersion}.

\begin{corollary}
In the setting of Theorem \ref{theorem:Horizontally_nondegenerate_Dirac}, if the 
Ehresmann connection $H$ is complete, then the assignment
\[U\mapsto \mathcal{S}^{q}_{\mathcal{W}}(U):=\big\{ [V]\in \mathrm{H}^{q}_V(\pi^{-1}(U),\mathcal{W})\, |\, \nabla^H [V]=0\big\}\]
is a locally constant sheaf on $Q$, and 
\[E_2^{(p,q)}\simeq \mathrm{H}^{p}(Q,\mathcal{S}^{q}_{\mathcal{W}}).\]
\end{corollary}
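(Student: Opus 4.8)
The plan is to derive the Corollary from Theorem~\ref{theorem:page:2:submersion}, whose only extra hypothesis on the extension \eqref{eq:Vorobjev:submersion_by_liealgebroids} is local triviality; the one substantive point is thus to extract local triviality from completeness of the Ehresmann connection $H$. First I would note that the coupling decomposition \eqref{eq:D_triple} is exactly a vector-bundle splitting $D=(\ker d\pi)^{*}\oplus C$ of \eqref{eq:Vorobjev:submersion_by_liealgebroids}, with complement $C=\{(X,\mathrm{i}_X\mathbb{F})\mid X\in H\}$ projecting isomorphically onto $H$ under $\mathrm{pr}_{TM}$. The horizontal lift it induces, $X\mapsto (\mathrm{hor}(X),\mathrm{i}_{\mathrm{hor}(X)}\mathbb{F})\in\Gamma(C)\subset\Gamma(D)$, has anchor $\mathrm{hor}(X)\in\Gamma(TM)$, since the anchor of the Dirac algebroid $D$ is $\mathrm{pr}_{TM}$. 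Consequently a vector field on $Q$ lifts to a section of $D$ with complete anchor precisely when its $H$-horizontal lift in $TM$ is complete, so completeness of $H$ is exactly the statement that $C$ is a complete Ehresmann connection for the submersion by Lie algebroids. By \cite[Theorem~3]{Frej19}, the extension \eqref{eq:Vorobjev:submersion_by_liealgebroids} is therefore locally trivial.

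With local triviality established, I would apply Theorem~\ref{theorem:page:2:submersion} to \eqref{eq:Vorobjev:submersion_by_liealgebroids} with the trivial representation $\mathbb{R}\times M$ (the coefficients computing $\mathrm{H}^\bullet(D)$ in Theorem~\ref{theorem:Horizontally_nondegenerate_Dirac}), obtaining $E_2^{p,q}\simeq\mathrm{H}^p(Q,\mathcal{S}^q_{L,\mathbb{R}})$ with $L=(\ker d\pi)^{*}$. It then remains to identify $\mathcal{S}^q_{L,\mathbb{R}}$ with $\mathcal{S}^q_{\mathcal{W}}$, which is a matter of unwinding definitions. Indeed, $\mathrm{H}^\bullet(L|_{\pi^{-1}(U)})=\mathrm{H}^\bullet_V(\pi^{-1}(U),\mathcal{W})$ by the very definition of the vertical cohomology, and the generalised representation $\nabla$ of Theorem~\ref{theorem:first_page_with_differential}(d) has already been identified with $\nabla^H$ just before the Corollary; with these two facts the defining formula for $\mathcal{S}^q_{L,\mathbb{R}}(U)$ becomes verbatim that of $\mathcal{S}^q_{\mathcal{W}}(U)$. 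This gives the isomorphism $E_2^{(p,q)}\simeq\mathrm{H}^p(Q,\mathcal{S}^q_{\mathcal{W}})$.

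Finally, for local constancy I would invoke Lemma~\ref{lemma:sheafification}: under local triviality the sheafification of the pushforward presheaf $\mathcal{P}^q$ is canonically $\mathcal{S}^q_{L,\mathbb{R}}=\mathcal{S}^q_{\mathcal{W}}$, while $\mathcal{P}^q$ is a locally constant presheaf by \cite[Lemma~4]{Frej19}, since over a contractible trivialising $U$ one has $\mathcal{P}^q(U)\simeq\mathrm{H}^q(L_x)=\mathrm{H}^q(M_x,\mathcal{W}_x)$; the sheafification of a locally constant presheaf is a locally constant sheaf. Alternatively, local constancy can be read directly off the trivialisation \eqref{eq:local:triv_with_representation}, under which, by the argument in the proof of Lemma~\ref{lemma:subm_by_LA:sheaf_resolution}, the $\nabla^H$-flat classes over a coordinate ball are exactly the families constant in $U$ and hence are identified with the fixed fibre $\mathrm{H}^q(M_x,\mathcal{W}_x)$. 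I expect the only genuine obstacle to be the first step, namely verifying that the anchor of the $C$-lift coincides with the $H$-horizontal lift in $TM$ so that the two notions of completeness match; once \eqref{eq:Vorobjev:submersion_by_liealgebroids} is known to be locally trivial, the remaining claims follow immediately from Theorems~\ref{theorem:page:2:submersion} and \ref{theorem:first_page_with_differential} and Lemma~\ref{lemma:sheafification}.
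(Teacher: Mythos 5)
Your proposal is correct and follows exactly the route the paper intends: the paper states this corollary without proof as a ``direct consequence of Theorem \ref{theorem:page:2:submersion}'', and the substance you supply --- that the anchor of $D$ is $\mathrm{pr}_{TM}$, so completeness of $H$ is precisely the existence of a complete Ehresmann connection in the sense of \cite{Frej19} and hence local triviality of \eqref{eq:Vorobjev:submersion_by_liealgebroids} --- together with the identification of the generalised representation with $\nabla^H$ and the local constancy via Lemma \ref{lemma:sheafification}, is exactly the intended argument. No gaps.
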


\subsubsection{Normal forms around presymplectic leaves}\label{sec:normal_forms_presympl_leaves}

In this subsection, we briefly discuss the linearisation problem around leaves for Poisson and Dirac structures, and how this leads to horizontally nondegenerate structures. We then apply the Serre spectral sequence to calculate the cohomology in low degrees of a class of structures that are partially linearisable, and obtain infinitesimal versions of results in \cite{Bra04,Vor05}.

Horizontally nondegenerate Poisson and Dirac structures arise naturally in the study of normal forms for such structures around leaves \cite{Vor01,Vor05,Wade08}. Namely, let $(M,D)$ be a Dirac manifold and $(Q,\omega_Q)$ be an embedded presymplectic leaf. Denote by $\pi: E:=TM|_Q/TQ\to Q$ the normal bundle of $Q$ and consider a tubular neighbourhood $\iota: E\hookrightarrow M$ of $Q$. Then $\iota^*D$ is a Dirac structure on $E$ which is horizontally nondegenerate around $Q$---because it is so along $Q$---so, by shrinking $\iota$, we may assume that we have a horizontally nondegenerate Dirac structure on $E$, which, for simplicity, we denote again by $D$. Since the zero section is a presymplectic leaf, the associated coupling data $(\mathcal{W},H,\mathbb{F})$ satisfies the following properties.
\begin{enumerate}[(i)]
    \item For each $x\in Q$, the Poisson structure $\mathcal{W}_x$ on $E_x=\pi^{-1}(x)$ vanishes at $x$, and coincides with the transverse Poisson structure to the leaf, constructed in \cite{Wein83} in the Poisson setting and in \cite{DuWa08} in the Dirac setting;
    \item $H$ is tangent to $Q$, i.e.\ $H|_Q=TQ$;
    \item $\mathbb{F}|_{Q}=-\omega_Q$.
\end{enumerate}

The vector bundle structure on $\pi:E\to Q$ can be used to build the \textbf{linearisation} of $D$. This was described in the Poisson case in \cite{Vor01,Vor05}; here we follow \cite[Chapter 4]{Mar13} and \cite{CrMa12}. We construct a path of horizontally non-degenerate Dirac structures on $E$
\[D_t:= t\cdot \mu_t^*(e^{(t-1)\pi^*\omega_Q}D),\qquad t\in (0,1],\]
where $e^{(t-1)\pi^*\omega_Q}D$ denotes the gauge transform of $D$ via the closed 2-form $(t-1)\pi^*\omega_Q\in \Omega^2(E)$, $\mu_t^*$ is the pullback of Dirac structures via the fiberwise multiplication $\mu_t:E\to E$, and finally we use the ``rescaling'' of Dirac structure, defined as $t\cdot (X+\xi)=t\cdot X+\xi$. Clearly $D_1=D$. Each Dirac structure $D_t$ is horizontally nondegenerate, with corresponding coupling data
\[\mathcal{W}_t:=t\cdot \mu_t^*(\mathcal{W}),\qquad H_t:=\mu_t^*(H),\qquad \mathbb{F}_t:=\frac{1}{t}\big(\mu_t^*(\mathbb{F})+(t-1)\pi^*\omega_Q\big).\]
Using these formulas, one easily shows that $D_0:=\lim_{t\to 0} D_t$ exists and is a horizontally nondegenerate Dirac structure. The corresponding coupling data, denoted by
\[\big(\mathcal{W}_{\mathrm{lin}}, H_{\mathrm{lin}},\mathbb{F}_{\mathrm{lin}}-\pi^*\omega_Q\big),\]
satisfies the following:
\begin{enumerate}[(i)]
    \item $\mathcal{W}_{\mathrm{lin}}$ is a family of linear Poisson structures on the fibres of $E$,  i.e.\ for each $x\in Q$,  $(\mathcal{W}_{\mathrm{lin}})_x$ is a linear Poisson structure on the vector space $E_x$, or equivalently, a Lie algebra structure on $E_x^*$, which is precisely the isotropy Lie algebra at $x$ of the original Dirac structure $D$;
    \item $H_{\mathrm{lin}}$ is a linear connection on $E$, hence, in particular, it is complete;
    \item $\mathbb{F}_{\mathrm{lin}}$ is a linear horizontal 2-form, i.e.\ it can be viewed as an element in $\Omega^2(Q,E^*)$.
\end{enumerate}

The Dirac structure $D_0$ plays the role of the \textbf{first order approximation} of $D$ around $Q$. It can be described more intrinsically using the Lie algebroid $D|_Q\Ato Q$ (see \cite[Section 5.2]{Vor01} and \cite[Proposition 4.2.25]{Mar13}). 
Let us note that, even if we start with a Poisson structure, i.e.\ $D=\mathrm{graph}(w)$, the first order approximation $D_0$ will correspond to a Poisson structure only on some neighbourhood $U$ of $Q$, i.e.\ $D_0|_U=\mathrm{graph}(w_{\mathrm{lin}})$.

The \textbf{linearisation problem} asks whether $D$ and $D_0$ are isomorphic around $Q$. For Poisson structures, the isomorphisms are diffeomorphism which fix $Q$ pointwise; in general, one should also allow for exact gauge transformations. For points, i.e.\ $Q=\{x\}$, this is the linearisation problem for Poisson structures around zeroes initiated in \cite{Wein83}. In this setting, Conn's famous theorem \cite{Conn85} says that a Poisson structure $w$ is linearisable around a zero $x\in M$, provided the isotropy Lie algebra $T^*_xM$ is semisimple and compact. For symplectic leaves of Poisson structures, the linearisation problem was studied \cite{Vor01,Bra04,Vor05,CrMa12}. The first approaches were based on Conn's theorem, and produced the following partial linearisation result (see \cite[Corollary 2.5]{Bra04} or \cite[Theorem 4.12]{Vor05}).

\begin{theorem}\label{th:partial:linearisation}
Let $(M,w)$ be a Poisson manifold, and $(Q,\omega_Q)$  be an embedded symplectic leaf such that the isotropy Lie algebra at points $x\in Q$ is semisimple and compact. Then there is a tubular neighbourhood $\iota: E\hookrightarrow M$ of $Q$ in $M$ such that $\iota^*w$ is horizontally nondegenerate around $Q$ with coupling data
\[\big(\mathcal{W}_{\mathrm{lin}}, H_{\mathrm{lin}},\mathbb{F}\big),\]
where $\mathcal{W}_{\mathrm{lin}}$ is fibrewise linear and $H_{\mathrm{lin}}$ is a linear connection on $E$.
\end{theorem}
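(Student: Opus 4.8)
The plan is to linearise the two pieces of vertical coupling data $\mathcal{W}$ and $H$ one at a time, using Conn's theorem for the first and a rigidity argument for the second, while allowing $\mathbb{F}$ to change freely. Every transformation used will be fibre-preserving, so horizontal nondegeneracy is automatically retained.

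First I would reduce to the normal bundle. Choosing a tubular neighbourhood $\iota\colon E\hookrightarrow M$ of $Q$ with $E=\nu_Q(M)$, the structure $\iota^*w$ is horizontally nondegenerate along the zero section, hence, after shrinking, on all of $E$; this produces coupling data $(\mathcal{W},H,\mathbb{F})$ with the properties (i)--(iii) recalled above. The relevant fibrewise object is the family $\{\mathcal{W}_x\}_{x\in Q}$ of Poisson structures on the fibres $E_x=\pi^{-1}(x)$. By (i), each $\mathcal{W}_x$ vanishes at the origin of the vector space $E_x$, and its linearisation there is the isotropy Lie algebra $\mathfrak{g}_x=\ker(\sharp_x)\cong E_x^*$, which by hypothesis is semisimple and of compact type.

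The first, and main, step is to linearise $\mathcal{W}$ fibrewise. Conn's linearisation theorem \cite{Conn85} applies to each $\mathcal{W}_x$ individually, since $\mathfrak{g}_x$ is semisimple and compact. The crux is to carry this out smoothly in the parameter $x\in Q$: one needs a parametrised version of Conn's theorem producing a fibre-preserving diffeomorphism $\Phi\colon E\to E$, defined near $Q$ and tangent to the identity along $Q$, with $\Phi_*\mathcal{W}=\mathcal{W}_{\mathrm{lin}}$ fibrewise linear. This is the hard part: Conn's proof rests on a Nash--Moser iteration, and its parametrised form requires tame estimates that are uniform in $x$. The necessary uniformity is exactly what compactness of the isotropy groups $G_x$ supplies, through the uniform vanishing of $\mathrm{H}^1$ and $\mathrm{H}^2$ of $\mathfrak{g}_x$ with the relevant coefficients (Whitehead's Lemma, as in Corollary \ref{corollary:vanishing:cohomology}), together with averaging over $G_x$ to build the smoothing operators. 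Replacing $\iota$ by $\iota\circ\Phi^{-1}$, a fibre-preserving change of tubular neighbourhood, we may thus assume $\mathcal{W}=\mathcal{W}_{\mathrm{lin}}$.

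It remains to linearise the connection $H$. With $\mathcal{W}$ now fibrewise linear, the compatibility condition \eqref{eq:W is inv} says that the horizontal lifts $\mathrm{hor}(X)$ are fibrewise Poisson vector fields for $\mathcal{W}_{\mathrm{lin}}$. I would extract the linear connection $H_{\mathrm{lin}}:=\lim_{t\to 0}(m_t)_*H$ via the fibrewise scaling $m_t\colon E\to E$, and then remove the higher-order remainder $H-H_{\mathrm{lin}}$ by a further fibre-preserving, fibrewise-Poisson diffeomorphism. The infinitesimal obstruction to doing so lives in the cohomology of the fibre Lie algebras $\mathfrak{g}_x$ with polynomial coefficients, which vanishes in the relevant degree by Whitehead's Lemma; the passage from this formal statement to an actual diffeomorphism is achieved by a Moser-type argument, for which the required completeness is guaranteed since a linear connection is automatically complete. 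This construction only alters $\mathcal{W}$ by Poisson diffeomorphisms, so it stays linear, while $\mathbb{F}$ changes in an uncontrolled but harmless way, which is all that the statement asks for. Since all transformations used are fibre-preserving, the outcome is again a horizontally nondegenerate structure on $E$, now with coupling data $(\mathcal{W}_{\mathrm{lin}},H_{\mathrm{lin}},\mathbb{F})$, as claimed.
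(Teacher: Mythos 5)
A preliminary remark: the paper itself does not prove Theorem \ref{th:partial:linearisation} --- it is imported verbatim from \cite[Corollary 2.5]{Bra04} and \cite[Theorem 4.12]{Vor05} as background for the infinitesimal statement (Theorem \ref{theorem:Cohomology:Dirac}) that the Serre spectral sequence actually delivers. So the comparison is with those references, whose strategy your outline correctly reproduces: pass to coupling data on the normal bundle, linearise the vertical Poisson structure fibrewise via Conn's theorem, and then use the vanishing of $\mathrm{H}^1(\mathfrak{g}^*,w_{\mathrm{lin}})\simeq \mathrm{Cas}(\mathfrak{g}^*,w_{\mathrm{lin}})\otimes \mathrm{H}^1(\mathfrak{g})$ (Ginzburg--Weinstein plus Whitehead) to straighten the Ehresmann connection while letting $\mathbb{F}$ absorb the slack. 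The reduction to the normal bundle and the observation that all transformations are fibre-preserving, hence preserve horizontal nondegeneracy, are fine.

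As a proof, however, the proposal has a genuine gap exactly where you flag ``the hard part''. The entire analytic content of the theorem is that Conn's linearisation can be performed \emph{smoothly in the leaf parameter} $x$, consistently enough on overlaps of local trivialisations to glue to a global fibre-preserving $\Phi$ tangent to the identity along $Q$. You assert that compactness supplies uniform tame estimates and the averaging needed for a parametrised Nash--Moser scheme, but you do not carry this out, and it does not follow formally from Conn's pointwise statement; establishing it is precisely what \cite{Bra04} and \cite{Vor05} devote their effort to. The second step is likewise only gestured at: one must check that $\mathrm{hor}(X)-\mathrm{hor}_{\mathrm{lin}}(X)$ is a vertical Poisson vector field for $\mathcal{W}_{\mathrm{lin}}$ vanishing to second order along $Q$, choose the trivialising Hamiltonians smoothly and $C^{\infty}(Q)$-linearly in $X$ (which requires the explicit homotopy operators of \cite{GiWei92}, not just the abstract vanishing of $\mathrm{H}^1_{V}$), and verify that the resulting time-dependent fibrewise Hamiltonian flow exists on a uniform neighbourhood of $Q$; ``a Moser-type argument'' names the technique without supplying these verifications. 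In short: the strategy is the correct one from the cited literature, but the two analytic cores of the theorem are invoked rather than proved.
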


We apply the techniques of this section to deduce an infinitesimal version of this partial linearisation result. Namely, Theorem \ref{th:partial:linearisation} shows that Poisson structures as in the statement can be deformed only in the direction of the $\mathbb{F}$-component. On the other hand, deformations of Poisson structures are infinitesimally encoded by the second Poisson cohomology group. A consequence of Theorem \ref{theorem:Cohomology:Dirac} below is that the second Poisson cohomology of a Poisson structure $(E,w)$ as in Theorem \ref{th:partial:linearisation} satisfies 
\[\mathrm{H}^2(E,w)\simeq \mathrm{H}^2(Q,\mathrm{Cas}(E,\mathcal{W}_{\mathrm{lin}})).\]

For this, we first recall some classical results. Let $\gg$ be a compact semisimple Lie algebra and denote the linear Poisson structure on its dual by $(\mathfrak{g}^*,w_{\mathrm{lin}})$. The Poisson cohomology of these linear Poisson structures has been computed in \cite[Theorem 3.2]{GiWei92}, and is given by 
\begin{equation}\label{eq:GinzWein}
\mathrm{H}^{\bullet}(\mathfrak{g}^*,w_{\mathrm{lin}})\simeq \mathrm{Cas}(\mathfrak{g}^*,w_{\mathrm{lin}})\otimes \mathrm{H}^{\bullet}(\mathfrak{g}).
\end{equation}

Consider a vector bundle $\pi:E\to Q$ endowed with a vertical, fiberwise linear Poisson structure $\mathcal{W}_{\mathrm{lin}}$. Then the dual bundle $E^*$ is a smooth bundle of Lie algebras. We assume that the bundle is locally trivial with typical fibre a Lie algebra denoted $\gg$. Then Lie algebra cohomology of the fibres yields a vector bundle over $Q$ with typical fibre $\mathrm{H}^{\bullet}(\gg)$, denoted by
\[\mathcal{H}^{\bullet}(E^*)\to Q, \quad 
\mathcal{H}^{\bullet}(E^*)_x
:=\mathrm{H}^{\bullet}(E^*_x)\simeq \mathrm{H}^{\bullet}(\gg).\]

We have the following extension of \cite[Theorem 3.2]{GiWei92}.
\begin{lemma}
If $\gg$ is compact and semisimple, then we have an isomorphism
\begin{equation}\label{eq:decomp:vertical:cohom}
\mathrm{H}_{\mathrm{V}}^{\bullet}(E,\mathcal{W}_{\mathrm{lin}})\simeq \mathrm{Cas}(E,\mathcal{W}_{\mathrm{lin}})\otimes_{C^{\infty}(Q)} \Gamma(\mathcal{H}^{\bullet}(E^*)).
\end{equation}
\end{lemma}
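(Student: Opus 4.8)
The plan is to reduce the global statement to the fibrewise Ginzburg–Weinstein isomorphism \eqref{eq:GinzWein}, the main work being a smooth, $Q$-parametrised version of its proof. First I would reformulate the complex computing $\mathrm{H}_{\mathrm V}^{\bullet}(E,\mathcal{W}_{\mathrm{lin}})$. Since $\mathcal{W}_{\mathrm{lin}}$ is vertical and fibrewise linear, over each $x\in Q$ the Lichnerowicz complex $(\Gamma(\wedge^{\bullet}\ker d\pi),[\mathcal{W}_{\mathrm{lin}},\,\cdot\,])$ restricts to $(\Gamma(\wedge^{\bullet}TE_x),[\mathcal{W}_{\mathrm{lin}}|_{E_x},\,\cdot\,])$, which is the Chevalley–Eilenberg complex of the Lie algebra $E_x^{*}=\gg$ with coefficients in $C^{\infty}(E_x)$; its cohomology is $\mathrm{Cas}(E_x,\mathcal{W}_{\mathrm{lin}}|_{E_x})\otimes\mathrm{H}^{\bullet}(\gg)$ by \eqref{eq:GinzWein}. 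I would then define the candidate map
\[
\Theta\colon \mathrm{Cas}(E,\mathcal{W}_{\mathrm{lin}})\otimes_{C^{\infty}(Q)}\Gamma(\mathcal{H}^{\bullet}(E^{*}))\longrightarrow \big(\Gamma(\wedge^{\bullet}\ker d\pi),[\mathcal{W}_{\mathrm{lin}},\,\cdot\,]\big),
\]
sending a section of $\mathcal{H}^{\bullet}(E^{*})$ to a representative by fibrewise-constant invariant vertical multivectors (recall $\mathcal{H}^{\bullet}(E^{*})_x=(\wedge^{\bullet}E_x)^{\gg}$ for compact $\gg$) and multiplying by a Casimir. One checks that $\Theta$ lands in closed elements: a Casimir $f$ satisfies $[\mathcal{W}_{\mathrm{lin}},f]=0$, a constant invariant multivector $P$ satisfies $[\mathcal{W}_{\mathrm{lin}},P]=0$, and the Leibniz rule finishes it. It remains to prove that $\Theta$ is a quasi-isomorphism onto the whole complex, which I would do in two steps mimicking the classical argument but carried out over $Q$.

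Second, I would introduce the smooth bundle $\mathcal{G}\Ato Q$ of compact connected groups integrating $E^{*}$ (the fibrewise inner-automorphism groups $\mathrm{Inn}(E_x^{*})$, compact because $\gg$ is compact semisimple), acting fibrewise on $E=(E^{*})^{*}$ by the coadjoint action. Fibrewise normalised Haar measure varies smoothly, so averaging defines a $C^{\infty}(Q)$-linear cochain projection $P\colon\Gamma(\wedge^{\bullet}\ker d\pi)\to\Gamma(\wedge^{\bullet}\ker d\pi)$ onto the $\mathcal{G}$-invariant subcomplex $\mathcal{C}^{\bullet}_{\mathrm{inv}}$ (here one uses that the coadjoint action is by linear Poisson automorphisms, so commutes with $[\mathcal{W}_{\mathrm{lin}},\,\cdot\,]$). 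The key identity is the Poisson–Cartan homotopy: for a fibrewise linear function $\ell_{\xi}$ with Hamiltonian multivector $X_{\ell_{\xi}}=[\mathcal{W}_{\mathrm{lin}},\ell_{\xi}]$, the graded Jacobi identity gives
\[
\mathscr{L}_{X_{\ell_{\xi}}}=\pm\big([\mathcal{W}_{\mathrm{lin}},\,\cdot\,]\circ[\ell_{\xi},\,\cdot\,]+[\ell_{\xi},\,\cdot\,]\circ[\mathcal{W}_{\mathrm{lin}},\,\cdot\,]\big),
\]
so each infinitesimal coadjoint generator acts null-homotopically; integrating this homotopy over $\mathcal{G}$ shows $\mathrm{id}-P$ is a boundary and hence $\mathcal{C}^{\bullet}_{\mathrm{inv}}\hookrightarrow\Gamma(\wedge^{\bullet}\ker d\pi)$ is a quasi-isomorphism.

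Third, I would collapse the invariant subcomplex using the fibrewise Killing form, which provides a smooth fibre metric on $E^{*}$ and a bundle Casimir operator $\Box$ acting as a cochain endomorphism of $\mathcal{C}^{\bullet}_{\mathrm{inv}}$. Decomposing $C^{\infty}(E)=\mathrm{Cas}(E,\mathcal{W}_{\mathrm{lin}})\oplus C'$ into the trivial and the nontrivial fibrewise-isotypic parts, $\Box$ vanishes on the $\mathrm{Cas}$-part and is invertible on $C'$; since $\Box$ is null-homotopic (Kostant's codifferential, again built from the metric and thus smooth in $x$), the $C'$-part of $\mathcal{C}^{\bullet}_{\mathrm{inv}}$ is acyclic. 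What survives is $\mathrm{Cas}(E,\mathcal{W}_{\mathrm{lin}})\otimes_{C^{\infty}(Q)}\Gamma((\wedge^{\bullet}E)^{\mathcal{G}})$ with zero differential, which equals $\mathrm{Cas}(E,\mathcal{W}_{\mathrm{lin}})\otimes_{C^{\infty}(Q)}\Gamma(\mathcal{H}^{\bullet}(E^{*}))$ and is exactly the image of $\Theta$. Composing the two reductions shows that $\Theta$ is a quasi-isomorphism, giving \eqref{eq:decomp:vertical:cohom}.

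The hard part will be not the fibrewise algebra—which is \eqref{eq:GinzWein}—but the uniformity in $x\in Q$: one must ensure that the Haar averaging, the homotopy operators, and above all the isotypic splitting $C^{\infty}(E)=\mathrm{Cas}\oplus C'$ are smooth decompositions of $C^{\infty}(Q)$-modules. I would secure this by working over a local trivialisation $E|_{U}\simeq U\times\gg^{*}$, where $\mathcal{G}$ is the constant group $\mathrm{Inn}(\gg)$ and all homotopy operators act only in the $\gg^{*}$-directions, hence are tautologically $C^{\infty}(U)$-linear and reduce to the classical constructions; naturality of these constructions under $\mathrm{Aut}(\gg)$ (the structure group of $E^{*}$) guarantees that the local quasi-isomorphisms glue, so that the globally defined $\Theta$ is a quasi-isomorphism.
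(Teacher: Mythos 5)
Your proposal is correct and follows essentially the same route as the paper: the paper's proof simply invokes the homotopy operators from the Ginzburg--Weinstein argument for \eqref{eq:GinzWein} applied in a local trivialisation, noting that they send smooth families of forms to smooth families. Your averaging-plus-Casimir-operator reduction is exactly a reconstruction of those homotopy operators, with the smoothness-in-$x$ issue handled the same way (locally, then glued by naturality).
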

\begin{proof}
For the proof, one uses the homotopy operators constructed in \cite{GiWei92} for the isomorphism \eqref{eq:GinzWein} in a local trivialisation. These, when applied to smooth families of forms, yield smooth families of forms---see the Remark following the proof of \cite[Lemma 3.6]{GiWei92}.    
\end{proof}

Next, we show that these bundles have canonical connections.

\begin{lemma}
If $\gg$ is compact and semisimple, then the $C^{\infty}(Q)$-module $\mathrm{H}^{\bullet}_{\mathrm{V}}(E,\mathcal{W}_{\mathrm{lin}})$ has a canonical flat $TQ$-connection
\[\nabla^{\mathrm{can}}:\Gamma(TQ)\times \mathrm{H}^{\bullet}_{\mathrm{V}}(E,\mathcal{W}_{\mathrm{lin}})\to \mathrm{H}^{\bullet}_{\mathrm{V}}(E,\mathcal{W}_{\mathrm{lin}}).\]
Moreover, this connection preserves the submodules $\mathrm{Cas}(E,\mathcal{W}_{\mathrm{lin}})$ and $\Gamma(\mathcal{H}^{\bullet}(E^*))$, and acts as a derivation with respect to the decomposition \eqref{eq:decomp:vertical:cohom}, i.e.\ it satisfies
\[\nabla^{\mathrm{can}}_X(f\otimes c)=
\nabla^{\mathrm{can}}_X(f)\otimes c+f\otimes \nabla^{\mathrm{can}}_X(c).
\]
\end{lemma}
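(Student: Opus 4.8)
The plan is to build $\nabla^{\mathrm{can}}$ from a fibrewise bracket-preserving linear connection and then prove that the resulting connection on cohomology is independent of that choice. Concretely, I would choose a linear connection on the vector bundle $E\to Q$ whose dual on $E^\ast$ preserves the fibrewise Lie bracket; equivalently, an Ehresmann connection $H$ with $\mathscr{L}_{\mathrm{hor}(X)}\mathcal{W}_{\mathrm{lin}}=0$ for all $X\in\mathfrak{X}(Q)$, as in condition~\eqref{eq:W is inv}. Such connections exist: in a local trivialisation in which the bracket is the constant $\gg$-bracket the flat connection works, and since $\mathcal{W}_{\mathrm{lin}}$ is vertical while $d(\pi^\ast\chi)$ is horizontal, the cross terms $\mathrm{i}_{d(\pi^\ast\chi)}\mathcal{W}_{\mathrm{lin}}$ vanish, so a partition-of-unity average of bracket-preserving connections is again bracket-preserving. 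For any such $H$ I set
\[\nabla^{\mathrm{can}}_X[V]:=[\mathscr{L}_{\mathrm{hor}(X)}V],\qquad [V]\in\mathrm{H}^\bullet_{\mathrm{V}}(E,\mathcal{W}_{\mathrm{lin}}),\]
which is exactly the generalised $TQ$-representation $\nabla^H$ of Theorem~\ref{theorem:first_page_with_differential}(d); in particular flatness is free, being axiom~\eqref{eq:rep:2}.

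The canonicity is where compact semisimplicity enters. Given two admissible connections $H,H'$, the difference $Z_X:=\mathrm{hor}(X)-\mathrm{hor}'(X)$ is a vertical, fibrewise linear vector field with $\mathscr{L}_{Z_X}\mathcal{W}_{\mathrm{lin}}=0$; fibrewise it is a linear Poisson vector field, hence a derivation of $\gg$, hence, since all derivations of a semisimple Lie algebra are inner, the Hamiltonian vector field $[\mathcal{W}_{\mathrm{lin}},\xi]$ of a fibrewise linear function $\xi\in\Gamma(E^\ast)$ depending smoothly on the base point. The graded Jacobi identity then gives, for any $[\mathcal{W}_{\mathrm{lin}},\cdot]$-cocycle $V$,
\[\mathscr{L}_{Z_X}V=[[\mathcal{W}_{\mathrm{lin}},\xi],V]=[\mathcal{W}_{\mathrm{lin}},[\xi,V]],\]
which is $[\mathcal{W}_{\mathrm{lin}},\cdot]$-exact, so $\nabla^H$ and $\nabla^{H'}$ agree on cohomology. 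Thus $\nabla^{\mathrm{can}}$ is well defined and canonical.

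It remains to verify the two structural statements. For the Casimir factor (which is the degree-zero part $\mathrm{H}^0_{\mathrm{V}}$, using $\mathrm{H}^0(\gg)=\mathbb{R}$), the Jacobi identity gives $[\mathcal{W}_{\mathrm{lin}},\mathscr{L}_{\mathrm{hor}(X)}f]=\mathscr{L}_{\mathrm{hor}(X)}[\mathcal{W}_{\mathrm{lin}},f]-[\mathscr{L}_{\mathrm{hor}(X)}\mathcal{W}_{\mathrm{lin}},f]=0$ for $f\in\mathrm{Cas}(E,\mathcal{W}_{\mathrm{lin}})$, so $\nabla^{\mathrm{can}}$ preserves $\mathrm{Cas}(E,\mathcal{W}_{\mathrm{lin}})$. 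For the factor $\Gamma(\mathcal{H}^\bullet(E^\ast))$ I would use that, fibrewise, the Ginzburg--Weinstein isomorphism~\eqref{eq:GinzWein} represents the $\mathrm{H}^\bullet(\gg)$-summand by the $\gg$-invariant, constant-coefficient (translation-invariant) vertical multivector fields, i.e.\ by Chevalley--Eilenberg cocycles; smoothness over the base follows from the smoothly varying homotopy operators already invoked for~\eqref{eq:decomp:vertical:cohom}. Since $\mathrm{hor}(X)$ is a linear vector field (weight zero for the fibrewise Euler grading), $\mathscr{L}_{\mathrm{hor}(X)}$ preserves the weight $-k$ subspace of constant-coefficient $k$-vector fields and, commuting with $[\mathcal{W}_{\mathrm{lin}},\cdot]$ by~\eqref{eq:W is inv}, sends invariant cocycles to invariant cocycles; on this submodule $\nabla^{\mathrm{can}}$ is the Gauss--Manin connection of the flat bundle $\mathcal{H}^\bullet(E^\ast)$, whose transition maps are locally constant because parallel transport acts by Lie algebra automorphisms. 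Finally, writing a pure tensor $f\otimes c$ as $f\cdot V_c$ with $f$ Casimir and $V_c$ an invariant constant-coefficient representative of $c$, the ordinary Leibniz rule $\mathscr{L}_{\mathrm{hor}(X)}(fV_c)=(\mathscr{L}_{\mathrm{hor}(X)}f)V_c+f\,\mathscr{L}_{\mathrm{hor}(X)}V_c$ descends to the asserted derivation identity $\nabla^{\mathrm{can}}_X(f\otimes c)=\nabla^{\mathrm{can}}_X(f)\otimes c+f\otimes\nabla^{\mathrm{can}}_X(c)$.

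The step I expect to be the main obstacle is the identification on the $\Gamma(\mathcal{H}^\bullet(E^\ast))$-factor: one must check that the fibrewise Ginzburg--Weinstein representatives can be taken simultaneously $\gg$-invariant, constant in the fibre coordinates, and smooth over $Q$, and that the weight bookkeeping for the linear field $\mathrm{hor}(X)$ is compatible with these normalisations. Everything else reduces either to the flatness already supplied by Theorem~\ref{theorem:first_page_with_differential} or to one-line Schouten-bracket computations.
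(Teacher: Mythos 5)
Your construction and your canonicity argument coincide with the paper's: choose a linear Ehresmann connection preserving $\mathcal{W}_{\mathrm{lin}}$, set $\nabla_X[V]:=[\mathscr{L}_{\mathrm{hor}(X)}V]$, and kill the difference of two choices using semisimplicity. (The paper phrases the latter as $\mathrm{H}^1_{\mathrm{V}}(E,\mathcal{W}_{\mathrm{lin}})\simeq \mathrm{Cas}(E,\mathcal{W}_{\mathrm{lin}})\otimes_{C^{\infty}(Q)}\Gamma(\mathcal{H}^1(E^*))=0$ via Whitehead; you phrase it as ``every derivation of $\gg$ is inner, so the difference is fibrewise Hamiltonian'' --- the same fact, and your Jacobi computation showing that the Lie derivative along a Hamiltonian vector field is exact on cocycles is exactly the implicit step the paper leaves out.) Your extra detail on the preservation of the two submodules and on the derivation rule fills in what the paper dismisses as clear, and is correct.

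The one step that does not go through as written is flatness. You declare it ``free'' by identifying $[\mathscr{L}_{\mathrm{hor}(X)}\cdot]$ with the generalised representation of Theorem \ref{theorem:first_page_with_differential}(d). But that theorem presupposes a Lie algebroid extension; here the connection $H$ is an arbitrary bracket-preserving linear connection (e.g.\ your partition-of-unity average), and nothing guarantees it is the middle component of coupling data $(\mathcal{W}_{\mathrm{lin}},H,\mathbb{F})$ of an actual Dirac structure --- conditions (3) and (4) on $\mathbb{F}$ would have to be arranged first. On the chain level the curvature of your operator is $[\mathscr{L}_{R_H(X,Y)}\cdot]$, with $R_H(X,Y)$ the (generally nonzero) vertical curvature of $H$, so flatness is genuinely something to prove. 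It does hold: either apply your own inner-derivation argument to the vertical linear Poisson vector field $R_H(X,Y)$, or, as the paper does, use the canonicity you have already established to reduce flatness to a local computation with the flat connection of a trivialisation of the bundle of Lie algebras. Either repair is one line, but as stated the claim is unjustified.
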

\begin{proof}
Consider any linear Ehresmann connection $\nabla$ which preserves $\mathcal{W}_{\mathrm{lin}}$ in the sense that \eqref{eq:W is inv} holds for the corresponding linear Ehresmann connection $H_{\mathrm{lin}}$. Then we obtain a map 
\[\nabla:\Gamma(TQ)\times \Gamma
(\wedge^{\bullet}\ker d\pi)\to \Gamma
(\wedge^{\bullet}\ker d\pi), \qquad \nabla_XV:=[\mathrm{hor}(X),V].\]
Since $\mathrm{hor}(X)$ is a Poisson vector field for $\mathcal{W}_{\mathrm{lin}}$, we have an induced map $\nabla^{\mathrm{can}}_X$ in cohomology, which clearly preserve the submodules from the statement and satisfy the derivation rule.

We show that the connection  $\nabla^{\mathrm{can}}_X$ on cohomology is independent of choices. Let $\nabla'$ be a second connection preserving $\mathcal{W}_{\mathrm{lin}}$. Then, for each $X\in \Gamma(TQ)$, we have a vertical Poisson vector field 
\[\theta(X):=[\mathrm{hor}(X)-\mathrm{hor}'(X)]\in \mathrm{H}^1_{\mathrm{V}}(E,\mathcal{W}_{\mathrm{lin}})\simeq \mathrm{Cas}(E,\mathcal{W}_{\mathrm{lin}})\otimes_{C^{\infty}(Q)}\Gamma(\mathcal{H}^1(E^*)).\]
However, since the typical fibre $\gg$ is semisimple, the Whitehead Lemma gives that $\mathcal{H}^1(E^*)=0$. Hence $\theta(X)=0$, and so $\nabla_X$ and $\nabla'_X$ have the same action on $\mathrm{H}^{\bullet}_{\mathrm{V}}(E,\mathcal{W}_{\mathrm{lin}})$. 

Flatness can be checked locally, where one can use the trivial connection $\nabla$ corresponding to a trivialisation of the bundle of Lie algebras.  
\end{proof}

By using Theorems \ref{theorem:first_page_with_differential} and \ref{theorem:page:2:submersion}, and Whitehead's Lemma, we obtain the following. 

\begin{theorem}\label{theorem:Cohomology:Dirac}
Let $D$ be a horizontally nondegenerate Dirac structure on a vector bundle $\pi:E\to Q$ whose coupling data has the first two components linear
\[\big(\mathcal{W}_{\mathrm{lin}}, H_{\mathrm{lin}},\mathbb{F}\big).\]
Assume that the typical fibre of the bundle of Lie algebras $E^*$ corresponding to $\mathcal{W}_{\mathrm{lin}}$ is semisimple and compact. Then the second page of the Serre spectral sequence associated to the extension \eqref{eq:Vorobjev:submersion_by_liealgebroids} is given by 
\[E_2^{(p,q)}\simeq \mathrm{H}^{p}(Q,\mathcal{S}^{q}_{\mathcal{W}_{\mathrm{lin}}}),\]
where $U\mapsto \mathcal{S}^{q}_{\mathcal{W}_{\mathrm{lin}}}(U)$ is the locally constant sheaf
\[\mathcal{S}^{q}_{\mathcal{W}_\mathrm{lin}}(U):=\big\{ f\otimes c\in \mathrm{Cas}(\pi^{-1}(U),\mathcal{W}_{\mathrm{lin}})\otimes_{C^{\infty}(U)}\Gamma(\mathcal{H}^{q}(E^*|_U))\, |\, \nabla^{\mathrm{can}} (f\otimes c)=0\big\}.\]

In particular, for $i=0,1,2$, we have that 
\[\mathrm{H}^{i}(D)\simeq E^{(i,0)}_2\simeq 
\mathrm{H}^{i}(Q,\mathcal{S}^0_{\mathcal{W}_\mathrm{lin}})\simeq \mathrm{H}^{i}(Q,\mathrm{Cas}(E,\mathcal{W}_{\mathrm{lin}})).\]
\end{theorem}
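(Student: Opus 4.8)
The plan is to assemble the statement from the machinery of Section \ref{sec:submersions_by_LAs} together with the two lemmas preceding the theorem. First I would observe that $(E,D,\pi)$ is a \emph{locally trivial} submersion by Lie algebroids: the linear Ehresmann connection $H_{\mathrm{lin}}$ is complete, since the horizontal lift of a complete vector field on $Q$ is a fibrewise-linear vector field over a complete base field and hence complete, and by the equivalence between local triviality and the existence of complete Ehresmann connections \cite[Theorem 3]{Frej19}, the extension \eqref{eq:Vorobjev:submersion_by_liealgebroids} is locally trivial. This places us in the setting of Theorem \ref{theorem:page:2:submersion}, which applied with $\mathcal{W}=\mathcal{W}_{\mathrm{lin}}$ and $L=(\ker d\pi)^*$ yields
\[E_2^{p,q}\simeq \mathrm{H}^p(Q,\mathcal{S}^q_{L,V}),\]
where $\mathcal{S}^q_{L,V}$ is the locally constant sheaf of $\nabla$-flat classes in $\mathrm{H}^q_V(\cdot,\mathcal{W}_{\mathrm{lin}})$ and $\nabla$ is the generalised $TQ$-representation of Theorem \ref{theorem:first_page_with_differential}(d).

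Next I would identify this sheaf with $\mathcal{S}^q_{\mathcal{W}_{\mathrm{lin}}}$. Under the hypothesis that the typical fibre $\gg$ is compact semisimple, the decomposition \eqref{eq:decomp:vertical:cohom} gives, over each open $U\subset Q$, an isomorphism $\mathrm{H}^q_V(\pi^{-1}(U),\mathcal{W}_{\mathrm{lin}})\simeq \mathrm{Cas}(\pi^{-1}(U),\mathcal{W}_{\mathrm{lin}})\otimes_{C^{\infty}(U)}\Gamma(\mathcal{H}^q(E^*|_U))$, and the generalised representation $\nabla=\nabla^H$ appearing in \eqref{eq:E_1} agrees, under the identification of vertical forms with vertical multivector fields as in \eqref{eq:iso:Poisson:cohom}, with the canonical connection $\nabla^{\mathrm{can}}_X=[\mathrm{hor}(X),\cdot]$ (on multivector fields $\mathscr{L}_{\mathrm{hor}(X)}=[\mathrm{hor}(X),\cdot]$). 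Passing to flat sections then identifies $\mathcal{S}^q_{L,V}(U)$ with $\mathcal{S}^q_{\mathcal{W}_{\mathrm{lin}}}(U)$ compatibly with restriction, which establishes the first isomorphism of the theorem.

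For the low-degree statement the decisive input is Whitehead's Lemma: since $\gg$ is semisimple, $\mathrm{H}^1(\gg)=\mathrm{H}^2(\gg)=0$, whence $\mathcal{H}^1(E^*)=\mathcal{H}^2(E^*)=0$ and therefore $\mathcal{S}^1_{\mathcal{W}_{\mathrm{lin}}}=\mathcal{S}^2_{\mathcal{W}_{\mathrm{lin}}}=0$. Consequently the rows $q=1$ and $q=2$ of $E_2$ vanish identically. Inspecting the differentials $d_r\colon E_r^{p,q}\to E_r^{p+r,q-r+1}$, for each $i\in\{0,1,2\}$ the only possibly nonzero entry on the antidiagonal $p+q=i$ is $E_2^{i,0}$, and every $d_r$ into or out of $E_2^{i,0}$ lands in a vanishing row ($q=1,2$) or a column with negative index; hence $E_\infty^{i,0}=E_2^{i,0}$ and the spectral sequence gives $\mathrm{H}^i(D)\simeq E_2^{i,0}$. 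Finally, exactly as in the proof of Theorem \ref{theorem:page:2:submersion}, the fine resolution \eqref{eq:resolution_by_fine_sheaves} identifies $\mathrm{H}^i(Q,\mathcal{S}^0_{\mathcal{W}_{\mathrm{lin}}})$ with the cohomology $\mathrm{H}^i(Q,\mathrm{Cas}(E,\mathcal{W}_{\mathrm{lin}}))$ of $Q$ with values in the flat module of Casimirs.

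I expect the only genuinely delicate point to be the sheaf-level identification $\mathcal{S}^q_{L,V}=\mathcal{S}^q_{\mathcal{W}_{\mathrm{lin}}}$: one must verify that the isomorphism \eqref{eq:decomp:vertical:cohom} intertwines the spectral-sequence connection $\nabla$ with $\nabla^{\mathrm{can}}$ and that it commutes with restriction to opens, so that taking flat sections produces an isomorphism of sheaves rather than merely of their sections over a fixed $U$. Both properties are essentially built into the two preceding lemmas, so the remaining work is bookkeeping; once the vanishing of the first two Whitehead rows is in place, the degeneration in total degrees $\le 2$ is a purely formal consequence of the shape of the second page.
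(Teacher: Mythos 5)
Your proposal is correct and follows essentially the same route as the paper, which gives no written proof but simply invokes Theorems \ref{theorem:first_page_with_differential} and \ref{theorem:page:2:submersion}, the two lemmas on the decomposition \eqref{eq:decomp:vertical:cohom} and on $\nabla^{\mathrm{can}}$, and Whitehead's Lemma. Your filling-in of the details — completeness of $H_{\mathrm{lin}}$ giving local triviality, the sheaf identification via \eqref{eq:decomp:vertical:cohom}, and the degeneration in total degree $\le 2$ from the vanishing of the rows $q=1,2$ — is exactly the intended argument.
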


For Poisson structures, versions of this result have appeared in the literature. For instance, in degree $i=1$, the theorem implies \cite[Claim 1.2]{VBVo18}. 
As mentioned above, for $i=2$, Poisson cohomology encodes infinitesimal deformations and the result is an infinitesimal version of \cite[Corollary 2.5]{Bra04} or \cite[Theorem 4.12]{Vor05} (stated above as Theorem \ref{th:partial:linearisation}). Moreover, \cite[Section 4]{Vor05} shows that a relative version of the group $\mathrm{H}^2(Q,\mathrm{Cas}(E,\mathcal{W}_{\mathrm{lin}}))$ encodes the obstructions for the linearisation problem.

\section{Abelian extensions}\label{sec:abelian_extensions}

A special case of \eqref{eq:ss_Liealgebroid_ausgangslage} where the differential on page $E_2$ can be described more explicitly is that of an \textbf{abelian extension}, i.e.\ a short exact sequence 
\begin{equation}\label{eq:ss_abelian_Liealgebroid}
\begin{tikzpicture}[baseline=(current bounding box.center)]
\usetikzlibrary{arrows}
\node (1) at (0,1) {$ 0 $};
\node (2) at (1.5,1) {$ L $};
\node (3) at (3,1) {$ A $};
\node (4) at (4.5,1) {$ B $};
\node (5) at (6,1) {$ 0 $};

\node (w) at (1.5,0) {$ M $};
\node (e) at (3,0) {$ M $};
\node (r) at (4.5,0) {$ Q $};
\draw[-Implies,double equal sign distance]
(2) -- (w);
\draw[-Implies,double equal sign distance]
(3) -- (e);
\draw[-Implies,double equal sign distance]
(4) -- (r);
\path[->]
(1) edge node[]{$  $} (2)
(2) edge node[above]{$ i $} (3)
(3) edge node[above]{$ \Pi $} (4)
(4) edge node[]{$  $} (5)
(w) edge node[above]{$\mathrm{id}_M $} (e)
(e) edge node[above]{$ \pi $} (r);
\end{tikzpicture}
\end{equation}
of Lie algebroids in which $L\Ato M$ has zero Lie bracket. These have been studied in \cite{Mack05} in the case when $M=Q$ and $\pi=\mathrm{id}_M$. By using generalised representations, we extend the results obtained there to the case when $A$ and $B$ are over different bases.

Consider a representation $V\to M$ of $A$ on which $L$ acts trivially.
Then, since $L$ is abelian, we have that $\mathrm{H}^q(L,V)=\Omega^q(L,V)$, and so Theorem \ref{theorem:first_page_with_differential} yields
\[E_2^{p,q}\simeq \mathrm{H}^p(B,\Omega^q(L,V)),\]
where, as explained in Theorem \ref{theorem:first_page_with_differential}, $\Omega^{q}(L,V)$ is a generalised $B$-representation. 

Next, note that there is a generalised representation of $B$ also on the $C^{\infty}(Q)$-module $\Gamma(L)$ given by $ \nabla^L_b =[ a, \cdot  ] $, where $a\in \Gamma(A)$ is any lift of $b\in \Gamma(B)$, i.e.\ satisfies $\Pi\circ a=b\circ \pi$. In the case when $\pi=\mathrm{id}_M$ this is a classical representation; see e.g.\ \cite[Proposition 3.3.20]{Mack05}. 

To identify the differential on the second page, fix a \textbf{Lie algebroid Ehresmann connection} for the extension \eqref{eq:ss_abelian_Liealgebroid}, i.e.\ a splitting $\sigma:M\times_{Q}B\to A$ of $\Pi$. We obtain a $C^{\infty}(Q)$-linear map at the level of sections, also denoted $\sigma:\Gamma(B)\to \Gamma(A)$, such that $\sigma(b)$ is a lift of $b\in \Gamma(B)$. The \textbf{curvature} of $\sigma$ is defined, for $b_1,b_2\in \Gamma(B)$, as
 \begin{equation}\label{eq:extension_class_definition}
 \gamma(b_1,b_2)=[ \sigma (b_1),\sigma (b_2) ]_A-\sigma([b_1,b_2]_B) \in \Gamma(L).
 \end{equation}
The curvature is $C^{\infty}(Q)$-bilinear, so $\gamma\in \Omega^2(B,\Gamma(L))$. The Jacobi identity implies that $\gamma$ is closed for the generalised representation of $B$. The \textbf{extension class} of \eqref{eq:ss_abelian_Liealgebroid} is defined by
 \[[\gamma]\in \mathrm{H}^{2}(B,\Gamma(L)).\] 
Any other splitting is of the form $\sigma'=\sigma+\lambda$, with $\lambda\in \Omega^1(B,\Gamma(L))$, and has corresponding curvature $\gamma'=\gamma+d_B\lambda$. Hence the extension class is independent of the Lie algebroid Ehresmann connection, and $[\gamma]=0$ if and only if the extension admits a flat Lie algebroid Ehresmann connection, in which case $A\simeq \tilde{B}\ltimes L$, where $\tilde{B}:=\mathrm{Im}\,\sigma$.

The extension class determines the differential on the second page of the spectral sequence. More precisely, contraction with the closed 2-form $\gamma\in \Omega^2(B,\Gamma(L))$ induces a cochain map
\[\mathrm{i}_{\gamma}:\big(\Omega^{\bullet}(B,\Omega^q(L,V)), d_B\big)\to \big(\Omega^{\bullet+2}(B,\Omega^{q-1}(L,V)), d_B\big),\]
and therefore a map in cohomology, which depends only on $[\gamma]$. 
We have the following result, which generalises \cite[Theorem 7.4.11]{Mack05} (for $B=TQ$), \cite[Theorem 8]{HoSe53} (for Lie algebras), and 
\cite[Corollary 4.3]{MaZe21} (for the anchor of a regular Lie algebroid).

 \begin{theorem}\label{prop:ses_abelian_extension_page2}
 	Let an abelian extension $L\to A\to B$ as in \eqref{eq:ss_abelian_Liealgebroid} be given, and let $V$ be a representation of $A$ on which $L$ acts trivially. Under the isomorphism of Theorem  \ref{theorem:first_page_with_differential}, the differential on the second page of the Serre spectral sequence becomes
 	\begin{equation}
 	\big(d_2: E_2^{p,q}\to E_2^{p+2,q-1}\big) \simeq \big( (-1)^p\mathrm{i}_{[\gamma]}: \mathrm{H}^{p}(B,\Omega^q(L,V))\to \mathrm{H}^{p+2}(B,\Omega^{q-1}(L,V))\big).
 	\end{equation}
 \end{theorem}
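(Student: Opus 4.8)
The plan is to fix a Lie algebroid Ehresmann connection $\sigma\colon\Gamma(B)\to\Gamma(A)$ as in the statement and use the induced splitting $A=L\oplus C$ with $C:=\sigma(B)$ to bigrade $\Omega^\bullet(A,V)$, after which $d_2$ can be read off directly from the homogeneous components of $d_A$. Calling a form of \emph{horizontal degree} $s$ and \emph{vertical degree} $t$ one that is nonzero only on $s$ arguments in $C$ and $t$ arguments in $L$, the filtration $\mathcal{F}^p_L$ described in Lemma \ref{theorem:classical_form} is exactly ``horizontal degree $\ge p$'', so $E_0^{p,q}$ is the space of bidegree $(p,q)$ forms, canonically $\Omega^p(B,\Omega^q(L,V))$. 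Because $L$ is abelian and acts trivially on $V$, the vertical differential $d_L$ vanishes; hence the bidegree $(0,1)$ component of $d_A$ is zero, $\mathrm{H}^q(L,V)=\Omega^q(L,V)$, and Theorem \ref{theorem:first_page_with_differential} gives $E_2^{p,q}\simeq\mathrm{H}^p(B,\Omega^q(L,V))$.

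The key structural point I would establish is that in this abelian setting $d_A$ has only two nonzero homogeneous components, $d_A=d^{(1,0)}+d^{(2,-1)}$. Indeed, in the Koszul formula \eqref{eq:Koszul} the only bracket that changes bidegree is $[\sigma b_1,\sigma b_2]=\sigma[b_1,b_2]_B+\gamma(b_1,b_2)$, whose curvature part $\gamma(b_1,b_2)\in\Gamma(L)$ is vertical, while $[\sigma b,\lambda]=\nabla^L_b\lambda$ and $[\lambda,\lambda']=0$ contribute no further off-diagonal terms. A bidegree count then shows that $d^{(1,0)}$ is the only component raising horizontal degree by one, and by Theorem \ref{theorem:first_page_with_differential}(e) it is precisely $d_1$, the differential of $B$ with values in the generalised representation; the curvature terms make up exactly the bidegree $(2,-1)$ part.

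Next I would compute $d^{(2,-1)}$ explicitly on $\eta\in\Omega^p(B,\Omega^q(L,V))$ by evaluating $d_A\eta$ on $p+2$ horizontal and $q-1$ vertical arguments. The Lie-derivative sum in \eqref{eq:Koszul} and all bracket terms except those producing $\gamma(b_i,b_j)$ drop out by the bidegree count above; moving each surviving $\gamma(b_i,b_j)$ past the $p$ remaining horizontal arguments into the vertical slots produces the reordering sign $(-1)^p$, and reading the result as the contraction of $\gamma(b_i,b_j)\in\Gamma(L)$ into $\Omega^q(L,V)$ yields $d^{(2,-1)}=(-1)^p\,\mathrm{i}_\gamma$ on $E_0^{p,q}$. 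Finally, since $d_A$ consists of exactly the two components $d^{(1,0)}$ and $d^{(2,-1)}$, any $d_1$-closed representative $\eta$ (i.e.\ $d^{(1,0)}\eta=0$) already lies in $Z_2^{p,q}$, because $d_A\eta=d^{(2,-1)}\eta$ has bidegree $(p+2,q-1)$; hence $d_2[\eta]=[d_A\eta]=[d^{(2,-1)}\eta]=(-1)^p[\mathrm{i}_\gamma\eta]$, and since $\mathrm{i}_\gamma$ is a cochain map descending to $(-1)^p\mathrm{i}_{[\gamma]}$ on $\mathrm{H}^p(B,\Omega^q(L,V))$ (as recorded before the statement), the claim follows. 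The hard part will be the sign bookkeeping in the explicit computation of $d^{(2,-1)}$: keeping the $(-1)^{i+j}$ Koszul signs and the $(-1)^p$ reordering sign consistent, and verifying that the shuffle signs built into the definition of $\mathrm{i}_\gamma$ match those produced by the Koszul sum.
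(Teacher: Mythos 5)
Your argument is correct and is exactly the ``standard'' proof that the paper omits (it only points to \cite[Proposition 4.2]{MaZe21}): split $A=L\oplus\sigma(B)$, observe that $d_A=d^{(1,0)}+d^{(2,-1)}$ because $L$ is abelian and acts trivially on $V$, identify $d^{(1,0)}$ with $d_1$ via Theorem \ref{theorem:first_page_with_differential}(e), and read off $d_2$ from the curvature component, with the $(-1)^p$ arising from moving $\gamma(b_i,b_j)$ past the $p$ horizontal slots. The one point worth making explicit is that a general element of $Z_2^{p,q}$ has components in bidegrees $(p+1,q-1),(p+2,q-2),\dots$ as well; your choice of a pure-bidegree representative is legitimate because $d_0=0$ gives $E_1=E_0$, and the contribution $d^{(1,0)}\omega_{p+1,q-1}$ of the extra components to $(d_A\omega)_{p+2,q-1}$ is a $d_1$-coboundary, hence vanishes in $E_2^{p+2,q-1}$.
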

The proof is standard (see e.g.\ the proof of \cite[Proposition 4.2]{MaZe21}) and so we omit it.

\subsection{Vaisman's spectral sequence for regular Poisson manifolds}\label{sec:vaisman}

As an application, we discuss the spectral sequence associated to the bundle of isotropy Lie algebras on a regular Poisson manifold, introduced in \cite{Vai90}.

Recall that a Poisson manifold $(M,w)$ carries a \textbf{singular symplectic foliation}, i.e.\ a decomposition into immersed, connected submanifolds endowed with symplectic structures, 
\[M=\sqcup_S (S,\omega_S),\]
whose members are called \textbf{symplectic leaves}. 
The singular symplectic foliation is determined by the following conditions (see e.g.\ \cite[Proposition 1.8 \& Theorem 4.1]{CFM21})
\[T_{x}S=\mathrm{Im}(\sharp_x:T^*_xM\to T_xM),\qquad \omega_S(\sharp \alpha ,\sharp \beta)=-w(\alpha,\beta),\qquad \textrm{for all }x\in S.\]

We consider \textbf{regular} Poisson manifolds, i.e.\ for which the bivector field $w$ has constant rank. In this case, the symplectic leaves form a smooth (regular) foliation $\mathcal{F}$, endowed with a leafwise symplectic form $\omega\in \Omega^2(\mathcal{F})$. The pair $(\mathcal{F},\omega)$ is called a \textbf{symplectic foliation}. This gives a one-to-one correspondence between regular Poisson manifolds and symplectic foliations. 

For a regular Poisson structure, we have that $\ker \sharp=\nu_{\mathcal{F}}^*$, and this Lie subalgebroid has trivial bracket. So we have an abelian extension
\begin{equation}\label{eq:extension:Poisson}
    0\longrightarrow \nu_{\mathcal{F}}^*
\longrightarrow T^*M
\stackrel{\sharp}{\longrightarrow} T\mathcal{F}
\longrightarrow
0,
\end{equation}
where we have used the notation from Example \ref{example:foliations}, as we will also do in the rest of the section. The Serre spectral sequence for Poisson cohomology of this extension was considered in this generality first in \cite{Vai90}. The corresponding filtration is given by
\[\mathcal{F}^p_{\nu_{\mathcal{F}}^*}\Gamma(\wedge^{p+q}TM)=\Gamma(\wedge^{p}T\mathcal{F}) \wedge \Gamma(\wedge^{q} TM).\]

The following determines the differential on the second page. 
\begin{lemma}
Let $(M,w)$ be a regular Poisson manifold with underlying foliation $\mathcal{F}$ and leafwise symplectic structure $\omega\in \Omega^2(\mathcal{F})$. The extension class corresponding to \eqref{eq:extension:Poisson} is given by
\[\mathrm{var}(\omega):=d_1[\omega]\in \mathrm{H}^2(\mathcal{F},\nu_{\mathcal{F}}^*),\]
where $d_1:\mathrm{H}^2(\mathcal{F})\to \mathrm{H}^2(\mathcal{F},\nu_{\mathcal{F}}^*)$ is the differential on the first page of the spectral sequence corresponding to $T\mathcal{F}\subset TM$ from Example \ref{example:foliations}. Explicitly, if $\tilde{\omega}\in \Omega^2(M)$ is a 2-form extending $\omega$ then the curvature form corresponding to the splitting $\sigma:=\tilde{\omega}^{\flat}:T\mathcal{F}\to T^*M$ is given by
\[\gamma\in \Omega^2(\mathcal{F},\nu_{\mathcal{F}}^*),\qquad \gamma(X_1,X_2):=(d\tilde{\omega})(X_1,X_2,\cdot)\in \nu_{\mathcal{F}}^*.\]
\end{lemma}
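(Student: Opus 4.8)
The plan is to establish the explicit formula for the curvature $\gamma$ by a direct computation with the cotangent bracket, and then to recognise that this formula is precisely the description of $d_1$ recalled in Example \ref{example:foliations}; combined with a comparison of representations, this yields the first assertion.

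First I would check that $\sigma=\tilde\omega^\flat\colon T\mathcal{F}\to T^\ast M$, $X\mapsto \mathrm{i}_X\tilde\omega$, is a genuine splitting of the anchor $\sharp$. The defining relation $\omega(\sharp\alpha,\sharp\beta)=-w(\alpha,\beta)$ of the symplectic foliation is equivalent to the statement that $\omega^\flat\circ\sharp\colon T^\ast M\to (T\mathcal{F})^\ast$ is the canonical projection with kernel $\nu_{\mathcal{F}}^\ast$; since $\tilde\omega$ restricts to $\omega$ on leaves, the covector $\mathrm{i}_X\tilde\omega$ projects to $\mathrm{i}_X\omega=\omega^\flat(X)$, and applying $\sharp$ returns $X$. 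Hence $\sharp\circ\sigma=\mathrm{id}_{T\mathcal{F}}$, so $\sharp(\sigma X)=X$ for $X\in\Gamma(T\mathcal{F})$.

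Next I would compute $\gamma(X_1,X_2)=[\sigma X_1,\sigma X_2]_w-\sigma([X_1,X_2])$. Using the symmetric form of the cotangent bracket $[\alpha,\beta]_w=\mathscr{L}_{\sharp\alpha}\beta-\mathscr{L}_{\sharp\beta}\alpha-d(w(\alpha,\beta))$ with $\alpha=\mathrm{i}_{X_1}\tilde\omega$, $\beta=\mathrm{i}_{X_2}\tilde\omega$ (so that $\sharp\alpha=X_1$, $\sharp\beta=X_2$ and $w(\alpha,\beta)=-\omega(X_1,X_2)$), and combining the commutator identity $\mathrm{i}_{[X_1,X_2]}=\mathscr{L}_{X_1}\mathrm{i}_{X_2}-\mathrm{i}_{X_2}\mathscr{L}_{X_1}$ with Cartan's formula, all terms containing $\tilde\omega$ undifferentiated cancel, leaving $\gamma(X_1,X_2)=\mathrm{i}_{X_2}\mathrm{i}_{X_1}d\tilde\omega=(d\tilde\omega)(X_1,X_2,\cdot)$. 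That this lands in $\nu_{\mathcal{F}}^\ast$ rather than in all of $T^\ast M$ follows because the pullback of $d\tilde\omega$ to the leaves equals $d\omega=0$, so $(d\tilde\omega)(X_1,X_2,Y)=0$ for $Y\in T\mathcal{F}$. This proves the explicit part of the statement.

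For the first assertion I would match two things. On one side, the extension class lives in $\mathrm{H}^2(T\mathcal{F},\Gamma(\nu_{\mathcal{F}}^\ast))$ for the generalised $B$-representation $\nabla^L_X\lambda=[a,\lambda]_w$ with $a$ a lift of $X$; since $\sharp\lambda=0$ for $\lambda\in\nu_{\mathcal{F}}^\ast$, this reduces to $\nabla^L_X\lambda=\mathscr{L}_X\lambda$. On the other side, $d_1$ in Example \ref{example:foliations} is taken with respect to the dual Bott connection on $\nu_{\mathcal{F}}^\ast\simeq(T\mathcal{F})^\circ$, which by $(\mathscr{L}_X\lambda)(Y)=X(\lambda(Y))-\lambda([X,Y])$ is exactly $\lambda\mapsto\mathscr{L}_X\lambda$; thus the two representations, and hence the two groups $\mathrm{H}^2(T\mathcal{F},\nu_{\mathcal{F}}^\ast)$, coincide. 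Comparing $\gamma(X_1,X_2)=(d\tilde\omega)(X_1,X_2,\cdot)$ with the formula $d_1[\omega]=[(d\tilde\omega)_{1,2}]$ from Example \ref{example:foliations} then identifies $\gamma$ with the representative cocycle $(d\tilde\omega)_{1,2}$, so $[\gamma]=d_1[\omega]=\mathrm{var}(\omega)$. The main obstacle I anticipate is purely the sign and convention bookkeeping, keeping the relation $\omega(\sharp\alpha,\sharp\beta)=-w(\alpha,\beta)$, the cotangent bracket, and the definition of $\gamma$ mutually consistent, and matching the two generalised representations on the nose; the curvature computation itself is routine once the splitting $\sigma=\tilde\omega^\flat$ is in place.
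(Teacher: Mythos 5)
Your proposal is correct and follows essentially the same route as the paper: verify that $\sigma=\tilde{\omega}^{\flat}$ splits the anchor and then compute the curvature by Cartan calculus, arriving at $\gamma(X_1,X_2)=\mathrm{i}_{X_2}\mathrm{i}_{X_1}(d\tilde{\omega})$ (your symmetric form of the cotangent bracket reduces to the paper's $\mathscr{L}_{X_1}\mathrm{i}_{X_2}\tilde{\omega}-\mathrm{i}_{X_2}d\,\mathrm{i}_{X_1}\tilde{\omega}$ after one Cartan identity). The extra checks you include --- that $\gamma$ takes values in $\nu_{\mathcal{F}}^{*}$ because $d\tilde{\omega}$ pulls back to $d\omega=0$ on leaves, and that the generalised representation $[a,\cdot]_w$ agrees with the dual Bott connection so that $[\gamma]=d_1[\omega]$ --- are left implicit in the paper but are correctly handled in your argument.
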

\begin{proof}
The definition of $\sharp$ and $\omega$ imply that $\sharp \mathrm{i}_X\tilde{\omega}=X$ for $X\in \Gamma(T\mathcal{F})$, i.e.\ $\sigma=\tilde{\omega}^{\flat}$ is a splitting of \eqref{eq:extension:Poisson}. Using this and standard formulas of Cartan calculus, we obtain the claimed result
\begin{align*}
\gamma(X_1,X_2)&=[\mathrm{i}_{X_1}\tilde{\omega},\mathrm{i}_{X_2}\tilde{\omega}]-\mathrm{i}_{[X_1,X_2]}\tilde{\omega}\\
&=\mathscr{L}_{X_1}\mathrm{i}_{X_2}\tilde{\omega}-\mathrm{i}_{X_2}d \mathrm{i}_{X_1}\tilde{\omega}-\mathrm{i}_{[X_1,X_2]}\tilde{\omega}\\
&=\mathrm{i}_{X_2}\mathrm{i}_{X_1}(d\tilde{\omega}). \qedhere
\end{align*}
\end{proof}

Theorem \ref{prop:ses_abelian_extension_page2} yields the following result.

\begin{theorem}\label{theorem:PC_regular}
Let $(M,w)$ be a regular Poisson manifold with underlying foliation $\mathcal{F}$ and leafwise symplectic structure $\omega\in \Omega^2(\mathcal{F})$. The second page of the Serre spectral sequence corresponding to the extension \eqref{eq:extension:Poisson} is given by
\begin{equation*}
 	\big(E_2^{p,q}, d_2\big) \simeq \big( \mathrm{H}^{p}(\mathcal{F},\wedge^q\nu_{\mathcal{F}}), (-1)^p\mathrm{i}_{\mathrm{var}(\omega)}).
 	\end{equation*}
  In particular, for the first Poisson cohomology group, we have an isomorphism 
\[\mathrm{H}^1(M,w)\simeq \mathrm{H}^1(\mathcal{F})\oplus \ker\big(\mathrm{i}_{\mathrm{var}(\omega)}: \mathrm{H}^0(\nu_{\mathcal{F}})\to \mathrm{H}^2(\mathcal{F}) \big).\]
\end{theorem}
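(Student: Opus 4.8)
The plan is to assemble the statement directly from the structural results already established for abelian extensions, together with the convergence theorem and a short bookkeeping argument in low total degree. The extension \eqref{eq:extension:Poisson} is an abelian extension of the form \eqref{eq:ss_abelian_Liealgebroid} with $A=T^*M$, $B=T\mathcal{F}$, $L=\nu_{\mathcal{F}}^*$, all over the same base $M$ (so $\pi=\mathrm{id}_M$), and with $V=\mathbb{R}$ the trivial representation, on which $L$ certainly acts trivially. First I would identify the second page: by Theorem \ref{theorem:first_page_with_differential} we have $E_2^{p,q}\simeq \mathrm{H}^p(B,\mathrm{H}^q(L,V))$, and since $L$ is abelian with trivial coefficients, $\mathrm{H}^q(L,V)=\Omega^q(L)=\Gamma(\wedge^q L^*)=\Gamma(\wedge^q\nu_{\mathcal{F}})$ under the canonical identification $L^*\simeq\nu_{\mathcal{F}}$. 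Hence $E_2^{p,q}\simeq \mathrm{H}^p(\mathcal{F},\wedge^q\nu_{\mathcal{F}})$.

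For the differential on this page, I would invoke Theorem \ref{prop:ses_abelian_extension_page2}, which gives $d_2\simeq (-1)^p\mathrm{i}_{[\gamma]}$, and then use the Lemma preceding the theorem, which identifies the extension class $[\gamma]$ with $\mathrm{var}(\omega)\in\mathrm{H}^2(\mathcal{F},\nu_{\mathcal{F}}^*)$. Together these yield $(E_2^{p,q},d_2)\simeq(\mathrm{H}^p(\mathcal{F},\wedge^q\nu_{\mathcal{F}}),(-1)^p\mathrm{i}_{\mathrm{var}(\omega)})$, proving the first claim. The contraction is well-typed, since $\mathrm{var}(\omega)$ takes values in $\nu_{\mathcal{F}}^*$, so $\mathrm{i}_{\mathrm{var}(\omega)}$ lowers the $\wedge^\bullet\nu_{\mathcal{F}}$-degree by one.

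To read off the first Poisson cohomology, I would first record convergence: via the isomorphism \eqref{eq:iso:Poisson:cohom} the spectral sequence computes $\mathrm{H}^\bullet(T^*M)=\mathrm{H}^\bullet(M,w)$, and since the subalgebroid is wide ($N=M$), Theorem \ref{theorem:convergence} applies and gives $\mathrm{H}^n(M,w)\simeq\bigoplus_{p+q=n}E_\infty^{p,q}$, with the filtration splitting as we work over $\mathbb{R}$. For $n=1$ only the spots $(1,0)$ and $(0,1)$ contribute. Because the spectral sequence is concentrated in the first quadrant, $E_r^{p,q}=0$ whenever $p<0$ or $q<0$; hence the spot $(1,0)$ neither receives nor emits any differential $d_r$ for $r\geq2$, so $E_\infty^{1,0}=E_2^{1,0}=\mathrm{H}^1(\mathcal{F})$, while the spot $(0,1)$ receives none and emits only $d_2\colon E_2^{0,1}\to E_2^{2,0}$, so $E_\infty^{0,1}=\ker d_2=\ker\bigl(\mathrm{i}_{\mathrm{var}(\omega)}\colon\mathrm{H}^0(\nu_{\mathcal{F}})\to\mathrm{H}^2(\mathcal{F})\bigr)$. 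Summing these two contributions gives the displayed formula for $\mathrm{H}^1(M,w)$.

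This proof is essentially an orchestration of earlier results, so there is no genuine analytic obstacle; the only points requiring care are the degree bookkeeping at the positions $(1,0)$ and $(0,1)$—in particular verifying that all higher differentials $d_r$ with $r\geq3$ vanish there for dimension reasons—and the observation that convergence over the field $\mathbb{R}$ upgrades the filtration on $\mathrm{H}^1$ to an honest direct sum.
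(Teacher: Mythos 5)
Your proposal is correct and follows essentially the same route as the paper, which simply invokes Theorem \ref{prop:ses_abelian_extension_page2} together with the preceding lemma identifying $[\gamma]$ with $\mathrm{var}(\omega)$, and reads off $\mathrm{H}^1$ from convergence. Your explicit bookkeeping at the spots $(1,0)$ and $(0,1)$, and the appeal to Theorem \ref{theorem:convergence} for the direct-sum decomposition, fill in exactly the steps the paper leaves implicit.
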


\begin{remark}\rm 
The spectral sequence for Poisson cohomology discussed here was used implicitly in \cite{VoKa88,VoKa89} to calculate Poisson cohomology in low degrees around leaves which admit a product neighbourhood. In the generality of this section, the spectral sequence was introduced in \cite{Vai90}, where a version of Theorem \ref{theorem:PC_regular} was obtained, but without the interpretation of the differential as the cup product with the extension class. 
\end{remark}

\begin{remark}\rm
The extension class $\mathrm{var}(\omega)\in \mathrm{H}^2(\mathcal{F},\nu_{\mathcal{F}}^*)$ plays an important role in the Poisson geometry of the symplectic foliation $(\mathcal{F},\omega)$---see also \cite[Section 3.4]{CFMT19} for an overview.

First, by pulling back the extension class to a symplectic leaf $(S,\omega_S)$, we obtain the class
$i_S^*(\mathrm{var}(\omega))\in H^2(S,\nu_{S}^*)$, which can be interpreted as the \textbf{cohomological variation} of the symplectic forms on the leaves at $S$. For example, if the foliation was trivial around $S$, so that the symplectic foliation is isomorphic to the family $\{(S\times \{x\},\omega_x)\}_{x\in U}$, where $U\subset \mathbb{R}^k$ is a neighbourhood of $0$, then 
$i_S^*(\mathrm{var}(\omega))\in H^2(S)\otimes \mathbb{R}^k$ has components
\[\big[\partial_{x_i}\omega_{x}|_{x=0}\big]\in H^2(S),\quad 1\leq i\leq k.\]
In general, fix $x_0\in S$ and let $\widetilde{S}_{x_0}$ be the universal cover of $S$, viewed as a principal $\pi_1(S,x_0)$-bundle. By pulling back $i_S^*(\mathrm{var}(\omega))$ to $\widetilde{S}$ and using the Bott connection to trivialise the normal bundle over $\widetilde{S}_{x_0}$, we obtain a $\pi_1(S,x_0)$-equivariant linear map
\begin{equation}\label{eq:cohomo_variation}
[\delta_S\omega]:\nu_{S,x_0}\longrightarrow  H^2(\widetilde{S}_{x_0}).
\end{equation}
In \cite{CrMa15} this map is called the \textbf{cohomological variation} of the symplectic form at $S$ and is shown to play an important role in the local structure of the symplectic foliation around $S$. 

Next, by composing the isomorphim $\pi_2(S,x_0)\simeq \pi_2(\widetilde{S}_{x_0},x_0)$ (coming from the long exact sequence in homotopy corresponding to the fibration $\widetilde{S}_{x_0}\to S$) with the Hurewicz map $\pi_2(\widetilde{S}_{x_0},x_0)\to H_2(\widetilde{S}_{x_0},\mathbb{Z})$, we see that the dual of 
\eqref{eq:cohomo_variation} gives a group homomorphism 
\[\partial_{x_0}: \pi_2(S,x_0)\longrightarrow \nu_{S,x_0}^*.\]
This map is the so-called \textbf{monodromy map} at $x_0$, which plays a crucial role in the integrability problem of the Poisson manifold by a symplectic groupoid; namely, integrability is equivalent to the image of all these maps being uniformly discrete in $\nu_{\mathcal{F}}^*$ \cite{CrFe04}. The monodromy map has the geometric interpretation of being the \textbf{variation of symplectic area of spheres} at $x_0$, i.e.
\[\langle\partial_{x_0}[\sigma],v\rangle=\frac{d}{dt}\int_{\mathbb{S}^2}\sigma_t^*\omega\big|_{t=0},\qquad [\sigma]\in \pi_2(S,x_0),\quad  v\in \nu_{S,x_0}\]
where $\sigma_t:\mathbb{S}^2\to M$ is a family of smooth leafwise spheres, with $[\sigma_0]=[\sigma]$ and sending the north pole $N\in \mathbb{S}^2$ to a curve $x_t=\sigma_t(N)$ with direction $v$, i.e.\ $[\frac{d}{dt} x_t|_{t=0}]=v$.
\end{remark}

\begin{remark}\rm 
For a non-regular Poisson manifold $(M,w)$, there are several different ways in which one can generalise the above filtration; at least the following. 

First, consider the filtration by the powers of the ideal of Hamiltonian vector fields
\[\mathcal{F}_{\mathrm{ham}}^{k}:=\wedge^k\mathcal{I}_{\mathrm{ham}},
\qquad \mathcal{I}_{\mathrm{ham}}:=\big\{\sharp d f\wedge \vartheta \,  |\, f\in C^{\infty}(M), \vartheta\in \Gamma(\wedge T M)\big\}.\]
It is easy to see that $\mathcal{I}_{\mathrm{ham}}$ can also be generated by vector fields of the form $\sharp \alpha$, with $\alpha\in \Gamma(T^*M)$. The corresponding spectral sequence is mentioned in \cite{Vai90}. 

Secondly, by considering tangential vector fields, one obtains a potentially larger filtration 
\[\mathcal{F}_{\mathrm{tang}}^{k}:=\wedge^k \mathcal{I}_{\mathrm{tang}}, \qquad \mathcal{I}_{\mathrm{tang}}:=\big\{ X \wedge \vartheta \,  |\, X\in \Gamma(T\mathcal{F}), \vartheta\in \Gamma(\wedge T M)\big\},\]
where $\Gamma(T\mathcal{F})$ denotes the set of vector fields tangent to the symplectic leaves of $w$. The corresponding spectral sequence was used in the proof of \cite[Theorem 4.15]{Gi96}. In general, the inclusion $\mathcal{F}_{\mathrm{ham}}^{k}\subset \mathcal{F}_{\mathrm{tang}}^{k}$ is strict. Indeed, for the Poisson structure $w:=(x^2+y^2) \partial_x\wedge \partial_y$ on $\mathbb{R}^2$, we have that $x\partial_x\in \mathcal{I}_{\mathrm{tang}}\backslash  \mathcal{I}_{\mathrm{ham}}$. 

Next, the conormal bundle of any symplectic leaf $S$ is a Lie subalgebroid $\iota_{\nu^*_S}:\nu^*_S\to T^*M$, and so it gives a differential graded ideal $\mathcal{I}_{\nu_S^*}:=\ker (\iota_{\nu^*_S}^*)\subset \Gamma(\wedge^{\bullet}TM)$. We obtain a filtration
\[ \mathcal{F}^k_{\nu^*}:= \cap_{S} \wedge^k \mathcal{I}_{\nu_S^*},\]
where the intersection is over all leaves. In general, the inclusion $\mathcal{F}_{\mathrm{tang}}^k\subset \mathcal{F}_{\nu^*}^k$ is strict. For example, for the Poisson manifold $(\mathbb{R}^2,w=x\partial_x\wedge \partial_y)$, we have that $w\in \mathcal{F}^2_{\nu^*}\backslash \mathcal{F}^2_{\mathrm{tang}}$.

Next, consider multivector fields that satisfy pointwise the tangential condition
\[\mathcal{F}^k_{\mathrm{pt}}:=\big\{\vartheta\,|\, \vartheta_x\in \wedge^k T_x\mathcal{F}\wedge\ \ldots\ ,\  \textrm{for all}\ x\in M\big\}.\]
This filtration has appeared in \cite[Proposition 5.5]{Vai94} in the following equivalent form
\[\mathcal{F}^k_{\mathrm{pt}}:=\big\{\vartheta \,|\, \vartheta(\alpha_1,\ldots,\alpha_{n})=0,\ \textrm{for all} \ \alpha_1,\ldots \alpha_{n+1-k}\in \ker \sharp_{x},\ x\in M\big\}.\]
We show that $\mathcal{F}^k_{\mathrm{pt}}$ is indeed a filtration by differential graded ideals. For this, we use the Lie subalgebroid $\iota_{T^*M|_S}:T^*M|_S\to T^*M$ corresponding to a symplectic leaf $S$ and the differential graded ideal corresponding to the inclusion $j_{\nu_S^*}:\nu_S^*\hookrightarrow T^*M|_S$, 
\[
\mathcal{J}_{\nu_S^*}:=\ker\big(j_{\nu_S^*}^*:\Gamma(\wedge TM|_S)\to \Gamma(\wedge \nu_S)\big).
\]
Then we have the following description of the filtration
\[\mathcal{F}^k_{\mathrm{pt}}=\cap_S (\iota^*_{T^*M|_S})^{-1}(\wedge^k\mathcal{J}_{\nu_S^*}).\]
Clearly, 
$(\iota^*_{T^*M|_S})^{-1}(\mathcal{J}_{\nu_S^*})=\mathcal{I}_{\nu_S^*}$, but for powers of the ideals, the analogue equality might fail if $S$ is not an embedded leaf. Therefore we expect also the inclusion $\mathcal{F}_{\nu^*}\subset \mathcal{F}_{\mathrm{pt}}$ to be strict. 

We can build another filtration by using the regular part of $M$, i.e.\ the open set $M_{\mathrm{reg}}$ consisting of points in $M$ where the rank of $w$ is locally constant. Namely, define
\[\mathcal{F}^k_{\mathrm{reg}}:=\{\vartheta \,|\, \vartheta|_{M_{\mathrm{reg}}}\in \mathcal{F}^k(M_{\mathrm{reg}})\},\]
where $\mathcal{F}^k(M_{\mathrm{reg}})$ denotes any of the above filtrations for the regular Poisson manifold $M_{\mathrm{reg}}$. It is easy to see that the inclusion $\mathcal{F}^k_{\mathrm{pt}}\subset\mathcal{F}^k_{\mathrm{reg}}$ is in general strict.  
\end{remark}

\appendix

\section{Appendix}

\begin{lemma}\label{lemma:tensor_vb}
Let $V\to Q$ be a vector bundle. We have canonical isomorphisms:
\begin{enumerate}[(a)]
\item For any vector bundle $W\to Q$, \label{App_1}
\[\Gamma(V)\otimes_{C^{\infty}(Q)}\Gamma(W)\simeq \Gamma(V\otimes W);\]
\item For any $C^{\infty}(Q)$-module $\mathfrak{M}$, \label{App_2}
\[\Gamma(V)\otimes_{C^{\infty}(Q)}\mathfrak{M}\simeq \mathrm{Hom}_{C^{\infty}(Q)}\big(\Gamma(V^*),\mathfrak{M});\]
\item For any smooth map $\phi:M\to Q$,\label{App_3}
\[\Gamma(V)\otimes_{C^{\infty}(Q)}C^{\infty}(M)\simeq \Gamma(\phi^*V);\]
\item For any cochain complex $(\mathcal{C}^{\bullet},d)$ of 
$C^{\infty}(Q)$-modules,
\label{App_4}
\[\Gamma(V)\otimes_{C^{\infty}(Q)} \mathrm{H}^{\bullet}(\mathcal{C})\simeq \mathrm{H}^{\bullet}(\Gamma(V)\otimes_{C^{\infty}(Q)} \mathcal{C}).\]
\end{enumerate}
\end{lemma}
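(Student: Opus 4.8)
The common thread through all four statements is that, by the Serre--Swan theorem, $\Gamma(V)$ is a finitely generated projective $C^{\infty}(Q)$-module; equivalently, there is a complementary bundle $V'\to Q$ with $V\oplus V'\cong Q\times\mathbb{R}^N$ trivial, so that $\Gamma(V)$ is a direct summand of a finite free module. My plan is to treat (a)--(c) uniformly by a single ``reduction to the trivial bundle'' argument, and to deduce (d) from flatness.

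First I would write down the evident natural maps: in (a), $s\otimes t\mapsto\big(q\mapsto s(q)\otimes t(q)\big)$; in (b), $s\otimes m\mapsto\big(\xi\mapsto\xi(s)\,m\big)$, using the pointwise pairing $\xi(s)\in C^{\infty}(Q)$ of $\xi\in\Gamma(V^*)$ with $s\in\Gamma(V)$; and in (c), $s\otimes f\mapsto f\cdot\phi^*s$, where $\phi^*s$ is the pullback section. Each is visibly $C^{\infty}(Q)$-balanced and natural in $V$, and both the source and target are \emph{additive} covariant functors of $V$ (they send $V_1\oplus V_2$ to a direct sum, since $\Gamma$, $\otimes$ and $\mathrm{Hom}$ all commute with finite direct sums). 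For the trivial bundle $V=Q\times\mathbb{R}^k$ all three maps are manifestly isomorphisms: the two sides reduce respectively to $\Gamma(W)^{\oplus k}$, to $\mathfrak{M}^{\oplus k}$, and to $C^{\infty}(M)^{\oplus k}\cong\Gamma(\phi^*V)$. Since $\Gamma(V)$ is a retract of a finite free module, each of these natural transformations is a retract (in the arrow category) of the corresponding isomorphism for the trivial bundle, and a retract of an isomorphism is an isomorphism.

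For (d) I would invoke that a finitely generated projective module is flat, so that $\Gamma(V)\otimes_{C^{\infty}(Q)}(-)$ is an exact functor. Applying exactness to the short exact sequences $0\to Z^n\to\mathcal{C}^n\to B^{n+1}\to0$ and $0\to B^n\to Z^n\to\mathrm{H}^n(\mathcal{C})\to0$ of cocycles, coboundaries and cohomology, and chasing the resulting exact rows, shows that $\Gamma(V)\otimes_{C^{\infty}(Q)}\mathrm{H}^n(\mathcal{C})$ computes the $n$-th cohomology of the tensored complex; this is the standard fact that an exact functor commutes with taking cohomology.

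The only genuinely non-formal input is the Serre--Swan theorem, i.e.\ the finite projectivity of $\Gamma(V)$; everything after that is bookkeeping with additive functors together with one homological-algebra lemma. The step requiring the most care is the trivial-bundle base case of (b), where one must check that, under the identification $\Gamma(V^*)\cong\mathrm{Hom}_{C^{\infty}(Q)}\big(\Gamma(V),C^{\infty}(Q)\big)$ for free $V$, the natural map is exactly the canonical isomorphism $\mathfrak{M}^{\oplus k}\cong\mathrm{Hom}\big(C^{\infty}(Q)^{\oplus k},\mathfrak{M}\big)$; the retract argument then transports this to an arbitrary $V$.
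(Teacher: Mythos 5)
Your proposal is correct and follows essentially the same route as the paper: both rest on $\Gamma(V)$ being finitely generated projective, verify the natural maps are isomorphisms for trivial bundles, and transport the result to general $V$ via the inclusion/projection onto a trivial bundle (the paper writes the retract argument out explicitly as $\varphi^{-1}=(p_*\otimes \mathrm{id})\circ\psi\circ(i\otimes\mathrm{id})_*$, which is your ``retract of an isomorphism'' in concrete form). The only minor divergence is part (d), where you substitute flatness plus the exactness-commutes-with-cohomology lemma for the paper's uniform retract scheme; both are valid and rest on the same projectivity.
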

\begin{proof}
The statements hold because $\Gamma(V)$ is a finitely generated, projective $C^{\infty}(Q)$-module. More precisely, the statements are obviously satisfied when $V$ is trivial, and in general, they follows because we can write $V$ as a subbundle of a trivial bundle $\mathbb{R}^n_Q\to Q$, with inclusion map $i:V\to \mathbb{R}^n_{Q}$ and projection map $p: \mathbb{R}^n_Q\to V$. 

For the first claim, we need to show that the obvious map
\[\varphi:\Gamma(V)\otimes_{C^{\infty}(Q)}\Gamma(W)\to \Gamma(V\otimes W)\]
is an isomorphism. Clearly, for $V=\mathbb{R}^n_Q$ the map $\varphi$ is an isomorphism. Denote the inverse by
\[\psi:\Gamma(\mathbb{R}^n_Q\otimes W)\to
\Gamma(\mathbb{R}^n_Q)\otimes_{C^{\infty}(Q)}\Gamma(W).\]
Then the inverse of $\varphi$ is given by 
\[\varphi^{-1}=(p_*\otimes \mathrm{id}_{\Gamma(W)})\circ \psi\circ (i\otimes\mathrm{id}_{W})_*.\]
The same scheme can be used to prove the other isomorphisms.  
\end{proof}

\subsubsection*{Acknowledgements}

We would like to thank Lennart Obster and Jo\~ao Mestre for sharing their insights into the conormal VB-algebroid of a subalgebroid and into the construction of exterior powers of representations up to homotopy. I.M.\ would like to thank Pedro Frejlich for useful discussions over many years about the structure of submersions by Lie algebroids and their cohomology. We would also like to thank Marco Zambon for his feedback on the paper and Silvia Sch\"{u}\ss ler for pointing out typos. A.S.\ was financially supported by Methusalem grant METH/21/03 – long term structural funding of the Flemish Government. 



{\small
\def\cprime{$'$} \def\polhk#1{\setbox0=\hbox{#1}{\ooalign{\hidewidth
			\lower1.5ex\hbox{`}\hidewidth\crcr\unhbox0}}} \def\cprime{$'$}
\def\cprime{$'$} \def\cprime{$'$} \def\cprime{$'$} \def\cprime{$'$}
\def\polhk#1{\setbox0=\hbox{#1}{\ooalign{\hidewidth
			\lower1.5ex\hbox{`}\hidewidth\crcr\unhbox0}}} \def\cprime{$'$}
\def\cprime{$'$} \def\cprime{$'$} \def\cprime{$'$} \def\cprime{$'$}
\providecommand{\bysame}{\leavevmode\hbox to3em{\hrulefill}\thinspace}
\providecommand{\MR}{\relax\ifhmode\unskip\space\fi MR }
\providecommand{\MRhref}[2]{%
	\href{http://www.ams.org/mathscinet-getitem?mr=#1}{#2}
}

\bibliographystyle{alpha}

}





\Addresses
\end{document}